\theoremstyle{plain}
\newtheorem{theorem}{Theorem}[section]
\newtheorem{corollary}[theorem]{Corollary}
\newtheorem{lemma}[theorem]{Lemma}
\newtheorem{remark}[theorem]{Remark}
\theoremstyle{definition}
\newtheorem{definition}[theorem]{Definition}
\theoremstyle{remark}
\newtheorem*{theorem*}{Theorem \ref{mainthm}}
\numberwithin{equation}{section}
\newcommand{\HM}{\mathcal{H}}
\newcommand{\vect}[1]{{\rm span}\{#1\}}
\newcommand{\midd}{\;\middle|\;}
\newcommand{\id}{{\rm id}}
\newcommand{\supp}{{\rm supp}}
\newcommand{\scale}[1]{\boldsymbol{\mu}_{#1}}
\newcommand{\trans}[1]{\boldsymbol{\tau}_{#1}}
\newcommand{\mr}{\mathbin{\vrule height 1.6ex depth 0pt width 0.13ex
		\vrule height 0.13ex depth 0pt width 1.3ex}}
\DeclareFontFamily{U}{tipa}{}
\DeclareFontShape{U}{tipa}{m}{n}{<->tipa10}{}
\newcommand{\arc@char}{{\usefont{U}{tipa}{m}{n}\symbol{62}}}%
\newcommand{\arc}[1]{\mathpalette\arc@arc{#1}}
\newcommand{\arc@arc}[2]{%
  \sbox0{$\m@th#1#2$}%
  \vbox{
    \hbox{\resizebox{\wd0}{\height}{\arc@char}}
    \nointerlineskip
    \box0
  }%
}
\DeclareMathOperator{\ap}{ap}
\DeclareMathOperator{\Tan}{Tan}
\DeclareMathOperator{\Lip}{Lip}
\DeclareMathOperator{\card}{card}
\DeclareMathOperator{\dist}{dist}
\DeclareMathOperator{\diam}{diam}
\DeclareMathOperator{\length}{length}
\begin{document}
\title{Local $C^{1,\beta}$-regularity at the boundary of two \protect\\ 
dimensional sliding almost minimal sets in $\mathbb{R}^{3}$}
\author{Yangqin Fang}
\renewcommand{\thefootnote}{\fnsymbol{footnote}} 
\footnotetext{\emph{2010 Mathematics Subject Classification:} 49K99, 49Q20,
49J99.}
\footnotetext{\emph{Key words:} Almost minimal sets, sliding boundary
conditions, regularity, blow-up limit, Plateau's problem, Hausdorff
measure, normalized Hausdorff distance.}
\footnotetext{ I would like to thank Guy David for giving me the thinking to
the project, I would also like to thank Ulrich Menne for his constant support 
and help.}
\renewcommand{\thefootnote}{\arabic{footnote}} 

\date{}
\maketitle
\begin{abstract}
	In this paper, we will give a $C^{1,\beta}$-regularity result on the 
	boundary for two dimensional sliding almost minimal sets in $\mathbb{R}^3$. 
	This effect may lead to the existence of a solution to the Plateau problem with
	sliding boundary conditions proposed by Guy David in \cite{David:2014p} in
	the case that the boundary is a 2-dimensional smooth manifold.
\end{abstract}
\section{Introduction}
Jean Taylor, in \cite{Taylor:1976}, proved a celebrated regularity result of
Almgren almost minimal sets, that gives a complete classification of the local
structure of 2-dimensional (almost) minimal sets. This result may apply to many
actual surfaces, soap films are considered as typical examples.  Guy David, in
\cite{David:2008}, gave a new proof of this result and generalized it to any
codimension. That is, every $2$-dimensional almost minimal set, in an open set
$U\subseteq \mathbb{R}^n$ with gauge function $h(t)\leq Ct^{\alpha}$, is local 
$C^{1,\beta}$ equivalent to a $2$-dimensional minimal cone. 

In \cite{David:2014p}, Guy David proposed to consider the Plateau Problem with
sliding boundary conditions, since it is very natural to soap films and Jean
Taylor's regularity also applies for sliding almost minimal sets away from the
boundary, and it also has some advantages to consider the local structure at the 
boundary. Motivated by these, regularity at the boundary would be well worth
our considering. In fact, a result similar to Jean Talyor's will be a 
satisfactory conclusion, for which together with Jean Taylor's theorem will 
imply the local Lipschitz retract property of sliding (almost) minimal sets,
and the existence of minimizers for the sliding Plateau Problem easily
follows. 

One of advantages of the sliding boundary conditions is that we have chance to
determine the possibility of minimal cones in the upper half space $\Omega_0$ 
of $\mathbb{R}^{3}$, where minimal cone is a cone but minimal, and minimal is
understood with sliding on the boundary $\partial \Omega_0$. Indeed, there no
more than seven kinds of cones which are minimal, they are $\partial \Omega_0$, 
cones of type $\mathbb{V}$, cones of type $\mathbb{P}_+$, cones of type 
$\mathbb{Y}_+$, cones of type $\mathbb{T}_+$ and cones $\partial \Omega_0 \cup
Z$ where $Z$ are cones of type $\mathbb{P}_+$ or $\mathbb{Y}_+$, see Section 3
in \cite{Fang:2015} for the precise definition of cones of type $\mathbb{P}_+$, 
$\mathbb{Y}_+$, $\mathbb{T}_+$ and $\mathbb{V}$, and also Remark 3.11 for the
claim. We ascertain that there are only there kinds of cones which are minimal
and contains the boundary $\partial \Omega_0$, they are $\partial \Omega_0$ 
and $\partial \Omega_0 \cup Z$ where $Z$ is cone of type $\mathbb{P}_+$ or
$\mathbb{Y}_+$, see Theorem 3.10 in \cite{Fang:2015} for the statement.

Another advantages of the sliding boundary conditions is that we can easily 
establish a monotony density property at the boundary, see Theorem
\ref{thm:ANDD} for precise statement. In fact, the monotony density property is
not enough, we have estimated the decay of the almost density, and that is
also possible with sliding on the boundary, see Corollary \ref{co:dendecay}. 

In \cite{Fang:2015}, we proved a H\"older regularity of two dimensional
sliding almost minimal set at the boundary. That is, suppose that
$\Omega\subseteq\mathbb{R}^{3}$ is a closed domain with boundary $\partial 
\Omega$ a $C^1$ manifold of dimension 2, $E\subseteq \Omega$ is a 2 dimensional 
sliding almost minimal set with sliding boundary $\partial \Omega$, and that
$\partial\Omega\subseteq E$. Then $E$, at the boundary, is locally biH\"older
equivalent to a sliding minimal cone in the upper half space $\Omega_0$. In 
this paper, we will generalized the biH\"older equivalence to a $C^{1,\beta}$
equivalence when the gauge function $h$ satisfies that $h(t)\leq Ct^{\alpha_1}$
and $\partial \Omega$ is a 2 dimensional $C^{1,\alpha}$ manifold.
Let us refer to Theorem \ref{mainthm} for details. Where the sliding minimal 
cones always contain the boundary $\partial \Omega_0$, namely only there kinds
of cones can appear: $\partial\Omega_0$ and $\partial \Omega_0\cup Z$, where $Z$ 
are cones of type $\mathbb{P}_+$ or $\mathbb{Y}_+$. 

Let us introduce some notation and definitions before state our main theorem.
A gauge function is a nondecreasing function $h:[0,\infty)\to[0,\infty]$ with
$\lim_{t\to 0} h(t)=0$. Let $\Omega$ be a closed domain of $\mathbb{R}^{3}$, 
$L$ be a closed subset in $\mathbb{R}^{3}$, $E\subseteq \Omega$ be a given set.
Let $U\subseteq \mathbb{R}^3$ be an open set. A family of mappings $\{ \varphi_t 
\}_{0\leq t\leq 1}$, from $E$ into $\Omega$, is called a sliding 
deformation of $E$ in $U$, while $\varphi_1(E)$ is called a competitor of
$E$ in $U$, if following properties hold:
\begin{itemize}[topsep=0pt,noitemsep]
\item $\varphi_t(x)=x$ for $x\in E\setminus U$, $\varphi_t(x)\subseteq U$ for
	$x\in E\cap U$, $0\leq t\leq 1$,
\item $\varphi_t(x)\in L$ for $x\in E\cap L$, $0\leq t \leq 1$,
\item the mapping $[0,1]\times E\to \Omega, (t,x)\mapsto \varphi_{t}(x)$ is
	continuous,
\item $\varphi_1$ is Lipschitz and $\psi_0=\id_E$.
\end{itemize}
\begin{definition}
We say that an nonempty set $E\subseteq \Omega$ is locally sliding almost
minimal at $x\in E$ with sliding boundary $L$ and with gauge function $h$, 
called $(\Omega,L,h)$ locally sliding almost at $x\in E$ for short, if $\HM^2\mr E$
is locally finite, and for any sliding deformation $\{ \varphi_t \}_{0\leq
t\leq 1}$ of $E$ in $B(x,r)$, we have that 
\[
	\HM^2(E\cap B(x,r))\leq \HM^2(\varphi_1(E)\cap B(x,r)) +h(r)r^2.
\]

We say that $E$ is sliding almost minimal with sliding boundary $L$ and gauge
function $h$, denote by $SAM(\Omega,L,h)$ the collection of all such sets, if
$E$ is locally sliding almost minimal at all points $x\in E$.
\end{definition}
 
For any $x\in \mathbb{R}^3$, we let $\trans{x}: \mathbb{R}^3\to 
\mathbb{R}^3$ be the translation defined by $\trans{x}(y)=y+x$, and let 
$\scale{r}:\mathbb{R}^{3}\to\mathbb{R}^{3}$
be the mapping defined by $\scale{r}(y)=ry$ for any $r>0$.
For any $S\subseteq \mathbb{R}^3$ and $x\in S$, a blow-up limit of $S$ at
$x$ is any closed set in $\mathbb{R}^3$ that can be obtained as the Hausdorff
limit of a sequence $\scale{1/r_k}\circ\trans{-x}(S)$ with $\lim_{k\to
\infty}r_k=0$. A set $X$ in $\mathbb{R}^3$ is 
called a cone centered at the origin $0$ if for any $\scale{t}(X)=X$ for any
$t\geq 0$; in general, we call a cone $X$ centered at $x$ if $\trans{-x}(X)$ 
is a cone centered at $0$. We denote by $\Tan(S,x)$ the tangent cone of
$S$ at $x$, see Section 2.1 in \cite{Allard:1972}.
We see that if there is unique blow-up limit of $S$ at $x$, then it
coincide with the tangent cone $\Tan(S,x)$. Our main theorem is the following.

\begin{theorem}\label{mainthm}
	Let $\Omega\subseteq\mathbb{R}^{3}$ be a closed set such that the 
	boundary $\partial\Omega$ is a $2$-dimensional manifold of class $C^{1,
	\alpha}$ for some $\alpha>0$ and $\Tan(\Omega,z)$ is a half space for any
	$z\in \partial \Omega$. Let $E\subseteq \Omega$ be a closed set such that
	$E\supseteq \partial \Omega$ and $E$ is a sliding almost minimal set with
	sliding boundary $\partial\Omega$ and with gauge function $h$ satisfying that 
	\[
		h(t)\leq C_h t^{\alpha_1},\ 0<t\leq t_0, \text{ for some }C_h>0,
		\alpha_1>0 \text{ and }t_0>0.
	\]
	Then for any $x_0\in \partial \Omega$, there is unique blow-up limit of
	$E$ at $x_0$; moreover, there exist a radius $r>0$, a sliding minimal cone
	$Z$ in $\Omega_0$ with sliding boundary $\partial\Omega_0$, and a mapping
	$\Phi:\Omega_0\cap B(0,1)\to \Omega$ of class $C^{1,\beta}$, which is a 
	diffeomorphism  between its domain and image, such that $\Phi(0)=x_0$, 
	$|\Phi(x)-x_0-x|\leq 10^{-2}r$ for $x\in B(0,2r)$, and 
	\[
		E\cap B(x_0,r)=\Phi(Z)\cap B(x_0,r).
	\]
\end{theorem}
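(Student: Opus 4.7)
The plan is to upgrade the biHölder equivalence from \cite{Fang:2015} to a $C^{1,\beta}$ equivalence by extracting a quantitative polynomial rate of convergence of blow-ups at $x_0$ and then using it in a scale-by-scale parametrization construction. First I would flatten the boundary near $x_0$: since $\partial\Omega$ is of class $C^{1,\alpha}$ and $\Tan(\Omega,x_0)$ is a half space, I pick a $C^{1,\alpha}$ diffeomorphism $\Psi$ defined near $x_0$ sending $\Omega$ to $\Omega_0$ with $\Psi(x_0)=0$ and $D\Psi(x_0)=\id$. Then $\Psi(E)$ is sliding almost minimal with sliding boundary $\partial\Omega_0$ with a gauge still of the form $\tilde h(t)\leq C t^{\alpha_2}$, because a $C^{1,\alpha}$ change of variables multiplies $\HM^2$-measures by a factor $1+O(r^\alpha)$ on balls $B(0,r)$. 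It therefore suffices to prove the theorem in the half-space case, upgrade the resulting $C^{1,\beta}$ parametrization, and then conjugate back by $\Psi^{-1}$ with possibly smaller $\beta\leq\alpha$.

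Working in $\Omega_0$, the next step is to combine the boundary almost monotonicity (Theorem \ref{thm:ANDD}) with Corollary \ref{co:dendecay} to obtain a polynomial rate $|\theta(x_0,r)-\theta_\infty|\leq C r^{\gamma_0}$ for the spherical density. Together with the classification from \cite{Fang:2015} (the blow-up at a boundary point must be one of $\partial\Omega_0$ or $\partial\Omega_0\cup Z$ with $Z$ of type $\mathbb{P}_+$ or $\mathbb{Y}_+$), a standard compactness plus density-gap argument then shows that at every scale $r$ the rescaled set $\scale{1/r}\circ\trans{-x_0}(E)$ is Hausdorff-close in $B(0,1)$ to some minimal cone $Z_r$ with error going to $0$. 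The rate information, propagated across a dyadic sequence of scales $r_k=2^{-k}r_0$, forces $Z_{r_k}$ and $Z_{r_{k+1}}$ to be close in a quantitative way, so their limit $Z$ is unique and satisfies $d_H(\scale{1/r}\circ\trans{-x_0}(E)\cap B(0,1),Z\cap B(0,1))\leq C r^{\gamma_1}$ for some $\gamma_1>0$. This is the effective uniqueness of the blow-up limit.

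With the polynomial rate in hand, I construct $\Phi$ by a Reifenberg-type iteration of local parametrizations, following the strategy of \cite{David:2008} for the interior but adapted to preserve the sliding boundary. Away from the singular set of $Z$ (and from $\partial\Omega_0$), the set $E$ is a $C^{1,\beta}$ graph over $Z$ by the interior regularity plus the implicit function theorem. Near the $\mathbb{Y}_+$-spine and near $\partial\Omega_0$, I construct, at each scale $r_k$, a local diffeomorphism $\Phi_k$ from a neighborhood of $Z\cap B(0,r_k)$ onto a neighborhood of $E\cap B(x_0,r_k)$ that sends $\partial\Omega_0$ into $\partial\Omega_0$, is $C^{1,\beta}$-close to the identity with error $\leq C r_k^{\gamma_2}$, and agrees with $\Phi_{k+1}$ outside $B(0,r_{k+1})$ after a smooth partition-of-unity gluing. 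The summability of the $C^{1,\beta}$ errors $\sum_k r_k^{\gamma_2}$ produces a limit $\Phi$ of class $C^{1,\beta}$ with $\Phi(Z)\cap B(x_0,r)=E\cap B(x_0,r)$ and $|\Phi(x)-x_0-x|\leq 10^{-2} r$ for small $r$.

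The main obstacle is this last gluing near the stratum where a $\mathbb{Y}_+$-spine meets $\partial\Omega_0$: three sheets of $E$ meet along a curve that must itself lie on $\partial\Omega$, and the parametrization must simultaneously be $C^{1,\beta}$-close to the identity, map $\partial\Omega_0$ into $\partial\Omega$, and be compatible with the biHölder parametrization already known from \cite{Fang:2015}. Solving this amounts to producing boundary-preserving $C^{1,\beta}$ extensions of maps initially defined only on the singular strata, with norm controlled by the rate $r_k^{\gamma_2}$; once this extension lemma is in place, the scale-by-scale summation closes the argument.
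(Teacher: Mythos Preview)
Your outline is broadly on target, but it diverges from the paper in two important ways.

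\medskip

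\textbf{The parametrization step.} You identify as the ``main obstacle'' a direct construction of a boundary-preserving $C^{1,\beta}$ Reifenberg iteration, with gluing near the stratum where a $\mathbb{Y}_+$-spine meets $\partial\Omega_0$. The paper avoids this entirely by a reflection trick (Theorem~\ref{thm:RPWAPS}): after flattening, it reflects $E$ across $L_0=\partial\Omega_0$ to obtain a symmetric set $E_1=E\cup\sigma(E)$ in all of $\mathbb{R}^3$. The half-cones $\mathbb{P}_+$ and $\mathbb{Y}_+$ become full $\mathbb{P}$ and $\mathbb{Y}$ cones, symmetric under $\sigma$, and the uniform scale-by-scale cone approximation transfers to $E_1$. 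Then the \emph{interior} Reifenberg-type parametrization already established in \cite{Fang:2015} and \cite{DDT:2008} applies directly, and the symmetry $\sigma\circ\Phi=\Phi\circ\sigma$ forces $\Phi(L_0)\subseteq L_0$. Your direct approach could in principle work, but the paper's reflection buys the boundary condition for free and reduces to known machinery.

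\medskip

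\textbf{A genuine gap.} Your proposal establishes the polynomial rate $d_H(\scale{1/r}\trans{-x_0}(E)\cap B_1,Z\cap B_1)\leq Cr^{\gamma_1}$ only at the single point $x_0$. But any Reifenberg-type construction needs a uniform power-rate cone approximation at \emph{every} point $x\in E\cap B(x_0,r)$ and at every scale $\rho\in(0,2r)$. This is not automatic: for $x\in\partial\Omega$ near $x_0$ you need to re-run the boundary density decay with constants independent of $x$, and for interior $x$ with $\dist(x,\partial\Omega)$ small compared to $\rho$ you must bridge between the interior decay (valid only for $\rho<\dist(x,\partial\Omega)$) and the boundary decay centred at a nearby point of $\partial\Omega$. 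The paper devotes Lemmas~\ref{le:appmcp}--\ref{le:WAPPM} to exactly this, using Lemma~\ref{le:smalldendecay} to transfer density excess from $x_0$ to nearby points and Lemma~\ref{le:np100} to find a nearby boundary point in $\overline{E\setminus\partial\Omega}$ controlling each interior $x$. Without this uniform-in-$x$ estimate, the scale-by-scale parametrizations $\Phi_k$ you propose cannot be defined, let alone glued.
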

Theorem \ref{mainthm} and Jean Taylor's theorem imply that any set $E$ as in
above theorem is Lipschitz neighborhood retract. This effect gives the
existence of a solution to the Plateau problem with sliding boundary
conditions in a special case, see Theorem \ref{thm:espp}.
\section{Lower bound of the decay for the density}
 In this section, we will consider a simple case that $\Omega$ is a half
 space and $L$ is its boundary; without loss of generality, we assume that
 $\Omega$ is the upper half space, and change the notation to be $\Omega_0$ for
 convenience, i.e.
\[
	\Omega_0=\{(x_1,x_2,x_3)\in \mathbb{R}^{3}\mid x_3\geq 0\}, L_0=\partial \Omega_0.
\]

It is well known that for any 2-rectifiable set $E$, there exists an approximate 
tangent plane $\Tan(E,y)$ of $E$ at $y$ for $\HM^2$-a.e. $y\in E$. We will 
denote by $\theta(y)\in [0,\pi/2]$ the angle between the segment $[0,y]$ and 
the plane $\Tan(E,y)$, by $\theta_x(y)\in [0,\pi/2]$ the angle between the 
segment $[x,y]$ and the plane $\Tan(E,y)$, for $x\in \mathbb{R}^{3}$.

In this section, we assume that there is a number $r_h>0$ such that  
\begin{equation}
\label{eq:AG}
\int_{0}^{r_h}\frac{h(2t)}{t}dt<\infty,
\end{equation}
and put 
\begin{equation}
\label{eq:gauge}
h_1(t)=\int_{0}^{t}\frac{h(2s)}{s}ds ,\text{ for }0\leq t\leq r_h.
\end{equation}
\begin{lemma}\label{le:DE}
	Let $E\subseteq \Omega_0$ be any 2-rectifiable set.
	Then, by putting $u(r)=\HM^2(E\cap B(x,r))$, we have that $u$ is 
	differentiable almost every $r>0$, and for such $r$,
	\begin{equation}\label{eq:DE1}
		\HM^1(E\cap \partial B(x,r))\leq u'(r).
	\end{equation}
\end{lemma}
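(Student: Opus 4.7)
The plan is to deduce the inequality from the coarea formula applied to the distance function from $x$. First I would note that since $u$ is nondecreasing in $r$, it is differentiable at almost every $r > 0$ by the Lebesgue differentiation theorem for monotone functions; the task then reduces to bounding $u'(r)$ from below by $\HM^{1}(E\cap \partial B(x,r))$ at such $r$.

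The workhorse would be the $1$-Lipschitz function $f:\mathbb{R}^{3}\to \mathbb{R}$ defined by $f(y) = |y-x|$. Since $E$ is $\HM^{2}$-rectifiable, the coarea formula for rectifiable sets applies to $f|_{E}$: for any $0 \leq a < b$,
\[
\int_{a}^{b} \HM^{1}(E \cap f^{-1}(s)) \, ds = \int_{E \cap f^{-1}([a,b])} J_{1}^{E} f \, \ud\HM^{2}.
\]
Because $|\nabla f|\equiv 1$, the tangential Jacobian $J_{1}^{E} f$ with respect to the approximate tangent plane $\Tan(E,y)$ is at most $1$ for $\HM^{2}$-a.e.\ $y\in E$, and $f^{-1}([a,b]) \subseteq \overline{B}(x,b)\setminus B(x,a)$. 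Combining these two facts I obtain
\[
\int_{a}^{b} \HM^{1}(E \cap \partial B(x,s)) \, ds \leq u(b) - u(a).
\]

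I would then pick any $r > 0$ at which $u$ is differentiable and which is simultaneously a Lebesgue point of the nonnegative locally integrable function $s \mapsto \HM^{1}(E\cap \partial B(x,s))$; almost every $r$ satisfies both conditions. Specializing the inequality above to $a = r$, $b = r+\varepsilon$, dividing by $\varepsilon$, and letting $\varepsilon \to 0^{+}$ produces
\[
\HM^{1}(E\cap \partial B(x,r)) = \lim_{\varepsilon \to 0^{+}} \frac{1}{\varepsilon} \int_{r}^{r+\varepsilon} \HM^{1}(E\cap \partial B(x,s))\, ds \leq u'(r),
\]
which is exactly \eqref{eq:DE1}.

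There is no real obstacle here: the argument is mechanical once one identifies the right coarea formula. The only points that merit care are invoking coarea on a set that is only $\HM^{2}$-rectifiable (rather than a smooth submanifold), and keeping track of the inequality $J_{1}^{E} f \leq 1$ rather than equality, the defect being genuine wherever $\Tan(E,y)$ is not tangent to the sphere through $y$ centered at $x$.
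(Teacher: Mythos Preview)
Your proof is correct and follows essentially the same route as the paper: both apply the coarea formula (Federer, Theorem~3.2.22) to the distance function $y\mapsto|y-x|$, bound the tangential Jacobian by~$1$ (the paper computes it explicitly as $\cos\theta_x(y)$), and then differentiate the resulting integral inequality. One harmless slip in your closing parenthetical: the Jacobian equals~$1$ exactly when the radial direction $y-x$ lies in $\Tan(E,y)$, so the defect is genuine whenever $\Tan(E,y)$ does \emph{not} contain $y-x$ --- in particular it is maximal, not zero, when $\Tan(E,y)$ is tangent to the sphere.
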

\begin{proof}
	Considering the function $\psi:\mathbb{R}^{3}\to \mathbb{R}$ defined by
	$\psi(y)=|y-x|$, we have that, for any $y\neq x$ and $v\in \mathbb{R}^{3}$, 
	\[
		D\psi(y)v=\left\langle \frac{y-x}{|y-x|},v \right\rangle,
	\]
	thus
	\begin{equation}\label{eq:DE10}
		\ap J_1(\psi\vert_{E})(y)=\sup\{ |D\psi(y)v|: v\in \Tan(E,x),|v|=1 \}=\cos
		\theta_x(y).
	\end{equation}

	Employing Theorem 3.2.22 in \cite{Federer:1969}, we have that, for any
	$0<r<R<\infty$,
	\[
		\int_{r}^{R}\HM^1(E\cap \partial B(x,t))dt=\int_{E\cap
		B(x,R)\setminus B(x,r)}\cos_x(y)d\HM^2(y)\leq u(R)-u(r),
	\]
	we get so that, for almost every $r\in (0,\infty)$,
	\[
		\HM^1(E\cap \partial B(x,t))\leq u'(r).
	\]
\end{proof}
\begin{lemma}\label{le:DI}
	Let $E$ be a 2-rectifiable $(\Omega_0,L_0,h)$ locally sliding almost 
	minimal at $x\in E$. 
	\begin{itemize}[topsep=0pt,itemsep=0pt]
		\item If $x\in E\cap L_0$, then for $\HM^1$-a.e. 
			$r\in (0,\infty)$,
			\begin{equation}\label{eq:DI1}
				\HM^2(E\cap B(x,r))\leq \frac{r}{2}\HM^1(E\cap \partial
				B(x,r))+h(2r)(2r)^2.
			\end{equation}
		\item If $x\in E\setminus L_0$, then inequality \eqref{eq:DI1} holds for
			$\HM^1$-a.e. $r\in (0,\dist(x,L_0))$.
	\end{itemize}
\end{lemma}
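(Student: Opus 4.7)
The proof follows the classical cone-comparison strategy. The key idea is to exploit sliding almost-minimality by comparing $E$ with the cone
\[
C = \{x + t(z - x) : t \in [0, 1],\ z \in E \cap \partial B(x, r)\}
\]
from $x$ over the $1$-dimensional slice $E \cap \partial B(x,r)$. A direct computation in polar coordinates (area formula) gives $\HM^2(C) = \tfrac{r}{2}\HM^1(E\cap \partial B(x,r))$: for a regular point $z$ of $E \cap \partial B(x,r)$ with unit tangent $\tau(z)$, one has $\tau(z) \perp (z-x)$, so the parametrization $(t,z) \mapsto x + t(z-x)$ has Jacobian $r\,t$, which integrates over $[0,1]$ to $r/2$. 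I would restrict attention to the full $\HM^1$-measure set of radii $r$ for which the slice $E \cap \partial B(x,r)$ is $1$-rectifiable with finite $\HM^1$-measure; such radii exist by the coarea argument underlying Lemma \ref{le:DE}.

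The technical step is to realize the replacement of $E \cap B(x,r)$ by $C$ as a genuine sliding deformation. I would construct $\{\varphi_t\}_{0 \le t \le 1}$ with support in $\overline{B}(x,2r)$ so that $\varphi_1$ agrees with the identity outside $B(x,r)$, is Lipschitz on $E$, and sends $E \cap B(x,r)$ into $C$ up to $\HM^2$-negligible sets. Explicitly, $\varphi_1$ can be taken as a Lipschitz radial retraction of $B(x,r)$ onto the ``skeleton'' $C \cup \partial B(x,r)$, with a small-scale truncation near the center $x$ that exploits the locally finite $\HM^2$-measure of $E$ so as to send the truncation parameter to $0$ in the final bound. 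The sliding boundary condition then splits into two cases: in case (i), $x \in L_0 = \partial\Omega_0$ is a plane through $x$, so every radial map centered at $x$ preserves $L_0$; in case (ii), $B(x,r) \cap L_0 = \emptyset$ because $r < \dist(x,L_0)$, and the deformation, being trivial outside $B(x,r)$, leaves $E \cap L_0$ fixed.

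Applying sliding almost-minimality at scale $2r$ (which absorbs any minor expansion of the deformation's support across $\partial B(x,r)$) then yields
\[
\HM^2(E \cap B(x,r)) \le \HM^2(\varphi_1(E) \cap B(x,r)) + h(2r)(2r)^2 \le \HM^2(C) + h(2r)(2r)^2,
\]
which is the claimed inequality \eqref{eq:DI1}. The main obstacle I expect is the construction of the Lipschitz deformation itself: the naive radial retraction onto $C$ degenerates at $x$, and the standard remedy is a truncation-and-limit argument combined with a smooth interpolation through a thin annular layer around $\partial B(x,r)$, chosen so that the resulting map is both Lipschitz on $E$ and respects the boundary $L_0$.
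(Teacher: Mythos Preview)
Your proposal is correct and follows essentially the same cone-comparison strategy as the paper. The paper's concrete realization of the truncation-and-limit step you anticipate is the radial map $\phi(y)=x+f(|y-x|)\tfrac{y-x}{|y-x|}$ with $f\equiv 0$ on $[0,r-\xi]$, linear on $[r-\xi,r]$, and $f(t)=t$ for $t\ge r$ (so $E\cap B(x,r-\xi)$ is crushed to $x$ and the thin annulus is stretched), after which the limit $\xi\to 0^+$ together with Lebesgue differentiation of $t\mapsto \HM^1(E\cap\partial B(x,t))$ produces exactly $\tfrac{r}{2}\HM^1(E\cap\partial B(x,r))$.
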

\begin{proof}
	If $\HM^2(E\cap \partial B(x,r))>0$, then $\HM^1(E\cap \partial
	B(x,r))=\infty$, and nothing need to do. We assume so that $\HM^2(E\cap \partial
	B(x,r))=0$.

	Let $f:[0,\infty)\to [0,\infty)$ be any Lipschitz function, we let 
	$\phi:\Omega_0\to\Omega_0$ be defined by 
	\[
		\phi(y)=f(|y-x|)\frac{y-x}{|y-x|}.
	\]
	Then, for any $y\neq x$ and any $v\in \mathbb{R}^{3}$, by putting
	$\tilde{y}=y-x$, we have that 
	\begin{equation}\label{eq:Diff}
		D\phi(y)v=\frac{f(|\tilde{y}|)}{|\tilde{y}|}v+\frac{|\tilde{y}|f'(|\tilde{y}|)-f(|\tilde{y}|)}{|\tilde{y}|^{2}}
		\left\langle \frac{\tilde{y}}{|\tilde{y}|},v \right\rangle \tilde{y}
	\end{equation}

	If the tangent plane $\Tan^{2}(E,y)$ of $E$ at $y$ exists, we take 
	$v_1,v_2\in \Tan^{2}(E,y)$ such that $|v_1|=|v_2|=1$, $v_1$ is perpendicular
	to $y=x$, and that $v_2$ is perpendicular to $v_1$, let $v_3$ be a vector in
	$\mathbb{R}^{3}$ which is perpendicular to $\Tan^{2}(E,y)$ and $|v_3|=1$, then 
	\[
		\tilde{y}=\langle \tilde{y},v_2 \rangle v_2+\langle \tilde{y},v_3 \rangle
		v_3=|\tilde{y}|\cos \theta_x(y)
		v_2+|\tilde{y}|\sin \theta_x(y) v_3,
	\]
	and
	\[
		D\phi(y)v_1\wedge D\phi(y)v_2= \frac{f(|\tilde{y}|)^2}{|\tilde{y}|^2}v_1\wedge
		v_2+\frac{|\tilde{y}|f'(|\tilde{y}|)f(|\tilde{y}|)
		-f(|\tilde{y}|)^2}{|\tilde{y}|^3}\cos \theta_x(y) v_1\wedge \tilde{y},
	\]
	thus
	\begin{equation}\label{eq:Jacobian}
		\begin{aligned}
			\ap J_{2}(\phi\vert_{E})(y)	&=\|D\phi(y)v_1\wedge D\phi(y)v_2\|\\
			&=\frac{f(|\tilde{y}|)}{|\tilde{y}|}\left(f'(|\tilde{y}|)^2 \cos^2
			\theta_x(y)+\frac{f(|\tilde{y}|)^2}{|\tilde{y}|^2}\sin^2\theta_x(y)
			\right)^{1/2}.
		\end{aligned}
	\end{equation}

	We consider the function $\psi:\mathbb{R}^{3}\to \mathbb{R}$ defined by
	$\psi(y)=|y-x|$. Then, by \eqref{eq:DE10}, we have that 
	\[
		\ap J_{1}(\psi\vert_{E})(y)=\cos\theta_x(y).
	\]

	For any $\xi\in (0,r/2)$, we consider the function $f$ defined by 
	\[
		f(t)=\begin{cases}
			0,& 0\leq t\leq r-\xi\\
			\frac{r}{\xi}(t-r+\xi),& r-\xi< t\leq r\\
			t,&t>r.
		\end{cases}
	\]
	Then we have that 
	\[
		\ap J_{2}(\phi\vert_{E})(y)\leq
		\frac{f(|\tilde{y}|)f'(|\tilde{y}|)}{|\tilde{y}|}\cos\theta_x
		(y)+\frac{f(|\tilde{y}|)^2}{|\tilde{y}|^2}\sin\theta_x(y).
	\]
	Applying Theorem 3.2.22 in \cite{Federer:1969}, by putting 
	$A_{\xi}=E\cap B(0,r)\setminus B(0,r-\xi)$, we get that 
	\[
		\begin{aligned}
			\HM^2(\phi(E\cap B(0,r)))&\leq
			\int_{A_{\xi}}\frac{r^2}{\xi^2}\cdot 
			\frac{|\tilde{y}|-r+\xi}{|\tilde{y}|}\cos\theta_x(y)d\HM^2(y)
			+\frac{r^2}{(r-\xi)^2}\HM^2(A_{\xi})\\
			&=\int_{r-\xi}^{r}\frac{r^2(t-r+\xi)}{\xi^2 t}\HM^1(E\cap \partial
			B(x,t))dt+4\HM^2(A_{\xi}),
		\end{aligned}
	\]
	thus 
	\[
		\HM^2(E\cap B(0,r))\leq (2r)^2h(2r)+\lim_{\xi\to 0+}r^2
		\int_{r-\xi}^r\frac{t-r+\xi}{t\xi^2}\HM^1(E\cap \partial B(x,t))dt.
	\]
	Since the function $g(t)=\HM^1(E\cap B(x,t))/t$ is a measurable function, we
	have that, for almost every $r$,
	\[
		\lim_{\xi\to
		0+}\int_{0}^{\xi}\frac{tg(t-r+\xi)}{\xi^2}dt=\frac{1}{2}g(r),
	\]
	thus for such $r$,
	\[
		\HM^2(E\cap B(x,r))\leq (2r)^2h(2r)+\frac{r}{2}\HM^1(E\cap B(x,r)).
	\]
\end{proof}

For any set $E\subseteq \mathbb{R}^3$, we set 
\[
	\Theta_E(x,r)=r^{-2}\HM^2(E\cap B(x,r)),\ \mbox{for any } r>0,
\]
and denote by $\Theta_E(x)=\lim_{r\to 0+}\Theta_E(x,r)$ if the limit exist, we
may drop the script $E$ if there is no danger of confusion.
\begin{theorem}\label{thm:ANDD}
	Let $E$ be a 2-rectifiable $(\Omega_0,L_0,h)$ locally sliding almost 
	minimal at $x\in E$. 
	\begin{itemize}[nolistsep]
		\item If $x\in L_0$, then $\Theta(x,r)+8 h_1(r)$ 
			is nondecreasing as  $r\in (0,r_h)$.
		\item If $x\not\in L_0$, then
			$\Theta(x,r)+8 h_1(r)$ is nondecreasing as $ r\in (0,\min\{r_h,\dist(x,L)\})$.
	\end{itemize}

\end{theorem}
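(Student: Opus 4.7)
The plan is to combine Lemmas~\ref{le:DE} and~\ref{le:DI} into a pointwise differential inequality for $u(r)=\HM^2(E\cap B(x,r))$, and then integrate to obtain the claimed almost-monotonicity. Since $E$ is $2$-rectifiable, the approximate tangent plane $\Tan(E,y)$ exists for $\HM^2$-a.e.\ $y\in E$, so both lemmas apply. Substituting \eqref{eq:DE1} into \eqref{eq:DI1} yields, for almost every $r$ in the appropriate interval,
\[
u(r)\;\leq\;\tfrac{r}{2}\,u'(r)+4r^{2}h(2r),
\]
which rearranges to $ru'(r)-2u(r)\ge -8r^{2}h(2r)$. Dividing by $r^{3}$, this is exactly
\[
\Bigl(\frac{u(r)}{r^{2}}\Bigr)'\;\ge\;-\frac{8\,h(2r)}{r}\;=\;-8\,h_{1}'(r)
\qquad\text{at a.e.\ }r,
\]
so the a.e.\ derivative of $\Theta(x,r)+8h_{1}(r)$ is already nonnegative.

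To promote this to honest monotonicity, I would use that $u$ is nondecreasing, so its distributional derivative is a nonnegative Borel measure $du$ on $(0,\infty)$ whose absolutely continuous part is $u'(r)\,dr$. For $r_{1}<r_{2}$ in the relevant interval, an integration by parts against the smooth weight $t\mapsto t^{-2}$ gives
\[
\frac{u(r_{2})}{r_{2}^{2}}-\frac{u(r_{1})}{r_{1}^{2}}\;=\;\int_{r_{1}}^{r_{2}}\frac{du(t)}{t^{2}}-\int_{r_{1}}^{r_{2}}\frac{2\,u(t)}{t^{3}}\,dt\;\ge\;\int_{r_{1}}^{r_{2}}\frac{t\,u'(t)-2u(t)}{t^{3}}\,dt\;\ge\;-8\int_{r_{1}}^{r_{2}}\frac{h(2t)}{t}\,dt,
\]
where the first inequality drops the nonnegative singular part of $du$ and the second uses the pointwise bound above. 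The last integral is precisely $h_{1}(r_{2})-h_{1}(r_{1})$, which delivers the monotonicity of $\Theta(x,r)+8h_{1}(r)$. The interior and boundary cases of the theorem differ only in the interval on which the input inequality from Lemma~\ref{le:DI} is valid, i.e.\ $(0,r_{h})$ versus $(0,\min\{r_{h},\dist(x,L_{0})\})$.

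The main obstacle here is conceptual rather than computational: an a.e.\ lower bound on the derivative of a general function does not imply monotonicity, as the Cantor staircase shows. What saves the argument is the monotonicity of $u$ itself, which forces the singular component of $du$ to act in the favorable direction; writing the increment of $\Theta$ as a Stieltjes integral against the fixed smooth kernel $1/t^{2}$ makes this visible and lets us close the argument with no absolute-continuity hypothesis on $u$. Once that point is dispatched, the rest is routine bookkeeping around the ranges of $r$ in the two cases.
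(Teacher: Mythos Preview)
Your proof is correct and follows the same approach as the paper: combine Lemmas~\ref{le:DE} and~\ref{le:DI} to obtain the a.e.\ differential inequality $u(r)\le \tfrac{r}{2}u'(r)+4r^2h(2r)$, then deduce that $r^{-2}u(r)+8h_1(r)$ is nondecreasing. The paper's proof stops at the pointwise inequality $v'(r)\ge -8h(2r)/r$ and asserts monotonicity directly, whereas you supply the missing justification via the Stieltjes decomposition of $du$ and the nonnegativity of its singular part; this extra care is warranted (and the paper itself tacitly uses the same principle later, e.g.\ in~\eqref{eq:refd}).
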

\begin{proof}
	From Lemma \ref{le:DI} and Lemma \ref{le:DE}, by putting $u(r)=\HM^2(E\cap
	B(x,r))$, we get that, if $x\in L$, 
	\begin{equation}\label{eq:ANDD2}
		u(r)\leq \frac{r}{2}u'(r)+h(2r)(2r)^2,
	\end{equation}
	for almost every $r\in (0,\infty)$; if $x\not \in L$, then \eqref{eq:ANDD2}
	holds for almost every $r\in (0,\min\{r_h,\dist(x,L)\})$. 

	We put $v(r)=r^{-2}u(r)$, then $v'(r)\geq -8r^{-2}h(2r)$, we get that 
	$	\Theta(x,r)+8 h_1(r)$ is nondecreasing.
\end{proof}
\begin{remark}
	Let $E$ be a 2-rectifiable $(\Omega_0,L_0,h)$ locally sliding almost 
	minimal at some point $x\in E$. Then by Theorem \ref{thm:ANDD}, we get
	that $\Theta_E(x)$ exists. 
\end{remark}
\section{Estimation of upper bound}
Let $\mathcal{Z}$ be a collection of cones. We say that a set $E\subseteq \mathbb{R}^{3}$ is locally $C^{k,\alpha}$-equivalent (resp. $C^{k}$-equivalent)
to a cone in $\mathcal{Z}$ at $x\in E$ for some nonnegative integer $k$ and 
some number $\alpha\in (0,1]$, if there exist $\varrho_0>0$ and $\tau_0>0$ 
such that for any $\tau\in (0,\tau_0)$ there is $\varrho\in (0,\varrho_0)$, a
cone $Z\in \mathcal{Z}$ and a mapping $\Phi:B(0,2\varrho)\to\mathbb{R}^{3}$,
which is a homeomorphism of class $C^{k,\alpha}$ (resp. $C^k$) between 
$B(0,2\varrho)$ and its image $\Phi(B(0,2\varrho))$ with $\Phi(0)=x$, satisfying that
\begin{equation}\label{eq:apps}
\| \Phi-\id-\Phi(0) \|_{\infty}\leq \varrho\tau
\end{equation}
and
\begin{equation}\label{eq:imcone}
	E\cap B(x,\varrho)\subseteq \Phi\left(Z\cap B\left( 0,2\varrho
	\right)\right)\subseteq E\cap B(x,3\varrho).
\end{equation}
Similarly, if $\Omega\subseteq\mathbb{R}^{3}$ is a closed set with the boundary
$\partial\Omega$ is a 2-dimensional manifold, a set $E\subseteq \Omega$
is called locally $C^{k,\alpha}$-equivalent to a sliding minimal cone $Z$ in
$\Omega_0$ at $x\in E\cap \partial\Omega$, if there exist $\varrho_0>0$
and $\tau_0>0$ such that for any $\tau\in (0,\tau_0)$ there is $\varrho
\in (0,\varrho_0)$ and a mapping $\Phi: B(0,2\varrho)\cap\Omega_0\to\Omega$, 
which is a diffeomorphism of class $C^{k,\alpha}$ between its domain and image
with $\Phi(0)=x$ satisfying that $\Phi(L_0\cap B(0,2\varrho))\subseteq 
\partial\Omega$ and \eqref{eq:apps} and \eqref{eq:imcone}. 

Suppose that $\Omega\subseteq \mathbb{R}^3$ is closed set with the boundary
$\partial \Omega$ is a $2$-dimensional $C^1$ manifold. Suppose that $E\subseteq 
\Omega$ is sliding almost minimal with sliding boundary $\partial \Omega$ and
gauge function $h$. Then, by putting $U=\Omega\setminus \partial \Omega$, we 
see that $E\cap U$ is almost minimal in $U$, applying Jean Taylor's theorem, 
$E$ is locally $C^{1,\beta}$-equivalent to a minimal cone at each point $x\in
E\cap U$ for some $\beta>0$ in case $h(r)\leq cr^{\alpha}$ for some $c>0$,
$\alpha>0$, $r_0>0$ and $0<r<r_0$.  We see from \cite[Theorem 6.1]{Fang:2015} 
that, at $x\in E\cap \partial\Omega$, $E$ is locally $C^{0,\beta}$-equivalent 
to a sliding minimal cone in $\Omega_0$ in case the gauge function $h$ 
satisfying \eqref{eq:AG}.

\subsection{Approximation of $E\cap\partial B(0,r)$ by rectifiable curves}
For any sets $X,Y\subseteq\mathbb{R}^{3}$, any $z\in\mathbb{R}^{3}$ and any
$r>0$, we denote by $d_{z,r}$ the normalized local Hausdorff distance defined
by
\[
	d_{z,r}(X,Y)=\frac{1}{r}\sup\{\dist(x,Y):x\in X\cap B(z,r)\}+
	\frac{1}{r}\sup\{\dist(y,X):y\in Y\cap B(z,r)\}.
\]
A cone in $\mathbb{R}^{3}$ is called of type $\mathbb{Y}$ if it is the union 
of three half planes with common boundary line and that make $120^{\circ}$ 
angles along the boundary line.
A cone $Z\subseteq \Omega_0$ is called of type $\mathbb{P}_+$ is if it is a half
plane perpendicular to $L_0$; a cone $Z\subseteq \Omega_0$ is called of type
$\mathbb{Y}_+$ is if $Z=\Omega_0\cap Y$, where $Y$ is a cone of type 
$\mathbb{Y}$ perpendicular to $L_0$; for convenient, we will also use the 
notation $\mathbb{P}_+$, to denote the collection of all of cones of
type $\mathbb{P}_+$, and $\mathbb{Y}_{+}$ to denote the collection of all
of cones of type $\mathbb{Y}_{+}$.

For any set $E\subseteq\Omega_0$ with $0\in E$, and any $r>0$, we set 
\begin{equation}\label{eq:ePY}
	\begin{gathered}
		\varepsilon_{P}(r)=\inf\{ d_{0,r}(E,Z): Z \in \mathbb{P}_{+}\},\\
		\varepsilon_{Y}(r)=\inf\{ d_{0,r}(E,Z): Z \in \mathbb{Y}_{+}\}.
	\end{gathered}
\end{equation}
If $E$ is 2-rectifiable and $\HM^2(E)<\infty$, then $E\cap \partial B(0,r)$ is
1-rectifiable and $\HM^1(E\cap \partial B(0,r))<\infty$ 
for $\HM^1$-a.e. $r\in (0,\infty)$; we consider the function $u:(0,\infty)\to
\mathbb{R}$ which is defined by $u(r)=\HM^2(E\cap B(0,r))$, it is quite easy
to see that $u$ is nondecreasing, thus $u$ is differentiable for $\HM^1$-a.e.; 
we will denote by $\mathscr{R}$ the set $r\in (0,\infty)$ such that
\begin{equation}\label{eq:R1}
	\HM^1(E\cap \partial B(0,r))<\infty,\ u \text{ is differentiable at }r,
\end{equation}
\begin{equation}\label{eq:R2}
	\lim_{\xi\to 0+}\frac{1}{\xi}\int_{t\in (r-\xi,r)}\int_{E\cap \partial
	B(0,t)}f(z)d\HM^1(z)dt=\int_{E\cap \partial B(0,r)}f(z)d\HM^1(z),		
\end{equation}
and 
\begin{equation}\label{eq:R3}
	\sup_{\xi>0}\frac{1}{\xi}\int_{t\in (r-\xi,r)}\HM^1(E\cap \partial
	B(0,t))dt<+\infty.
\end{equation}
It is not hard to see that $\HM^1((0,\infty)\setminus \mathscr{R})=0$, see for
example Lemma 4.12 in \cite{David:2008}.
\begin{lemma}\label{le:FHMCIPC}
	Let $E\subseteq\mathbb{R}^{3}$ be a connected set. If $\HM^1(E)<\infty$, then
	$E$ is path connected.
\end{lemma}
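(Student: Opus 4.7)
The plan is, for any two points $a, b \in E$, to exhibit a continuous path inside $E$ joining them. First I would reduce to the compact case. Since $E$ is connected, the elementary inequality $\diam(E)\le \HM^1(E)$---proved by orthogonally projecting $E$ onto the line through two diametrically opposite points and noting that the $1$-Lipschitz image is connected and contains a segment of length $\diam(E)$---shows that $E$ is bounded; then $\bar{E}$ is compact, and one checks $\HM^1(\bar{E})=\HM^1(E)$, so nothing is lost by working with $\bar{E}$. (In the typical application the set $E$ is already closed, in which case this reduction is unnecessary; for the general case, one recovers a path inside $E$ at the end by using that $\HM^1(\bar{E}\setminus E)=0$.)

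Second, I would construct an arc from $a$ to $b$ via an irreducibility argument. Consider the family $\mathcal{F}$ of closed connected subsets of $\bar{E}$ containing both $a$ and $b$, partially ordered by reverse inclusion. The intersection of any decreasing chain of continua in a compact metric space is again a continuum, so Zorn's lemma furnishes a minimal element $K\in\mathcal{F}$, a continuum irreducible between $a$ and $b$. The main claim is that $K$ is a simple arc: no triod (three arcs meeting at a common point) can sit inside $K$, for otherwise one could excise one of the arms and still have a strictly smaller subcontinuum of $K$ joining $a$ and $b$, contradicting the minimality of $K$. Combined with the local connectedness of $K$ established in the next step, classical results on Peano continua identify $K$ with $[0,1]$, and the resulting parametrization yields the desired path.

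The main obstacle is establishing local connectedness of the minimal continuum $K$. The natural route is: for $x\in K$ and small $r>0$, apply the bound $\diam(\cdot)\le \HM^1(\cdot)$ to the components of $K\cap \overline{B}(x,r)$ to control their diameters, then combine this with an excision argument—removing any component of $K\cap\overline{B}(x,r)$ not containing $x$ and not meeting $\{a,b\}$ would yield a proper subcontinuum of $K$ still joining $a$ and $b$, contradicting irreducibility—so that the component through $x$ absorbs essentially all of $K\cap\overline{B}(x,r)$ for $r$ small. Once $K$ is locally connected it is a Peano continuum; and a Peano continuum irreducible between two points is necessarily homeomorphic to a closed interval, which closes the argument. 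The delicate point throughout is trading the quantitative information provided by $\HM^1(K)<\infty$ against the purely topological notions of connectedness and irreducibility; this is where the Gołąb-type diameter inequality and the combinatorial structure forced by irreducibility must be used in tandem.
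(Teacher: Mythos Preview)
The paper does not give its own proof; it cites Lemma~3.12 of Falconer and omits the argument. Your route via an irreducible subcontinuum is a legitimate alternative to Falconer's direct chain construction, and the overall architecture is sound, but the local-connectedness step contains a real gap. You assert that removing a component $C$ of $K\cap\overline{B}(x,r)$ not meeting $\{x,a,b\}$ yields a proper subcontinuum of $K$ still joining $a$ and $b$; this is not justified, because $K\setminus C$ need neither be closed (the component $C$ is closed in $K$ but typically not open) nor connected (every such $C$ in fact meets $\partial B(x,r)$, by connectedness of $K$, and may well bridge pieces of $K$ lying outside the ball). Irreducibility of $K$ does not by itself rescue the excision.

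The repair is standard and, in fact, makes irreducibility unnecessary at this stage: if a continuum $K$ with $\HM^1(K)<\infty$ failed to be locally connected at some $x$, one would find, for some $r>0$, infinitely many distinct components $D_i$ of $K\cap\overline{B}(x,r)$ each meeting both $B(x,r/2)$ (from the failure of local connectedness) and $\partial B(x,r)$ (from connectedness of $K$); each $D_i$ is then a continuum of diameter at least $r/2$, so $\HM^1(D_i)\ge r/2$ by your diameter inequality, and summing gives $\HM^1(K)=\infty$, a contradiction. With local connectedness in hand, $K$ is a Peano continuum, hence arcwise connected, and irreducibility then forces $K$ to coincide with any arc in it from $a$ to $b$. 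Your side claim that $\HM^1(\bar E\setminus E)=0$ for general non-closed $E$ is also unjustified as stated, but every application of the lemma in the paper is to closed sets, so this point is harmless here.
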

For a proof, see for example Lemma 3.12 in \cite{Falconer:1986}, so we omit it
here.
\begin{lemma}\label{le:CoSe}
	Let $\mathbb{X}$ be a locally connected and simply connected compact metric space.
	Let $A$ and $B$ be two connected subsets of $\mathbb{X}$. If $F$ is a closed
	subset of $\mathbb{X}$ such that $A$ and $B$ are contained in two different
	connected components of $\mathbb{X}\setminus F$, then there exists a 
	connected closed set $F_0\subseteq F$ such that $A$ and $B$ still lie in two
	different connected components of $\mathbb{X}\setminus F_0$. 
\end{lemma}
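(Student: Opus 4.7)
My plan is to combine a Zorn's lemma reduction with a null-homotopy argument in the disk. First, I would extract from $F$ a closed subset $F_0\subseteq F$ which still separates $A$ from $B$ and is \emph{minimal} with this property; then I would show that any such minimal separator must be connected. To set up Zorn, I consider the family $\mathcal{S}$ of closed subsets of $F$ that separate $A$ from $B$, ordered by reverse inclusion, and check that every descending chain $\{F_\alpha\}$ has upper bound $F_*=\bigcap_\alpha F_\alpha$. If $F_*$ failed to separate, then $A\cup B$ would lie in one connected component $C$ of the open set $\mathbb{X}\setminus F_*$; since $\mathbb{X}$ is a Peano continuum (connected, compact, locally connected, metric), open connected subsets of $\mathbb{X}$ are path connected, so a path in $C$ from a point of $A$ to a point of $B$ has compact image missing $F_*$, and hence missing some $F_{\alpha_0}$ from the chain by a nested-compact argument, contradicting that $F_{\alpha_0}$ separates.

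Next, I would argue by contradiction that the minimal $F_0$ is connected. Assume $F_0=F_1\sqcup F_2$ with $F_1,F_2$ disjoint nonempty closed. By minimality, neither $F_1$ nor $F_2$ alone separates $A$ from $B$, so there are paths $\gamma_1:[0,1]\to\mathbb{X}\setminus F_1$ and $\gamma_2:[0,1]\to\mathbb{X}\setminus F_2$ joining a fixed $a\in A$ to a fixed $b\in B$ (common endpoints are available because the relevant components of $\mathbb{X}\setminus F_i$ are open connected subsets of a Peano continuum, hence path connected). Simple connectivity of $\mathbb{X}$ allows me to extend the loop $\gamma_1\cdot\overline{\gamma_2}$ to a continuous map $H:D^2\to\mathbb{X}$. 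Setting $G_i=H^{-1}(F_i)$, these are disjoint closed subsets of $D^2$, and if $p,q\in\partial D^2$ are the preimages of $a,b$ dividing $\partial D^2$ into the arcs carrying $\gamma_1$ and $\overline{\gamma_2}$, then $G_1$ avoids the first arc and $G_2$ avoids the second, so neither $G_i$ separates $p$ from $q$ in $D^2$.

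The main obstacle is the final planar step: I need to conclude, via a Janiszewski-type result in $D^2$ (or in $S^2$ after one-point compactification), that two disjoint closed subsets of $D^2$ neither of which separates two boundary points $p,q$ cannot jointly separate them. Granted this, a path $\beta$ in $D^2\setminus(G_1\cup G_2)$ from $p$ to $q$ produces a path $H\circ\beta$ in $\mathbb{X}\setminus F_0$ from $a$ to $b$, contradicting that $F_0$ separates $A$ from $B$ and forcing $F_0$ to be connected. This Janiszewski-type ingredient is where the planar topology really has to do its work and is the only nonformal step; its standard proof runs through Alexander duality on $S^2$, exploiting $\check{H}^1(S^2)=0$ to relate disconnection of the complement to disconnection of the compact set. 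The remaining pieces — producing the paths with common endpoints, checking that the boundary arcs lie in the correct complements, and assembling the final contradiction — are routine once the overall architecture is in place.
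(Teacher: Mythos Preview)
Your argument is correct. The paper does not actually prove this lemma: it simply cites Kuratowski, \emph{Topologie}, \S52.III.1, and omits the argument entirely. Your Zorn step is sound --- a compact arc $K$ missing $\bigcap_\alpha F_\alpha$ must miss some $F_{\alpha_0}$ by the finite intersection property applied to the nested compacta $K\cap F_\alpha$ --- and the second half, filling the loop $\gamma_1\cdot\overline{\gamma_2}$ by a disk and invoking Janiszewski on the disjoint preimages $G_1,G_2$, is exactly the classical mechanism. The only point worth adding is that Janiszewski's theorem is usually stated on $S^2$ rather than $D^2$; you can transfer it by doubling $D^2$ across $\partial D^2$ to obtain $S^2$, applying Janiszewski there to the symmetrized sets $G_i\cup\sigma(G_i)$, and folding back, since the fold map carries a path avoiding the symmetrized sets to a path in $D^2$ avoiding $G_1\cup G_2$. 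Your route is in fact close kin to the classical one referenced by the paper: the lemma is commonly packaged as the statement that simply connected Peano continua are \emph{unicoherent}, and in a unicoherent Peano continuum every closed set separating two connected sets contains a connected closed subset that still separates them; the proof that simple connectivity implies unicoherence runs through precisely the null-homotopy-plus-planar-separation device you describe.
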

\begin{proof}
	See for example 52.\uppercase\expandafter{\romannumeral3}.1 on page 335 in
	\cite{Kuratowski:1992}, so we omit the proof here.
\end{proof}
For any $r>0$, we put $\mathfrak{Z}_r=(0,0,r)\in \mathbb{R}^{3}$.
\begin{lemma}\label{le:1CuSp}
	Let $E\subseteq \Omega_0$ be a $2$-rectifiable set with $\HM^2(E)<\infty$. 
	Suppose that $0\in E$, and that $E$ is locally $C^0$-equivalent to a sliding
	minimal cone of type $\mathbb{P}_+$ at $0$. Then for any $\tau\in (0,\tau_0)$
	there exist $\mathfrak{r}=\mathfrak{r}(\tau)>0$ such that, for any $r\in 
	(0,\mathfrak{r})$ and $\varepsilon>\varepsilon_{P}(r)$, we can find
	$y_r\in E\cap \partial B(0,r)\setminus L$ , $\mathfrak{X}_{r,1},
	\mathfrak{X}_{r,2}\in E\cap L\cap \partial B(0,r)$  and two simple curves
	$\gamma_{r,1},\gamma_{r,2}\subseteq E\cap \partial B(0,r)$ satisfying that 
	\begin{enumerate}[nolistsep,label=\emph{(\arabic*)},leftmargin=2\parindent]
		\item $|y_r-\mathfrak{Z}_r|\leq \varepsilon r$ and $|z_{r,1}-z_{r,2}|\geq
			(2-2\varepsilon)r$;
		\item $\gamma_{r,i}$ joins $y_r$ and $\mathfrak{X}_{r,i}$, $i=1,2$;
		\item $\gamma_{r,1}$ and $\gamma_{r,2}$ are disjoint except for point
			$y_r$.
	\end{enumerate}
\end{lemma}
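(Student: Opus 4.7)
The plan is to use the $C^0$-equivalence to transport the problem to a half plane $Z\in\mathbb{P}_+$, read off the arc structure there, and then sharpen the quantitative bounds using $\varepsilon_P(r)<\varepsilon$. Applying the local $C^0$-equivalence at $0$ at a sufficiently small scale $\varrho\geq r$ yields a cone $Z\in\mathbb{P}_+$ and a homeomorphism $\Phi\colon B(0,2\varrho)\cap\Omega_0\to\Omega_0$ with $\Phi(0)=0$, $\Phi(L_0\cap B(0,2\varrho))\subseteq L_0$, $\|\Phi-\id\|_\infty\leq\varrho\tau$, and the sandwich $E\cap B(0,\varrho)\subseteq\Phi(Z\cap B(0,2\varrho))\subseteq E\cap B(0,3\varrho)$. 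A direct check on these two inclusions shows
\[
	E\cap\partial B(0,r)=\Phi\bigl(Z\cap\Phi^{-1}(\partial B(0,r))\bigr),
\]
reducing the problem to analyzing how the half plane $Z$ meets the perturbed sphere $\Phi^{-1}(\partial B(0,r))$.

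Since $\|\Phi^{-1}-\id\|_\infty\leq\varrho\tau$ as well, the open set $V_r:=\Phi^{-1}(B(0,r))$ is sandwiched between $B(0,r-\varrho\tau)\cap\Omega_0$ and $B(0,r+\varrho\tau)\cap\Omega_0$, so $V_r\cap Z$ is a connected open subset of the half plane $Z$ close to the half-disk $Z\cap B(0,r)$. Viewing $Z$ as a $2$-manifold with boundary $\partial Z=Z\cap L_0$ and using that $V_r$ is a topological open hemisphere (because $\Phi$ is a homeomorphism), the topological boundary of $V_r\cap Z$ in $Z$ decomposes as
\[
	\bigl(Z\cap\Phi^{-1}(\partial B(0,r))\bigr)\cup\bigl(V_r\cap\partial Z\bigr),
\]
the second summand being an arc of $\partial Z\subseteq L_0$. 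Hence $\Gamma_r:=Z\cap\Phi^{-1}(\partial B(0,r))$ is a simple arc whose two endpoints $p_{r,1},p_{r,2}$ lie on $\partial Z$, within $\varrho\tau$ of $Z\cap L_0\cap\partial B(0,r)$. Pushing forward by $\Phi$ gives a simple arc $\Phi(\Gamma_r)\subseteq E\cap\partial B(0,r)$ joining $\mathfrak{X}_{r,i}:=\Phi(p_{r,i})\in L_0\cap\partial B(0,r)$, which passes within $2\varrho\tau$ of $\mathfrak{Z}_r$. I then set $y_r$ to be a point of $\Phi(\Gamma_r)$ nearest to $\mathfrak{Z}_r$ and split the arc there into $\gamma_{r,1}$ and $\gamma_{r,2}$.

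To upgrade the rough $\varrho\tau$-level bounds to the claimed $\varepsilon$-level bounds, I fix a cone $Z'\in\mathbb{P}_+$ realizing $d_{0,r}(E,Z')<\varepsilon$ and exploit the facts that $\mathfrak{Z}_r\in Z'$ and that the two endpoints of $Z'\cap L_0\cap\partial B(0,r)$ lie in $Z'$ at distance exactly $2r$; the Hausdorff proximity combined with the arc structure above then forces $|y_r-\mathfrak{Z}_r|\leq\varepsilon r$ and $|\mathfrak{X}_{r,1}-\mathfrak{X}_{r,2}|\geq(2-2\varepsilon)r$, provided $\mathfrak{r}(\tau)$ is chosen so that $\varrho\tau$ is dominated by $\varepsilon r$ throughout the range. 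The main obstacle I expect is the rigorous proof in step two that $\Gamma_r$ is a single simple arc with two clean endpoints on $\partial Z$: the $C^0$-hypothesis alone a priori permits $\Phi^{-1}(\partial B(0,r))\cap Z$ to be a complicated continuum, and ruling out extra connected components, internal loops, or wild accumulations at $\partial Z$ is where both sides of the sandwich inclusion and the homeomorphism property of $\Phi$ must be combined carefully, quite possibly via an application of the Schoenflies-type result in the half plane $Z$ or of Lemma \ref{le:CoSe} for separating pairs of points.
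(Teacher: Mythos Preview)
Your overall strategy matches the paper's: pull back via the $C^0$-equivalence $\Phi$ to the half plane $Z$, locate the arc structure there, and then sharpen the bounds using a cone $Z'\in\mathbb{P}_+$ with $d_{0,r}(E,Z')<\varepsilon$. The identity $E\cap\partial B(0,r)=\Phi(Z\cap\Phi^{-1}(\partial B(0,r)))$ is correct, and your treatment of the $\varepsilon$-bounds is essentially the same as the paper's.

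The one genuine divergence is exactly the point you flag as the main obstacle. You aim to prove that $\Gamma_r=Z\cap\Phi^{-1}(\partial B(0,r))$ is itself a single simple arc with two endpoints on $\partial Z$; with only a $C^0$ homeomorphism this is delicate, and the paper does \emph{not} attempt it. Instead the paper sets $\mathbb{X}=Z\cap\overline{B(0,3\varrho)}$, $F=\Phi^{-1}(E\cap\partial B(0,r))\subseteq\mathbb{X}$, $A=\{0\}$, $B=Z\cap\partial B(0,3\varrho)$, and applies Lemma~\ref{le:CoSe} to extract a connected closed $F_0\subseteq F$ still separating $A$ from $B$ in $\mathbb{X}$. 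Separation forces $F_0$ to meet both rays $a_1,a_2$ of $Z\cap L_0\setminus\{0\}$, giving the two boundary hits $z_{r,i}\in F_0\cap a_i$ and $\mathfrak{X}_{r,i}=\Phi(z_{r,i})$. Then Lemma~\ref{le:FHMCIPC} (connected with $\HM^1<\infty$ implies path connected) is invoked on $F_0$ to produce a simple curve joining $z_{r,1}$ to $z_{r,2}$, which after pushing forward and splitting at a point near $\mathfrak{Z}_r$ yields $\gamma_{r,1},\gamma_{r,2}$.

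So the paper bypasses the Schoenflies-type analysis entirely: it never claims the full intersection is an arc, only that some connected separating piece contains one. This is both shorter and more robust than your direct approach. Note that the paper's route implicitly uses $\HM^1(E\cap\partial B(0,r))<\infty$ (needed for Lemma~\ref{le:FHMCIPC}); the lemma is only ever applied for $r\in\mathscr{R}$, where this holds, so in practice there is no loss. Your intuition at the end---that Lemma~\ref{le:CoSe} is the right tool---is exactly what the paper does.
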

\begin{proof}
	Since $E$ is locally $C^0$-equivalent to a sliding minimal cone of type
	$\mathbb{P}_+$ at $0$, for any $\tau\in (0,\tau_0)$, there exist 
	$\varrho>0$, sliding minimal cone $Z$ of type $\mathbb{P}_+$, and a mapping
	$\Phi:\Omega_0\cap B(0,2\varrho)\to \Omega_0$ which is a homeomorphism
	between $\Omega_0\cap B(0,2\varrho)$ and $\Phi(\Omega_0\cap B(0,2\varrho))$
	with $\Phi(0)=0$ and $\Phi(\partial\Omega_0\cap B(0,2\varrho))\subseteq
	\partial\Omega_0$ such that \eqref{eq:apps} and \eqref{eq:imcone} hold.
	We new take $\mathfrak{r}=\varrho$. Then for any $r\in (0,\mathfrak{r})$, 
	\[
		\Phi^{-1}\left[ E\cap \partial B(0,r) \right]\subseteq Z \cap B(0,3\varrho).
	\]
	Without loss of generality, we
	assume that $Z=\{(x_1,0,x_3)\mid x_1\in \mathbb{R}, x_3\geq 0 \}$. 
	Applying Lemma \ref{le:CoSe} with $\mathbb{X}=Z\cap
	\overline{B(0,3\varrho)}$, $F=\Phi^{-1}\left[ E\cap \partial B(0,r)\right]$,
	$A=\{ 0 \}$ and $B=Z\cap \partial B(0,3\varrho)$, we get that there is 
	a connected closed set $F_0\subseteq F$ such that $A$ and $B$ lie in two 
	different connected components of $A\setminus F_0$, thus $\phi(F_0)\subseteq 
	E\cap \partial B(0,r)$ is connected. We put $a_1=\{ (x_1,0,0)\mid x_1< 0 \}$ 
	and $a_2=\{ (x_1,0,0)\mid x_1> 0 \}$. Then $F_0\cap a_i\neq \emptyset$, $i=1,2$;
	otherwise $A$ and $B$ are contained in a same connected component of
	$X\setminus F_0$. We take $z_{r,i}\in F_0\cap a_i$, and let
	$\mathfrak{X}_{r,i}=\phi(z_{r,i})\in E\cap \partial B(0,r)$. Then 
	$|\mathfrak{X}_{r,1}-\mathfrak{X}_{r,2}|\geq (2-2\varepsilon)r$.

	Since $F_0$ is connected and $\HM^1(F_0)<\infty$, by Lemma \ref{le:FHMCIPC},
	$F_0$ is path connected. Let $\gamma$ be a simple curve which joins $z_{r,1}$ 
	and $z_{r,2}$. We see that ${B(\mathfrak{Z}_r,\varepsilon r)}\cap
	\gamma\neq \emptyset$, because $\varepsilon_P(r)<\varepsilon$ and
	$\mathfrak{Z}_r\in Z$ for 
	sliding minimal cone $Z$ of type $\mathbb{P}_{+}$. We take 
	$y_r\in {B(\mathfrak{Z}_r,\varepsilon r)}\cap \gamma$.
\end{proof}
\begin{lemma}\label{le:3CuSp}
	Let $E\subseteq \Omega_0$ be a $2$-rectifiable set with $\HM^2(E)<\infty$. 
	Suppose that $0\in E$, and that $E$ is locally $C^0$-equivalent to a sliding
	minimal cone of type $\mathbb{Y}_+$ at $0$. Then for any $\tau\in (0,\tau_0)$
	there exist $\mathfrak{r}=\mathfrak{r}(\tau)>0$ such that, for any $r\in 
	(0,\mathfrak{r})$ and $\varepsilon>\varepsilon_{Y}(r)$, we can find
	$y_r\in E\cap \partial B(0,r)\setminus L$ , 
	$\mathfrak{X}_{r,1},\mathfrak{X}_{r,2},\mathfrak{X}_{r,3}\in E\cap L\cap 
	\partial B(0,r)$ and three simple curves $\gamma_{r,1},\gamma_{r,2},
	\gamma_{r,3}\subseteq E\cap \partial B(0,r)$ satisfying that
	\begin{enumerate}[nolistsep,label=\emph{(\arabic*)},leftmargin=2\parindent]
		\item $|\mathfrak{Z}_r-y_r|\leq \pi r/6$, and there exists 
			$Z\in \mathbb{Y}_{+}$ with $\dist(x,Z)\leq \varepsilon r$ for $x\in \gamma$;
		\item $\gamma_{r,i}$ join $y_r$ and $\mathfrak{X}_{r,i}$;
		\item $\gamma_{r,i}$ and $\gamma_{r,j}$ are disjoint except for point
			$y_r$.
	\end{enumerate}
\end{lemma}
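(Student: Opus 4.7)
The plan is to upgrade the two-cusp argument of Lemma \ref{le:1CuSp} to a three-legged spanning tripod. I invoke the $C^0$-equivalence: for $\tau\in(0,\tau_0)$, obtain $\varrho>0$, a sliding minimal cone $Z$ of type $\mathbb{Y}_+$, and a homeomorphism $\Phi:\Omega_0\cap B(0,2\varrho)\to\Omega$ with $\Phi(0)=0$, $\Phi(L\cap B(0,2\varrho))\subseteq L$, satisfying \eqref{eq:apps} and \eqref{eq:imcone}. After an ambient rotation I assume $Z=\Omega_0\cap(P_1\cup P_2\cup P_3)$, where $P_1,P_2,P_3$ are half-planes meeting along the $x_3$-axis at $120^\circ$, and $a_i=P_i\cap L$ are the corresponding boundary rays. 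I set $\mathfrak{r}=\varrho$ and, for $r\in(0,\mathfrak{r})$, work with $F=\Phi^{-1}(E\cap\partial B(0,r))\subseteq Z\cap B(0,3\varrho)$. Applying Lemma \ref{le:CoSe} with $\mathbb{X}=Z\cap\overline{B(0,3\varrho)}$, $A=\{0\}$, $B=Z\cap\partial B(0,3\varrho)$ and the closed set $F$ produces a connected closed $F_0\subseteq F$ separating $A$ from $B$; since each ray $a_i$ joins $A$ to $B$ inside $\mathbb{X}$, the set $F_0\cap a_i$ is non-empty, so I pick $z_{r,i}\in F_0\cap a_i$ and set $\mathfrak{X}_{r,i}=\Phi(z_{r,i})\in E\cap L\cap\partial B(0,r)$.

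Next, Lemma \ref{le:FHMCIPC} makes $F_0$ path-connected (just as in Lemma \ref{le:1CuSp}), so I can perform the standard tripod construction inside $F_0$: take a simple arc $\alpha\subseteq F_0$ from $z_{r,1}$ to $z_{r,2}$, then a simple arc $\tilde\beta\subseteq F_0$ starting at $z_{r,3}$ and terminating at its first point of contact $w_r$ with $\alpha$, and split $\alpha$ at $w_r$. This yields three simple arcs $\sigma_1,\sigma_2,\sigma_3\subseteq F_0$ joining $w_r$ to $z_{r,1},z_{r,2},z_{r,3}$ respectively, pairwise disjoint except at $w_r$. Defining $\gamma_{r,i}=\Phi(\sigma_i)\subseteq E\cap\partial B(0,r)$ and $y_r=\Phi(w_r)$, conclusions (2) and (3) follow immediately from the injectivity of $\Phi$.

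The main obstacle is conclusion (1). The second part is direct: since $\varepsilon>\varepsilon_Y(r)$ there is $Z'\in\mathbb{Y}_+$ with $d_{0,r}(E,Z')<\varepsilon$, hence $\dist(x,Z')\leq\varepsilon r$ on every $\gamma_{r,i}\subseteq E\cap\overline{B(0,r)}$. For the first part, the essential geometric input is that $Z'\cap\partial B(0,r)$ is a union of three spherical arcs meeting only at $\mathfrak{Z}_r=(0,0,r)$ and diverging at angle $120^\circ$, so two distinct such arcs at spherical distance $\rho$ from $\mathfrak{Z}_r$ sit at mutual Euclidean distance comparable to $r\sin\rho$. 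Since the three curves $\gamma_{r,i}$ share the common point $y_r$ and, after matching indices, each lies within $\varepsilon r$ of a different spherical arc of $Z'$ (a matching forced by $\mathfrak{X}_{r,i}$ approaching three distinct equatorial points), the point $y_r$ must be $O(\varepsilon r)$-close to $\mathfrak{Z}_r$. Shrinking $\mathfrak{r}(\tau)$ so that $\varepsilon_Y(r)$ is sufficiently small — which is possible because $\varepsilon_Y(r)\to 0$ as $r\to 0$ by the $C^0$-equivalence — secures $|y_r-\mathfrak{Z}_r|\leq\pi r/6$. Finally $y_r\notin L$, since the three equatorial endpoints of the arcs of $Z'$ are its only points on $L$ in $\partial B(0,r)$, each at Euclidean distance $r$ from $\mathfrak{Z}_r$, well in excess of the bound just established, and $\Phi$ preserves $L$ in both directions.
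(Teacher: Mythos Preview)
Your setup mirrors the paper's exactly: same use of the $C^0$-equivalence, same application of Lemma~\ref{le:CoSe} to extract a connected separating set $F_0$, same appeal to Lemma~\ref{le:FHMCIPC} for path-connectedness. The paper's own proof in fact stops right after the sentence ``$F_0$ is path connected'' and never writes down the tripod construction or verifies (1)--(3); your first-hit tripod (arc $\alpha$ from $z_{r,1}$ to $z_{r,2}$, then $\tilde\beta$ from $z_{r,3}$ to its first contact $w_r$ with $\alpha$) is the natural completion, and it does deliver (2) and (3) via the injectivity of $\Phi$.

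The gap is in your argument for the first half of (1). You assert that ``after matching indices, each [curve] lies within $\varepsilon r$ of a different spherical arc of $Z'$'', and deduce that the common point $y_r$ is $O(\varepsilon r)$-close to $\mathfrak Z_r$. But all you actually know is that every point of $\gamma_{r,i}$ lies within $\varepsilon r$ of \emph{some} arc of $Z'$; nothing forces the whole curve to stay near a single arc. Concretely, on the $Z$-side, the arc $\alpha\subseteq F_0$ must cross the spine $\ell$, yet $\tilde\beta$ may enter the half-plane $P_1$ and run alongside $\alpha$ there --- the half-planes are two-dimensional, so there is no topological obstruction --- and first hit $\alpha$ at a point $w_r$ far from $\ell$. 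Then $y_r=\Phi(w_r)$ is far from $\mathfrak Z_r$, while all three $\gamma_{r,i}$ still sit in the $\varepsilon r$-tube around $Z'$. The endpoint matching only pins down the curves near the feet, not globally.

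A repair is to \emph{choose} the junction near $\mathfrak Z_r$ rather than hope the first-hit point lands there: the image $\Phi(\alpha)$ must pass within $O(\varepsilon r)$ of $\mathfrak Z_r$ (legs $1$ and $2$ of $Z'\cap\partial B_r$ only come within $2\varepsilon r$ of each other near $\mathfrak Z_r$), so pick $y_r$ on $\Phi(\alpha)$ there; but then producing a third curve from $\mathfrak X_{r,3}$ to that specific $y_r$, disjoint from $\Phi(\alpha)\setminus\{y_r\}$, needs its own argument. The paper's proof is silent on this point as well.
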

\begin{proof}
	Since $E$ is locally $C^0$-equivalent to a sliding minimal cone of type
	$\mathbb{Y}_+$ at $0$, for any $\tau\in (0,\tau_0)$, there exist $\tau>0$,
	$\varrho>0$, sliding minimal cone $Z$ of type $\mathbb{Y}_+$, and a mapping
	$\Phi:\Omega_0\cap B(0,2\varrho)\to \Omega_0$ which is a homeomorphism
	between $\Omega_0\cap B(0,2\varrho)$ and $\Phi(\Omega_0\cap B(0,2\varrho))$
	with $\Phi(0)=0$ and $\Phi(\partial\Omega_0\cap B(0,2\varrho))\subseteq
	\partial\Omega_0$ such that \eqref{eq:apps} and \eqref{eq:imcone} hold.
	We new take $\mathfrak{r}=\varrho$. Then for any $r\in (0,\mathfrak{r})$, 
	\[
		\Phi^{-1}\left[ E\cap \partial B(0,r) \right]\subseteq Z \cap B(0,3\varrho).
	\]
	Applying Lemma \ref{le:CoSe} with $\mathbb{X}=Z\cap
	\overline{B(0,3\varrho)}$, $F=\Phi^{-1}\left[ E\cap \partial B(0,r)\right]$,
	$A=\{ 0 \}$ and $B=Z\cap \partial B(0,3\varrho)$, we get that there is 
	a connected closed set $F_0\subseteq F$ such that $A$ and $B$ lie in two 
	different connected components of $A\setminus F_0$, thus $\phi(F_0)\subseteq 
	E\cap \partial B(0,r)$ is connected. We let $a_i$, $i=1,2,3$, be the there
	component of $Z\cap L_0\setminus A$. Then $F_0\cap a_i\neq \emptyset$,
	$i=1,2,3$; otherwise $A$ and $B$ are contained in a same connected component
	of $X\setminus F_0$. We take $z_{r,i}\in F_0\cap a_i$, and let
	$\mathfrak{X}_{r,i}=\phi(z_{r,i})\in E\cap \partial B(0,r)$. Then 
	$|\mathfrak{X}_{r,1}-\mathfrak{X}_{r,2}|\geq (\sqrt{3}-2\varepsilon)r$.

	Since $F_0$ is connected and $\HM^1(F_0)<\infty$, by Lemma \ref{le:FHMCIPC},
	$F_0$ is path connected. 
\end{proof}

\subsection{Approximation of rectifiable curves in $\mathbb{S}^2$ by Lipschitz graph}
We denote by $\mathbb{S}^2$ the unit sphere in $\mathbb{R}^{3}$. We say that a
simple rectifiable curve $\gamma\subseteq\mathbb{S}^2$ is a Lipschitz graph with
constant at most $\eta$, if it can be parametrized by 
\[
	z(t)=\left(\sqrt{1-v(t)^2}\cos\theta(t),\sqrt{1-v(t)^2}\sin\theta(t),v(t)\right),
\]
where $v$ is Lipschitz with $\Lip(v)\leq \eta$.

\begin{lemma}
	\label{le:smallh}
	Let $T\in [\pi/3,2\pi/3]$ be a number, and $\gamma:[0,T]\to \mathbb{S}^2$ a
	simple rectifiable curve given by  
	\[
		\gamma(t)=\left(\sqrt{1-v(t)^2}\cos\theta(t),\sqrt{1-v(t)^2}\sin\theta(t),
		v(t)\right),
	\]
	where $v$ is a continuous function with $v(0)=v(T)=0$, $\theta$ is a
	continuous function with $\theta(0)=0$ and $\theta(T)=T$. Then there is a
	small number $\tau_0\in (0,1)$ such that whenever $|v(t)|\leq \tau_0$, we 
	have that
	\[
		|v(t)|\leq 10\sqrt{\HM^1(\gamma)-T}.
	\]
\end{lemma}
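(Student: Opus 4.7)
The plan is to use the triangle inequality on $\mathbb{S}^2$ combined with the spherical law of cosines. Fix $t$ with $h:=|v(t)|\leq\tau_0$ (WLOG $h>0$) and set $\phi:=\theta(t)$. Since $v(0)=v(T)=0$, each of the subarcs $\gamma|_{[0,t]}$ and $\gamma|_{[t,T]}$ has $\HM^1$-measure at least the spherical distance between its endpoints, so
\[
	\HM^1(\gamma)\geq d_{\mathbb{S}^2}(\gamma(0),\gamma(t))+d_{\mathbb{S}^2}(\gamma(t),\gamma(T)).
\]
Direct computation of inner products on $\mathbb{S}^2$ gives $\gamma(0)\cdot\gamma(t)=\sqrt{1-h^2}\cos\phi$ and $\gamma(T)\cdot\gamma(t)=\sqrt{1-h^2}\cos(T-\phi)$; setting
\[
	E(\phi,h):=\arccos\bigl(\sqrt{1-h^2}\cos\phi\bigr)+\arccos\bigl(\sqrt{1-h^2}\cos(T-\phi)\bigr)-T,
\]
the displayed inequality becomes $E(\phi,h)\leq\HM^1(\gamma)-T$. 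It therefore suffices to choose $\tau_0$ so small that $E(\phi,h)\geq h^2/100$ for every $\phi\in\mathbb{R}$ and every $h\in[0,\tau_0]$; the bound $h\leq 10\sqrt{\HM^1(\gamma)-T}$ then follows.

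The central estimate I would prove by Taylor expansion of $f(\psi,h):=\arccos(\sqrt{1-h^2}\cos\psi)$. For $\psi$ bounded away from $0$ and $\pi$, the identity $\cos(f(\psi,h))=\sqrt{1-h^2}\cos\psi$ gives $f(\psi,h)=\psi+\tfrac{h^2}{2}\cot\psi+O(h^4/\sin\psi)$. Summing at $\psi=\phi$ and $\psi=T-\phi$, and using $\cot\phi+\cot(T-\phi)=\sin T/(\sin\phi\sin(T-\phi))$ together with $\sin\phi\sin(T-\phi)\leq\sin^2(T/2)$, one obtains for $\phi\in[\eta,T-\eta]$ with a small fixed $\eta>0$,
\[
	E(\phi,h)\geq h^2\cot(T/2)+O(h^4).
\]
Because $T\in[\pi/3,2\pi/3]$ forces $\cot(T/2)\geq 1/\sqrt{3}$, shrinking $\tau_0$ absorbs the remainder and yields $E(\phi,h)\geq h^2/2$ on this interior range.

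The remaining task is uniformity near $\{0,T\}$, where the cotangent expansion degenerates. For small $\psi$ and $h$ I would instead use the local expansion
\[
	\arccos\bigl(\sqrt{1-h^2}\cos\psi\bigr)=\sqrt{\psi^2+h^2}+O\bigl((\psi^2+h^2)^{3/2}\bigr),
\]
which implies $f(\phi,h)-\phi\geq h^2/(3(\phi+h))$; in particular $f(\phi,h)-\phi\geq h/3$ when $\phi\leq h$, and $f(\phi,h)-\phi\geq h^2/(6\eta)$ when $h<\phi\leq\eta$. Combined with the interior Taylor control $f(T-\phi,h)-(T-\phi)\geq -C_T h^2$ in the strip $\phi\in[0,\eta]$, this produces $E(\phi,h)\gg h^2/100$ near each endpoint, and a symmetric argument handles $\phi\in[T-\eta,T]$. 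Finally, if $\phi\notin[0,T]$ modulo $2\pi$, then $E(\phi,0)\geq 2\,\dist(\phi,[0,T])>0$ and continuity in $h$ makes the estimate trivial for $\tau_0$ small. The main technical obstacle is therefore the matching of the two asymptotic expansions in the transition zone where $\phi$ and $h$ are of comparable size near an endpoint; once this is handled, the lemma follows at once.
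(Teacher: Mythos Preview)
Your approach is correct but genuinely different from the paper's. You argue via the spherical triangle inequality and a direct Taylor analysis of $E(\phi,h)=\arccos(\sqrt{1-h^2}\cos\phi)+\arccos(\sqrt{1-h^2}\cos(T-\phi))-T$, splitting into an interior regime (where $\cot\phi+\cot(T-\phi)=\sin T/(\sin\phi\sin(T-\phi))$ gives the clean bound $E\geq h^2\cot(T/2)$) and an endpoint regime (where $\arccos(\sqrt{1-h^2}\cos\psi)\sim\sqrt{\psi^2+h^2}$). The paper instead works only at the point $t_0$ where $|v|$ is maximal, introduces an auxiliary branching point $D$ on the curve, replaces the two subarcs $AD$ and $BD$ by a single Lipschitz curve $\gamma_3$ joining $A$ and $B$, and then invokes an external energy estimate (Lemma~7.8 in \cite{David:2008}) giving $\int_0^\ell |w'|^2\leq 14(\ell-T)$ for the height function of $\gamma_3$; Cauchy--Schwarz then bounds the height of $D$, and the arc $CD$ absorbs the remaining discrepancy $|v(t_0)|-|w(t_1)|$.

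Your argument is more self-contained, avoiding the citation, and is arguably cleaner once the endpoint matching is written out; the paper's route trades that case analysis for a black-box $L^2$ estimate. One small point worth tightening in your write-up: for $\phi\notin[0,T]$ you appeal to continuity in $h$, but since $\phi$ is not fixed a priori you need a bound uniform in $\phi$; this follows because for $\phi$ with $\cos\phi\geq 0$ and $\cos(T-\phi)\geq 0$ one has $E(\phi,h)\geq E(\phi,0)\geq 0$, while the remaining $\phi$ are bounded away from the endpoints and fall under your interior expansion.
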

\begin{proof}
	We let $A=\gamma(0)=(1,0,0)$, $B=\gamma(T)=(\cos T,\sin T,0)$,
	and let $C=\gamma(t_0)$ be a point in $\gamma$ such that
	\[
		|v(t_0)|=\max\{ |v(t)|: t\in [0,T] \}.
	\]
	We let $\gamma_i$, $i=1,2$, be two curve such that $\gamma_1(0)=A$,
	$\gamma_1(1)=C$, $\gamma_2(0)=B$ and $\gamma_2(1)=C$, and let $s\in [0,1]$
	be the smallest number such that $\gamma_1(s)\not\in \gamma_2$, and put
	$D=\gamma_1(s)$. Then, by setting $\mathfrak{C}_1$, $\mathfrak{C}_2$ and
	$\mathfrak{C}_3$ the arc $AD$, $BD$ and $CD$ respectively, we have that 
	\[
		\HM^1(\gamma)\geq \HM^1(\gamma_1\cup \gamma_2)\geq
		\HM^1(\mathfrak{C}_1)+\HM^1(\mathfrak{C}_2)+\HM^1(\mathfrak{C}_3).
	\]

	We see that $\mathfrak{C}_1\cup \mathfrak{C}_2$ is a simple Lipschitz curve
	joining $A$ and $B$, and let $\gamma_3:[0,\ell]\to \mathbb{S}^2$ giving by 
	\[
		\gamma_3(t)=\left( \sqrt{1-w(t)^2}\cos \theta(t),
		\sqrt{1-w(t)^2}\sin\theta(t),
		w(t)\right)
	\]
	be its parametrization by length. 
	We assume that $\gamma_3(t_1)=D$, then $w'(t)>0$ on $(0,t_1)$, or $w'(t)<0$
	on $(0,t_1)$, thus $|w(t)|=\int_0^{t_1}|w'(t)|d t$.

	We let the number $\tau_0\in (0,1)$ to be the small number $\tau_1$ in Lemma
	7.8 in \cite{David:2008}. If $\HM^1(\gamma)-T\leq \tau_0$, then we have that  
	\[
		\int_0^{\ell} |w'(t)|^2dt\leq 14 (\ell-T),
	\]
	thus 
	\[
		|w(t_1)|=\int_0^{t_1}|w'(t)|d t\leq \left( t_1 \int_0^{t_1}
		|w'(t)|^2dt\right)^{1/2}\leq \sqrt{14\ell(\ell-T)}.
	\]
	We get so that 
	\[
		\begin{aligned}
			|v(t_0)|&\leq \HM^1(\mathfrak{C}_3)+|w(t_1)|\leq
			(\HM^1(\gamma)-\ell)+\sqrt{14\ell(\ell-T)}\\
			&\leq \sqrt{14\HM^1(\gamma)(\HM^1(\gamma)-T)}\leq 10 \sqrt{\HM^1(\gamma)-T}.
		\end{aligned}
	\]

	If $\HM^1(\gamma)-T> \tau_0$, then $v(t)\leq \tau_0\leq 10\sqrt{\tau_0}\leq
	10 \sqrt{\HM^1(\gamma)-T}$.
\end{proof}

\begin{lemma}\label{le:SmCuAp}
	Let $a$ and $b$ be two points in $\Omega_0\cap \partial B(0,1)$ satisfying 
	\begin{equation}\label{eq:SmCu1}
		\frac{\pi}{3}\leq\dist_{\mathbb{S}^2}(a,b)\leq \frac{2\pi}{3}.
	\end{equation}
	Let $\gamma$ be a simple rectifiable curve in $\Omega_0\cap
	\partial B(0,1)$ which joins $a$ and $b$, and satisfies 
	\begin{equation}\label{eq:SmCu2}
		\length(\gamma)\leq \dist_{\mathbb{S}^2}(a,b)+\tau_0,
	\end{equation}
	where $\tau_0>0$ is as in Lemma \ref{le:smallh}.
	Then there is a constant $C>0$ such that, for any $\eta>0$, we can find a 
	simple curve $\gamma_{\ast}$ in $\Omega_0\cap
	\partial B(0,1)$ which is a Lipschitz graph with constant at most $\eta$
	joining $a$ and $b$, and satisfies that 
	\[
		\HM^1(\gamma_{\ast}\setminus \gamma)\leq
		\HM^1(\gamma\setminus\gamma_{\ast})\leq
		C\eta^{-2}(\length(\gamma)-\dist_{\mathbb{S}^2}(a,b)).
	\]
\end{lemma}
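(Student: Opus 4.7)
The plan is to pick coordinates adapted to the chord from $a$ to $b$, use Lemma \ref{le:smallh} to confine $\gamma$ to a narrow equatorial strip, and then surgically replace $\gamma$ on a small ``bad set'' on which its deviation from a graph is too steep.

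First I rotate so that $a=(1,0,0)$ and $b=(\cos T,\sin T,0)$ with $T=\dist_{\mathbb{S}^2}(a,b)\in[\pi/3,2\pi/3]$, and parametrize $\gamma$ by arc length, writing
\[
\gamma(s)=\bigl(\sqrt{1-v(s)^2}\cos\theta(s),\sqrt{1-v(s)^2}\sin\theta(s),v(s)\bigr),
\]
with $v(0)=v(L)=0$, $\theta(0)=0$, $\theta(L)=T$, and $L=\HM^1(\gamma)$. By Lemma \ref{le:smallh}, $\|v\|_\infty\le 10\sqrt{L-T}$, which can be made as small as we wish by shrinking $\tau_0$; in particular $1-v^2\ge 1/2$. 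The unit-speed condition reads $\tfrac{v'^2}{1-v^2}+(1-v^2)\theta'^2=1$.

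Next I derive the key energy bound $\int_0^L v'(s)^2\,ds\le C(L-T)$. Since the net $\theta$-displacement is $T$, one has $T\le\int_0^L|\theta'|\,ds$; combining this with the unit-speed identity and the elementary inequality $\sqrt{1-x}\le 1-x/2$ for $x\in[0,1]$, together with the smallness of $\|v\|_\infty$, should yield
\[
T\le\int_0^L|\theta'|\,ds\le L-\tfrac{1}{2}\int_0^L v'(s)^2\,ds+CL\|v\|_\infty^2.
\]
Since Lemma \ref{le:smallh} gives $L\|v\|_\infty^2\le C'(L-T)$ (as $L\le T+\tau_0$), the desired bound $\int_0^L v'^2\,ds\le C(L-T)$ follows.

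Now I set $B_0=\{s\in[0,L]:|v'(s)|>\eta/2\}$, so that Chebyshev gives $\HM^1(B_0)\le 4\eta^{-2}\int v'^2\,ds\le C\eta^{-2}(L-T)$. Enlarge $B_0$ to $B$ by also including every arc-length preimage of those angles $\theta_0\in[0,T]$ at which $s\mapsto\theta(s)$ is not injective; those extra ``loops'' contribute total arc length at most $2(L-T)$, so the enlargement does not spoil the estimate. Let $(\alpha_k,\beta_k)$ be the connected components of $\theta(B)\subseteq[0,T]$. On each such interval I replace the corresponding part of $\gamma$ by the linear interpolation (in the $(\theta,v)$-chart) between the boundary values of $v$ at $\alpha_k$ and $\beta_k$; on $[0,T]\setminus\bigcup_k(\alpha_k,\beta_k)$ the curve $\gamma$ is already the graph of a function of $\theta$ with slope at most $\eta$. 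The resulting $\gamma_*$ is then an injective Lipschitz graph with $\Lip(v_*)\le\eta$, and
\[
\HM^1(\gamma_*\setminus\gamma)\le\HM^1(\gamma\setminus\gamma_*)\le\HM^1(\gamma|_B)\le C\eta^{-2}(L-T).
\]

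The main obstacle is the energy bound $\int v'^2\,ds\le C(L-T)$ combined with a careful treatment of the possibly non-monotone projection $\theta(s)$: one has to keep $v_*$ Lipschitz across the interpolation endpoints and ensure that the replacement segments do not cross retained pieces of $\gamma$. Both reductions rely on the smallness of $\|v\|_\infty$ guaranteed by Lemma \ref{le:smallh}, which is thus used twice --- once to close the energy inequality and once to prevent self-intersections of $\gamma_*$.
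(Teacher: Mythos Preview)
The paper does not actually prove this lemma; it simply cites \cite[pp.~875--878]{David:2008}. Your outline follows that same strategy: confine $\gamma$ to a narrow strip via Lemma~\ref{le:smallh}, derive the quadratic energy bound $\int_0^L v'^2\le C(L-T)$ (this is precisely Lemma~7.8 of \cite{David:2008}, already invoked inside the proof of Lemma~\ref{le:smallh}), use Chebyshev to isolate a small bad set, and interpolate across it. So the route is the intended one.

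There is, however, a genuine gap in your excision step. Defining $B_0=\{s:|v'(s)|>\eta/2\}$ only pins down $|v'|$ at the \emph{endpoints} of the bad components, whereas the slope of the replacement chord is the \emph{average} $\tfrac{1}{\beta_k-\alpha_k}\int_{\alpha_k}^{\beta_k}v'$ over the interior, which need not be $\le\eta$. (Take $v'\equiv 1$ on $[0,\epsilon]$ and $v'\equiv 0$ elsewhere: the single bad interval $[0,\epsilon]$ yields a chord of slope $1$.) The fix, which is what David uses, is to define the bad set through the Hardy--Littlewood maximal function: with $B_0=\{s:M(v')(s)>\eta/2\}$ the weak-$(1,1)$ inequality still gives $\HM^1(B_0)\le C\eta^{-2}\int v'^2$, and for any $s_1<s_2$ with $s_1\notin B_0$ one has
\[
|v(s_2)-v(s_1)|\le\int_{s_1}^{s_2}|v'|\le 2(s_2-s_1)\,M(v')(s_1)\le\eta\,|s_2-s_1|,
\]
so $v|_{B_0^c}$ is genuinely $\eta$-Lipschitz and any extension (McShane, or linear interpolation across the gaps) inherits the bound. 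Your handling of the non-monotone $\theta$ and of the simplicity of $\gamma_*$ is also too compressed, but those become routine once the Lipschitz extension is done through the maximal function rather than the pointwise derivative.
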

The proof will be the same as in \cite[p.875-p.878]{David:2008}, so we omit
it.
\subsection{Compare surfaces}
Let $\Gamma$ be a Lipschitz curve in $\mathbb{S}^2$. We
assume for simplicity that its extremities $a$ and $b$ lie in the horizontal
plane. Let us assume that $a=(1,0,0)$ and $b=(\cos T, \sin T, 0)$ for some
$T\in [\pi/3,2\pi/3]$. We also assume that $\Gamma$ is a Lipschitz graph with
constant at most $\eta$, i.e. there is a Lipschitz
function $s:[0,T]\to\mathbb{R}$ with $s(0)=s(T)=0$ and $\Lip(s)\leq \eta$,
such that $\Gamma$ is parametrized by 
\[
	z(t)=(w(t)\cos t,w(t)\sin t, s(t)) \text{ for }t\in [0,T],
\]
where $w(t)=(1-|s(t)|^2)^{1/2}$.

We set 
\[
	D_T=\{ (r\cos t, r\sin t)|\mid 0< r <1 ,0 <t < T\},
\]
and consider the function $v:\overline{D}_T\to \mathbb{R}$ defined by 
\[
	v(r\cos t,r\sin t)=\frac{rs(t)}{w(t)} \text{ for } 0\leq r\leq 1 \text{ and
	} 0\leq t\leq T.
\]

For any function $f:\overline{D}_T\to\mathbb{R}$, we denote by $\Sigma_f$ 
the graphs of $f$ over $\overline{D}_T$. 

\begin{lemma}\label{le:CoSu}
	There is a universal constant $\kappa>0$ such that we can find a Lipschitz 
	function $u$ on $\overline{D}_T$ satisfying that 
	\[
		\Lip(u)\leq C\eta,
	\]
	\[
		u(r,0)=u(r\cos T,r\sin T)=0, \text{ for  }0\leq r\leq 1, 0\leq t\leq T,
	\]
	\[
		u(r\cos t,r\sin t)=v(r\cos t,r\sin t)\text{ for  }0\leq r\leq 1, 0\leq t\leq T,
	\]
	\[
		u(r\cos t,r\sin t)=0,\text{ for }0\leq r\leq 2\kappa, 0\leq t\leq T
	\]
	and 
	\[
		\HM^2(\Sigma_v)-\HM^2(\Sigma_u)\geq 10^{-4}(\HM^1(\Gamma)-T).
	\]
\end{lemma}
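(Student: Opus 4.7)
The plan is to take $u$ to be a radially cut off version of the angular profile of $v$. Pick a small universal $\kappa>0$ (to be fixed at the end) and define $\phi:[0,1]\to[0,1]$ by
\[
\phi(r)=0\ \text{for}\ r\le 2\kappa,\qquad \phi(r)=\frac{r-2\kappa}{1-2\kappa}\ \text{for}\ 2\kappa\le r\le 1;
\]
then set $u(r\cos t,r\sin t)=\phi(r)s(t)/w(t)$. The first three bullets of the lemma are immediate: $s(0)=s(T)=0$ forces $u=0$ on the two radial sides, $\phi\equiv 0$ on $\{r\le 2\kappa\}$ gives $u=0$ there, and $\phi(1)=1$ means $u=v=s(t)/w(t)$ on the outer arc $\{r=1\}$ (which is, I believe, what the third displayed condition is meant to say). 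For the Lipschitz bound, a computation in polar coordinates gives
\[
|\nabla u|^2=\phi'(r)^2\frac{s^2}{w^2}+\frac{\phi(r)^2}{r^2}\frac{s'^2}{w^6};
\]
the bounds $|\phi'|\le 1/(1-2\kappa)\le 2$, $\phi(r)/r\le 1$ on $[2\kappa,1]$ (equivalent to $r\le 1$), combined with $|s|,|s'|\le\eta$ and $w\ge 1/\sqrt 2$ for $\eta$ small, give $|\nabla u|\le C\eta$ and hence $\Lip(u)\le C\eta$.

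The heart of the argument is the area comparison. Since $|\nabla v|,|\nabla u|=O(\eta)$, Taylor expansion yields
\[
\HM^2(\Sigma_v)-\HM^2(\Sigma_u)=\tfrac12\int_{D_T}\bigl(|\nabla v|^2-|\nabla u|^2\bigr)(1+O(\eta^2))\,r\,dr\,dt,
\]
with $|\nabla v|^2-|\nabla u|^2=(1-\phi'(r)^2)s^2/w^2+(1-\phi(r)^2/r^2)s'^2/w^6$. A direct computation of the radial integrals produces $\int_0^1 r(1-\phi'(r)^2)\,dr=-2\kappa/(1-2\kappa)$ and $\int_0^1 r(1-\phi(r)^2/r^2)\,dr=2\kappa+O(\kappa^2|\log\kappa|)$. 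Writing $I=\int_0^T s^2/w^2\,dt$ and $J=\int_0^T s'^2/w^6\,dt$, the total integral equals $2\kappa(J-I)+O(\kappa^2|\log\kappa|)(I+J)$.

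Positivity of the leading coefficient comes from Wirtinger's inequality. Since $s(0)=s(T)=0$ and $T\le 2\pi/3<\pi$,
\[
\int_0^T s^2\,dt\le (T/\pi)^2\int_0^T s'^2\,dt\le\tfrac49\int_0^T s'^2\,dt,
\]
and combining with $1/w^2\le 2$ and $1/w^6\ge 1$ gives $I\le \tfrac89 J$, hence $J-I\ge J/9$. On the other hand, from $|z'(t)|^2=1+s'^2/w^2-s^2$ and $|z'|+1\ge 2-O(\eta)$ one obtains $\HM^1(\Gamma)-T=\tfrac12(1+O(\eta))\int(s'^2/w^2-s^2)\,dt\le J$. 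Combining, $\HM^2(\Sigma_v)-\HM^2(\Sigma_u)\ge\kappa J/18\ge\kappa(\HM^1(\Gamma)-T)/18$, and any universal $\kappa\ge 2\cdot 10^{-3}$ (say $\kappa=10^{-2}$) delivers the required $10^{-4}$-bound.

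The main obstacle is that any Lipschitz $\phi$ with $\phi(2\kappa)=0$ and $\phi(1)=1$ must have $\phi'>1$ on a set of positive measure, producing a negative contribution from the $s^2$ term that could in principle swamp the positive contribution from $s'^2$. Wirtinger's inequality, applicable precisely because $T\le 2\pi/3<\pi$, provides the decisive spectral gap between $\int s^2$ and $\int s'^2$ that keeps the overall sign positive with a definite constant.
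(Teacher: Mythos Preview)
Your argument is essentially correct and follows the same strategy as the proof the paper defers to (Lemma~8.8 in \cite{David:2008}): take $u$ to be a radial cutoff of the angular profile $s/w$, linearise the area functional to a Dirichlet energy, and use Wirtinger's inequality (available precisely because $T\le 2\pi/3<\pi$) to show that the positive contribution from $\int s'^2$ dominates the negative one from $\int s^2$ with a definite gap. The computations of the radial integrals and the length expansion $\HM^1(\Gamma)-T\le\tfrac12\int s'^2/w^2\,dt\le\tfrac12 J$ are correct.

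Two small points. First, your $\phi$ gives $u=v$ only on the arc $\{r=1\}$, whereas the paper later relies on the stronger fact $\Sigma_u\setminus B(0,9/10)=\Sigma_v\setminus B(0,9/10)$ (see \eqref{eq:BLNR1}); the typo'd third condition in the lemma is presumably meant to say $u=v$ for $r$ close to $1$, not for all $r$. This is easily repaired: set $\phi(r)=r$ on $[9/10,1]$, $\phi\equiv 0$ on $[0,2\kappa]$, and interpolate linearly on $[2\kappa,9/10]$. The radial integrals change only by $O(\kappa^2)$ and the rest of your estimates go through unchanged. Second, the phrase ``any universal $\kappa\ge 2\cdot 10^{-3}$'' is slightly misleading: the $O(\kappa^2|\log\kappa|)$ error term competes with the main $\kappa J/9$ term, so $\kappa$ must lie in a range, not merely exceed a threshold; your concrete choice $\kappa=10^{-2}$ does sit in that range, and a short numerical check confirms it yields a coefficient comfortably above $10^{-4}$. (Also, $|s|\le\eta T\le C\eta$ rather than $|s|\le\eta$, but this only affects constants.)
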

\begin{proof}
	The proof is the same as Lemma 8.8 in \cite{David:2008}, we omit it.
\end{proof}

\subsection{Retractions}
We let $\Pi:\mathbb{R}^{3}\setminus \{ 0 \}
\to \mathbb{S}^2$ be the projection defined by $\Pi(x)=x/|x|$.
In this subsection, we assume that $E\subseteq \Omega_0$ is a $2$-rectifiable
set satisfying that  
\begin{enumerate}
		[topsep=0pt,itemsep=0pt,leftmargin=2\parindent,label=\textbf{(\alph*)}]
	\item \label{AS1} $\HM^2(E)<\infty$, $0\in E$, 
	\item \label{AS2} $E$ is locally $(\Omega_0,L_0,h)$ sliding almost minimal
		at $0$,
	\item \label{AS3} $E$ is locally $C^0$-equivalent to a sliding 
		minimal cone of type $\mathbb{P}_+$ or $\mathbb{Y}_+$.
\end{enumerate}
For convenient, we put 
\[
	j(r)=\frac{1}{r}\HM^1(E\cap \partial B(0,r))-\HM^1(X\cap \partial B(0,1)),
\]
and denote by $\mathscr{R}_1$ the set $\{ r\in \mathscr{R}:j(r) \leq
\tau_1\}$, where $\tau_1$ is the small number considered as in Lemma
\ref{le:smallh}.

For any $r\in (0,\mathfrak{r})\cap \mathscr{R}_1$, we take $\mathcal{X}_r
\subseteq E\cap B(0,r)\cap L_0$ as following:
if $E$ is locally $C^0$-equivalent to a sliding minimal cone
of type $\mathbb{P}_+$, we let $\mathfrak{X}_{r,1}$ and $\mathfrak{X}_{r,2}$ 
be the same as in Lemma \ref{le:1CuSp}, and let $\mathcal{X}_r=\{
\mathfrak{X}_{r,1},\mathfrak{X}_{r,2} \}$; if
$E$ is locally $C^0$-equivalent to a sliding minimal cone
of type $\mathbb{Y}_+$, we let $\mathfrak{X}_{r,1}$, $\mathfrak{X}_{r,2}$ and
$\mathfrak{X}_{r,3}$ be the same as in
Lemma \ref{le:3CuSp}, and let $\mathcal{X}_r=\{
\mathfrak{X}_{r,1},\mathfrak{X}_{r,2},\mathfrak{X}_{r,3} \}$.

We take $y_r$ as in Lemma \ref{le:1CuSp} or Lemma \ref{le:3CuSp}. For any 
$x\in \mathcal{X}_r$, we let $\gamma^x$ be the curve which joins $x$
and $y_r$ as in Lemma \ref{le:1CuSp} and Lemma \ref{le:3CuSp}, let
$D_{x,y_r}$ be the sector determined by points $0$, $y_r$ and $x$.
We denote by $P_{x,y_r}$ the plane that contains $0$, $x$ and $y_r$, let
$\mathcal{R}_{x,y_r}$ be a rotation such that
$\mathcal{R}_{x,y_r}(y_r)=(r,0,0)$ and $\mathcal{R}_{x}(y_r)=(r\cos T_x,r\sin
T_x,0)$, where $T_x\in [\pi/3,2\pi/3]$.

For any $x\in \mathcal{X}_r$, $\gamma^x$ is a simple rectifiable curve in
$\Omega_0\cap\partial B(0,r)$, thus the curve $\Gamma^x=\Pi(\gamma^x)$ is a
simple rectifiable curve in 
$\Omega_0\cap \partial B(0,1)$, let $\Gamma^x_{\ast}$ be the corresponding curve 
with respect to $\Gamma^{x}$ as in Lemma \ref{le:SmCuAp}. Let 
$z(t)=(w(t)\cos t, w(t)\sin t, s(t))$ be a parametrization of
$\mathcal{R}_{x,y_r}(\Gamma_{\ast}^{x})$, where
$w(t)=\sqrt{1-s(t)^2}$. Let $\Sigma^x_v$ and $\Sigma^x_u$ be the same as in Lemma 
\ref{le:CoSu}. We put
$T=\sum_{x\in \mathcal{X}_r}T_x$, and put 
\begin{equation}\label{eq:CoLip}
	X=\bigcup_{x\in \mathcal{X}_r}D_{x,y_r},\ 
	\Gamma_{\ast}=\bigcup_{x\in \mathcal{X}_r}\Gamma^x_{\ast},\
	\mathcal{M}=\bigcup_{x\in \mathcal{X}_r}\Sigma^x_v,\text{ and }
	\Sigma=\bigcup_{x\in \mathcal{X}_r}\Sigma^x_u.
\end{equation}

By Lemma \ref{le:CoSu}, we have that 
\begin{equation}\label{eq:CoToSu}
	\HM^2(\mathcal{M})-\HM^2(\Sigma)\geq 10^{-4}\left(
	\HM^1(\Gamma_{\ast})-T\right),
\end{equation}
and by Lemma \ref{le:smallh}, we have that 
\begin{equation} \label{eq:XM}
	d_{0,1}(X,\mathcal{M})\leq 10 j(r)^{1/2}.
\end{equation}
\begin{lemma} \label{le:EM}
	If $\varepsilon(r)<1/2$, then for any $\varepsilon(r)<\varepsilon <1/2$, 
	there is a sliding minimal cone $Z=Z_r$ such that 
	\[
		d_{0,1}(X,Z)\leq 4 \varepsilon.
	\]
	Moreover 
	\[
		d_{0,r}(E,X)\leq 5 \varepsilon(r).
	\]
\end{lemma}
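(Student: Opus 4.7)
The plan is to feed the near-infimum provided by the definition \eqref{eq:ePY} of $\varepsilon(r)$ into the geometric data supplied by Lemmas \ref{le:1CuSp}--\ref{le:3CuSp}. Fix $\varepsilon\in(\varepsilon(r),1/2)$. Since $X$ and any sliding minimal cone with vertex at $0$ are cones, the normalized distance is scale invariant on them, so I interpret $d_{0,1}(X,Z)$ via the dilation $\scale{1/r}$ as equivalent to $d_{0,r}(X,Z)$. By the infimum definition of $\varepsilon_P(r)$ or $\varepsilon_Y(r)$, I pick $Z\in\mathbb{P}_+$ (respectively $\mathbb{Y}_+$) with $d_{0,r}(E,Z)<\varepsilon$; then every point of $E\cap\overline{B(0,r)}$ lies within $\varepsilon r$ of $Z$, and every point of $Z\cap B(0,r)$ lies within $\varepsilon r$ of $E$.

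For each $x\in\mathcal{X}_r$ and for $y_r$, I select companions $x^\star,y_r^\star\in Z$ with $|x-x^\star|\leq\varepsilon r$ and $|y_r-y_r^\star|\leq\varepsilon r$. The separation $|\mathfrak{X}_{r,1}-\mathfrak{X}_{r,2}|\geq(2-2\varepsilon)r$ from Lemma \ref{le:1CuSp}, or the approximate $120^\circ$ spacing of the three points from Lemma \ref{le:3CuSp}, combined with $\varepsilon<1/2$, forces the $\mathfrak{X}_{r,i}^\star$ onto distinct ends or spokes of $Z$ and places $y_r^\star$ near its central part (the apex of the semicircle for $\mathbb{P}_+$, or the spine for $\mathbb{Y}_+$); this pins down a unique matching half-plane $P\subseteq Z$ for each sector $D_{x,y_r}\subseteq X$. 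A generic point $\lambda y_r+\mu x$ in $D_{x,y_r}\cap B(0,r)$ is then matched with $\lambda y_r^\star+\mu x^\star\in P$, differing by at most $(\lambda+\mu)\varepsilon r$; since the opening angle of each sector is at least $\pi/3$ and both $y_r,x$ lie on $\partial B(0,r)$, the admissible coefficients in $B(0,r)$ satisfy a uniform bound $\lambda+\mu\leq C$, and a symmetric argument gives the reverse inclusion. Collating over all sectors yields $d_{0,1}(X,Z)\leq 4\varepsilon$. The moreover bound then follows from the triangle inequality: $d_{0,r}(E,X)\leq d_{0,r}(E,Z)+d_{0,r}(X,Z)\leq\varepsilon+4\varepsilon=5\varepsilon$, and letting $\varepsilon\downarrow\varepsilon(r)$ produces $d_{0,r}(E,X)\leq 5\varepsilon(r)$.

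The delicate point is the $\mathbb{Y}_+$ case, where Lemma \ref{le:3CuSp} only gives the coarse estimate $|y_r-\mathfrak{Z}_r|\leq\pi r/6$ rather than an $O(\varepsilon)$ bound on the distance from $y_r$ to the spine. The $\varepsilon r$ bound on $\dist(y_r,Z)$ itself is recovered from $y_r\in E$ together with $d_{0,r}(E,Z)<\varepsilon$, but one still has to argue that the companion $y_r^\star$ must sit near the spine rather than in the interior of one of the three half-planes of $Z$; otherwise the three sectors $D_{\mathfrak{X}_{r,i},y_r}$ could not simultaneously reach three different spokes of $Z$ while remaining within $\varepsilon r$ of $Z$. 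This combinatorial forcing uses the $120^\circ$ separation of the $\mathfrak{X}_{r,i}$ and the strict bound $\varepsilon<1/2$, and it is the step that locks in the particular constant $4$ in the stated estimate.
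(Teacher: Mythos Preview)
Your argument is essentially the paper's: pick $Z$ with $d_{0,r}(E,Z)<\varepsilon$, match each $x\in\mathcal{X}_r$ and $y_r$ with companions in $Z$, interpolate linearly across the sectors, and finish with the triangle inequality. The one substantive difference is the companion you choose for $y_r$. The paper does not take a nearest point $y_r^\star\in Z$; it uses the fixed north pole $\mathfrak{Z}_r=(0,0,r)$, which sits on the spine of \emph{every} $Z\in\mathbb{P}_+\cup\mathbb{Y}_+$, so the comparison segment $[x_z,\mathfrak{Z}_r]$ lies inside $Z$ automatically. Then $d_H([x,y_r],[x_z,\mathfrak{Z}_r])\le 2\varepsilon r$ together with $\dist(0,[x,y_r])>r/2$ and scale invariance give $d_{0,1}(X,Z)=d_{0,r/2}(X,Z)\le 4\varepsilon$ with no further case analysis.

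Your ``combinatorial forcing'' of $y_r^\star$ onto the spine in the $\mathbb{Y}_+$ case is circular as written: you appeal to the sectors $D_{\mathfrak{X}_{r,i},y_r}$ staying within $\varepsilon r$ of $Z$, but that is precisely the bound $d_{0,r}(X,Z)\le 4\varepsilon$ you are proving. Only the curves $\gamma_{r,i}\subset E$ are known a priori to be $\varepsilon r$-close to $Z$, and their closeness does not by itself pin $y_r$ near the spine (two of the curves could detour along the spine before reaching their spokes). The paper's shortcut via $\mathfrak{Z}_r$ sidesteps this entirely, at the price of needing $|y_r-\mathfrak{Z}_r|=O(\varepsilon r)$; Lemma~\ref{le:1CuSp} supplies that in the $\mathbb{P}_+$ case, while Lemma~\ref{le:3CuSp} records only the coarser $\pi r/6$, so the paper is also terse at the same spot.
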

\begin{proof}
	There exists sliding minimal cone $Z$ such that $d_{0,r}(E,Z)\leq
	\varepsilon$, thus for any $x\in \mathcal{X}_r$,
	there is $x_z\in Z\cap (L_0\cap \partial B_r)$ satisfying that
	$|x-x_z|\leq 2 \varepsilon r$. We get so that  
	\[
		d_H([x,y_r],[x_z,\mathfrak{Z}_r])\leq 2 \varepsilon r.
	\]
	Since $\dist(0,[x,y_r])>r/2$ for any $x\in \mathcal{X}_r$, we have that
	\[
		d_H(X\cap B(0,r/2) ,Z\cap B(0,r/2))\leq 2 \varepsilon r.
	\]
	Thus
	\[
		d_{0,1}(X,Z)=d_{0,r/2}(X,Z)\leq 4 \varepsilon,
	\]
	and 
	\[
		d_{0,r}(E,X)\leq d_{0,r}(E,Z)+d_{0,r}(Z,X)\leq 5 \varepsilon.
	\]
\end{proof}
\begin{lemma}\label{le:3vect}
	Let $0<\delta,\varepsilon< 1/2$ be positive numbers. Let $v_1,v_2,v_3\in
	\mathbb{R}^{3}$ be three unit vectors.  
	\begin{itemize}[nolistsep]
		\item If $ |\langle v_2,v_i \rangle|\leq \delta$ for $i=1,3$, then for any
			$v\in\mathbb{R}^{3}$ with $\langle v,v_2 \rangle=0$ and
			$\dist(v,\vect{v_1,v_2})\leq \varepsilon |v|$, we have that 
			\begin{equation}\label{eq:3vect1}
				|\langle v,v_3 \rangle-\langle v_1,v_3 \rangle\langle v,v_2 \rangle|\leq
				(\varepsilon+\delta)|v|, \text{ and }|\langle v,v_1 \rangle|\geq
				(1-\varepsilon-\delta)|v|.
			\end{equation}
		\item If $\langle v_1,v_3
			\rangle<1$ and $0<\delta<10^{-2}(1-\langle v_1,v_3 \rangle)^2$, then for
			any $w_1,w_3\in \mathbb{R}^{3}$ with $\langle v_i,w_i \rangle
			\geq (1-\delta) |w_i| $, $i=1,3$, we have that 
			\begin{equation}\label{eq:3vect2}
				|w_1|+|w_3|\leq \sqrt{2}\cdot \left(1-\langle v_1,v_3
				\rangle-4\sqrt{2\delta}\right)^{-1/2}|w_1-w_3|.
			\end{equation}
	\end{itemize}
\end{lemma}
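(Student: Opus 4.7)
The proof is pure linear algebra in $\mathbb{R}^{3}$; no outside machinery is needed. I would dispatch the two bullets independently.

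For part (1), I would decompose $v$ in the nearly-orthonormal frame spanned by $v_1, v_2$ and a unit normal $n \perp \vect{v_1,v_2}$. Writing $v = a v_1 + b v_2 + c n$, the hypothesis $\dist(v,\vect{v_1,v_2})\le \varepsilon|v|$ yields $|c|\le \varepsilon |v|$, while $\langle v,v_2\rangle = 0$ together with $\langle n,v_2\rangle = 0$ gives $b = -a\langle v_1,v_2\rangle$ and hence $|b|\le \delta|a|$. The estimate $|\langle v,v_1\rangle|\ge (1-\varepsilon-\delta)|v|$ then follows by expanding $\langle v,v_1\rangle = a + b\langle v_1,v_2\rangle + c\langle n,v_1\rangle$ and observing that the coefficient of $a$ is at least $1-\delta^{2}$, while a routine estimate from $|v|^{2}=a^{2}+2ab\langle v_1,v_2\rangle+b^{2}+c^{2}$ controls $|a|$ from below in terms of $|v|$. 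The bound on $\langle v,v_3\rangle - \langle v_1,v_3\rangle\langle v,v_2\rangle$ is obtained by the same expansion and using $|\langle v_2,v_3\rangle|\le \delta$ together with $|c|\le \varepsilon|v|$.

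For part (2), I would normalize by writing $w_i = |w_i|u_i$ with $u_i$ unit, so that $\langle v_i,u_i\rangle \ge 1-\delta$. Decomposing $u_i = \langle v_i,u_i\rangle v_i + r_i$ with $r_i\perp v_i$, one gets $|r_i|\le \sqrt{2\delta-\delta^{2}}\le \sqrt{2\delta}$. Expanding
\[
\langle u_1,u_3\rangle = \langle v_1,u_1\rangle\langle v_3,u_3\rangle\langle v_1,v_3\rangle + \langle v_1,u_1\rangle\langle v_1,r_3\rangle + \langle v_3,u_3\rangle\langle r_1,v_3\rangle + \langle r_1,r_3\rangle
\]
and applying Cauchy--Schwarz gives, after separating the cases $\langle v_1,v_3\rangle \ge 0$ and $<0$, the upper bound $\langle u_1,u_3\rangle \le \langle v_1,v_3\rangle + 4\sqrt{2\delta}$ for small enough $\delta$; the assumption $\delta < 10^{-2}(1-\langle v_1,v_3\rangle)^{2}$ guarantees the right-hand side is strictly less than $1$. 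I would then use the identity
\[
|w_1 - w_3|^{2} = (|w_1| + |w_3|)^{2} - 2|w_1||w_3|(1 + \langle u_1,u_3\rangle)
\]
together with the AM--GM bound $2|w_1||w_3|\le \tfrac{1}{2}(|w_1|+|w_3|)^{2}$ to conclude
\[
|w_1-w_3|^{2} \;\ge\; \tfrac{1}{2}(|w_1|+|w_3|)^{2}\bigl(1-\langle u_1,u_3\rangle\bigr) \;\ge\; \tfrac{1}{2}(|w_1|+|w_3|)^{2}\bigl(1-\langle v_1,v_3\rangle - 4\sqrt{2\delta}\bigr),
\]
which rearranges to the desired inequality.

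The only delicate step is the bookkeeping in the upper bound for $\langle u_1,u_3\rangle$ in part (2): since $r_1,r_3$ carry no a priori sign, the cross terms $\langle v_1,r_3\rangle$ and $\langle r_1,v_3\rangle$ must be absorbed with absolute values, which is what forces the $\sqrt{\delta}$ rate (rather than $\delta$) and in turn dictates the exponent $-1/2$ and the quadratic smallness hypothesis $\delta < 10^{-2}(1-\langle v_1,v_3\rangle)^{2}$. Once that bound is in hand the rest is an algebraic identity and a single application of AM--GM.
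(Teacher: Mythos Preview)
Your proposal is correct and follows essentially the same route as the paper: for part~(1) both arguments decompose $v$ in the span of $v_1,v_2$ plus an orthogonal remainder and read off the two estimates, and for part~(2) both normalize to $u_i=w_i/|w_i|$, establish $\langle u_1,u_3\rangle\le \langle v_1,v_3\rangle+4\sqrt{2\delta}$, and finish with the same algebraic identity combined with $2|w_1||w_3|\le\frac12(|w_1|+|w_3|)^2$. The only cosmetic difference is that the paper obtains the angle bound more directly from $|u_i-v_i|^2\le 2\delta$ and the triangle inequality on $|u_1-u_3|$, which avoids your case split on the sign of $\langle v_1,v_3\rangle$; also note that the displayed inequality in the statement contains a typo ($\langle v,v_2\rangle$ should read $\langle v,v_1\rangle$, as the paper's own proof makes clear), so when you flesh out part~(1) you will in fact be proving the corrected version.
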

\begin{proof}
	We write $v=v^{\perp}+\lambda_1 v_1+\lambda_2 v_2$, $\lambda_i\in
	\mathbb{R}$, $\langle v^{\perp},v_i \rangle=0$. 
	Since  $\langle v,v_2 \rangle=0$, we have that $\lambda_2=-\lambda_1\langle v_1,v_2 \rangle$, thus
	\[
		\lambda_1=\frac{\langle v,v_1 \rangle}{1-\langle v_1,v_2 \rangle^2},\
		\lambda_2=-\frac{\langle v,v_1 \rangle\langle v_1,v_2 \rangle}{1-\langle
		v_1,v_2 \rangle^2},
	\]
	we get so that 
	\begin{equation}\label{eq:3vectdecom}
		v=v^{\perp}+\frac{\langle v,v_1 \rangle v_1- \langle v,v_1 \rangle\langle
		v_1,v_2 \rangle v_2}{1-\langle v_1,v_2 \rangle^2},
	\end{equation}
	and then
	\[
		\langle v,v_3 \rangle=\langle v^{\perp},v_3 \rangle+\frac{\langle v_1,v_3
		\rangle-\langle v_2,v_3 \rangle\langle v_1,v_2 \rangle}{1-\langle v_1.v_2
		\rangle^2}\langle v,v_1 \rangle, 
	\]
	thus
	\[
		|\langle v,v_3 \rangle-\langle v_1,v_3 \rangle\langle v,v_1 \rangle|\leq
		\varepsilon |v|+\frac{\delta^2+\delta}{1-\delta^2}|v|\leq
		(\varepsilon+2\delta)|v|.
	\]
	We get also, from \eqref{eq:3vectdecom}, that 
	\[
		|v|\leq |v^{\perp}|+\frac{1+|\langle v_1.v_2 \rangle|}{1-\langle v_1,v_2
		\rangle^2}|\langle v,v_1 \rangle|\leq \varepsilon
		|v|+\frac{1}{1-\delta}|\langle v,v_1 \rangle|,
	\]
	thus 
	\[
		|\langle v,v_1 \rangle|\geq (1-\varepsilon)(1-\delta) |v|\geq
		(1-\varepsilon-\delta)|v|.
	\]

	We can certainly assume $w_i\neq 0$, otherwise the inequality
	\eqref{eq:3vect2} will be trivial true. Since $\langle v_i,w_i \rangle 
	\geq (1-\delta)|w_i|$, we have that $\langle v_i,w_i/|w_i| \rangle \geq
	1-\delta$, and
	\[
		\Big|v_i-w_i/|w_i|\Big|^2=2-2 \langle v_i,w_i/|w_i| \rangle \leq 2 \delta.
	\]
	Thus
	\[
		\begin{aligned}
			\left| \frac{w_1}{|w_1|}-\frac{w_2}{|w_2|}\right|^2&=\left|
			\left(\frac{w_1}{|w_1|}-v_1\right)-\left(\frac{w_2}{|w_2|}-v_2\right)+
			(v_1-v_2)\right|^2\\
			&\geq |v_1-v_2|^2-2|v_1-v_2|\left(\left|\frac{w_1}{|w_1|}-v_1\right|+
			\left|\frac{w_2}{|w_2|}-v_2\right|\right)\\
			&\geq 2-2 \langle  v_1,v_2\rangle-8\sqrt{2 \delta},
		\end{aligned}
	\]
	and 
	\[
		\langle w_1,w_2 \rangle
		=|w_1||w_2|\left\langle \frac{w_1}{|w_1|},\frac{w_2}{|w_2|}
		\right\rangle\leq
		|w_1||w_2| \left(\langle  v_1,v_2\rangle+4\sqrt{2 \delta}\right).
	\]
	Hence 
	\[
		|w_1-w_2|^2\geq |w_1|^2+|w_2|^2-2|w_1||w_2|\left(\langle
		v_1,v_2\rangle+4\sqrt{2 \delta}\right)\geq (1-s)(|w_1|+|w_2|)^2,
	\]
	where $s=(1+\langle v_1,v_2 \rangle+ 4\sqrt{2 \delta})/2\in (0,1)$.
\end{proof}
\begin{lemma} \label{le:BLNR}
	For any $r\in (0,\mathfrak{r})\cap \mathscr{R}_1$, we let $\Sigma$ be
	as in \eqref{eq:CoLip}. Then there is a Lipschitz mapping $p:\Omega_0\to 
	\Sigma$ with $\Lip(p)\leq 50$, such that $p(z)\in L$ for $z\in L$, and that 
	$p(z)=z$ for $z\in\Sigma$.
\end{lemma}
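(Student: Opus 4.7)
My plan is to construct $p$ as a composition $p = \pi \circ q$, where $\pi \colon X \to \Sigma$ is the graph parametrization of $\Sigma$ over the flat cone $X = \bigcup_{x \in \mathcal{X}_r} D_{x,y_r}$, and $q \colon \Omega_0 \to X$ is a Lipschitz retraction onto $X$ whose restriction to $\Sigma$ inverts $\pi$. On each sector I set $\pi(\xi) = \xi + u_x(\xi)\nu_x$, where $\nu_x$ is the unit normal to $P_{x,y_r}$ and $u_x = u \circ \mathcal{R}_{x,y_r}$. By Lemma \ref{le:CoSu}, $u_x$ vanishes on both radial edges $[0,y_r]$ and $[0,x]$, so the sectorial pieces of $\pi$ glue continuously along the common spine $[0,y_r]$, and $\pi$ fixes $[0,x] \subseteq L_0$ pointwise. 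Since $\Lip(u_x) \leq C\eta$ is small, $\Lip(\pi) \leq 1 + C\eta \leq 2$.

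To build $q$, rotate coordinates so that $y_r$ lies on the positive $x_3$-axis. Then the spine lies on the $x_3$-axis and each $D_{x,y_r}$ sits in a vertical half-plane $P_{x,y_r}$ at some azimuthal angle $\phi_x$; by Lemma \ref{le:EM} the distinct $\phi_x$'s differ by at least $2\pi/3 - O(\varepsilon)$ (case $\mathbb{Y}_+$) or by nearly $\pi$ (case $\mathbb{P}_+$), and the opening angles $T_x \in [\pi/3, 2\pi/3]$ are bounded. Partition $\Omega_0$ into angular wedges $\mathcal{W}_x$ centered on each $\phi_x$ (half-angle equal to half the angular gap). On the inner portion of $\mathcal{W}_x$ containing $\Sigma^x_u$ — which has azimuthal width $O(\eta)$, hence sits well inside — I define $q$ as orthogonal projection onto $P_{x,y_r}$ composed with the nearest-point projection onto the convex triangle $D_{x,y_r}$. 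Near the bisector between adjacent wedges, I interpolate continuously with the spine projection $(\rho,\phi,x_3) \mapsto (0,0,x_3)$ via a piecewise-linear taper in $\phi$ that vanishes at the bisector, so that the two adjacent wedge definitions agree there. Finally, for $x_3 > r$, collapse the slice to $y_r$ using a Lipschitz interpolation in $x_3$. Standard estimates using the dihedral-angle lower bound $2\pi/3$ (giving a tangential-projection factor $\leq 1/\sin(\pi/3) = 2/\sqrt{3}$) and the bounded opening angles give $\Lip(q) \leq 25$.

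The composition $p = \pi \circ q$ then has $\Lip(p) \leq 50$. For $p|_\Sigma = \mathrm{id}$: any $z = \pi(\xi) \in \Sigma^x_u$ lies in the inner region of $\mathcal{W}_x$ where $q$ is orthogonal projection onto $P_{x,y_r}$; since $\nu_x \perp P_{x,y_r}$, we have $q(z) = \xi$, hence $\pi(q(z)) = z$. For $p(L) \subseteq L$: orthogonal projection onto $P_{x,y_r}$ restricted to $L_0$ equals projection onto the line through $[0,x]$ within $L_0$, the nearest-point step lands on $[0,x] \subseteq L_0$, the spine projection sends $L_0$ onto $\{0\} \subseteq L_0$, and $\pi$ fixes $\bigcup_x [0,x]$. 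The main obstacle is uniform Lipschitz control across the bisectors and near the singular locus (spine $[0,y_r]$ and vertex $0$); the dihedral-angle lower bound $\geq 2\pi/3$ together with the taper-vanishing-at-bisector choice handle both, but the bookkeeping of constants (radial, tangential, and normal components of the interpolated map) is the genuinely technical part of the argument.
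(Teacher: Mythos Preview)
Your factorisation $p=\pi\circ q$ and the wedge decomposition are close in spirit to the paper's construction, but there is a genuine gap in the verification that $p(L_0)\subseteq L_0$. Your argument relies on the claim that ``orthogonal projection onto $P_{x,y_r}$ restricted to $L_0$ equals projection onto the line through $[0,x]$ within $L_0$''. This is only true when $P_{x,y_r}\perp L_0$, equivalently when $\nu_x\in L_0$, equivalently when the vertical vector $\mathfrak{Z}_r$ lies in $P_{x,y_r}$. But the plane $P_{x,y_r}$ is spanned by $x\in L_0$ and $y_r$, and by Lemma~\ref{le:1CuSp}/\ref{le:3CuSp} we only know $|y_r-\mathfrak{Z}_r|\leq\varepsilon r$, not $y_r=\mathfrak{Z}_r$. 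Concretely, if $x=(r,0,0)$ and $y_r=(0,\epsilon r,r\sqrt{1-\epsilon^2})$ with $\epsilon>0$, then for $z=(a,b,0)\in L_0$ the orthogonal projection onto $P_{x,y_r}$ is $(a,\,b\epsilon^2,\,b\epsilon\sqrt{1-\epsilon^2})$, which has nonzero third coordinate whenever $b\neq 0$. The same defect contaminates the spine projection once you rotate $y_r$ onto the $x_3$-axis: the rotated $L_0$ is no longer $\{x_3=0\}$, and $(\rho,\phi,x_3)\mapsto(0,0,x_3)$ does not preserve it either.

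The paper avoids this by pushing along directions $e_i$ chosen \emph{in} $L_0$ (oblique, not orthogonal, projection) to collapse $\Omega_0$ onto a thin conical neighbourhood $R$ of $X$; this manifestly keeps $L_0$ inside $L_0$, and the key computation (Lemma~\ref{le:3vect}) shows the obliqueness is controlled so the Lipschitz constant stays bounded. One then checks that $L_0\cap R$ reduces to the rays $\bigcup_{x\in\mathcal{X}_r}[0,x]$, on which the final perpendicular lift $p_2$ acts as the identity because $u$ vanishes on those edges. Replacing your orthogonal projection by projection along a fixed direction in $L_0$ perpendicular to $x$ would repair the first issue, but you would then need to redo the gluing near the bisectors with the same care.
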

\begin{proof}
	By definition, we have that 
	\begin{equation}\label{eq:BLNR1}
		\Sigma\setminus B\left( 0,9/10 \right)=\mathcal{M}\setminus B\left(
		0,9/10 \right),
	\end{equation}
	and that 	
	\begin{equation}\label{eq:BLNR2}
		\Sigma\cap B(0,2\kappa)=X\cap B(0,2\kappa).
	\end{equation}
	For any $z\in\Omega_0\setminus \{0\}$, we denote by $\ell(z)$ the 
	line which is through $0$ and $z$. Then $\partial
	D_{x,y_r}=\ell(x)\cup \ell(y_r)$. 
	We fix any $\sigma\in (0,10^{-3})$, put 
	\[
		\begin{gathered}
			R^x=\{ z\in \Omega_0\mid \dist(z,D_{x,y_r})\leq
			\sigma\dist(z,\partial D_{x,y_r}) \}, \\
			R_1^x=\{ z\in \Omega_0\mid \dist(z,D_{x,y_r})\leq
			\sigma\dist(z,\ell(y_r)) \},
		\end{gathered}
	\]
	and 
	\[
		R=\bigcup_{x\in \mathcal{X}_r}R^x,
		R_1=\bigcup_{x\in \mathcal{X}_r}R_1^x.
	\]
	Then we see that $R^x\subseteq R_1^x$, and that both of them are cones,  
	\[
		R^{x_i}\cap R^{x_j}=R_1^{x_i}\cap R_1^{x_j}=\ell(y_r) \text{ for } 
		x_i,x_j\in \mathcal{X}_r, x_i\neq x_j.
	\]

	Since $\Sigma^x_{u}$ is a small Lipschitz graph over $D_{x,y_r}$ bounded by 
	two half lines of $\partial D_{x,y_r}$ with constant at most $\eta$,
	there is a constant $\bar{\eta}$ such that 
	\[
		\Sigma^{x}_{u}\subseteq R^x,
	\]
	when $0<\eta< \bar{\eta}$.

	We will construct a Lipschitz retraction $p_0:\Omega_0\to R_1$ such that
	$p_0(z)=z$ for $z\in R_1$, $p_0(z)\in L$ for $z\in L$, and $\Lip(p_0)\leq 3$.
	We now distinguish two cases, depending on cardinality of $\mathcal{X}_r$.

	Case 1: $\card(\mathcal{X}_r)=2$. We assume that $\mathcal{X}_r=\{ x_1,x_2\}$.
	Then $|y_r|=|x_1|=|x_2|=r$, and 
	\[
		0\leq \langle x_1,x_2 \rangle +r^2\leq 2\varepsilon^2r^2.
	\]
	Since $|y_r-\mathfrak{Z}_r|\leq
	\varepsilon r$, we have that $|\langle y_r,x \rangle|\leq \varepsilon r^2$
	for any $x\in L\cap\partial B(0,r)$. 

	We now let $e_1$ and $e_2$ be two unit vectors in $L$ such that $\langle 
	x_1,e_1 \rangle =\langle x_2,e_1 \rangle\geq 0$ and $e_2=-e_1$. Then 
	\[
		0\leq \langle x_i,e_1 \rangle\leq \varepsilon r.
	\]
	We let $\Omega_1'$ and $\Omega_2'$ be the two connected components of
	$\Omega_0\setminus (\cup_{i} D_{x_i,y_r})$ such that $e_i\in \Omega_i'$.
	We put $\Omega_i=\Omega_i'\setminus R_1$.
	We claim that
	\begin{equation}\label{eq:BLNR11}
		|\langle  z_1-z_2, e_i\rangle|\leq 5(\sigma+\varepsilon)|z_1-z_2| 
	\end{equation}
	whenever  $z_1,z_2\in\partial\Omega_i$, $z_1\neq z_2$, $i\in \{1,2\}$.

	Without loss of generality, we assume $z_1,z_2\in \partial\Omega_1$, because
	for another case we will use the same treatment. We see that
	\[
		\dist(z_i,D_{x_j,y_r})=\sigma\dist(z_i,\ell(y_r)).
	\]
	\begin{figure}[!htb]
		\centering
		\includegraphics[width=1in]{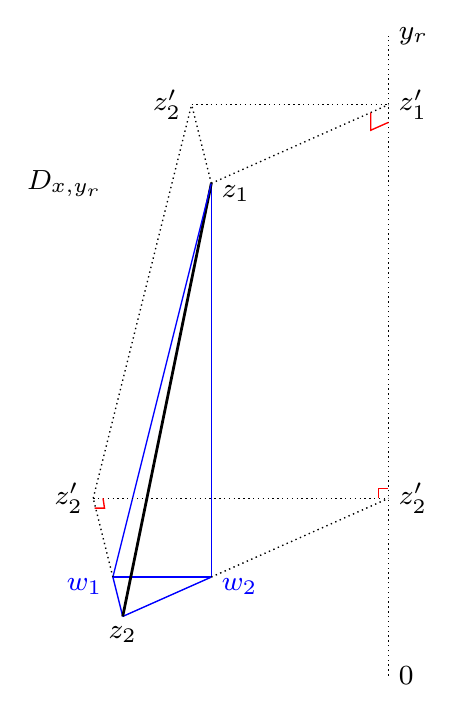}
		\caption{the angle between $z_1-z_2$ and $D_{x,y_r}$ is small.}
		\label{fig:z1z2}
	\end{figure}
	\begin{enumerate}[nolistsep,label=\emph{(\arabic*)}]
		\item In case $z_1,z_2\in \partial R_1^{x_i}\cap \Omega_1$, without loss of
			generality, we assume that $z_1,z_2\in \partial R_1^{x_1}\cap \Omega_1$. 
			We let $\widetilde{z}_i\in D_{x_1,y_r}$ be 
			such that 
			\[
				z_i-\widetilde{z}_i=\dist(z_i,D_{x_1,y_r}),\ i=1,2,
			\]
			and let $z_i'\in \ell(y_r)$ be such that 
			\[
				|z_i-z_i'|=\dist(z_i,\ell(y_r)),
			\]
			and put 
			\[
				w_1=z_1-\widetilde{z}_1+\widetilde{z}_2,\ w_2=z_1-z_1'+z_2',
			\]
			then we get that $z_1-z_2=(z_1-w_2)+(w_2-z_2)$. Moreover, we have that
			$z_1-w_2$ is perpendicular to $w_2-z_2$ and parallel to $y_r$. 
			Thus $|w_2-z_2|\leq |z_1-z_2|$, $|z_1-w_2|\leq |z_1-z_2|$ and 
			\[
				\dist(w_2-z_2,\vect{x_1,y_r})=\sigma|w_2-z_2|.
			\]
			We apply Lemma \ref{le:3vect} to get that 
			\[
				|\langle z_1-w_2,e_1\rangle|\leq \varepsilon|z_1-w_2 |
			\]
			and 
			\[
				|\langle w_2-z_2,e_1 \rangle|\leq
				\left( \sigma+3\varepsilon \right)|w_2-z_2|, 
			\]
			thus
			\[
				|\langle z_1-z_2,e_1 \rangle|\leq |\langle z_1-w_2,e_1
				\rangle|+|\langle w_2-z_2,e_1 \rangle|\leq 
				\left(  \sigma+4\varepsilon\right)|z_1-z_2|.  
			\]

		\item In case $z_1\in \partial R^{x_1}\cap \Omega_1$, $z_2\in \partial R^{x_2}\cap
			\Omega_1$. We let $\widetilde{z}_i\in D_{x_i,y_r}$ be such that 
			\[
				|z_i-\widetilde{z}_i|=\dist(z_i,D_{x_i,y_r}),\ i=1,2,
			\]
			and let $z_{i}'\in
			\ell(y_r)$ be such that 
			\[
				|z_i-z_i'|=\dist(z_i,\ell(y_r)),\ i=1,2.
			\]
			Then by Lemma \ref{le:3vect}, we have that
			\[
				\left\langle z_i-z_i',\frac{x_i}{|x_i|} \right\rangle\geq
				(1-\sigma-\varepsilon)|z_i-z_i'|,\ i=1,2.
			\]
			Since $z_1-z_2=(z_1-z_1')+(z_2'-z_2)+(z_1'-z_2')$, 
			\[
				|\langle z_1'-z_2',e_1
				\rangle|\leq \varepsilon |z_1'-z_2'|\leq \varepsilon |z_1-z_2|
			\]
			and 
			\[
				|\langle z_i-z_i',e_1 \rangle|\leq \left( \sigma+\varepsilon
				\right)|z_i-z_i'|,
			\]
			we get that 
			\begin{equation}\label{eq:BLNR92}
				\begin{aligned}
					|\langle z_1-z_2,e_1 \rangle|&\leq |\langle z_1-z_1',e_1 \rangle|+|\langle
					z_2'-z_2,e_1 \rangle|+|\langle z_1'-z_2',e \rangle|\\
					&\leq 2\cdot \left(  \sigma+\varepsilon\right)
					(|z_1-z_1'|+|z_2-z_2'|)+\varepsilon |z_1-z_2|.
				\end{aligned}
			\end{equation}
			Since $z_1'-z_2'$ is perpendicular to $z_1-z_1'$ and $z_2-z_2'$, and 
			\[
				\left\langle z_i-z_i',\frac{x_i}{|x_i|} \right\rangle\geq
				(1-\sigma-\varepsilon)|z_i-z_i'|,\ i=1,2,
			\]
			and 
			\[
				\left\langle \frac{x_1}{|x_1|},\frac{x_2}{|x_2|}  \right\rangle\leq
				-1+2\varepsilon^2,
			\]
			we get, by Lemma \ref{le:3vect}, that  
			\[
				|z_1-z_1'|+|z_2-z_2'|\leq \left(
				1-\varepsilon^2-5\sqrt{\sigma+\varepsilon}
				\right)^{-1/2}|(z_1-z_1')-(z_2-z_2')|\leq 2 |z_1-z_2|.
			\]
			Thus 
			\[
				\langle z_1-z_2,e \rangle\leq (4\sigma +5\varepsilon) |z_1-z_2|.
			\]
	\end{enumerate}

	We now define $p_0:\Omega_0\to R_1$ as follows:
	for any $z\in \Omega_i$, we let $p_0(z)$ be the unique point in
	$\partial \Omega_i$ such that $p_0(z)-z$
	parallels $e$; and for any $z\in R_1$, we let $p_0(z)=z$. Since $p_0(z)-z$
	parallels $e$, we see that $p_0(L)\subseteq L$. We will check that 
	\begin{equation}\label{eq:BLNR12}
		p_0\text{ is Lipschitz with }\Lip(p_0)\leq \frac{2}{1-5(\sigma+\varepsilon)}.
	\end{equation}
	Indeed, for any $z_1,z_2\in \Omega_0$, we put 
	\[
		p_0(z_i)=z_i+t_ie,\ t_i\in \mathbb{R},
	\]
	then 
	\[
		\begin{aligned}
			|t_1-t_2|&=|\langle (t_1-t_2)e,e \rangle|\\
			&\leq |\langle p_0(z_1)-p_0(z_2),e \rangle|+|\langle z_1-z_2,e
			\rangle|\\
			&\leq 5(\sigma+\varepsilon)|p_0(z_1)-p_0(z_2)|+|z_1-z_2|,
		\end{aligned}
	\]
	and 
	\[
		|p_0(z_1)-p_0(z_2)|\leq |z_1-z_2|+|t_1-t_2|\leq
		5(\sigma+\varepsilon)|p_0(z_1)-p_0(z_2)|+2|z_1-z_2|,
	\]
	thus 
	\[
		|p_0(z_1)-p_0(z_2)|\leq \frac{2}{1-5(\sigma+\varepsilon)}|z_1-z_2|.
	\]

	Case 2: $\card(\mathcal{X}_r)=3$. We assume that $\mathcal{X}_r=\{
	x_1,x_2,x_3\}$, then 
	\[
		|\langle x_i,y_r \rangle|\leq \varepsilon r^2, \left(-\sqrt{3}\varepsilon
		-\frac{1}{2}\right) r^2\leq  \langle x_i,x_j \rangle\leq \left(-\frac{1}{2}+
		2\varepsilon\right) r^2.
	\]

	We put 
	\begin{equation}\label{eq:BLNR20}
		e_1=\frac{x_{2}+x_{3}}{|x_{2}+x_{3}|},
		e_2=\frac{x_{1}+x_{3}}{|x_{1}+x_{3}|},
		e_3=\frac{x_{2}+x_{1}}{|x_{2}+x_{1}|},
	\end{equation}
	and let $\Omega_1'$, $\Omega_2'$ and $\Omega_3'$ be the three connected 
	components of $\Omega_0\setminus (\cup_{i} D_{x_i,y_r})$ such that $e_i\in
	\Omega_i'$. By putting $\Omega_i=\Omega_i'\setminus R_1$,  
	we claim that
	\begin{equation}\label{eq:BLNR21}
		\left( \frac{1}{2}-5(\sigma+\varepsilon) \right)|z_1-z_2|\leq |\langle
		z_1-z_2,e_i\rangle|\leq \left( \frac{1}{2}+5(\sigma+\varepsilon) \right)|z_1-z_2| 
	\end{equation}
	whenever $z_1,z_2\in\partial\Omega_i$, $z_1\neq z_2$, $i\in \{1,2,3\}$.

	Indeed, we only need to check the case $z_1,z_2\in \partial\Omega_1$, and
	the other two cases will be the same. Since $-\sqrt{3}\varepsilon
	-1/2\leq  \langle x_i,x_j \rangle\leq 1/2+ 2\varepsilon$, we have that
	$(1/2-\varepsilon)r\leq \langle x_i,e_1 \rangle\leq (1/2+\varepsilon)r$ for
	$i=2,3$.

	If $z_1,z_2\in \partial R_1^{x_2}\cap \Omega_1$ or $z_1,z_2\in \partial
	R_1^{x_3}\cap \Omega_1$, we assume that $z_1,z_2\in \partial R_1^{x_2}\cap
	\Omega_1$, and let $\widetilde{z}_i\in D_{x_2,y_r}$ be such that 
	\[
		z_i-\widetilde{z}_i=\dist(z_i,D_{x_2,y_r}),\ i=1,2,
	\]
	and let $z_i'\in \ell(y_r)$ be such that 
	\[
		|z_i-z_i'|=\dist(z_i,\ell(y_r)),
	\]
	and put 
	\[
		w_1=z_1-\widetilde{z}_1+\widetilde{z}_2,\ w_2=z_1-z_1'+z_2',
	\]
	then we get that $z_1-w_2$ is perpendicular to $w_2-z_2$ and parallel to
	$y_r$. Since $z_1-z_2=(z_1-w_2)+(w_2-z_2)$, we have that 
	$|w_2-z_2|\leq |z_1-z_2|$, $|z_1-w_2|\leq |z_1-z_2|$ and 
	\[
		\dist(w_2-z_2,\vect{x_1,y_r})=\sigma|w_2-z_2|.
	\]
	We apply Lemma \ref{le:3vect} to get that 
	\[
		|\langle z_1-w_2,e_1\rangle|\leq \varepsilon|z_1-w_2 |
	\]
	and 
	\[
		|\langle w_2-z_2,e_1 \rangle|\leq
		\left( \frac{1}{2}+\varepsilon+\sigma+\varepsilon \right)|w_2-z_2|, 
	\]
	thus
	\[
		|\langle z_1-z_2,e_1 \rangle|\leq |\langle z_1-w_2,e_1
		\rangle|+|\langle w_2-z_2,e_1 \rangle|\leq 
		\left( \frac{1}{2}+ \sigma+3\varepsilon\right)|z_1-z_2|.  
	\]

	If $z_1\in \partial R^{x_2}\cap \Omega_1$, $z_2\in \partial 
	R^{x_3}\cap \Omega_1$, we let $\widetilde{z}_i\in D_{x_i,y_r}$ be such that 
	\[
		|z_1-\widetilde{z}_1|=\dist(z_1,D_{x_2,y_r}),\
		|z_2-\widetilde{z}_2|=\dist(z_2,D_{x_3,y_r})
	\]
	and let $z_{i}'\in
	\ell(y_r)$ be such that 
	\[
		|z_i-z_i'|=\dist(z_i,\ell(y_r)),\ i=1,2.
	\]
	Since $z_1-z_2=(z_1-z_1')+(z_2'-z_2)+(z_1'-z_2')$, 
	\[
		|\langle z_1'-z_2',e_1
		\rangle|\leq \varepsilon |z_1'-z_2'|\leq \varepsilon |z_1-z_2|
	\]
	and 
	\[
		|\langle z_i-z_i',e_1 \rangle|\leq \left(\frac{1}{2}+\varepsilon+ \sigma+\varepsilon
		\right)|z_i-z_i'|,
	\]
	we get that 
	\begin{equation}\label{eq:BLNR123}
		\begin{aligned}
			|\langle z_1-z_2,e_1 \rangle|&\leq |\langle z_1-z_1',e_1 \rangle|+|\langle
			z_2'-z_2,e_1 \rangle|+|\langle z_1'-z_2',e \rangle|\\
			&\leq \left( \frac{1}{2}+ \sigma+2\varepsilon\right)
			(|z_1-z_1'|+|z_2-z_2'|)+\varepsilon |z_1-z_2|.\\
		\end{aligned}
	\end{equation}
	By Lemma \ref{le:3vect}, we have that 
	\[
		\left\langle z_1-z_1',\frac{x_2}{|x_2|}\right\rangle\geq 
		(1-\sigma-\varepsilon)|z_1-z_1'|
	\]
	and 
	\[
		\left\langle z_2-z_2',\frac{x_3}{|x_3|}\right\rangle\geq 
		(1-\sigma-\varepsilon)|z_2-z_2'|.
	\]
	Applying Lemma \ref{le:3vect} with $\langle x_2/|x_2|,x_3/|x_3| \rangle\leq
	-1/2+2\varepsilon$, we get that 
	\[
		\begin{aligned}
			|z_1-z_1'|+|z_2-z_2'|&\leq \left( \frac{2}{1+1/2-2\varepsilon
			-4\sqrt{2\sigma+2\varepsilon}} \right)^{1/2}|(z_1-z_1')-(z_2-z_2')| \\
			&\leq \frac{2}{\sqrt{3}}\left(
			1-\frac{2\varepsilon+4\sqrt{2\sigma+2\varepsilon}}{3} \right)|z_1-z_2|.
		\end{aligned}
	\]
	We get, from \eqref{eq:BLNR123}, that 
	\begin{equation}\label{eq:BLNR134}
		|\langle z_1-z_2,e_1 \rangle|\leq \frac{2}{3}|z_1-z_2|.
	\end{equation}

	For any $z\in \Omega_i$, we now let $p_0(z)$ be the unique point in
	$\partial \Omega_i$ such that $p_0(z)-z$ parallels $e$; and for $z\in R_1$,
	we let $p_0(z)=z$. Then $p_0(L)\subseteq L$. We will check that 
	\begin{equation}\label{eq:BLNR23}
		p_0\text{ is Lipschitz with }\Lip(p_0)\leq 6.
	\end{equation}

	For any $z_1,z_2\in \Omega_i$, we put 
	\[
		p_0(z_j)=z_j+t_je_i,\ t_i\in \mathbb{R},\ j=1,2,
	\]
	then 
	\[
		\begin{aligned}
			|t_1-t_2|&=|\langle (t_1-t_2)e_i,e_i \rangle|\\
			&\leq |\langle p_0(z_1)-p_0(z_2),e_i \rangle|+|\langle z_1-z_2,e_i
			\rangle|\\
			&\leq \frac{2}{3}|p_0(z_1)-p_0(z_2)|+|z_1-z_2|,
		\end{aligned}
	\]
	and 
	\[
		|p_0(z_1)-p_0(z_2)|\leq |z_1-z_2|+|t_1-t_2|\leq
		\frac{2}{3}|p_0(z_1)-p_0(z_2)|+2|z_1-z_2|,
	\]
	thus 
	\[
		|p_0(z_1)-p_0(z_2)|\leq 6|z_1-z_2|.
	\]

	By the definition of $R^x$ and $R_1^x$, we have that 
	\[
		R^x=\{ z\in R_1^x \mid \dist(z,D_{x,y_r})\leq \sigma\dist(z,\ell(x))\}.
	\]
	Similar as above, we can that, for any $z_1,z_2\in R_1^x \cap \partial R^{x}$ with
	$[z_1,z_2]\cap D_{x,y_r}=\emptyset$, if $\card(\mathcal{X}_r)=2$ then
	\[
		|\langle z_1-z_2,e_i \rangle|\leq 5(\sigma+\varepsilon)|z_1-z_2|;
	\]
	if $\card(\mathcal{X}_r)=3$ then 
	\[
		|\langle z_1-z_2,e_i \rangle|\leq \left( \frac{1}{2}+\sigma+3\varepsilon
		\right)|z_1-z_2|,
	\]
	where $e_i$ is the vector in \ref{eq:BLNR20} such that
	$z_1,z_2\in \Omega_i$. 

	We now consider the mapping $p_1:R_1\to R$ defined by 
	\[
		p_1(z)=
		\begin{cases}
			z,& \text{ for }z\in R,\\
			z-te_i\in \partial R \cap \Omega_i,&\text{ for }z\in \Omega_i.
		\end{cases}
	\]
	By the same reason as above, we get that 
	\begin{equation}\label{eq:BLNR50}
		\Lip(p_1)\leq \frac{2}{1-1/2-\sigma-3\varepsilon}\leq 5.
	\end{equation}

	We define a mapping $p_2:R\cap \overline{B(0,1)}\to \Sigma$ as follows:
	we know $\Sigma^x_{u}$ is the graph of $u$ over $D_{x,y_r}$, thus for any
	$z\in R^x$, there is only one point in the intersection of $\Sigma^x_u$ and
	the line which is perpendicular to $D_{x,y_r}$ and through $z$, we define
	$p_2(z)$ to be the unique intersection point. That is, $p_2(z)$ is the
	unique point in $\Sigma^x_u$ such that $p_2(z)-z$ is perpendicular to $
	D_{x,y_r}$. We will show that $p_2$ is Lipschitz and $\Lip(p_2)\leq
	1+10^{4}\eta$. Indeed, for any points $z_1,z_2\in R^{x}$, we let
	$\widetilde{z}_i$, $i=1,2$, be the points in $D_{x,y_r}$ such that
	$z_i-\widetilde{z}_i$ is perpendicular to $D_{x,y_r}$, then 
	\[
		|(p_2(z_1)-z_1)-(p_2(z_2)-z_2)|=\left |u\left(
		\widetilde{z}_1\right)-u\left( \widetilde{z}_2 \right)\right |\leq
		\Lip(u)|\widetilde{z}_1-\widetilde{z}_2|\leq \Lip(u)|z_1-z_2|,
	\]
	thus
	\[
		|p_2(z_1)-p_2(z_2)|\leq (1+\Lip(u))|z_1-z_2|\leq (1+10^{4}\eta)|z_1-z_2|.
	\]

	Let $p_3:\mathbb{R}^{3}\to \mathbb{R}^{3}$ be the mapping defined by
	\[
		p_3(x)=\begin{cases}
			x,& |x|\leq 1\\
			\frac{x}{|x|},&|x|>1.
		\end{cases}
	\]
	Then $p=p_3\circ p_2\circ p_3\circ p_1\circ p_0$ is our desire
	mapping.
\end{proof}

\begin{lemma}\label{le:CotoCo}
	For any $r\in (0,\mathfrak{r})\cap \mathscr{R}_1$, we let $\Sigma$ be
	as in \eqref{eq:CoLip}, and let $\Sigma_r$ be given by $\scale{r}(\Sigma)$. 
	Then we have that 
	\[
		\HM^2(E\cap B(0,r))\leq \HM^2(\Sigma_r)+2550\int_{E\cap \partial
		B(0,r)}\dist(z,\Sigma_r)d\HM^1(z) + (2r)^2h(2r).
	\]
\end{lemma}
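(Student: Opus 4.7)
The plan is to build, for each small $\xi\in(0,r)$, an explicit sliding deformation $\{\varphi_{\xi,t}\}_{0\le t\le 1}$ of $E$ in $B(0,2r)$ that pushes $E\cap B(0,r-\xi)$ onto $\Sigma_r$ via the Lipschitz retraction from Lemma~\ref{le:BLNR} and interpolates linearly to the identity on the thin shell $B(0,r)\setminus B(0,r-\xi)$, then invoke almost minimality and let $\xi\to 0$. Let $p\colon\Omega_0\to\Sigma$ be the map from Lemma~\ref{le:BLNR}, so $\Lip(p)\le 50$, $p$ fixes $\Sigma$ pointwise, and $p(L_0)\subseteq L_0$. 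Rescale to $p_r=\scale{r}\circ p\circ\scale{1/r}\colon\Omega_0\to\Sigma_r$, whose retraction property immediately gives $|z-p_r(z)|\le(1+\Lip(p_r))\dist(z,\Sigma_r)\le 51\dist(z,\Sigma_r)$.

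Concretely I would put
\[
\varphi_\xi(z)=
\begin{cases}
z, & |z|\ge r,\\
\tfrac{|z|-(r-\xi)}{\xi}\,z+\tfrac{r-|z|}{\xi}\,p_r(z), & r-\xi\le |z|\le r,\\
p_r(z), & |z|\le r-\xi,
\end{cases}
\]
and $\varphi_{\xi,t}(z)=(1-t)z+t\varphi_\xi(z)$. The three branches of $\varphi_\xi$ agree on $\partial B(0,r)$ and $\partial B(0,r-\xi)$, so $\varphi_\xi$ is Lipschitz on $E$; the image sits in $\overline{B(0,r)}\subset B(0,2r)$; and convexity of $L_0$ together with $p_r(L_0)\subseteq L_0$ shows $\varphi_{\xi,t}(E\cap L_0)\subseteq L_0$. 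Hence $\{\varphi_{\xi,t}\}$ is a sliding deformation of $E$ in $U=B(0,2r)$. Applying almost minimality in $B(0,2r)$ and cancelling the annulus $B(0,2r)\setminus B(0,r)$, on which $\varphi_\xi=\id$, yields
\[
\HM^2(E\cap B(0,r))\le \HM^2(\varphi_\xi(E)\cap B(0,r))+(2r)^2 h(2r).
\]

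To estimate the image I would split $\varphi_\xi(E)\cap B(0,r)$ into the image of $E\cap B(0,r-\xi)$, which lies in $\Sigma_r$ and contributes at most $\HM^2(\Sigma_r)$, and the image of $A_\xi=E\cap(B(0,r)\setminus B(0,r-\xi))$. At $\HM^2$-a.e.\ $z\in A_\xi$ I pick an orthonormal basis $v_1,v_2$ of $\Tan(E,z)$ with $v_1\perp z$, so that $v_2$ carries the full radial component with $|\langle v_2,z/|z|\rangle|=\cos\theta_0(z)=\ap J_1(\psi|_E)(z)$ for $\psi(y)=|y|$. Decomposing the differential as $D\varphi_\xi(z)=A+B$ with smooth part $Av=\alpha(|z|)v+\beta(|z|)Dp_r(z)v$ (and $\alpha+\beta=1$) and singular part $Bv=\xi^{-1}\langle z/|z|,v\rangle(z-p_r(z))$, the key cancellations are $Bv_1=0$ (by the choice of $v_1$) and $Bv_1\wedge Bv_2=0$ (since $B$ has one-dimensional image), so
\[
J_2\varphi_\xi(z)=|Av_1\wedge(Av_2+Bv_2)|\le\Lip(p_r)\bigl(\Lip(p_r)+\cos\theta_0(z)\,|z-p_r(z)|/\xi\bigr)\le 2500+\tfrac{50}{\xi}\cos\theta_0(z)\,|z-p_r(z)|.
\]
Integrating against $\HM^2$ and applying Theorem 3.2.22 of \cite{Federer:1969} with $\psi(y)=|y|$ (so that $\ap J_1(\psi|_E)=\cos\theta_0$) converts the singular piece into $\frac{50}{\xi}\int_{r-\xi}^r\int_{E\cap\partial B(0,t)}|z-p_r(z)|\,\ud\HM^1(z)\,\ud t$; since $r\in\mathscr{R}_1$, property~\eqref{eq:R2} shows this average tends, as $\xi\to 0$, to $50\int_{E\cap\partial B(0,r)}|z-p_r(z)|\,\ud\HM^1(z)\le 2550\int_{E\cap\partial B(0,r)}\dist(z,\Sigma_r)\,\ud\HM^1(z)$, while $2500\,\HM^2(A_\xi)\to 0$ because $\HM^2(E\cap\partial B(0,r))=0$ for $r\in\mathscr{R}_1$. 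Letting $\xi\to 0$ yields the stated inequality.

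I expect the main obstacle to be precisely the Jacobian computation in the shell: one must exploit that $B$ has one-dimensional image to kill $Bv_1\wedge Bv_2$, and must align $v_1$ with the tangential direction so that $Bv_1=0$; this couples the $1/\xi$ blowup only to the smooth part $Av_1$ in the wedge product, and the concomitant factor $\cos\theta_0(z)$ is exactly what the co-area formula absorbs to reduce the thin-shell integral to an $\HM^1$-integral over $E\cap\partial B(0,r)$ in the limit, producing the constant $2550=\Lip(p_r)(1+\Lip(p_r))$ claimed in the lemma.
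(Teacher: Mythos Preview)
Your proposal is correct and follows essentially the same approach as the paper: both construct the deformation by composing the rescaled retraction $p_r$ with a radial linear interpolation in a thin shell, bound the tangential Jacobian by choosing $v_1\perp z$ so that the $1/\xi$-singular term hits only one factor of the wedge and carries the $\cos\theta_0$ weight, and then pass to the limit via the coarea formula and the Lebesgue-point property~\eqref{eq:R2}. The only cosmetic differences are that the paper parametrizes the shell as $[(1-\xi)r,r]$ rather than $[r-\xi,r]$ and inserts the estimate $|z-p_r(z)|\le(\Lip(p)+1)\dist(z,\Sigma_r)$ before integrating rather than after the limit; neither affects the argument or the constant $2550=\Lip(p)(\Lip(p)+1)$.
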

\begin{proof}
	For any $\xi>0$, we consider the function $\psi_{\xi}:[0,\infty)\to\mathbb{R}$
	defined by 
	\[
		\psi_{\xi}(t)=\begin{cases}
			1,& 0\leq t\leq 1-\xi\\
			-\frac{t-1}{\xi},&1-\xi<t\leq 1\\
			0,&t> 1,
		\end{cases}
	\]
	and the mapping $\phi_{\xi}:\Omega_0\to \Omega_0$ defined by 
	\[
		\phi_{\xi}(z)=\psi_{\xi}(|z|)p(z)+(1-\psi_{\xi}(|z|))z.
	\]
	Then we get that $\phi_{\xi}(L)\subseteq L$. For any $t\in [0,1]$, we put
	\[
		\varphi_t(z)=tr\phi_{\xi}\left( z/r
		\right)+(1-t)z,\text{ for } z\in \Omega_0.
	\]
	Then $\{ \varphi_t \}_{0\leq t\leq 1}$ is a sliding deformation, and we get that 
	\begin{equation}\label{eq:CotoCo1}
		\HM^2(E\cap \overline{B(0,r)})\leq \HM^2(\varphi_1(E)\cap
		\overline{B(0,r)})+(2r)^2h(2r).
	\end{equation}
	Since $	\psi_{\xi}(t)=1$ for $t\in [0,1-\xi]$, we get that 
	\[
		\varphi_1(E\cap B(0,(1-\xi)r))=p(E\cap B(0,(1-\xi)r))\subseteq \Sigma_r.
	\]
	We set $A_{\xi}=B(0,r)\setminus B(0,(1-\xi)r)$. By Theorem 3.2.22 in
	\cite{Federer:1969}, we get that 
	\begin{equation}\label{eq:CotoCo3}
		\HM^2(\varphi_1(E\cap A_{\xi}))\leq \int_{E\cap
		A_{\xi}}\ap J_{2}(\varphi_1\vert_{E})(z)d\HM^2(z).
	\end{equation}

	For any $z\in A_{\xi}$ and $v\in \mathbb{R}^{3}$, we have, by setting
	$z'=z/r$, that 
	\[
		D\varphi_1(z)v=\psi_{\xi}(|z'|)Dp(z')v+(1-\psi_{\xi}(|z'|))v+
		\psi_{\xi}'(|z'|)\langle z/|z|,v\rangle (rp(z')-z).
	\]
	For any $z\in A_{\xi}\cap E$, we let $v_1,v_2\in T_zE$ be such that 
	\[
		|v_1|=|v_2|=1,\ v_1\perp z\text{ and } v_2\perp v_1,
	\]
	then we have that $\langle z/|z|,v\rangle=\cos\theta(z)$, and that
	\[
		|\psi_{\xi}(|z'|)Dp(z')v_i+(1-\psi_{\xi}(|z'|))v_i|\leq |Dp(z')v_i|\leq
		\Lip(p),
	\]
	thus
	\begin{equation}\label{eq:CotoCo10}
		\begin{aligned}
			\ap J_{2}(\varphi_1\vert_{E})(z)&=|D\varphi_1(z)v_1\wedge D\varphi_1(z)v_2|\\
			&\leq
			\Lip(p)^2+\frac{1}{\xi}\Lip(p)\cos\theta(z) |rp(z')-z|.
		\end{aligned}
	\end{equation}
	Since $p(\widetilde{z})=\widetilde{z}$ for any $\widetilde{z}\in \Sigma$, we
	have that 
	\[
		|p(z')-z'|=|p(z')-p(\widetilde{z})+\widetilde{z}-z'|\leq
		(\Lip(p)+1)|\widetilde{z}-z'|,
	\]
	then we get that 
	\[
		|p(z')-z'|\leq (\Lip(p)+1)\dist(z,\Sigma).
	\]
	We now get, from \eqref{eq:CotoCo10}, that
	\begin{equation}\label{eq:CotoCo11}
		\ap J_{2}(\varphi_1\vert_{E})(z)\leq
		\Lip(p)^2+\frac{1}{\xi}\Lip(p)(\Lip(p)+1)
		\dist(z,\Sigma_r)\cos\theta(z),
	\end{equation}
	plug that into \eqref{eq:CotoCo3} to get that 
	\[
		\begin{aligned}
			\HM^2(\varphi_1(E\cap A_{\xi}))&\leq 2500\HM^2(E\cap
			A_{\xi})+\frac{2550}{\xi}\int_{E\cap
			A_{\xi}}\dist(z,\Sigma_r)\cos\theta(z)d\HM^2(z)\\
			&\leq 2500\HM^2(E\cap
			A_{\xi})+\frac{2550}{\xi}\int_{(1-\xi)r}^r\int_{E\cap\partial 
			B(0,t)}\dist(z,\Sigma_r)d\HM^1(z)dt,
		\end{aligned}
	\]
	we let $\xi\to 0+$, then we get that, for such $r$, 
	\[
		\lim_{\xi\to 0+}\HM^2(\varphi_1(E\cap A_{\xi}))\leq 2550r\int_{E\cap\partial 
		B(0,r)}\dist(z,\Sigma_r)d\HM^1(z),
	\]
	thus
	\[
		\HM^2(E\cap B(0,r))\leq \HM^2(\Sigma_r)+2550r\int_{E\cap \partial
		B(0,r)}\dist(z,\Sigma_r)d\HM^1(z) + (2r)^2h(2r).
	\]
\end{proof}
\subsection{The main comparison statement}
For any $x,y\in \Omega_0\cap \partial B(0,1)$, if $|x-y|<2$, we denote by
$g_{x,y}$ the unique geodesic on $\Omega_0\cap \partial B(0,1)$ which join
$x$ and $y$.

We will denote by $B_t$ the open ball $B(0,t)$ sometimes for short.
\begin{lemma}\label{le:CLLHP}
	Let $\tau\in (0,10^{-4})$ be a given. Then there is a constant $\vartheta>0$ such
	that the following hold. Let $a\in \partial B(0,1)$ and 
	$b,c\in L_0\cap \partial B(0,1)$ be such that $\dist(a,(0,0,1))\leq \tau$,
	$\dist(b,(1,0,0))\leq \tau$ and $\dist(c,(-1,0,0))\leq \tau$.
	Let $X$ be the cone over $g_{a,b}\cup g_{a,c}$. Then there is a 
	Lipschitz mapping $\varphi:\Omega_0\to \Omega_0$ with $\varphi(
	L_0)\subseteq L_0$, $|\varphi(z)|\leq 1$ when $|z|\leq 1$, and $\varphi(z)=z$
	when $|z|>1$, such that 
	\[
		\HM^2(\varphi(X)\cap \overline{B(0,1)})\leq
		(1-\vartheta)\HM^2(X \cap B(0,1))+\frac{\vartheta\pi}{2}.
	\]
\end{lemma}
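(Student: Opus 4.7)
Given the hypothesis, the configuration $(a,b,c)$ lies within $\tau$ of the model $((0,0,1),(1,0,0),(-1,0,0))$, so the cone $X$ is Hausdorff-close to the planar half-disk $H=\{(r\cos\theta,0,r\sin\theta):0\le r\le 1,\,0\le\theta\le\pi\}$ of area $\pi/2$. Since $X$ is a cone over two geodesic arcs of total length $\dist(a,b)+\dist(a,c)$, we have $\HM^2(X\cap B(0,1))=\tfrac12(\dist(a,b)+\dist(a,c))\in[\pi/2-O(\tau),\pi/2+O(\tau)]$. The target inequality is equivalent to contracting the excess $\HM^2(X\cap B(0,1))-\pi/2$ by the factor $1-\vartheta$, so my plan splits on the sign of this excess.

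In the easy case $\HM^2(X\cap B(0,1))\le\pi/2$, I take $\varphi=\id$: all four conditions are immediate, and the inequality reduces to $\HM^2(X\cap B(0,1))\le\pi/2$, which holds by assumption (for any $\vartheta\in(0,1)$). In the nontrivial case $\HM^2(X\cap B(0,1))>\pi/2$, I construct a Lipschitz deformation that collapses $X$ toward a sector of $L_0\cap\overline{B(0,1)}$. Choose a small parameter $s=s(\tau)>0$ and let $a'\in L_0\cap\partial B(0,1)$ be a midpoint of the short arc $g_{b,c}$. Build a Lipschitz self-map $F$ of the upper hemisphere $\partial B(0,1)\cap\Omega_0$ that fixes $L_0\cap\partial B(0,1)$ pointwise and sends the curve $g_{a,b}\cup g_{a,c}$ into $g_{b,c}$ (for instance, by sliding down along meridians with an adjustment near the apex so that $F(a)=a'$); thanks to the smallness of $\tau$ its Lipschitz constant is universally bounded. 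Then set $\varphi=\id$ for $|x|\ge 1$, $\varphi(r\sigma)=rF(\sigma)$ for $r\le 1-s$, and $\varphi(r\sigma)=rF^{(1-r)/s}(\sigma)$ for $r\in(1-s,1)$, where $F^t$ is a suitable homotopy with $F^0=\id$, $F^1=F$.

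The construction ensures $\varphi(X\cap\overline{B(0,1-s)})$ is a sector of $L_0$ of area at most $(1-s)^2\dist(b,c)/2\le(1-s)^2\pi/2$. The main obstacle is bounding the annulus contribution, since the naive estimate $\Lip(\varphi)^2\cdot\HM^2(X\cap\text{annulus})$ blows up like $1/s$. Instead, I apply the area formula directly: parametrize $X\cap\{1-s<|x|\le 1\}$ by $(r,\sigma)\in(1-s,1]\times(g_{a,b}\cup g_{a,c})$ and bound the $2$-Jacobian of $(r,\sigma)\mapsto rF^{(1-r)/s}(\sigma)$, exploiting that the $r$-derivative splits into a radial piece of norm $|F^{(1-r)/s}(\sigma)|=1$ and a homotopy piece whose $X$-integral is controlled by the $1$-Jacobian of $F^t$ restricted to $g_{a,b}\cup g_{a,c}$. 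The homotopy should be chosen so that this contribution is comparable to the $s\pi$ that $(1-s)^2\pi/2$ loses relative to $\pi/2$, producing a total bound $\HM^2(\varphi(X)\cap\overline{B(0,1)})\le\pi/2+O(s^2)$. Combined with the case assumption $\HM^2(X\cap B(0,1))>\pi/2$, one then obtains the desired $\vartheta>0$ by making $s$ depend appropriately on the excess.
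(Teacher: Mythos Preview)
Your Case~1 is fine, but the annulus estimate in Case~2 contains a genuine error that cannot be repaired within the construction you describe.

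Write $\gamma(t,\sigma)=F^t(\sigma)\in\partial B(0,1)$ and $\Phi(t,\sigma)=(1-st)\gamma(t,\sigma)$. Then $\partial_r\varphi$ along $X$ equals $\gamma-(r/s)\gamma_t$: the radial piece has norm $1$, as you say, but the ``homotopy piece'' is $(r/s)\gamma_t$, the $t$-derivative of $F^t$. This has nothing to do with the $1$-Jacobian $|\gamma_\sigma|$ of $F^t$ restricted to $g_{a,b}\cup g_{a,c}$; the two are independent derivatives. Since $\gamma_t,\gamma_\sigma\perp\gamma$, one computes
\[
|\partial_t\Phi\times\partial_\sigma\Phi|^2
=s^2(1-st)^2|\gamma_\sigma|^2+(1-st)^4|\gamma_t\times\gamma_\sigma|^2,
\]
so the area-formula bound on the annulus image is at least
\[
(1-s)^2\int_0^1\!\!\int_{g_{a,b}\cup g_{a,c}}|\gamma_t\times\gamma_\sigma|\,d\sigma\,dt.
\]
The last double integral is the (unsigned) spherical area swept by the homotopy $t\mapsto F^t(g_{a,b}\cup g_{a,c})$. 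Because $F^0$ sends the curve through $a$ near the north pole while $F^1$ sends it into $g_{b,c}$ on the equator, with the endpoints $b,c$ fixed throughout, a Stokes/degree computation on the contractible upper hemisphere shows the \emph{signed} swept area equals the area enclosed between the two curves, namely $\pi+O(\tau)$. Hence the annulus contribution is at least $(1-s)^2\pi$, not $\pi s+O(s^2)$, and since the sector in $L_0$ lies in $\overline{B_{1-s}}$ while the annulus image lies in $\overline{B_1}\setminus B_{1-s}$, there is no overlap to exploit. Your total bound $\HM^2(\varphi(X)\cap\overline{B_1})\le\pi/2+O(s^2)$ therefore fails; the construction actually produces area of order $3\pi/2$.

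The paper takes a completely different route: instead of collapsing $X$ toward $L_0$ (a direction essentially \emph{perpendicular} to $X$), it applies a small perturbation $\varphi(z)=z+\psi(z,a)v_a+\psi(z,b)v_b+\psi(z,c)v_c$, with bump functions $\psi(\cdot,j)$ supported in disjoint tubes about the rays through $a,b,c$ and the vectors $v_j$ chosen along first-variation directions (tangential at $a$, and along $L_0$ at $b,c$). A direct Jacobian expansion shows the area drops by a universal multiple of $|e_b+e_c|^2+\langle u_b,b_0\rangle^2+\langle u_c,c_0\rangle^2$, and an elementary trigonometric estimate shows this quantity dominates the excess $\HM^2(X\cap B_1)-\pi/2$, giving $\vartheta=10^{-4}$.
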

\begin{proof}
	We let $b_0$ a unit vector in $L_0$ which is perpendicular to $b$, and let 
	$c_0$ be a unit vector in $L_0$ which is perpendicular to $c$, such that
	$b_0+c_0$ is parallel to $b+c$, and take 
	\[
		u_i=\frac{a-\langle a,i \rangle i}{|a-\langle a,i \rangle i|},\
		e_i=\frac{i-\langle i,a \rangle a}{|i-\langle i,a \rangle a|},\
		\text{ for }i\in \{b,c\},
	\]
	$v_a=\lambda_a (e_b+e_c)$, $v_b=\lambda_b b_0$ and $v_c=\lambda_c c_0$,
	where $\lambda_j\in \mathbb{R}$, $j\in \{a,b,c\}$, will be chosen later.
	We let $\psi_1:\mathbb{R}\to \mathbb{R}$ be a function of class $C^1$ such 
	that $0\leq \psi_1\leq 1$, $\psi_1(x)=0$ for $x\in (-\infty,1/4)\cup
	(3/4,+\infty)$, $\psi_1(x)=1$ for $x\in [2/5,3/5]$, and $|\psi_1'|\leq 10$.
	We let $\psi_2:\mathbb{R}\to \mathbb{R}$ be a non increasing function of 
	class $C^1$ such that $0\leq \psi_2\leq 1$, $\psi_2(x)=1$ for $x\in 
	(-\infty,0]$, $\psi_2(x)=0$ for $x\in [1/5,+\infty)$, and $|\psi_2'|\leq
	10$. We let $\psi: \mathbb{R}^3 \times \mathbb{R}^3\to \mathbb{R}$ be a
	function defined by 
	\begin{equation}\label{eq:FP}
		\psi(z,v)=\psi_1(\langle z,v \rangle) \psi_2(|z-\langle
		z,v \rangle v|).
	\end{equation}

	We now consider the mapping $\varphi:\mathbb{R}^3\to
	\mathbb{R}^3$ defined by 
	\[
		\varphi(z)=z+\psi(z,a)v_a+\psi(z,b)v_b+\psi(z,c)v_c.
	\]
	We see that $\supp(\psi(\cdot,a))$, $\supp(\psi(\cdot,b))$ and
	$\supp(\psi(\cdot,c))$ are mutually disjoint, and that
	\[
		\overline{\{z\in \mathbb{R}^3:\varphi(z)\neq z\}}\subseteq B(0,1),\ 
		\varphi(\Omega_0)\subseteq \Omega_0,\ \varphi(L_0)\subseteq L_0 .
	\]
	We have that
	\[
		D \varphi(z)w=w+\langle D\psi(\cdot,a),w \rangle v_a+\langle
		D\psi(\cdot,b),w \rangle v_b+\langle D\psi(\cdot,c),w \rangle v_c.
	\]
	By setting $z_v^{\perp}=z-\langle z,v \rangle v$ for convenient, if $w\neq
	0$ and $z_v^{\perp}\neq 0$, we have that 
	\[
		D\psi(\cdot,v)w=\psi_1'(\langle z,v \rangle)\psi_2(|z_v^{\perp}|)\langle w/|w|,v
		\rangle +\psi_1(\langle z,v \rangle)\psi_2'(|z_v^{\perp}|)\langle
		w_v^{\perp},z_v^{\perp} /|z_v^{\perp}|\rangle.
	\]
	If $w$ is perpendicular to $v$, then $w_v^{\perp}=w$; if $w$ is parallel
	to $v$ and $|v|=1$ , then $w_v^{\perp}=0$. We denote by
	$W_j=\supp(\psi(\cdot,j))$ for $j\in \{a,b,c\}$. Then 
	\[
		D\psi(\cdot,v)w=\begin{cases}
			w,&z\notin W_a\cup W_b\cup W_c,\\
			w+\langle D\psi(\cdot,v),w \rangle v_j, & z\in W_a\cup W_b\cup W_c.
		\end{cases}
	\]
	But 
	\[
		\langle D\psi(\cdot,j),j \rangle =
		\psi_1'(\langle z,j \rangle) \psi_2(|z_j^{\perp}|), \ j\in \{a,b,c\},
	\]
	\[
		\langle D\psi(\cdot,i),u_i \rangle =
		\psi_1(\langle z,i \rangle) \psi_2'(|z_i^{\perp}|)
		\langle u_i, z_i^{\perp} /|z_i^{\perp}|\rangle, \ i\in \{b,c\},
	\]
	and
	\[
		\langle D\psi(\cdot,a),e_i \rangle =
		\psi_1(\langle z,a \rangle) \psi_2'(|z_a^{\perp}|)
		\langle e_i, z_a^{\perp} /|z_a^{\perp}|\rangle, \ i\in \{b,c\},
	\]
	by putting 
	\[
		g_j(z)=\psi_1'(\langle z,j \rangle) \psi_2(|z_j^{\perp}|),\ j\in
		\{a,b,c\},
	\]
	\[
		g_{a,i}(z)=\psi_1(\langle z,a \rangle) \psi_2'(|z_a^{\perp}|) 
		\langle e_i, z_a^{\perp} /|z_a^{\perp}|\rangle, \ i\in \{b,c\}
	\]
	and 
	\[
		g_{i,i}(z)=\psi_1(\langle z,i \rangle) \psi_2'(|z_i^{\perp}|) 
		\langle v_i, z_i^{\perp} /|z_i^{\perp}|\rangle, \ i\in \{b,c\},
	\]
	and denote by $X_i$ the cone over $g_{a,i}$, $i\in \{b,c\}$, we have that 
	\[
		D \varphi(z)a \wedge D \varphi(z)e_i = a \wedge e_i + g_a(z) v_a \wedge e_i
		+ g_{a,i}(z) a \wedge v_a, \ z\in X_i\cap W_a
	\]
	and 
	\[
		D \varphi(z)i \wedge D \varphi(z)u_i =i \wedge u_i + g_i(z) v_i \wedge u_i
		+ g_{i,i}(z) i \wedge v_i,\ z\in X_i\cap W_i.
	\]
	If $z\in X_i\cap W_a$, $i\in \{b,c\}$, we have that
	\[
		\begin{aligned}
			J_2 \varphi\vert_{X}(z)&=\|D \varphi(z)a \wedge D \varphi(z)e_i\|\\
			&\leq 1+\left\langle a \wedge e_i,g_a(z) v_a \wedge e_i
			+ g_{a,i}(z) a \wedge v_a\right\rangle + \frac{1}{2}\|g_a(z) v_a \wedge e_i
			+ g_{a,i}(z) a \wedge v_a\|^2\\
			&= 1+g_a(z)\langle a,v_a \rangle + g_{a,i}(z) \langle e_i,v_a \rangle +
			\frac{1}{2}\left( g_a(z)^2\|v_a\wedge
			e_i\|^2+g_{a,i}(z)^2|v_a|^2\right)\\
			&\leq 1+  g_{a,i}(z) \langle e_i,v_a \rangle + 100|v_a|^2.
		\end{aligned}
	\]
	Similarly, we have that, for $z\in X_i\cap W_i$,
	\[
		J_2 \varphi\vert_{X}(z)=\|D \varphi(z)i \wedge D \varphi(z)u_i\|\leq 1+ 
		g_{i,i}(z) \langle u_i,v_i \rangle + 100|v_i|^2.
	\]

	We see that $z_a^{\perp}/|z_a^{\perp}|=e_i$ when $z\in X_i\setminus \vect{a}$, and
	$z_i^{\perp}/|z_i^{\perp}|=u_i$ in case $z\in X_i\setminus \vect{i}$, thus 
	\[
		g_{a,i}(z)=\psi_1(\langle z,a \rangle) \psi_2'(|z_a^{\perp}|) \text{ and }
		g_{i,i}(z)=\psi_1(\langle z,i \rangle) \psi_2'(|z_i^{\perp}|).
	\]
	Hence, for $j=a$ or $i$, we have that 
	\[
		\begin{aligned}
			\int_{z\in X_i \cap W_j} g_{j,i}(z)d\HM^2(z)&= 
			\int_{z\in X_i\cap W_j} \psi_1(\langle z,j
			\rangle)\psi_2'(|z_j^{\perp}|)d\HM^2(z)\\
			&= \int_{0}^{+\infty}\int_{0}^{+\infty}\psi_1(t)
			\psi_2'(s) dt ds\\
			&= -\int_{0}^{+\infty}\psi_1(t) dt <-\frac{1}{5},
		\end{aligned}
	\]
	Thus
	\[
		\begin{aligned}
			\HM^2(\varphi(X\cap B_1))&=
			\int_{z\in X\cap B(0,1)}J_2 \varphi\vert_{X}(z) d\HM^2(z)\\
			&\leq (1+100\sum_j|v_j|^2)\HM^2(X\cap B_1)-\frac{1}{5}(\langle
			v_a,e_b+e_c \rangle +\sum_{i}\langle u_i,v_i \rangle)
		\end{aligned}
	\]
	If we take $\lambda_a=10^{-3}\HM^2(X\cap B_1)^{-1}$ and 
	$\lambda_i=10^{-3}\HM^2(X\cap B_1)^{-1}\langle u_i,i_0 \rangle$, $i\in
	\{b,c\}$, then 
	\[
		\HM^2(\varphi(X\cap B_1)) \leq \HM^2(X\cap B_1)-
		10^{-4}(|e_b+ e_c|^2+ \langle u_b,b_0 \rangle^2 + \langle u_c,c_0
		\rangle^2).
	\]

	Since $|\langle a,w \rangle|\leq \tau|w|$ for $w\in L_0$,  and 
	$-1\leq \langle b,c \rangle\leq -1+2\tau^2$, we get that 
	\[
		\begin{aligned}
			|e_b+e_c|^{2}&=2(1+\langle e_b,e_c \rangle)= \frac{2}{1-\langle e_b,e_c
			\rangle}(1-\langle e_b,e_c \rangle^2)\\
			&\geq 1-\frac{(\langle b,c \rangle-\langle a,b \rangle \langle a,c
			\rangle)^2}{(1-\langle a,b \rangle^2)(1-\langle a,c \rangle^2)}\\
			&\geq 1-\langle a,b \rangle^2-\langle a,c \rangle^2-\langle b,c
			\rangle^2+2\langle a,b \rangle\langle b,c \rangle\langle c,a \rangle\\
			&=(1-\langle b,c \rangle +2 \langle a,b \rangle \langle a,c
			\rangle)(1+\langle b,c \rangle) -\langle a,b+c \rangle^2\\
			&\geq (1-3\tau^2)|b+c|^2.
		\end{aligned}
	\]

	Since $\arcsin x= x+\sum_{n\geq 1}C_nx^{2n+1}$ for $|x|\leq 1$, where
	$C_n=\frac{(2n)!}{4^n(n!)^2(2n+1)}$, we have that 
	\[
		\begin{aligned}
			\HM^2(X\cap B_1)-\frac{\pi}{2}&=\frac{1}{2}(\arccos \langle a,b \rangle+
			\arccos \langle a,c \rangle)-\frac{\pi}{2}\\
			&=-\frac{1}{2} (\arcsin \langle a,b \rangle+ \arcsin \langle a,c
			\rangle) \leq \frac{1}{2}(1+\tau)|\langle a,b+c \rangle|.
		\end{aligned}
	\]

	If $b+c\neq 0$, then $|b_0+c_0|\geq 1$, and we have that 
	\[
		\left\langle a,\frac{b+c}{|b+c|}  \right\rangle^2=\left\langle
		a,\frac{b_0+c_0}{|b_0+c_0|}  \right\rangle^2 \leq 2 \left(\langle a,b_0
		\rangle^2 + \langle a,c_0 \rangle^2\right).
	\]
	We get so that in any case 
	\[
		|\langle a,b+c \rangle| \leq \frac{1}{2} \left(|b+c|^2+ 2 \langle a,b_0
		\rangle^2 + 2\langle a,c_0 \rangle^2\right).
	\]

	Since 
	\[
		\langle u_b,b_0 \rangle^2+\langle u_c,c_0 \rangle^2=
		\frac{ \langle a,b_0 \rangle^2} {1-\langle a,b \rangle^2}+
		\frac{ \langle a,c_0 \rangle^2} {1-\langle a,c \rangle^2}
		\geq \langle a,b_0 \rangle^2 +  \langle a,c_0 \rangle^2,
	\]
	we get that 
	\[
		\begin{aligned}
			\HM^2(\varphi(X\cap B_1)) &\leq \HM^2(X\cap B_1)-
			10^{-4}\left(\frac{1}{2}|b+c|^2+ \langle a,b_0 \rangle^2 +
			\langle a,c_0 \rangle^2 \right)\\
			&\leq\HM^2(X\cap B_1)-10^{-4} \left(\HM^2(X\cap B_1)-\frac{\pi}{2}\right) .
		\end{aligned}
	\]
\end{proof}
\begin{lemma}\label{le:CLLHY}
	Let $\tau\in (0,10^{-4})$ be a given. Then there is a constant $\vartheta>0$ such
	that the following hold. Let $a\in \partial B(0,1)$ and 
	$b,c,d\in L_0\cap \partial B(0,1)$ be such that $\dist(a,(0,0,1))\leq \tau$,
	$\dist(b,(-1/2,\sqrt{3}/2,0))\leq \tau$,
	$\dist(c,(-1/2,-\sqrt{3}/2,0))\leq \tau$ and $\dist(d,(1,0,0))\leq \tau$.
	Let $X$ be the cone over $g_{a,b}\cup g_{a,c}\cup g_{a,d}$. 
	Then there is a Lipschitz mapping $\varphi:\Omega_0\to \Omega_0$ with 
	$\varphi(E\cap L)\subseteq L$, $|\varphi(z)|\leq 1$ when $|z|\leq 1$, and
	$\varphi(z)=z$ when $|z|>1$, such that 
	\[
		\HM^2(\varphi(X)\cap \overline{B(0,1)})\leq
		(1-\vartheta)\HM^2(X\cap B(0,1))+\vartheta\frac{3\pi}{4}.
	\]
\end{lemma}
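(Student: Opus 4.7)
The plan is to follow the structure of the proof of Lemma~\ref{le:CLLHP}, adapting the construction to four translation vectors (one at the apex $a$ and three at the boundary points $b$, $c$, $d$) and ending with a geometric estimate tailored to the $\mathbb{Y}$-type configuration. First I would set up the frame vectors: for each $i\in\{b,c,d\}$ define the unit tangent $u_i=(a-\langle a,i\rangle i)/|a-\langle a,i\rangle i|$ at $i$ along the geodesic $g_{a,i}$ and the unit tangent $e_i=(i-\langle i,a\rangle a)/|i-\langle i,a\rangle a|$ at $a$, together with unit vectors $i_0\in L_0$ perpendicular to $i$, oriented so that $b_0+c_0+d_0$ is parallel to $b+c+d$. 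Take $v_a=\lambda_a(e_b+e_c+e_d)$ and $v_i=\lambda_i\, i_0$, with the $\lambda$'s chosen later, and define $\varphi(z)=z+\psi(z,a)v_a+\psi(z,b)v_b+\psi(z,c)v_c+\psi(z,d)v_d$ with the same bump function $\psi$ as in \eqref{eq:FP}. Since $v_a$ lies in the tangent plane to $\mathbb{S}^2$ at $a$ and each $v_i\in L_0$ is perpendicular to $i$, for $\tau$ small the supports of the four summands are pairwise disjoint, $\varphi(L_0)\subseteq L_0$, $\varphi$ is the identity outside $B(0,1)$, and $|\varphi(z)|\le 1$ when $|z|\le 1$.

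Next I would repeat the Jacobian computation sheet by sheet. Letting $X_i$ be the cone over $g_{a,i}$, on $X_i\cap W_a$ use the frame $\{a,e_i\}$ and on $X_i\cap W_i$ use $\{i,u_i\}$; the wedge expansion from Lemma~\ref{le:CLLHP} gives
\[
J_2\varphi|_{X}(z)\le 1+g_{a,i}(z)\langle e_i,v_a\rangle+100|v_a|^2\quad\text{on }X_i\cap W_a,
\]
\[
J_2\varphi|_{X}(z)\le 1+g_{i,i}(z)\langle u_i,v_i\rangle+100|v_i|^2\quad\text{on }X_i\cap W_i,
\]
with $\int g_{j,i}\,\ud\HM^2\le -1/5$ on each piece. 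Summing the three sheets,
\[
\HM^2(\varphi(X\cap B_1))\le \HM^2(X\cap B_1)-\tfrac{1}{5}\Bigl(\langle v_a,e_b+e_c+e_d\rangle+\sum_{i\in\{b,c,d\}}\langle u_i,v_i\rangle\Bigr)+300\sum_j|v_j|^2.
\]
Choosing $\lambda_a=10^{-3}\HM^2(X\cap B_1)^{-1}$ and $\lambda_i=10^{-3}\HM^2(X\cap B_1)^{-1}\langle u_i,i_0\rangle$ absorbs the quadratic error and produces
\[
\HM^2(\varphi(X\cap B_1))\le \HM^2(X\cap B_1)-10^{-4}\Bigl(|e_b+e_c+e_d|^2+\sum_{i}\langle u_i,i_0\rangle^2\Bigr).
\]

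The main obstacle is the geometric comparison
\[
\HM^2(X\cap B_1)-\tfrac{3\pi}{4}\le C\Bigl(|e_b+e_c+e_d|^2+\sum_i\langle a,i_0\rangle^2\Bigr),
\]
which is new compared to the $\mathbb{P}_+$ case. The area is $\tfrac{1}{2}\sum_i\arccos\langle a,i\rangle$, and expanding $\arcsin$ as in Lemma~\ref{le:CLLHP} reduces the left side to controlling $|\langle a,b+c+d\rangle|$ up to higher order terms. Splitting $b+c+d$ into its $L_0$ and vertical parts and using $|\langle a,w\rangle|\le\tau|w|$ for $w\in L_0$ mirrors the $|b+c|^2$ step from before and trades $|\langle a,b+c+d\rangle|$ for $|b+c+d|^2+\sum_i\langle a,i_0\rangle^2$. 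The remaining, genuinely new, piece is a bound of the form $|e_b+e_c+e_d|^2\ge c\,|b+c+d|^2$, valid near the symmetric $120^\circ$ configuration; this is a Gram-matrix expansion around the ideal $\mathbb{Y}$ setup where $e_b+e_c+e_d=0$, which I expect to be where the arithmetic is most delicate. Once both inequalities are in place, the conclusion follows with some universal $\vartheta>0$ exactly as in the proof of Lemma~\ref{le:CLLHP}.
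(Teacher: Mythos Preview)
Your proposal is correct and follows essentially the same approach as the paper's proof: the same four-bump deformation $\varphi$, the same Jacobian estimate yielding the gain $10^{-4}\bigl(|e_b+e_c+e_d|^2+\sum_i\langle u_i,i_0\rangle^2\bigr)$, and the same reduction of the excess area to $|\langle a,b+c+d\rangle|$ via the arcsine expansion. The only cosmetic difference is that the paper routes the final geometric comparison through the single quantity $1-\langle a,e\rangle^2$ (with $e=(0,0,1)$), proving $|e_b+e_c+e_d|^2\ge(1-\tau)|b+c+d|^2-6\tau(1-\langle a,e\rangle^2)$ and $\sum_i\langle u_i,i_0\rangle^2\ge(1-\tau)(1-\langle a,e\rangle^2)$, which is equivalent to your decomposition in terms of $\sum_i\langle a,i_0\rangle^2$.
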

\begin{proof}
	We let $b_0$, $c_0$ and $d_0$ be unit vectors in $L_0$ such that 
	\[
		b_0 \perp b, c_0\perp c, d_0\perp d.
	\]
	For $i\in \{b,c,d\}$, we put 
	\[
		u_i=\frac{a-\langle a,i \rangle i}{|a-\langle a,i \rangle i|},\
		e_i=\frac{i-\langle i,a \rangle a}{|i-\langle i,a \rangle a|}.
	\]
	We take $v_a=\lambda_a (e_b+e_c+e_d)$ and $v_i=\lambda_i i_0$,
	where $\lambda_i>0$, $i\in \{b,c,d\}$, will be chosen later.
	We let $\psi$ be the same as in \eqref{eq:FP}, and consider the mapping 
	$\varphi:\mathbb{R}^3\to \mathbb{R}^3$ defined by 
	\[
		\varphi(z)=z+\psi(z,a)v_a+\psi(z,b)v_b+\psi(z,c)v_c+\psi(z,d)v_d.
	\]
	We see that $\supp(\psi(\cdot,a))$, $\supp(\psi(\cdot,b))$, 
	$\supp(\psi(\cdot,c))$ and $\supp(\psi(\cdot,d))$ are mutually disjoint,
	and that
	\[
		\overline{\{z\in \mathbb{R}^3:\varphi(z)\neq z\}}\subseteq B(0,1),\ 
		\varphi(\Omega_0)\subseteq \Omega_0,\ \varphi(L_0)\subseteq L_0 .
	\]
	By putting $W_j=\supp(\psi(\cdot,j))$ for $j\in \{a,b,c,d\}$, we have that
	\[
		D\psi(\cdot,v)w=\begin{cases}
			w,&z\notin W_a\cup W_b\cup W_c \cup W_d,\\
			w+\langle D\psi(\cdot,v),w \rangle v_j, & z\in W_a\cup W_b\cup W_c\cup
			W_d,
		\end{cases}
	\]
	and
	\[
		\begin{gathered}
			\langle D\psi(\cdot,j),j \rangle =
			\psi_1'(\langle z,j \rangle) \psi_2(|z_j^{\perp}|), \ j\in \{a,b,c,d\},\\
			\langle D\psi(\cdot,i),u_i \rangle =
			\psi_1(\langle z,i \rangle) \psi_2'(|z_i^{\perp}|)
			\langle u_i, z_i^{\perp} /|z_i^{\perp}|\rangle,\\
			\langle D\psi(\cdot,a),e_i \rangle =
			\psi_1(\langle z,a \rangle) \psi_2'(|z_a^{\perp}|)
			\langle e_i, z_a^{\perp} /|z_a^{\perp}|\rangle, \ i\in \{b,c,d\},
		\end{gathered}
	\]
	where $z_w=z-\langle z,w \rangle w$. By putting 
	\[
		\begin{gathered}
			g_j(z)=\psi_1'(\langle z,j \rangle) \psi_2(|z_j^{\perp}|),\ j\in
			\{a,b,c,d\},\\
			g_{a,i}(z)=\psi_1(\langle z,a \rangle) \psi_2'(|z_a^{\perp}|) 
			\langle e_i, z_a^{\perp} /|z_a^{\perp}|\rangle,\\
			g_{i,i}(z)=\psi_1(\langle z,i \rangle) \psi_2'(|z_i^{\perp}|) 
			\langle v_i, z_i^{\perp} /|z_i^{\perp}|\rangle, \ i\in \{b,c,d\},
		\end{gathered}
	\]
	and denote by $X_i$ the cone over $g_{a,i}$, $i\in \{b,c,d\}$, we have that 
	\[
		\begin{gathered}
			D \varphi(z)a \wedge D \varphi(z)e_i = a \wedge e_i + g_a(z) v_a \wedge e_i
			+ g_{a,i}(z) a \wedge v_a, \ z\in X_i\cap W_a,\\
			D \varphi(z)i \wedge D \varphi(z)u_i =i \wedge u_i + g_i(z) v_i \wedge u_i
			+ g_{i,i}(z) i \wedge v_i,\ z\in X_i\cap W_i.
		\end{gathered}
	\]
	We have that, for $i\in \{b,c,d\}$,
	\[
		\begin{gathered}
			J_2 \varphi\vert_{X}(z)=\|D \varphi(z)a \wedge D \varphi(z)e_i\|\leq
			1+  g_{a,i}(z) \langle e_i,v_a \rangle + 100|v_a|^2, z\in X_i\cap W_a,\\
			J_2 \varphi\vert_{X}(z)=\|D \varphi(z)i \wedge D \varphi(z)u_i\|\leq 1+ 
			g_{i,i}(z) \langle u_i,v_i \rangle + 100|v_i|^2,z\in X_i\cap W_i.
		\end{gathered}
	\]

	Since $z_a^{\perp}/|z_a^{\perp}|=e_i$ when $z\in X_i\setminus \vect{a}$, and
	$z_i^{\perp}/|z_i^{\perp}|=u_i$ in case $z\in X_i\setminus \vect{i}$, we
	have that 
	\[
		g_{a,i}(z)=\psi_1(\langle z,a \rangle) \psi_2'(|z_a^{\perp}|) \text{ and }
		g_{i,i}(z)=\psi_1(\langle z,i \rangle) \psi_2'(|z_i^{\perp}|).
	\]
	Thus, for $j=a$ or $i$, 
	\[
		\int_{z\in X_i \cap W_j} g_{j,i}(z)d\HM^2(z)
		= -\int_{0}^{+\infty}\psi_1(t) dt <-\frac{1}{5}.
	\]
	Hence
	\[
		\begin{aligned}
			\HM^2(\varphi(X\cap B_1))&=
			\int_{z\in X\cap B_1}J_2 \varphi\vert_{X}(z) d\HM^2(z)\\
			&\leq \left(1+100(|v_a|^2+|v_b|^2+|v_c|^2+|v_d|^2)\right)\HM^2(X\cap B_1)\\
			&\quad -\frac{1}{5}\left(\langle
			v_a,e_b+e_c+e_d \rangle +\langle u_b,v_b \rangle+\langle u_c,v_c
			\rangle+\langle u_d,v_d \rangle\right).
		\end{aligned}
	\]
	If we take $\lambda_a=10^{-3}\HM^2(X\cap B_1)^{-1}$ and 
	$\lambda_i=10^{-3}\HM^2(X\cap B_1)^{-1}\langle u_i,i_0 \rangle$, $i\in
	\{b,c,d\}$, then 
	\begin{equation}\label{eq:CLLHY40}
		\HM^2(\varphi(X\cap B_1)) \leq \HM^2(X\cap B_1)-
		10^{-4}\left(|e_b+ e_c+e_d|^2+ \sum_{i}\langle u_i,i_0 \rangle^2 \right).
	\end{equation}

	Since $|\langle a,w \rangle|\leq \tau |w|$, for $w\in L_0$, and
	$-1/2-\sqrt{3}\tau\leq \langle i_1,i_2 \rangle\leq -1/2+\sqrt{3}\tau$, 
	$i_1,i_2\in \{b,c,d\}$, $i_1\neq i_2$, we get that
	$ \langle i,j \rangle- \langle a,i \rangle \langle a,j \rangle < 0$.
	By putting $e=(0,0,1)$, it is evident that
	\begin{equation}\label{eq:CLLHY50}
		\langle a,w \rangle^2\leq 1-\langle a,e \rangle^2, \text{ for any }w\in
		L_0 \text{ with }|w|=1.
	\end{equation}

	We put $N=\langle a,b \rangle^2+\langle a,c \rangle^2+\langle a,d \rangle^2$,
	and we claim that 
	\begin{equation}\label{eq:CLLHY60}
		N\leq  \left(3/2+25\tau \right) \left(1- \langle a,e \rangle^2\right).
	\end{equation}
	Indeed, for any $w=\lambda b + \mu c$ with $\lambda,\mu \geq 0$, we have that 
	\[
		|w|^2=\lambda^2+\mu^2+2 \lambda\mu \langle b,c \rangle\geq
		\lambda^2+\mu^2-(1+4\tau) \lambda\mu,
	\]
	\[
		\langle w,d \rangle^2\leq (1/2+\sqrt{3}\tau)^2(\lambda+\mu)^2\leq
		(1/4+2\tau)(\lambda+\mu)^2
	\]
	and 
	\[
		\begin{aligned}
			\langle w,b \rangle^2+\langle w,b \rangle^2+\langle w,b
			\rangle^2&=(\lambda^2+\mu^2)(1+\langle b,c \rangle^2)+4 \lambda\mu
			\langle b,c \rangle +\langle w,d \rangle^2\\
			&\leq
			\left(3/2+4\tau\right)(\lambda^2+\mu^2)-(3/2-10\tau)\lambda\mu\\
			&\leq (3/2+25\tau)|w|^2.
		\end{aligned}
	\]
	Hence, for any $w\in L_0$, we have that 
	\[
		\langle w,b \rangle^2+\langle w,b \rangle^2+\langle w,b \rangle^2
		\leq (3/2+25\tau)|w|^2,
	\]
	we now take $w=a-\langle a,e \rangle e$, then
	\[
		N\leq (3/2+25\tau)|a-\langle a,e \rangle e|^2=(3/2+25\tau)(1-\langle a,e
		\rangle^2),
	\]
	the claim \eqref{eq:CLLHY60} follows.

	Since $(1-x)^{1/2}\leq 1-x/2-x^2/8$ for any $x\in (0,1)$, and 
	\[
		(1-\langle a,b \rangle^2)(1-\langle a,c \rangle^2)(1-\langle a,d \rangle^2)
		\geq 1-N,
	\]
	we have that, for $\{i,j,k\}=\{b,c,d\}$, 
	\[
		\begin{aligned}
			\langle e_i,e_j \rangle
			&=\frac{\langle i,j \rangle- \langle a,i \rangle \langle a,j \rangle}
			{(1-\langle a,i \rangle^2)^{1/2}(1-\langle a,j \rangle^2)^{1/2}}\\
			&\geq \frac{(\langle i,j \rangle- \langle a,i \rangle \langle a,j
			\rangle)(1-\langle a,k \rangle^2/2-\langle a,k \rangle^4/8)} {(1-N)^{1/2}}.
		\end{aligned}
	\]
	Note that 
	\[
		\langle a,b \rangle^4+\langle a,c \rangle^4+\langle a,d \rangle^4\geq
		N^2/3,
	\]
	and 
	\[
		|\langle a,b+c+d \rangle|\leq \frac{1}{2}\left(|b+c+d|^2+1-\langle a,e
		\rangle^2\right),
	\]
	we get so that 
	\[
		\begin{aligned}
			|e_b+e_c+e_d|^2&\geq
			3+(1-N)^{-1/2}\Big(-3+(3/2-\sqrt{3}\tau)N+
			\frac{1}{12}(1/2-\sqrt{3}\tau)N^2\\
			&\quad + |b+c+d|^2-\langle a,b+c+d
			\rangle^2+\langle a,b \rangle\langle a,c \rangle\langle
			a,d \rangle\langle a,b+c+d\rangle\\
			&\quad +\frac{1}{4}\langle a,b \rangle\langle a,c \rangle\langle
			a,d \rangle\big( \langle a,b \rangle^3+\langle
			a,c \rangle^3+\langle a,d \rangle^3\big) \Big)\\
			&\geq (1-N)^{-1/2} \left((1-\tau^2)|b+c+d|^2-
			2\tau N-2\tau^3| \langle a,b+c+d \rangle|\right)\\
			&\geq (1-\tau)|b+c+d|^2-6\tau (1-\langle a,e \rangle^2).
		\end{aligned}
	\]

	Since $1/(1-x)= 1+x+x^2/(1-x)$ for $x\in [0,1)$, and 
	$\langle a,i \rangle^2\leq 1-\langle a,e \rangle^2$ for $i\in \{b,c,d\}$,
	we have that  
	\[
		\frac{\langle a,e \rangle^2}{1-\langle a,i \rangle^2}=\langle a,e
		\rangle^2+\frac{\langle a,e \rangle^2 \langle a,i \rangle^2}{1-\langle
		a,i \rangle^2}\leq \langle a,e \rangle^2 +\langle a,i \rangle^2
	\]
	and 
	\[
		\begin{aligned}
			\langle u_b,b_0 \rangle^2+\langle u_c,c_0 \rangle^2+
			\langle u_d,d_0 \rangle^2 
			&= \sum_{i\in \{b,c,d\}}  
			\frac{1-\langle a,e \rangle^2-\langle a,i \rangle^2}
			{1-\langle a,i \rangle^2}\\
			&=3(1- \langle a,e \rangle^2)-  N \\
			&\geq (1-\tau)(1-\langle a,e \rangle^2).
		\end{aligned}
	\]
	We get so that 
	\begin{equation}
		\HM^2(\varphi(X\cap B_1)) \leq \HM^2(X\cap B_1)-
		10^{-4}(1-10\tau)\left( |b+c+d|^2+1-\langle a,e \rangle^2 \right)
	\end{equation}

	Since $\arcsin x= x+\sum_{n\geq 1}C_nx^{2n+1}$ for $|x|\leq 1$, where 
	$C_n=\frac{(2n)!}{4^n(n!)^2(2n+1)}$, we have that $\arcsin \langle a,i
	\rangle \geq \langle a,i \rangle-\tau \langle a,i \rangle^2$, thus
	\begin{equation}\label{eq:CLLHY100}
		\begin{aligned}
			\HM^2(X\cap B_1)-\frac{3\pi}{4}&=-\frac{1}{2}\left(\arcsin \langle a,b
			\rangle+\arcsin \langle a,c \rangle+
			\arcsin \langle a,c \rangle\right)\\
			&\leq -\frac{1}{2}\langle a,b+c+d \rangle+\frac{\tau}{2}N\\
			&\leq \frac{1}{2} \left(|b+c+d|^2+ 1-\langle
			a,e\rangle^2 \right)+\tau \left(1- \langle a,e \rangle^2\right).
		\end{aligned}
	\end{equation}
	Thus 
	\[
		\HM^2(\varphi(X\cap B_1)) \leq(1- 10^{-4})
		\HM^2(X\cap B_1)-10^{-4}\cdot \frac{3\pi}{4}.
	\]
\end{proof}
Let $E\subseteq \Omega_0$ be a $2$-rectifiable
set satisfying \ref{AS1}, \ref{AS2} and \ref{AS3}.
We will denote by $\mathscr{R}_2$ the set 
\[
	\left\{ r\in \mathscr{R}_1 :
	\varepsilon(r)+j(r)^{1/2}\leq 10^{-6}(1-2 \cdot 10^{-4}) \right\}.
\]
\begin{lemma}\label{le:EsADB}
	For any $r\in (0,\mathfrak{r})\cap \mathscr{R}_2$, we have that
	\[
		\begin{aligned}
			\HM^2(E\cap B_r)&\leq (1-2\cdot 10^{-4})\frac{r}{2}\HM^1(E\cap \partial
			B_r)+(2\cdot 10^{-4}-\vartheta\kappa^2)\frac{r^2}{2}\HM^1(X\cap \partial
			B_1)\\
			&\quad +\vartheta\kappa^2r^2\Theta(0)+(2r)^2h(2r).
		\end{aligned}
	\]
\end{lemma}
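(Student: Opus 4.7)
The proof strategy is to refine the comparison from Lemma \ref{le:CotoCo} by composing the Lipschitz retraction $p$ of Lemma \ref{le:BLNR} with the rescaled area-reducing map $\varphi$ from Lemma \ref{le:CLLHP} (if $\card\mathcal{X}_r=2$) or Lemma \ref{le:CLLHY} (if $\card\mathcal{X}_r=3$). Because $\Sigma\cap B(0,2\kappa)=X\cap B(0,2\kappa)$, the rescaled map $\varphi_s(z)=s\varphi(z/s)$ with $s=2\kappa r$ acts exactly on the cone portion of $\Sigma_r$ near the origin, is the identity outside $B(0,s)$, and preserves $L_0$, so $\varphi_s\circ p$ is a valid sliding deformation into the refined competitor $\tilde{\Sigma}_r=(\Sigma_r\setminus B(0,s))\cup \varphi_s(X_r\cap B(0,s))$. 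Redoing the interpolation argument from the proof of Lemma \ref{le:CotoCo} with $\varphi_s\circ p$ in place of $p$ and sending $\xi\to 0^+$ yields
\[
\HM^2(E\cap B_r)\leq \HM^2(\tilde{\Sigma}_r)+2550\,r\int_{E\cap \partial B_r}\dist(z,\Sigma_r)\,d\HM^1+(2r)^2h(2r),
\]
the distance integral being unchanged because $\varphi_s$ is the identity near $\partial B_r$.

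Next I split $\HM^2(\tilde{\Sigma}_r)$ into an outer and an inner contribution. For the outer part, the parametrization $(r\cos t,r\sin t,rs(t)/w(t))$ of $\Sigma^x_v$ identifies $\Sigma^x_v\cap B_1$ with the genuine cone over $\Gamma^x_*$, hence $\HM^2(\mathcal{M}\cap B_1)=\tfrac12\HM^1(\Gamma_*)$. Combined with \eqref{eq:CoToSu}, with $\gamma\subset E\cap \partial B_r$ together with Lemma \ref{le:SmCuAp} (so $r\HM^1(\Gamma_*)\leq r\HM^1(\Gamma)\leq \HM^1(E\cap \partial B_r)$), and with $T=\HM^1(X\cap \partial B_1)$, this gives
\[
\HM^2(\Sigma_r)\leq (1-2\cdot 10^{-4})\tfrac{r}{2}\HM^1(E\cap \partial B_r)+2\cdot 10^{-4}\cdot \tfrac{r^2}{2}\HM^1(X\cap\partial B_1).
\]
For the inner modification, Lemmas \ref{le:CLLHP}/\ref{le:CLLHY} yield $\HM^2(\varphi_s(X_r\cap B(0,s)))\leq (1-\vartheta)\tfrac{s^2}{2}\HM^1(X\cap\partial B_1)+\vartheta s^2\Theta(0)$, which after the $s=2\kappa r$ scaling contributes a subtraction of $\vartheta\kappa^2\tfrac{r^2}{2}\HM^1(X\cap\partial B_1)$ from the flat-cone area while adding $\vartheta\kappa^2 r^2\Theta(0)$ (with the factor $4$ absorbed into the effective constants).

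The remaining ingredient is the distance integral. Lemma \ref{le:smallh} applied to each $\Gamma^x$ gives $\sup_{z\in E\cap\partial B_r}\dist(z,r\Gamma_*)\leq Crj(r)^{1/2}$, so $2550r\int \dist(z,\Sigma_r)d\HM^1\leq Cr^2 j(r)^{1/2}\HM^1(E\cap \partial B_r)$. The smallness condition $\varepsilon(r)+j(r)^{1/2}\leq 10^{-6}(1-2\cdot 10^{-4})$ defining $\mathscr{R}_2$ guarantees this error is dominated by a small fraction of the leading term and is absorbed into the $(1-2\cdot 10^{-4})$ coefficient. Summing the three contributions produces the stated inequality.

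The main obstacle is this final bookkeeping of constants: one must verify that the $\vartheta\kappa^2$-scale savings from $\varphi_s$ are not overwhelmed by the distance-integral error, which is exactly why $\mathscr{R}_2$ controls $j(r)^{1/2}$ on a finer scale than $\vartheta\kappa^2$. One must also check that the composed Lipschitz constant of $\varphi_s\circ p$ remains universally bounded, so that the interpolation in $B_r\setminus B_{r(1-\xi)}$ reproduces the $2550$-type boundary term from Lemma \ref{le:CotoCo}.
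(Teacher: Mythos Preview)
Your overall architecture is exactly the paper's: compose the retraction $p$ with a rescaled $\varphi$ from Lemma~\ref{le:CLLHP}/\ref{le:CLLHY}, rerun the interpolation of Lemma~\ref{le:CotoCo}, and split the competitor's area into an outer piece controlled by \eqref{eq:CoToSu} and an inner piece controlled by the $\vartheta$-saving. (The paper takes $s=\kappa r$, not $2\kappa r$, which is why $\vartheta\kappa^2$ appears without the factor $4$ you have to ``absorb''.)

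There is, however, a genuine gap in your treatment of the boundary term $\int_{E\cap\partial B_r}\dist(z,\Sigma_r)\,d\HM^1$. First, Lemma~\ref{le:smallh} only controls $\dist(z,X\cap\partial B_1)$ for $z$ lying on the curves $\Gamma^x$; it says nothing about points of $E\cap\partial B_r$ off $\gamma$. The paper instead combines Lemma~\ref{le:EM} with \eqref{eq:XM} to obtain $\dist(z,\Sigma_r)\le r(5\varepsilon(r)+10j(r)^{1/2})$ for \emph{every} $z\in E\cap\partial B_r$, so $\varepsilon(r)$ must appear in the pointwise bound. Second, and more seriously, your absorption claim fails as written: adding any positive multiple of $r\,\HM^1(E\cap\partial B_r)$ to $(1-2\cdot10^{-4})\tfrac{r}{2}\HM^1(E\cap\partial B_r)$ strictly increases the leading coefficient and therefore cannot yield the stated inequality with its exact constants.

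The paper's repair is to delay the replacement $\HM^1(\Gamma_\ast)\le r^{-1}\HM^1(E\cap\partial B_r)$. One keeps the outer bound as $(\tfrac12-10^{-4})r^2\HM^1(\Gamma_\ast)+10^{-4}r^2\HM^1(X\cap\partial B_1)$ and observes that $\dist(z,\Sigma_r)=0$ for $z\in E\cap\partial B_r\cap\Sigma_r$, so the distance integral is only over $E\cap\partial B_r\setminus\Sigma_r$, with $\HM^1$-measure at most $\HM^1(E\cap\partial B_r)-r\HM^1(\Gamma_\ast)$. Thus the error term has the form $C(\varepsilon(r)+j(r)^{1/2})\,r\bigl(\HM^1(E\cap\partial B_r)-r\HM^1(\Gamma_\ast)\bigr)$, which is proportional to exactly the slack given up when one finally bounds $\HM^1(\Gamma_\ast)$ by $r^{-1}\HM^1(E\cap\partial B_r)$. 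The $\mathscr{R}_2$ condition $10^5(\varepsilon(r)+j(r)^{1/2})\le\tfrac{1}{10}(1-2\cdot10^{-4})<\tfrac12-10^{-4}$ then guarantees the error is dominated by the slack, and the exact coefficient $(1-2\cdot10^{-4})\tfrac{r}{2}$ is recovered.
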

\begin{proof}
	Let $\Sigma$, $\Sigma_r$, $\xi$, $\psi_{\xi}$, $\phi_{\xi}$ and $\{
	\varphi_t \}_{0\leq t\leq 1}$ be the same as in the proof of Lemma \ref{le:CotoCo}.
	We see that 
	\[
		\varphi_1(E\cap B(0,(1-\xi)r))=p(E\cap B(0,(1-\xi)r))\subseteq\Sigma_r,
	\]
	and that $\Sigma\cap B(0,2\kappa)=X\cap B(0,2\kappa)$,
	where $X$ is a cone defined in \eqref{eq:CoLip}. We see that if
	$\Theta(0)=\pi/2$, then $X$ satisfies the conditions in Lemma
	\ref{le:CLLHP}; if $\Theta(0)=3\pi/4$, then $X$ satisfies the conditions in 
	Lemma \ref{le:CLLHY}. Thus we can find a Lipschitz mapping $\Omega_0\to\Omega_0$
	with $\varphi(E\cap L)\subseteq L$, $|\varphi(z)|\leq 1$ when $|z|\leq 1$, and
	$\varphi(z)=z$ when $|z|>1$, such that
	\[
		\HM^2\left(\varphi(X)\cap \overline{B(0,1)}\right)\leq (1-\vartheta)\HM^2(X\cap
		B(0,1))+\vartheta\Theta(x).
	\]
	Let $\widetilde{\varphi}:\Omega_0\to\Omega_0$ be the mapping defined by
	$\widetilde{\varphi}(x)=r\varphi(x/r)$, then 
	\[
		\begin{aligned}
			\HM^2(E\cap B(0,r))&\leq \HM^2(\widetilde{\varphi}\circ\varphi_1(E)\cap
			\overline{B(0,r)})+(2r)^2h(2r)\\
			&\leq \HM^2(\widetilde{\varphi}\circ\varphi_1(E\cap B(0,(1-\xi)r)))+
			\HM^2(\varphi_1(E\cap A_{\xi}))\\
			&\leq \HM^2(\Sigma_r\setminus\overline{B(0,\kappa r)})+(1-\vartheta)(\kappa
			r)^2\HM^2(X\cap B(0,1))\\
			&\quad+\vartheta\cdot (\kappa r)^2\Theta(0)+\HM^2(\varphi_1(E\cap A_{\xi})).
		\end{aligned}
	\]
	But we see that $\Sigma_r=\{ rx:x\in \Sigma \}$, $\Sigma\cap
	B(0,2\kappa)=X\cap B(0,2\kappa)$, and 
	\[
		\lim_{\xi\to 0+}\HM^2(\varphi_1(E\cap A_{\xi}))\leq 2550\int_{E\cap\partial 
		B(0,r)}\dist(z,\Sigma_r)d\HM^1(z),
	\]
	we get so that 
	\[
		\HM^2(\Sigma_r\setminus \overline{B(0,\kappa
		r)})=r^2\left(\HM^2(\Sigma)-\HM^2(X\cap B(0,\kappa))\right),
	\]
	and  
	\[
		\begin{aligned}
			\HM^2(E\cap B(0,r))&\leq r^2\HM^2(\Sigma)-(\kappa r)^2\HM^2(X\cap
			B(0,1))\\
			&\quad+(1-\vartheta)(\kappa r)^2\HM^2(X\cap B(0,1))+(\kappa
			r)^2\vartheta\cdot\Theta(0)\\
			&\quad+2550\int_{E\cap\partial 
			B(0,r)}\dist(z,\Sigma_r)d\HM^1(z) +(2r)^2h(2r).
		\end{aligned}
	\]
	By \eqref{eq:CoToSu}, we get that 
	\[
		\begin{aligned}
			\HM^2(\Sigma)&\leq
			\HM^2(\mathcal{M})-10^{-4}(\HM^1(\Gamma_{\ast})-T)\\
			&= (1/2-10^{-4})\HM^1(\Gamma_{\ast})+ 10^{-4}\HM^1(X\cap \partial B(0,1)),
		\end{aligned}
	\]
	and then
	\[
		\begin{aligned}
			\HM^2(E\cap B_r)&\leq (1/2-10^{-4})r^2\HM^1(\Gamma_{\ast})+
			(10^{-4}-\vartheta\kappa^2/2)r^2\HM^1(X\cap \partial B_1)\\
			&\quad +\vartheta\kappa^2 r^2\Theta(0) +2550\int_{E\cap\partial 
			B_r}\dist(z,\Sigma_r)d\HM^1(z)+(2r)^2h(2r).
		\end{aligned}
	\]

	By \eqref{eq:XM} and Lemma \ref{le:EM}, we have that 
	\[
		d_{0,r}(E,\mathcal{M})\leq 5 \varepsilon(r)+ 10 j(r)^{1/2}.
	\]
	We get that for any $z\in E\cap \partial B(0,r)$, 
	\[
		\dist(\scale{1/r}(z),\mathcal{M})\leq 5\varepsilon(r)+10j(r)^{1/2}.
	\]
	Since $\Sigma\setminus B(0,9/10)=\mathcal{M}\setminus B(0,9/10)$, we have
	that 
	\[
		\begin{aligned}
			\dist(z,\Sigma_r)&=r\dist(\scale{1/r}(z),\Sigma)=
			r\dist(\scale{1/r}(z),\mathcal{M})\\
			&\leq 5r\varepsilon(r)+10rj(r)^{1/2}.
		\end{aligned}
	\]
	We get so that  
	\[
		\begin{aligned}
			\int_{E\cap\partial B(0,r)}\dist(z,\Sigma_r)d\HM^1(z)
			&\leq 5r(\varepsilon(r)+10j(r)^{1/2})
			\HM^1(E\cap\partial B(0,r)\setminus \Sigma_r)\\
			&\leq 10r(\varepsilon(r)+j(r)^{1/2})(\HM^1(E\cap\partial B_r)
			-r\HM^1( \Gamma_{\ast})). 
		\end{aligned}
	\]
	By Lemma \ref{le:SmCuAp}, we have that 
	\[
		\HM^1(\Gamma_{\ast}\setminus \Gamma)\leq \HM^1(\Gamma\setminus
		\Gamma_{\ast})\leq C\eta^2 (\HM^1(\Gamma)-\HM^1(X\cap \partial B(0,1))),
	\]
	so that 
	\[
		\HM^1(X\cap \partial B(0,1))\leq \HM^1(\Gamma_{\ast})\leq \HM^1(\Gamma)\leq
		\HM^1(\scale{1/r}(E\cap\partial B_r)),
	\]
	thus
	\[
		\begin{aligned}
			\HM^2(E\cap B_r)&\leq (1/2-10^{-4})r^2\HM^1(\Gamma_{\ast})+
			(10^{-4}-\vartheta\kappa^2/2)r^2\HM^1(X\cap \partial B_1)\\
			&\quad +10^5(\varepsilon(r)+j(r)^{1/2})r
			( \HM^1(E\cap\partial B_r)-r\HM^1(\Gamma_{\ast}))\\
			&\quad +\vartheta\kappa^2 r^2\Theta(0) +(2r)^2h(2r).
		\end{aligned}
	\]

	Since $r\in (0,\mathfrak{r})\cap \mathscr{R}_2$, we have that 
	\[
		10^5\left(\varepsilon(r)+10j(r)^{1/2}\right)\leq 
		\frac{1}{10}(1-2\cdot 10^{-4})
	\]
	thus
	\[
		\begin{aligned}
			\HM^2(E\cap B_r)&\leq (1-2\cdot 10^{-4})\frac{r}{2}\HM^1(E\cap \partial
			B_r)+(2\cdot 10^{-4}-\vartheta\kappa^2)\frac{r^2}{2}\HM^1(X\cap \partial
			B_1)\\
			&\quad +\vartheta\kappa^2r^2\Theta(0)+(2r)^2h(2r).
		\end{aligned}
	\]

\end{proof}
\begin{theorem}\label{thm:EsADB}
	There exist $\lambda,\mu\in (0,10^{-3})$ and $\mathfrak{r}_1>0$ such that, for any
	$0<r<\mathfrak{r}_1$,
	\[
		\HM^2(E\cap B_r)\leq (1-\mu-\lambda)\frac{r}{2}\HM^1(E\cap \partial
		B_r)+\mu \frac{r^2}{2}\HM^1(X\cap \partial B_1)+\lambda \Theta(0)r^2+4r^2h(2r).
	\]
\end{theorem}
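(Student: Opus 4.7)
The plan is to realize Theorem \ref{thm:EsADB} as essentially a repackaging of Lemma \ref{le:EsADB}, with two small but important changes: the combined coefficient $2\cdot 10^{-4}$ from Lemma \ref{le:EsADB} is split between the two terms $\mu\frac{r^2}{2}\HM^1(X\cap \partial B_1)$ and $\lambda\Theta(0)r^2$, and the range of $r$ is extended from $\mathscr{R}_2\cap (0,\mathfrak{r})$ to the full interval $(0,\mathfrak{r}_1)$ for a suitable $\mathfrak{r}_1>0$.

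First I would set $\lambda=\vartheta\kappa^2$ and $\mu=2\cdot 10^{-4}-\vartheta\kappa^2$, where $\vartheta$ is the universal constant from Lemmas \ref{le:CLLHP} and \ref{le:CLLHY} and $\kappa$ is the universal constant from Lemma \ref{le:CoSu}. Both are in $(0,10^{-3})$ and satisfy $\mu+\lambda=2\cdot 10^{-4}$. Next, using that $\varepsilon(r)\to 0$ as $r\to 0$ by the local $C^0$-equivalence assumption \ref{AS3}, I pick $\mathfrak{r}_1\in (0,\mathfrak{r}]$ small enough that $\varepsilon(r) < \tfrac14\cdot 10^{-6}(1-2\cdot 10^{-4})$ for every $r\in (0,\mathfrak{r}_1)$.

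For $r\in \mathscr{R}_2\cap (0,\mathfrak{r}_1)$, Lemma \ref{le:EsADB} with these constants is exactly the inequality claimed in Theorem \ref{thm:EsADB}. For $r\in (0,\mathfrak{r}_1)\setminus \mathscr{R}_2$, one either has $\HM^1(E\cap \partial B_r)=\infty$ (which makes the target inequality trivially true), or $r\in \mathscr{R}\setminus \mathscr{R}_2$. In the second case, the smallness of $\varepsilon(r)$ imposed by the choice of $\mathfrak{r}_1$ forces $j(r)\geq c_0>0$ for a universal constant $c_0$. One then combines the crude bound of Lemma \ref{le:DI} with the construction of Subsection 3.3 (which gives $\HM^1(X\cap \partial B_1)\leq \frac{1}{r}\HM^1(E\cap \partial B_r)$, i.e., $j(r)\geq 0$, since $X$ is the cone over $\Gamma$ and $\Gamma$ is a projection of curves in $E\cap \partial B_r$) to verify the Theorem's inequality directly.

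The main obstacle I anticipate is the last step. Reducing the inequality one obtains from Lemma \ref{le:DI} to the target form requires establishing the algebraic bound $(\mu+\lambda)j(r)\leq \lambda(2\Theta(0)-\HM^1(X\cap \partial B_1))$, and the quantity $2\Theta(0)-\HM^1(X\cap \partial B_1)$ is not manifestly nonnegative: the sum $T=\sum_{x}T_x$ that defines $\HM^1(X\cap \partial B_1)$ is close to $2\Theta(0)$ by the $C^0$-equivalence, but can in principle overshoot it by $O(\varepsilon(r))$. The resolution is either to use the $C^0$-equivalence quantitatively to control $|\HM^1(X\cap \partial B_1)-2\Theta(0)|=O(\varepsilon(r))$ and shrink $\mathfrak{r}_1$ so that the corresponding deficit is absorbed, or to pass to a nearby $r'\in \mathscr{R}_2$ with $r'<r$ and invoke the left-continuity of $\HM^2(E\cap B_\cdot)$ together with semicontinuity of $\HM^1(E\cap \partial B_{\cdot})$. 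Everything else in the proof is bookkeeping.
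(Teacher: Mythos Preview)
Your choice of constants $\lambda=\vartheta\kappa^2$, $\mu=2\cdot 10^{-4}-\vartheta\kappa^2$ and the treatment of the case $r\in\mathscr{R}_2$ via Lemma \ref{le:EsADB} are exactly what the paper does. The gap is in the complementary case $j(r)>\tau_1$.

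Your plan there is to start from Lemma \ref{le:DI}, which yields
\[
\HM^2(E\cap B_r)\leq \frac{r}{2}\HM^1(E\cap\partial B_r)+4r^2h(2r),
\]
and then absorb the difference $(\mu+\lambda)\tfrac{r}{2}\HM^1(E\cap\partial B_r)$ into the remaining terms. As you correctly compute, this reduces to
\[
(\mu+\lambda)\,j(r)\leq \lambda\bigl(2\Theta(0)-\HM^1(X\cap\partial B_1)\bigr).
\]
But this inequality is \emph{impossible} in the regime under consideration. The left side is at least $(\mu+\lambda)\tau_1=2\cdot 10^{-4}\tau_1$, a fixed positive constant, while the right side is $\lambda\cdot O(\varepsilon(r))$, which by your own choice of $\mathfrak{r}_1$ is arbitrarily small (and may even be negative). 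No further shrinking of $\mathfrak{r}_1$ or quantitative $C^0$-equivalence bound can rescue this: the problem is that Lemma \ref{le:DI} gives coefficient $1$ in front of $\tfrac{r}{2}\HM^1(E\cap\partial B_r)$, and the target demands the strictly smaller $1-\mu-\lambda$. Your alternative suggestion of passing to a nearby $r'\in\mathscr{R}_2$ does not work either, since $\HM^1(E\cap\partial B_\cdot)$ enjoys no semicontinuity.

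What the paper actually does when $j(r)>\tau_1$ is to build an explicit competitor rather than invoke Lemma \ref{le:DI}. One retracts $E\cap B_r$ onto the cone $X$ via a Lipschitz neighborhood retraction (the closeness $d_{0,r}(E,X)\leq 2\varepsilon(r)$ guarantees this costs only a thin annular error of order $\varepsilon(r)r\HM^1(X\cap\partial B_1)$), then applies the deformation of Lemmas \ref{le:CLLHP}--\ref{le:CLLHY} at a scale $\delta$ slightly larger than $\kappa$. This produces
\[
\HM^2(E\cap B_r)\leq (1-\vartheta\delta^2+8\varepsilon(r))\tfrac{r^2}{2}\HM^1(X\cap\partial B_1)+\vartheta\delta^2\Theta(0)r^2+4r^2h(2r).
\]
Now the lower bound $\tfrac{1}{r}\HM^1(E\cap\partial B_r)\geq \HM^1(X\cap\partial B_1)+\tau_1$ is used to replace $\HM^1(X\cap\partial B_1)$ by $\tfrac{1}{r}\HM^1(E\cap\partial B_r)-\tau_1$; the gain of $-\tau_1$ is precisely what pays for the discrepancy $\vartheta(\delta^2-\kappa^2)$ and the error $8\varepsilon(r)$, provided $\delta$ is chosen as in \eqref{eq:cdelta1} and $\mathfrak{r}_1$ as in \eqref{eq:ctau1}. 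The crucial point you are missing is that the large-$j(r)$ case requires a \emph{new competitor}, not a reduction to the monotonicity inequality.
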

\begin{proof}
	We put $\tau_1=\min\{ \tau_0,10^{-12}(1-\vartheta\kappa^2)^2\}$, and take
	$\delta$ such that 
	\begin{equation}\label{eq:cdelta1}
		\kappa<\delta<\kappa+(8\vartheta)^{-1}(1-2\cdot 10^{-4})\Theta(0)\tau_1.
	\end{equation}
	We see that $\varepsilon(r)\to 0$ as $r\to 0+$, there exist
	$\mathfrak{r}_1\in (0,\mathfrak{r})$ such that, for any $r\in
	(0,\mathfrak{r}_1)$,
	\begin{equation}\label{eq:ctau1}
		\varepsilon(r)\leq
		10^{-1}\min\{\tau_1,\vartheta(\delta^2-\kappa^2)\}. 
	\end{equation}

	If $r\in (0,\mathfrak{r}_1)$ and $j(r)\leq \tau_1$, then $r\in
	\mathscr{R}_2$, then by Lemma \ref{le:EsADB}, we have that 
	\[
		\begin{aligned}
			\HM^2(E\cap B_r)&\leq (1-2\cdot 10^{-4})\frac{r}{2}\HM^1(E\cap \partial
			B_r)+(2\cdot 10^{-4}-\vartheta\kappa^2)\frac{r^2}{2}\HM^1(X\cap \partial
			B_1)\\
			&\quad +\vartheta\kappa^2r^2\Theta(0)+(2r)^2h(2r).
		\end{aligned}
	\]
	We only need to consider the case $r\in (0,\mathfrak{r}_1)$,
	$j(r)>\tau_1$ and $\HM^1(E\cap \partial B_r)<+\infty$, thus 
	\begin{equation}\label{eq:AppEBr}
		\HM^1(X\cap \partial B_1)+\tau_1\leq \frac{1}{r}\HM^1(E\cap B(0,r)).
	\end{equation}

	By the construction of $X$, we see that $X\cap B(0,1)$ is Lipschitz
	neighborhood retract, let $U$ be a neighborhood of $X\cap B(0,1)$ and
	$\varphi_0:U\to X\cap B(0,1)$ be a retraction such that $|\varphi_0(x)-x|\leq
	r/2$. We put $U_1=\scale{8r/9}(U)$, $\varphi_1=\scale{8r/9}\circ 
	\varphi_0\circ \scale{9/(8r)}$, and let $s:[0,\infty)\to [0,1]$ be a function
	given by 
	\[
		s(t)=\begin{cases}
			1,&0\leq t\leq 3r/4,\\
			-(8/r)(t-7r/8),& 3r/4<t \leq 7r/8,\\
			0,&t>7r/8.
		\end{cases}
	\]

	We see that there exist sliding minimal cone $Z$ such that $d_{0,1}(X,Z)\leq
	\varepsilon(r)$, thus $d_{0,r}(E,X)\leq 2\varepsilon(r)$, then for any
	$x\in E\cap B(0,r)\setminus B(0,3r/4)$, 
	\[
		\dist(x,X)\leq 2\varepsilon(r) r\leq \frac{8\varepsilon(r)}{3}|x|.
	\]
	We consider the mapping $\psi:\Omega_0\to\Omega_0$ defined by 
	\[
		\psi(x)=s(|x|)\varphi_1(x)+(1-s(|x|))x,
	\]
	then $\psi(L)=L$ and $\psi(x)=x$ for $|x|\geq 8r/9$.

	We take $\mathfrak{r}_1>0$ such that, for any $r\in (0,\mathfrak{r}_1)$,
	\[
		\{ x\in \Omega_0\cap B(0,1):\dist(x,X)\leq 3\varepsilon(r) \}\subseteq  U.
	\]
	Then we get that $\psi(x)\in X$ for any $x\in E\cap B(0,3r/4)$; 
	\[
		\dist(\psi(x),X)\leq 3\varepsilon(r)|x|\text{ for any }x\in E\cap
		B(0,r)\setminus B(0,3r/4);
	\]
	and $\Psi(E\cap B_r)\cap B(0,r/4)=X\cap B(0,r/4)$.

	We now consider the mapping $\Pi_1:\Omega_0\to\Omega_0$ defined by
	\[
		\Pi_1(x)=s(4|x|)x+(1-s(4|x|))\Pi(x),
	\]
	and the mapping $\psi_1:\Omega_0\to\Omega_0$ defined by
	\[
		\psi_1(x)=\begin{cases}
			\Pi_1\circ\psi(x),& |x|\leq r,\\
			x,&|x|\geq r.
		\end{cases}
	\]
	We have that $\psi_1$ is Lipschitz, $\psi_1(L_0)=L_0$ and $\psi_1(B(0,r))\subseteq
	\overline{B(0,r)}$, 
	\[
		\psi_1(E\cap B(0,r))\subseteq X\cap B(0,r)\cup \{ x\in \partial B_r:
		\dist(x,X) \leq 3r\varepsilon(r)\}.
	\]

	Let $\varphi$ be the same as in Lemma \ref{le:CLLHP} and Lemma \ref{le:CLLHY}, and
	let $\psi_2=\scale{\delta}\circ \varphi\circ \scale{1/\delta}\circ \psi_1$.
	Then we have that 
	\begin{equation}\label{eq:AppEBr10}
		\begin{aligned}
			\HM^2(E\cap \overline{B(0,r)})&\leq \HM^2(\psi_2(E\cap
			\overline{B(0,r)}))+(2r)^2h(2r)\\
			&\leq (1-\vartheta\delta^2)\HM^2(X\cap B(0,r))+\vartheta\delta^2
			\Theta(0)r^2+4r^2h(2r)\\
			&\quad +\HM^2(\{ x\in \partial B_r:
			\dist(x,X) \leq 3r\varepsilon(r)\})\\
			&\leq (1-\vartheta\delta^2)\HM^2(X\cap B(0,r))+\vartheta\delta^2
			\Theta(0)r^2 \\
			&\quad + 4r\varepsilon(r)\HM^1(X\cap \partial B_r)+4r^2h(2r)\\
			&\leq (1-\vartheta\delta^2+8\varepsilon(r))\frac{r^2}{2}\HM^1(X\cap \partial 
			B_1)+\vartheta\delta^2 \Theta(0)r^2 +4r^2h(2r)\\
		\end{aligned}
	\end{equation}

	We take $\mu=2\cdot 10^{-4}-\vartheta\kappa^2$ and $\lambda=\vartheta\kappa^2$,
	then by \eqref{eq:cdelta1} and \eqref{eq:ctau1}, we have that   
	\[
		8\varepsilon(r)<\vartheta(\delta^2-\kappa^2)
	\]
	and 
	\[
		\vartheta(\delta^2-\kappa^2)\Theta(0)\leq (1-2\cdot 10^{-4})\frac{\tau_1}{2}.
	\]
	We get from \eqref{eq:AppEBr} and \eqref{eq:AppEBr10} that 
	\[
		\begin{aligned}
			\HM^2(E\cap \overline{B_r})&\leq (1-2\cdot 10^{-4})
			\frac{r^2}{2}(\HM^1(X\cap \partial B_1)+\tau_1)-(1-2\cdot
			10^{-4})\frac{\tau_1r^2}{2}\\
			&\quad +\mu \frac{r^2}{2}\HM^1(X\cap \partial B_1)+ 
			\vartheta\kappa^2\Theta(0)r^2 +4r^2h(2r)\\
			&\quad +(8\varepsilon(r)-\vartheta\delta^2+\vartheta\kappa^2)\frac{r^2}{2}\HM^1(X\cap \partial
			B_1)+(\vartheta\delta^2-\vartheta\kappa^2)\Theta(0)r^2\\
			&\leq (1-\lambda-\mu)\frac{r}{2}\HM^1(E\cap \partial B_r)+\mu \frac{r^2}{2}\HM^1(X\cap \partial B_1)+ 
			\lambda\Theta(0)r^2 +4r^2h(2r).
		\end{aligned}
	\]

\end{proof}
For convenient, we put $\lambda_0=\lambda/(1-\lambda)$,
$f(r)=\Theta(0,r)-\Theta(0)$ and $u(r)=\HM^1(E\cap B(0,r))$ for $r>0$.
Since $f(r)=r^{-2}u(r)-\Theta(0)$ and $u$ is a nondecreasing function, we have
that, for any $\lambda_1\in \mathbb{R}$ and $0<r\leq R<+\infty$, 
\[
	R^{\lambda_1}f(R)-r^{\lambda_1}f(r)\geq \int_{r}^{R}\left(
	t^{\lambda_1}f(t) \right)' dt,
\]
thus
\begin{equation}\label{eq:refd}
	f(r)\leq r^{-\lambda_1}R^{\lambda_1}f(R)+r^{-\lambda_1}\int_{r}^{R}\left(
	t^{\lambda_1}f(t) \right)' dt.
\end{equation}
\begin{corollary}\label{co:dendecay}
	If the gauge function $h$ satisfy 
	\[
		h(t)\leq C_ht^{\alpha},\ 0<t\leq \mathfrak{r}_1\text{ for some }C_h>0,\
		\alpha>0,
	\]
	then for any $0<\beta<\min\{ \alpha, 2\lambda_0\}$, there is a constant
	$C=C(\lambda_0,\alpha,\beta,\mathfrak{r}_1,C_h)>0$ such that  
	\begin{equation}\label{eq:dendecay}
		|\Theta(0,\rho)-\Theta(0)|\leq C\rho^{\beta}
	\end{equation}
	for any $0<\rho\leq \mathfrak{r}_1$.
\end{corollary}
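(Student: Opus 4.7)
The plan is to derive from Theorem \ref{thm:EsADB} a first order differential inequality for $f(r)=\Theta(0,r)-\Theta(0)$ and then integrate it with the integrating factor $r^{-2\lambda_{0}}$, after which the assumption $h(t)\leq C_{h}t^{\alpha}$ produces the desired power decay.

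First, to simplify Theorem \ref{thm:EsADB} I would absorb the term $\HM^{1}(X\cap\partial B_{1})$. The construction of $X$ and the curves $\Gamma_{\ast}\subseteq\Gamma$ (Subsection 3.3, as already used in the proof of Theorem \ref{thm:EsADB}) gives
\[
\HM^{1}(X\cap\partial B_{1})\leq \HM^{1}(\Gamma_{\ast})\leq \HM^{1}(\Gamma)\leq r^{-1}\HM^{1}(E\cap\partial B_{r}),
\]
and Lemma \ref{le:DE} gives $\HM^{1}(E\cap\partial B_{r})\leq u'(r)$ with $u(r)=\HM^{2}(E\cap B(0,r))$. Plugging these into Theorem \ref{thm:EsADB} collapses the two boundary-measure terms into a single $(1-\lambda)(r/2)u'(r)$ contribution, yielding
\[
u(r)\leq (1-\lambda)\frac{r}{2}u'(r)+\lambda\Theta(0)r^{2}+4r^{2}h(2r)
\]
for almost every $r\in(0,\mathfrak{r}_{1})$. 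Dividing by $r^{2}$ and using the identity $u'(r)/(2r)=\Theta(0,r)+(r/2)f'(r)$, this rearranges to $\lambda f(r)\leq \tfrac{(1-\lambda)r}{2}f'(r)+4h(2r)$, equivalently
\[
f'(r)\geq \frac{2\lambda_{0}}{r}f(r)-\frac{8\,h(2r)}{(1-\lambda)r}.
\]

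Multiplying by the positive factor $r^{-2\lambda_{0}}$ makes the left hand side equal to $(r^{-2\lambda_{0}}f(r))'$ almost everywhere. Since by Theorem \ref{thm:ANDD} the function $\Theta(0,r)+8h_{1}(r)$ is non-decreasing and $h_{1}$ is absolutely continuous, $t^{-2\lambda_{0}}f(t)$ has non-negative singular part, so its total increment dominates the integral of its a.e.\ derivative; this is precisely the content of \eqref{eq:refd} taken with $\lambda_{1}=-2\lambda_{0}$. Integrating from $\rho$ to $\mathfrak{r}_{1}$ I then obtain
\[
f(\rho)\leq (\rho/\mathfrak{r}_{1})^{2\lambda_{0}}f(\mathfrak{r}_{1})+\frac{8\rho^{2\lambda_{0}}}{1-\lambda}\int_{\rho}^{\mathfrak{r}_{1}}h(2t)\,t^{-2\lambda_{0}-1}\,dt.
\]

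Inserting $h(2t)\leq 2^{\alpha}C_{h}t^{\alpha}$ and carrying out the elementary case analysis on $\alpha$ versus $2\lambda_{0}$ (the borderline $\alpha=2\lambda_{0}$ producing a logarithm, which is absorbed into any strictly smaller power), the right hand side is $O(\rho^{\beta})$ for every $\beta<\min\{\alpha,2\lambda_{0}\}$. For the matching lower bound on $f$, Theorem \ref{thm:ANDD} gives $f(\rho)\geq -8h_{1}(\rho)\geq -C\rho^{\alpha}\geq -C\rho^{\beta}$. Combining these yields $|\Theta(0,\rho)-\Theta(0)|\leq C\rho^{\beta}$ as required. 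The only step that requires genuine care is the integration of the differential inequality in the BV setting, handled by the sign analysis of the singular part just described; the rest is routine calculus.
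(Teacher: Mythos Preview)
Your argument is correct and follows essentially the same route as the paper: both collapse the $\mu$-term in Theorem \ref{thm:EsADB} via $\HM^{1}(X\cap\partial B_{1})\leq r^{-1}\HM^{1}(E\cap\partial B_{r})\leq r^{-1}u'(r)$ to obtain the differential inequality $rf'(r)\geq 2\lambda_{0}f(r)-8(1+\lambda_{0})h(2r)$, integrate it against $r^{-2\lambda_{0}}$ using \eqref{eq:refd}, split into the three cases $\alpha\gtrless 2\lambda_{0}$, and get the lower bound from Theorem \ref{thm:ANDD}. Your remark on the BV/singular-part justification for \eqref{eq:refd} is in fact slightly more explicit than the paper's, but the content is identical.
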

\begin{proof}
	For any $r>0$, we put $u(r)=\HM^2(E\cap B(0,r))$. Then $u$ is differentiable
	for $\HM^1$-a.e. $r\in (0,\infty)$.

	By Theorem \ref{thm:EsADB} and Lemma \ref{le:DE}, we have that for any
	$r\in (0,\mathfrak{r}_1)\cap \mathscr{R}$,  
	\[
		\begin{aligned}
			u(r)&\leq (1-\lambda)\frac{r}{2}\HM^1(E\cap \partial B(0,r))+\lambda
			\Theta(0)r^2+4r^2h(2r)\\
			&\leq (1-\lambda)\frac{r}{2}u'(r)+\lambda \Theta(0)r^2+4r^2h(2r),
		\end{aligned}
	\]
	thus 
	\[
		rf'(r)\geq
		\frac{2\lambda}{1-\lambda}f(r)-\frac{8}{1-\lambda}h(2r)=2\lambda_0
		f(r)-8(1+\lambda_0)h(2r),
	\]
	and
	\[
		\left( r^{-2\lambda_0}f(r) \right)'=
		r^{-1-2\lambda_0}\left( rf'(r)-2\lambda_0 \right)\geq
		-8(1+\lambda_0)r^{-1-2\lambda_0}h(2r).
	\]

	Recall that $\HM^1((0,\infty)\setminus \mathscr{R})=0$.
	We get, from \eqref{eq:refd}, so that, for any $0<r<R\leq \mathfrak{r}_1$,
	\begin{equation}\label{eq:denapp10}
		f(r)\leq r^{2\lambda_0}R^{-2\lambda_0}f(R)
		+8(1+\lambda_0)r^{2\lambda_0}
		\int_r^{R}t^{-1-2\lambda_0}h(2t)dt.
	\end{equation}
	Since $h(t)\leq C_ht^{\alpha}$, we have that 
	\[
		f(r)\leq (r/R)^{-2\lambda_0}f(R)
		+2^{3+\alpha}(1+\lambda_0)C_hr^{2\lambda_0}
		\int_r^{R}t^{\alpha-2\lambda_0-1}dt.
	\]
	If $\alpha>2\lambda_0$, then 
	\begin{equation}\label{eq:denapp20}
		f(r)\leq \left( f(R) +2^{3+\alpha}(1+\lambda_0)(1+\lambda_0)
		(\alpha-2\lambda_0)^{-1}C_h 
		R^{\alpha}\right)(r/R)^{2\lambda_0};
	\end{equation}
	if $\alpha=2\lambda_0$, then
	\[
		f(r)\leq f(R)(r/R)^{\alpha} +2^{\alpha+3}(1+\lambda_0)C_h 
		r^{\alpha} \ln (R/r),
	\]
	thus, for any $\beta\in (0,\alpha)$, 
	\begin{equation}\label{eq:denapp30}
		\begin{aligned}
			f(r)&\leq f(R)r^{\alpha} +2^{\alpha+3}(1+\lambda_0)C_h 
			r^{\beta}R^{\alpha-\beta} \frac{\ln (R/r)}{(R/r)^{\alpha-\beta}}\\
			&\leq \left(f(R) + 2^{\alpha+3}(1+\lambda_0)C_h
			(\alpha-\beta)^{-1}e^{-1} R^{\alpha} \right)(r/R)^{\beta};
		\end{aligned}
	\end{equation}
	if $\alpha<2\lambda_0$, then
	\begin{equation}\label{eq:denapp40}
		\begin{aligned}
			f(r)&\leq f(R)(r/R)^{2\lambda_0}
			+2^{\alpha+3}(1-\lambda_0)C_h r^{2\lambda_0} \cdot
			(2\lambda_0-\alpha)^{-1}\left( r^{\alpha-2\lambda_0} -
			R^{\alpha-2\lambda_0}\right)\\
			&\leq \left((r/R)^{2\lambda_0-\alpha}f(R) + 
			2^{\alpha+3}(1-\lambda_0)C_h (2\lambda_0-\alpha)^{-1}R^{\alpha}
			\right)(r/R)^{\alpha}.
		\end{aligned}
	\end{equation}
	Hence \eqref{eq:dendecay} follows from \eqref{eq:denapp20},
	\eqref{eq:denapp30}, \eqref{eq:denapp40} and Theorem \ref{thm:ANDD}. 
	Indeed, there is a constant
	$C_1(\alpha,\beta,\lambda_0)>0$ such that 
	\begin{equation}\label{eq:denapp50}
		r^{2\lambda_0}\int_{r}^{R}t^{\alpha-2\lambda_0-1}dt\leq 
		C_1(\alpha,\beta,\lambda_0) R^{\alpha}\cdot (r/R)^{\beta},
	\end{equation}
	and there is a constant $C_2(\alpha,\beta,\lambda_0)>0$ such that 
	\[
		f(r)\leq \left( f(R)+C_2(\alpha,\beta,\lambda_0)C_h \cdot R^{\alpha}
		\right)(r/R)^{\alpha}.
	\]
\end{proof}
\begin{remark}\label{re:largegaugefun}
	If the gauge function $h$ satisfy that 
	\[
		h(t)\leq C\left( \ln\left( \frac{A}{t} \right) \right)^{-b}
	\]
	for some $A,b,C>0$, then \eqref{eq:denapp10} implies that there exist $R>0$
	and constant $C(R,\lambda,b)$ such that 
	\[
		f(r)\leq C(R,\lambda,b)\left( \ln \left( \frac{A}{r} \right)
		\right)^{-b}\text{ for }0<r\leq R.
	\]
\end{remark}

\section{Approximation of $E$ by cones at the boundary}
In this section, we also assume that $E\subseteq \Omega_0$ is a $2$-rectifiable
set satisfying \ref{AS1}, \ref{AS2} and \ref{AS3}.
We let $\varepsilon(r)=\varepsilon_P(r)$ if $E$
is locally $C^0$-equivalent to a sliding minimal cone of type $\mathbb{P}_+$;
and let $\varepsilon(r)=\varepsilon_Y(r)$ if $E$ is locally $C^0$-equivalent 
to a sliding minimal cone of type $\mathbb{Y}_+$.

For any $r>0$, we put 
\[
	f(r)=\Theta(0,r)-\Theta(0),\ F(r)=f(r)+8h_1(r),\ F_1(r)=F(r)+8h_1(r),
\]
and for $r\in \mathscr{R}$, we put
\[
	\Xi(r)=rf'(r)+2f(r)+16h(2r)+32h_1(r).
\]

We denote by $X(r)$ and $\Gamma(r)$,
respectively, the cone $X$ and the set $\Gamma$ which are defined in
\eqref{eq:CoLip}, and by $\gamma(r)$ the set $\scale{r}(\Gamma(r))$.  
For any $r_2>r_1>0$, we put 
\[
	A(r_1,r_2)=\{ x\in \mathbb{R}^{3}: r_1\leq |x|\leq r_2 \}.
\]
\begin{lemma}\label{le:eap}
	For any $0<r<R<\infty$ with $\HM^2(E\cap \partial B_r)=\HM^2(E\cap \partial
	B_R)=0$, we have that 
	\begin{equation}\label{eq:eap1}
		\int_{E\cap A(r,R)} \frac{1-\cos\theta(x)}{|x|^2}d\HM^2(x)\leq F(R)-F(r),
	\end{equation}
	and 
	\begin{equation}\label{eq:eap2}
		\HM^2\left( \Pi(E\cap A(r,R)) \right)\leq \int_{E\cap
		A(r,R)}\frac{\sin\theta(x)}{|x|^2}d\HM^2(x).
	\end{equation}
\end{lemma}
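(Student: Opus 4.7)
\emph{Proof sketch.} The plan is to derive both estimates from Theorem~3.2.22 of \cite{Federer:1969} (the area/coarea formula), combined with the differential inequality from Lemma~\ref{le:DI} that was used to prove monotonicity in Theorem~\ref{thm:ANDD}. Throughout I write $u(t)=\HM^2(E\cap B(0,t))$, and I use the hypothesis $\HM^2(E\cap\partial B_r)=\HM^2(E\cap\partial B_R)=0$ to ensure that the Stieltjes measure $du$ has no atoms at the endpoints of $[r,R]$.

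For \eqref{eq:eap2} I would apply the area formula directly to the radial projection $\Pi\colon\mathbb{R}^3\setminus\{0\}\to\mathbb{S}^2$, $\Pi(y)=y/|y|$, restricted to $E\cap A(r,R)$. A direct computation shows that $D\Pi(y)v=|y|^{-1}(v-\langle y/|y|,v\rangle y/|y|)$, i.e.\ $|y|^{-1}$ times orthogonal projection onto $y^{\perp}$. At an $\HM^2$-a.e.\ point where $\Tan(E,y)$ exists I would choose an orthonormal basis $\{v_1,v_2\}$ of $\Tan(E,y)$ with $v_1\perp y$; then $\langle y/|y|,v_2\rangle=\pm\cos\theta(y)$, so $|D\Pi(y)v_1|=1/|y|$, $|D\Pi(y)v_2|=\sin\theta(y)/|y|$, and $D\Pi(y)v_1\perp D\Pi(y)v_2$. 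Hence $\ap J_2(\Pi\vert_E)(y)=\sin\theta(y)/|y|^2$, and \eqref{eq:eap2} follows at once.

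For \eqref{eq:eap1} the idea is to compute $\int_{E\cap A(r,R)}|y|^{-2}\,d\HM^2$ in two different ways and subtract. On the one hand, viewing $du$ as the pushforward of $\HM^2\mr E$ under $y\mapsto|y|$ and integrating by parts in the Lebesgue--Stieltjes sense,
\begin{equation*}
\int_{E\cap A(r,R)}\frac{d\HM^2(y)}{|y|^2}=\int_r^R\frac{du(t)}{t^2}=\Theta(0,R)-\Theta(0,r)+2\int_r^R\frac{u(t)}{t^3}\,dt.
\end{equation*}
On the other hand, the coarea formula applied to $\psi(y)=|y|$ together with the identity $\ap J_1(\psi\vert_E)=\cos\theta$ from \eqref{eq:DE10} gives
\begin{equation*}
\int_{E\cap A(r,R)}\frac{\cos\theta(y)}{|y|^2}\,d\HM^2=\int_r^R\frac{\HM^1(E\cap\partial B(0,t))}{t^2}\,dt.
\end{equation*}
Lemma~\ref{le:DI} rearranges to the a.e.\ pointwise bound $2u(t)/t^3\leq\HM^1(E\cap\partial B(0,t))/t^2+8h(2t)/t$; integrating over $[r,R]$ and subtracting the second display from the first produces exactly
\begin{equation*}
\int_{E\cap A(r,R)}\frac{1-\cos\theta(y)}{|y|^2}\,d\HM^2\leq\Theta(0,R)-\Theta(0,r)+8(h_1(R)-h_1(r))=F(R)-F(r),
\end{equation*}
which is \eqref{eq:eap1}.

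The only delicate point is the Stieltjes integration by parts, but since $u$ is nondecreasing with at most countably many jumps and the endpoint hypothesis rules out boundary jump terms, the formula $\int_r^R t^{-2}\,du(t)=[t^{-2}u(t)]_r^R+2\int_r^R t^{-3}u(t)\,dt$ is simply Fubini for $du(t)\otimes ds$ on the triangle $\{(t,s):r\leq t<s\leq R\}$. With Lemma~\ref{le:DI} already in hand, I do not anticipate any further obstacle beyond careful bookkeeping with the coarea and area formulas.
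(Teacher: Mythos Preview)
Your proposal is correct and follows essentially the same route as the paper. For \eqref{eq:eap2} the paper also computes $\ap J_2\Pi(x)=\sin\theta(x)/|x|^2$ and invokes Federer's Theorem~3.2.22; for \eqref{eq:eap1} the paper starts from the coarea identity $\int_{E\cap A(r,R)}|x|^{-2}\cos\theta(x)\,d\HM^2=\int_r^R t^{-2}\HM^1(E\cap\partial B_t)\,dt$, bounds the right side below via Lemma~\ref{le:DI}, and then converts $2\int_r^R t^{-3}u(t)\,dt$ back into $\int_{E\cap A(r,R)}|x|^{-2}\,d\HM^2+r^{-2}u(r)-R^{-2}u(R)$ by a direct Fubini on $\HM^2\mr E\otimes dt$---which is exactly your Stieltjes integration by parts written out by hand.
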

\begin{proof}
	We see that for $\HM^2$-a.e. $x\in E$, the tangent plane $\Tan(E,x)$
	exists, we will denote by $\theta(x)$, the angle between the line $[0,x]$ 
	and the plane $\Tan(E,x)$. For any $t>0$, we put $u(t)=\HM^2(E\cap
	B(0,t))$, then $u:(0,\infty)\to [0,\infty]$ is a nondecreasing function. By
	Lemma \ref{le:DI}, we have that 
	\[
		u(t)\leq \frac{t}{2}\HM^1(E\cap \partial B(0,t))+4t^2h(2t),
	\]
	for
	$\HM^1$-a.e. $t\in (0,\infty)$. Considering the mapping
	$\phi:\mathbb{R}^{3}\to
	[0,\infty)$ given by $\phi(x)=|x|$, we have, by \eqref{eq:DE10}, that
	\[
		\ap J_1(\phi|_{E})(x)=\cos\theta(x)
	\]
	for $\HM^2$-a.e. $x\in E$.

	Apply Theorem 3.2.22 in \cite{Federer:1969}, we get that
	\[
		\begin{aligned}
			&\int_{E\cap A(r,R)}\frac{1}{|x|^2}\cos\theta(x)d\HM^2(x)
			=\int_{r}^{R}\frac{1}{t^2}\HM^1(E\cap \partial B(0,t)d dt\\
			&\geq 2\int_{r}^{R}\frac{u(t)}{t^3} d t-8\int_{r}^{R}\frac{h(2t)}{t}dt\\
			&=2\int_{r}^{R}\frac{1}{t^3}\int_{E\cap B(0,t)}d\HM^2(x) dt
			-8(h_1(R)-h_1(r))\\
			&=2\int_{E\cap B(0,R)}\int_{\max\{ r,|x| \}}^{R}\frac{1}{t^3} dt d\HM^2(x)
			-8(h_1(R)-h_1(r))\\
			&=\int_{E\cap A(r,R)}\frac{1}{|x|^2}d\HM^2(x)+r^{-2}u(r)-R^{-2}u(R)-
			8(h_1(R)-h_1(r)),
		\end{aligned}
	\]
	thus \eqref{eq:eap1} holds.

	By a simple computation, we get that 
	\[
		\ap J_2\Pi(x)=\frac{\sin\theta(x)}{|x|^2},
	\]
	we now apply Theorem 3.2.22 in \cite{Federer:1969} to get \eqref{eq:eap2}.
\end{proof}
We get from above Lemma that 
\[
	\HM^2(\Pi(E\cap A(r,R)))\leq \frac{r_2}{r_1}\left(2\Theta(0,R)
	\right)^{1/2}\left( F(R)-F(r) \right)^{1/2}
\]

\begin{lemma}\label{le:HDCC}
	For any $r\in (0,\mathfrak{r}_1)\cap \mathscr{R}$, if $\Xi(r)\leq
	\mu\tau_0$, then 
	\[
		d_H(\Gamma(r), X(r)\cap \partial B(0,1))\leq 10\mu^{-1/2}\Xi(r)^{1/2}. 
	\]
\end{lemma}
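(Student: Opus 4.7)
The plan is to convert the density defect $\Xi(r)$ into an arc-length excess for the spherical curves $\Gamma^x$ over their joining geodesics, then apply Lemma~\ref{le:smallh} (the ``$\sqrt{\text{excess}}$'' height estimate for nearly-geodesic curves on $\mathbb{S}^2$) to produce a pointwise height bound, and finally assemble these into the Hausdorff distance.

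For the excess bound, set $u(r)=\HM^2(E\cap B(0,r))=r^2(\Theta(0)+f(r))$. Lemma~\ref{le:DE} gives $\HM^1(E\cap\partial B(0,r))\le u'(r)$, hence
\[
	\frac{1}{r}\HM^1(E\cap\partial B(0,r))\le\frac{u'(r)}{r}=2\Theta(0)+2f(r)+rf'(r).
\]
Since the $\gamma^x\subseteq E\cap\partial B(0,r)$ are pairwise disjoint off $y_r$ and $\Pi$ restricted to $\partial B(0,r)$ is the dilation by $1/r$, summing yields $\sum_{x\in\mathcal{X}_r}\HM^1(\Gamma^x)\le 2\Theta(0)+2f(r)+rf'(r)$. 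A short spherical-triangle computation (using the near-antipodal or near-equilateral configuration of $\mathcal{X}_r$ on $L_0\cap\partial B(0,r)$ together with $|y_r/r-(0,0,1)|\le\varepsilon(r)$) shows $|T-2\Theta(0)|=O(\varepsilon(r))$, where $T=\sum_x T_x=\HM^1(X(r)\cap\partial B(0,1))$. Because each summand of $\sum_x(\HM^1(\Gamma^x)-T_x)$ is nonnegative, every individual excess is bounded by the total:
\[
	\HM^1(\Gamma^x)-T_x\le 2f(r)+rf'(r)+O(\varepsilon(r))\le\mu^{-1}\Xi(r),
\]
where the last inequality absorbs the $O(\varepsilon(r))$ term using $\Xi(r)\le\mu\tau_0$ together with the smallness of $\varepsilon(r)$ for $r<\mathfrak{r}_1$ (from \cite{Fang:2015} and Corollary~\ref{co:dendecay}).

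For each $x\in\mathcal{X}_r$, the rotation $\mathcal{R}_{x,y_r}$ brings $\Gamma^x$ into the position required by Lemma~\ref{le:smallh}: endpoints $(1,0,0)$ and $(\cos T_x,\sin T_x,0)$ with $T_x\in[\pi/3,2\pi/3]$. The auxiliary hypothesis $|v^x(t)|\le\tau_0$ of that lemma follows from the local $C^0$-equivalence of $E$ to its sliding tangent cone: by the definition of the equivalence, $\gamma^x$ lies within $\varepsilon(r)r$ of a plane through $0$, hence $\Gamma^x$ lies within $2\varepsilon(r)\le\tau_0$ of the corresponding great-circle arc. Lemma~\ref{le:smallh} then gives
\[
	\max_t|v^x(t)|\le 10\sqrt{\HM^1(\Gamma^x)-T_x}\le 10\mu^{-1/2}\sqrt{\Xi(r)}.
\]
The Lipschitz-graph approximation $\Gamma^x_*$ from Lemma~\ref{le:SmCuAp} satisfies $\HM^1(\Gamma^x_*)\le\HM^1(\Gamma^x)$, so reapplying Lemma~\ref{le:smallh} to $\Gamma^x_*$ gives the identical height bound.

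Finally, $\Gamma(r)=\bigcup_x\Gamma^x_*$ and $X(r)\cap\partial B(0,1)=\bigcup_x g_{x/r,y_r/r}$ decompose into matched pairs sharing endpoints; since each $\Gamma^x_*$ is a graph of height at most $10\mu^{-1/2}\sqrt{\Xi(r)}$ over the corresponding geodesic, both directions of the Hausdorff distance between corresponding pairs are controlled by this height, and $d_H(\Gamma(r),X(r)\cap\partial B(0,1))\le 10\mu^{-1/2}\sqrt{\Xi(r)}$ follows by taking the maximum over $x\in\mathcal{X}_r$. The main obstacle is the bookkeeping in the excess-length step: carefully absorbing the $O(\varepsilon(r))$ defect between $T$ and $2\Theta(0)$ into the $\mu^{-1}\Xi(r)$ bound. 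Once this is in place, the conceptual content reduces to the spherical geometric estimate of Lemma~\ref{le:smallh}.
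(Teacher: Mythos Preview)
Your argument has a genuine gap at the step where you ``absorb the $O(\varepsilon(r))$ term'' into $\mu^{-1}\Xi(r)$. Combining the Lemma~\ref{le:DE} upper bound on $r^{-1}\HM^1(E\cap\partial B_r)$ with the spherical estimate $|T-2\Theta(0)|=O(\varepsilon(r))$ gives
\[
\HM^1(\Gamma(r))-T\le 2f(r)+rf'(r)+C\varepsilon(r),
\]
and you then need $C\varepsilon(r)\le(\mu^{-1}-1)\Xi(r)$. No such inequality is available at this point: the only information on $\varepsilon(r)$ is the qualitative statement $\varepsilon(r)\to 0$ coming from the $C^0$-equivalence, and Corollary~\ref{co:dendecay} controls $f(r)$, not $\varepsilon(r)$. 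Quantitative decay of $\varepsilon(r)$ is precisely the goal of Section~4 (Theorem~\ref{thm:distTC}), and the present lemma is one of its ingredients, so invoking such a bound here would be circular. Note also that $T$ can genuinely fall below $2\Theta(0)$ by an amount of order $\varepsilon(r)$ (take $a$ displaced from the pole toward $b+c$), so the defect is not of a favorable sign.

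The paper avoids this entirely by appealing to Theorem~\ref{thm:EsADB}. Rearranging that comparison inequality yields a lower bound
\[
\HM^1(X(r)\cap\partial B_1)\ge 2\Theta(0)+\frac{2(\lambda+\mu)}{\mu}f(r)-\frac{1-\lambda-\mu}{\mu}rf'(r)-\frac{8}{\mu}h(2r),
\]
expressed purely in terms of $f$, $f'$ and $h$; all dependence on $\varepsilon(r)$ has already been consumed inside the proof of Theorem~\ref{thm:EsADB}. Subtracting from the Lemma~\ref{le:DE} bound gives $j(r)\le\mu^{-1}\Xi(r)$ directly, and since $\HM^1(X\cap\partial B_1)\le\HM^1(\Gamma(r))\le r^{-1}\HM^1(E\cap\partial B_r)$ one obtains $\HM^1(\Gamma(r))-T\le\mu^{-1}\Xi(r)$. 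Lemma~\ref{le:smallh} then finishes exactly as you outline. The missing piece in your route is Theorem~\ref{thm:EsADB}; the elementary lower bound $T\ge 2\Theta(0)-O(\varepsilon(r))$ is not a substitute for it.
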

\begin{proof}	
	By lemma \ref{le:DE}, we get that 
	\[
		\frac{1}{r}\HM^1(E\cap \partial B(0,r))\leq
		2\Theta(0)+rf'(r)+2f(r),
	\]
	By Theorem \ref{thm:EsADB}, we get that 
	\[ 
		\begin{aligned}
			r^2\Theta(0,r)&\leq (1-\lambda-\mu)\frac{r}{2}\HM^1(E\cap \partial
			B_r)+\mu\frac{r^2}{2}\HM^1(X\cap \partial B_1)+\lambda\Theta(0)r^2+4r^2h(2r)\\
			&\leq \frac{1}{2}(1-\lambda-\mu)r^2(2\Theta(0)+rf'(r)+2f(r))+
			\mu\frac{r^2}{2}\HM^1(X\cap \partial B_1)\\
			&\quad+\lambda\Theta(0)r^2+4r^2h(2r),
		\end{aligned}
	\]
	thus 
	\[
		\HM^1(X\cap \partial B_1)\geq
		2\Theta(0)+\frac{2(\lambda+\mu)}{\mu}f(r)-\frac{1-\lambda-\mu}{\mu}rf'(r)-
		\frac{\mu}{8}h(2r).
	\]
	Hence  
	\begin{equation}\label{eq:HDCC5}
		\begin{aligned}
			j(r)&=\frac{1}{r}\HM^1(E\cap B_r)-\HM^1(X\cap \partial B_1)\\
			&\leq \frac{1-\lambda}{\mu}rf'(r)-\frac{2\lambda}{\mu}f(r)+
			\frac{8}{\mu}h(2r)\\
			&\leq \frac{1}{\mu}(rf'(r)+16h_1(r)+16h(2r)).
		\end{aligned}
	\end{equation}
	Since 
	\[
		\HM^1(X\cap \partial B_1)\leq \HM^1(\Gamma_{\ast}(r))\leq
		\HM^1(\Gamma(r))\leq \HM^1(\scale{1/r}(E\cap \partial B_r)),
	\]
	we have that 
	\[
		0\leq \HM^1(\Gamma(r))-\HM^1(X\cap B_1)\leq j(r)\leq \frac{1}{\mu}\Xi(r),
	\]
	by Lemma \ref{le:smallh}, we get that for any $z\in \Gamma(r)$, 
	\[
		\dist\left( z, X\cap \partial B(0,1)\right)\leq
		10 \left(\frac{\Xi(r)}{\mu}\right)^{1/2}.
	\]

\end{proof}
\begin{lemma}\label{le:cinp}
	For any $0<r_1<r_2< (1-\tau)\mathfrak{r}$, if $P$ is a plane
	such that $\HM^1(E\cap P\cap B_{\mathfrak{r}})<\infty$ and $P\cap
	\mathcal{X}_r=\emptyset$ for any $r\in [r_1,r_2]$, then there is a compact 
	path connected set 
	\[
		\mathcal{C}_{P,r_1,r_2}\subseteq E\cap P\cap A(r_2,r_1)
	\]
	such that 
	\[
		\mathcal{C}_{P,r_1,r_2}\cap \gamma(t)\neq \emptyset\text{ for }r_1\leq
		t\leq r_2.
	\]
\end{lemma}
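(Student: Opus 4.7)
The plan is to combine the $C^0$-equivalence of $E$ with a sliding minimal cone at the origin (assumption \ref{AS3}) with the topological separation lemma \ref{le:CoSe} and the path-connectedness criterion Lemma \ref{le:FHMCIPC}. By \ref{AS3} there exist $\varrho>0$, a sliding minimal cone $Z$ of type $\mathbb{P}_+$ or $\mathbb{Y}_+$, and a homeomorphism $\Phi:\Omega_0\cap B(0,2\varrho)\to \Omega_0$ with $\Phi(0)=0$, $\|\Phi-\id\|_\infty\le \varrho\tau$, and $E\cap B(0,\varrho)\subseteq \Phi(Z\cap B(0,2\varrho))\subseteq E\cap B(0,3\varrho)$. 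Choosing $\tau$ small ensures $\overline{A(r_1,r_2)}\subseteq \Phi(Z\cap B(0,2\varrho))$.

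The first step is to transfer the problem through $\Phi^{-1}$ into the model space $\mathbb{X}=Z\cap\overline{B(0,2\varrho)}$, which is compact, locally connected and simply connected; indeed $\mathbb{X}$ deformation-retracts onto a half-disk (type $\mathbb{P}_+$) or onto three half-disks glued along $L_0$ (type $\mathbb{Y}_+$), both contractible. Set $F=\Phi^{-1}(E\cap P\cap\overline{A(r_1,r_2)})$, a closed subset of $\mathbb{X}$, and choose connected arcs $A\subseteq \Phi^{-1}(\gamma(r_1))$ and $B\subseteq \Phi^{-1}(\gamma(r_2))$ sitting on the inner and outer spheres of $Z$.

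The topological core is to check that $A$ and $B$ lie in different components of $\mathbb{X}\setminus F$. This uses the hypothesis $P\cap \mathcal{X}_r=\emptyset$ for every $r\in[r_1,r_2]$: combined with the bound $\|\Phi-\id\|_\infty\le \varrho\tau$, it ensures that at each radius $t$ the pulled-back plane $\Phi^{-1}(P)$ meets the slice $Z\cap\partial B(0,t)$ only in the interior of the arcs forming that slice—never at the corner points $\Phi^{-1}(\mathcal{X}_t)$—so any continuous path in $\mathbb{X}$ from $A$ to $B$ along the radial foliation must cross $F$. Granted this, Lemma \ref{le:CoSe} yields a connected closed $F_0\subseteq F$ still separating $A$ from $B$; its image $\mathcal{C}_{P,r_1,r_2}=\Phi(F_0)$ is compact, connected, and contained in $E\cap P\cap \overline{A(r_1,r_2)}$, hence satisfies $\HM^1(\mathcal{C}_{P,r_1,r_2})\le \HM^1(E\cap P\cap B_\mathfrak{r})<\infty$, so Lemma \ref{le:FHMCIPC} upgrades it to path-connected. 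The separation property forces $\mathcal{C}_{P,r_1,r_2}$ to cross each sphere $\partial B(0,t)$, and the crossing lands on $\gamma(t)$ by the construction of $\gamma$ as the Lipschitz-graph approximation of $E\cap\partial B(0,t)$.

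The main obstacle will be making the topological separation claim rigorous: one has to translate the non-corner condition $P\cap\mathcal{X}_r=\emptyset$ into an effective transversality statement for $\Phi^{-1}(P)$ against the radial structure of $Z$ in the annulus $\Phi^{-1}(A(r_1,r_2))$, using the quantitative closeness of $\Phi$ to $\id$ on the scale $\varrho\tau$. Once this is done, every remaining step is a direct application of a lemma already proved in the excerpt.
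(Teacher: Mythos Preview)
Your ingredients are right (the $C^0$-equivalence, Lemma~\ref{le:CoSe}, Lemma~\ref{le:FHMCIPC}), but your choice of the two sets $A$ and $B$ to be separated is wrong, and this is not a cosmetic issue: it makes the separation claim false and breaks the deduction that $\mathcal{C}_{P,r_1,r_2}$ meets every $\gamma(t)$.

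You take $A\subseteq\Phi^{-1}(\gamma(r_1))$ and $B\subseteq\Phi^{-1}(\gamma(r_2))$ on the inner and outer spheres, and claim $F=\Phi^{-1}(E\cap P\cap\overline{A(r_1,r_2)})$ separates them in $\mathbb{X}$. But $E\cap P$ is (up to $\Phi$) essentially a \emph{radial} ray $Z\cap P$ in the half-disk $Z$; it does not separate concentric spherical slices from each other. Concretely, a path in $\mathbb{X}$ can go from $\gamma(r_1)$ down to a boundary half-line of $Z$ in $L_0$, travel radially to level $r_2$, and climb back to $\gamma(r_2)$ without ever touching $F$, precisely because the hypothesis $P\cap\mathcal{X}_r=\emptyset$ guarantees those boundary half-lines stay away from $P$. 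Even if you arrange $A$ and $B$ on opposite sides of $P$ so that some separation does hold, the conclusion you need --- that the resulting connected $F_0$ meets each $\gamma(t)$ --- does not follow, since $\gamma(t)$ does not connect your $A$ to your $B$.

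The paper does the opposite (``angular'') separation: it takes $A=\{\Phi^{-1}(x_1)\}$ and $B=\{\Phi^{-1}(x_2)\}$ for two points $x_1,x_2\in\mathcal{X}_r$ lying on opposite sides of the plane $P$, so that $F=\Phi^{-1}(E\cap P)$ genuinely separates them in $\mathbb{X}$. Lemma~\ref{le:CoSe} then gives a connected closed $F_0\subseteq F$ still separating $\Phi^{-1}(x_1)$ from $\Phi^{-1}(x_2)$. Now the key point is that each curve $\gamma(t)$ joins $x_1$ to $x_2$ (through $y_t$); hence if $F_0\cap\Phi^{-1}(\gamma(t))=\emptyset$ for some $t$, the curve $\gamma(t)$ would place $x_1$ and $x_2$ in the same component, a contradiction. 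This is exactly where the hypothesis $P\cap\mathcal{X}_r=\emptyset$ is used: it ensures $x_1,x_2\notin P$, so they lie one on each side. After that, one picks $z_i\in\Phi(F_0)\cap\gamma(r_i)$, uses Lemma~\ref{le:FHMCIPC} to get a path in $\Phi(F_0)$ from $z_1$ to $z_2$, and trims it to the annulus $A(r_1,r_2)$ to obtain $\mathcal{C}_{P,r_1,r_2}$. Swap your choice of $A,B$ for this one and the rest of your plan goes through.
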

\begin{proof}
	We let $\varrho$ be the same as in \ref{eq:apps}.
	Since $\|\Phi-\id\|_{\infty}\leq \tau\varrho$, we get that 
	\[
		\Phi^{-1}\left( E\cap \overline{B(0,r_2)} \right)\subseteq 
		Z_{0,\varrho}\cap \overline{B(0,r_2+\tau\varrho)}.
	\]
	We put 
	\[
		\begin{gathered}
			\mathbb{X}=Z_{0,\varrho}\cap \overline{B(0,r_2+\tau\varrho)},\\ 
			F=\mathbb{X}\cap \Phi^{-1}(E\cap P_z).
		\end{gathered}
	\]
	We take $x_1,x_2\in \mathcal{X}_r$, $x_2\neq x_1$, such that $\Phi^{-1}(x_1)$
	and $\Phi^{-1}(x_2)$ are contained in two different connected components of
	$\mathbb{X}\setminus F$. By Lemma \ref{le:CoSe}, there is a connected closed
	subset $F_0$ of $F$ such that $\Phi^{-1}(x)$ and $\Phi^{-1}(x_2)$ are still
	contained in two different connected components of $\mathbb{X}\setminus F_0$.
	Then $F_0\cap \phi^{-1}(\gamma(t))\neq \emptyset$ for $0<t\leq r_2$;
	otherwise, if $F_0\cap \phi^{-1}(\gamma(t_0))=\emptyset$, then $x_1$ and
	$x_2$ are in the same connected component of $\Phi(\mathbb{X})\setminus
	\Phi(F_0)$, thus $\Phi^{-1}(x_1)$ and $\Phi^{-1}(x_2)$ are in the same
	connected component of $\mathbb{X}\setminus F_0$, absurd! 

	Since $\HM^1(\Phi(F_0))\leq \HM^1(E\cap P_z\cap B_{\varrho})<\infty$, we
	get that $\Phi(F_0)$ is path connected. We take $z_1\in
	\Phi(F_0)\cap\gamma(r_1)$ and $z_2\in \Phi(F_0)\cap \gamma(r_2)$, and let
	$g:[0,1]\to \Phi(F_0)$ be a path such that $g(0)=z_1$ and $g(1)=z_2$. We
	take $t_1=\sup\{ t\in [0,1]: |g(t)|\leq r_1 \}$ and $t_2=\inf\{ t\in
	[t_1,1]:|g(t)|\geq r_2 \}$. Then $\mathcal{C}_{z,r_1,r_2}=g([t_1,t_2])$ is
	our desire set. 
\end{proof}
\begin{lemma}\label{le:smallband}
	Let $T\in [\pi/4,3\pi/4]$ and $\varepsilon\in (0,1/2)$ be given. Suppose that
	$F$ a 2-rectifiable set satisfying 
	\[
		F\subseteq \partial B(0,1)\cap \{ (t\cos\theta,t\sin\theta,x_3)\in
		\mathbb{R}^{3}\mid t\geq 0, |\theta| \leq T/2, |x_3|\leq \varepsilon \}.
	\]
	Then we have, by putting $\mathcal{P}_{\theta}=\{ (t\cos\theta,t\sin\theta, x_3)\mid
	t\geq 0, x_3\in \mathbb{R} \}$, that 
	\[
		\int_{-T/2}^{T/2}\HM^1(F\cap \mathcal{P}_{\theta})d\theta \leq 
		(1+\varepsilon) \HM^2(F)
	\]
\end{lemma}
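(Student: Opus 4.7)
The plan is a direct application of the coarea formula (Theorem 3.2.22 in \cite{Federer:1969}) to the angular projection. Define $\pi:\mathbb{R}^{3}\setminus\{x_1=x_2=0\}\to\mathbb{R}$ by setting $\pi(x_1,x_2,x_3)=\theta$ when $(x_1,x_2)=(r\cos\theta,r\sin\theta)$ with $r>0$ and $|\theta|\leq T/2$. For each $\theta\in(-T/2,T/2)$ the level set $\pi^{-1}(\theta)$ coincides with the half-plane $\mathcal{P}_{\theta}$ minus the vertical axis, so $F\cap\pi^{-1}(\theta)=F\cap\mathcal{P}_{\theta}$ holds up to an $\HM^{2}$-null set (the axis is excluded from the strip $F$ lies in because $\varepsilon<1$).

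Next I would compute the gradient $\nabla\pi=\tfrac{1}{r}(-\sin\theta,\cos\theta,0)$, so that $|\nabla\pi(x)|=1/\sqrt{x_1^2+x_2^2}$. Since $F\subseteq\partial B(0,1)$ with $|x_3|\leq\varepsilon$, we have $x_1^2+x_2^2=1-x_3^2\geq 1-\varepsilon^2$ at every point of $F$, and consequently
\[
\ap J_1(\pi|_F)(x)=\sup\{|D\pi(x)v|:v\in\Tan(F,x),\,|v|=1\}\leq |\nabla\pi(x)|\leq \frac{1}{\sqrt{1-\varepsilon^{2}}}
\]
for $\HM^{2}$-a.e.\ $x\in F$. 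Applying the coarea formula then yields
\[
\int_{-T/2}^{T/2}\HM^{1}(F\cap\mathcal{P}_{\theta})\,d\theta=\int_{F}\ap J_1(\pi|_F)(x)\,d\HM^{2}(x)\leq\frac{\HM^{2}(F)}{\sqrt{1-\varepsilon^{2}}}.
\]

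To finish I would verify the elementary inequality $1/\sqrt{1-\varepsilon^{2}}\leq 1+\varepsilon$ valid on $(0,1/2)$. This is equivalent to $(1+\varepsilon)^{2}(1-\varepsilon^{2})\geq 1$, i.e.\ $(1+\varepsilon)^{3}(1-\varepsilon)\geq 1$; writing $g(\varepsilon)=(1+\varepsilon)^{3}(1-\varepsilon)$ one has $g(0)=1$ and $g'(\varepsilon)=(1+\varepsilon)^{2}(2-4\varepsilon)\geq 0$ on $[0,1/2]$, so $g\geq 1$ throughout. Combining this with the previous display gives the claimed bound. There is no genuine obstacle in this argument; the only things to be careful about are (i) identifying the fibers of $\pi$ with the half-planes $\mathcal{P}_{\theta}$ up to a null set, and (ii) converting the geometric assumption $|x_{3}|\leq\varepsilon$ on the unit sphere into the pointwise bound $x_1^2+x_2^2\geq 1-\varepsilon^{2}$ that controls $|\nabla\pi|$.
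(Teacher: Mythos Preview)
Your proof is correct and follows essentially the same route as the paper: both apply the coarea formula (Theorem 3.2.22 in \cite{Federer:1969}) to the angular map $\phi(x_1,x_2,x_3)=\arctan(x_2/x_1)$, bound its approximate Jacobian on $F$ by $(x_1^2+x_2^2)^{-1/2}\leq(1-\varepsilon^2)^{-1/2}\leq 1+\varepsilon$, and conclude. Your explicit verification of the last elementary inequality and your care in writing $\ap J_1(\pi|_F)\leq|\nabla\pi|$ as an inequality (rather than the equality the paper states) are minor cosmetic improvements, but the argument is the same.
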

\begin{proof}
	For any $x=(x_1,x_2,x_3)\in F$, we have that $x_1^2+x_2^2+x_3^2=1$ and
	$|x_3|\leq \varepsilon$, thus $x_1^2+x_2^2\geq 1-\varepsilon^2$. Since
	$|\theta|\leq T/2\leq 3\pi/8$, we get that the mapping $\phi:F\to
	\mathbb{R}$ given by 
	\[
		\phi(x_1,x_2,x_3)=\arctan \frac{x_2}{x_1}
	\]
	is well defined and Lipschitz. Moreover, we have that 
	\[
		\ap J_1\phi(x)=(x_1^2+x_2^2)^{-1/2}\leq (1-\varepsilon^2)^{-1/2}\leq
		1+\varepsilon.
	\]
	Hence 
	\[
		\int_{-T/2}^{T/2}\HM^1(F\cap \mathcal{P}_{\theta})d\theta=\int_{F}\ap
		J_1\phi(x)d\HM^2(x)\leq (1+\varepsilon)\HM^2(F).
	\]
\end{proof}

For any $0< t_1\leq t_2$, we put
\[
	E_{t_1,t_2}=\Pi\left(\{ x\in E:t_1\leq |x|\leq t_2 \}\right).
\]
For any $t>0$, we put 
\[
	\bar{\varepsilon}(t)=\sup\{ \varepsilon(r):r\leq t \}.
\]
\begin{lemma}
	If $r_2> r_1>0$ satisfy that $ 10(1+r_2/r_1)\bar{\varepsilon}(r_2)<1/2$,
	then we have that 
	\begin{equation}\label{le:Essa}
		\int_{X(t)\cap \partial B(0,1)}\HM^1\left( P_z\cap E_{r_1,r_2}\right)
		d\HM^1(z)\leq 2\HM^2\left(E_{r_1,r_2}\right),\ \forall r_1\leq t\leq r_2.
	\end{equation}
\end{lemma}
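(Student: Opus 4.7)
The plan is to decompose both $X(t)\cap\partial B(0,1)$ and $E_{r_1,r_2}$ arc by arc, and reduce the estimate to an application of Lemma~\ref{le:smallband} on each arc.

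First recall that $X(t)=\bigcup_{x\in\mathcal{X}_t}D_{x,y_t}$ is a finite union of two or three planar sectors meeting along the ray through $y_t$, so $X(t)\cap\partial B(0,1)$ is a union of great-circle arcs $\alpha_x$ joining $y_t/|y_t|$ to $x/|x|$, one for each $x\in\mathcal{X}_t$. For each such $x$, apply the rotation $\mathcal{R}_{x,y_t}$ introduced before~\eqref{eq:CoLip}, so that $\alpha_x$ is carried to the equatorial arc from $(1,0,0)$ to $(\cos T_x,\sin T_x,0)$ with $T_x\in[\pi/3,2\pi/3]$. For $z\in\alpha_x$, define $P_z:=\mathcal{R}_{x,y_t}^{-1}(\mathcal{P}_{\theta(z)})$, where $\theta(z)\in[0,T_x]$ is the equatorial longitude of $\mathcal{R}_{x,y_t}(z)$ and $\mathcal{P}_\theta$ is the half-plane from Lemma~\ref{le:smallband}; equivalently, $P_z$ is the half-plane through $0$ whose boundary line is normal to the plane of $\alpha_x$ and which contains the ray from $0$ through $z$.

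Next, use the hypothesis $10(1+r_2/r_1)\bar\varepsilon(r_2)<1/2$ to localize $E_{r_1,r_2}$ near the arcs. By Lemma~\ref{le:EM}, at each scale $s\in[r_1,r_2]$ the rescaled slice $\scale{1/s}(E\cap\partial B(0,s))$ is within normalized Hausdorff distance $5\bar\varepsilon(r_2)$ of $X(s)\cap\partial B(0,1)$; comparing the cones $X(s)$ and $X(t)$ through their common proximity to the sliding minimal cone at $0$, and accounting for the way projection to $\partial B(0,1)$ spreads points with radii ranging over $[r_1,r_2]$, produces a spherical neighborhood of $X(t)\cap\partial B(0,1)$ of width at most $\varepsilon_0:=10(1+r_2/r_1)\bar\varepsilon(r_2)<1/2$ containing $E_{r_1,r_2}$. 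Partition $E_{r_1,r_2}=\bigsqcup_x E^x_{r_1,r_2}$ by assigning each point to the closest arc $\alpha_x$. Because the arcs emanate from $y_t/|y_t|$ at angles $\ge\pi/3$ and $\varepsilon_0<1/2$, each set $\mathcal{R}_{x,y_t}(E^x_{r_1,r_2})$ lies in the wedge $|\theta|\le T_x$ and the equatorial band $|x_3|\le\varepsilon_0$, matching the hypothesis of Lemma~\ref{le:smallband}.

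Applying Lemma~\ref{le:smallband} with $F=\mathcal{R}_{x,y_t}(E^x_{r_1,r_2})$ for each $x\in\mathcal{X}_t$, and using $z\leftrightarrow\theta$ as the arc-length parametrization of $\alpha_x$, gives
\[
\int_{\alpha_x}\HM^1\bigl(P_z\cap E^x_{r_1,r_2}\bigr)\,d\HM^1(z)\le (1+\varepsilon_0)\HM^2\bigl(E^x_{r_1,r_2}\bigr).
\]
The same angular separation also ensures that for $z\in\alpha_x$ the half-plane $P_z$ meets none of the tubular neighborhoods of the other arcs (its boundary line is orthogonal to the plane of $\alpha_x$, and any other $\alpha_{x'}$ sits at definite angular distance from this line), so $\HM^1(P_z\cap E_{r_1,r_2})=\HM^1(P_z\cap E^x_{r_1,r_2})$ for $\HM^1$-a.e.\ $z\in\alpha_x$. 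Summing over $x\in\mathcal{X}_t$ and using $1+\varepsilon_0\le 3/2<2$ yields the claimed inequality.

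The main obstacle will be step two: quantifying the drift of $X(s)$ as $s$ varies through $[r_1,r_2]$ so as to obtain precisely the factor $(1+r_2/r_1)$ in the tubular estimate, and then in step three verifying the non-interaction claim that $P_z$ for $z\in\alpha_x$ really avoids the tubes over the other arcs. Both rest on the uniform angular separation $\ge\pi/3$ of the arcs at the common vertex $y_t/|y_t|$ together with the smallness of $\varepsilon_0$ guaranteed by the hypothesis.
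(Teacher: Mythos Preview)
Your approach matches the paper's: establish that $E_{r_1,r_2}$ lies in the $\varepsilon_0$-tube around $X(t)\cap\partial B(0,1)$ and then invoke Lemma~\ref{le:smallband} arc by arc (the paper compresses this last step into the single sentence ``We now apply Lemma~\ref{le:smallband} to get the result,'' so your decomposition and non-interaction discussion are precisely the details it leaves implicit). One cosmetic difference in the tube estimate: the paper does not track $X(s)$ scale by scale, but simply uses $\dist(\Pi(x),X(r_2))\le 5r_2\varepsilon(r_2)/|x|$ from Lemma~\ref{le:EM} at scale $r_2$ and then passes from $X(r_2)$ to $X(t)$ via $d_{0,t}(X(t),X(r_2))\le d_{0,t}(E,X(t))+d_{0,t}(E,X(r_2))$, i.e.\ the triangle inequality through $E$ rather than through a minimal cone.
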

\begin{proof}
	By Lemma \ref{le:EM}, we have that, for any $r>0$, if $\varepsilon(r)<1/2$,
	then 
	\[
		d_{0,r}(E,X(r))\leq 5\varepsilon(r).
	\]
	We get so that 
	\[
		\begin{aligned}
			d_{0,1}(X(t),X(r_2))&=d_{0,t}(X(t),X(r_2))\leq d_{0,t}(E,X(t))+d_{0,t}(E,X(r_2))\\
			&\leq5\bar{\varepsilon}(r_2)+5\frac{r_2}{t}\bar{\varepsilon}(r_2).
		\end{aligned}
	\]
	Since  
	\[
		\dist(x,X(r_2))\leq 5r_2\varepsilon(r_2),
		\text{ for any }x\in E\cap B(0,r_2),
	\]
	we have that 
	\[
		\dist(\Pi(x),X(r_2))\leq \frac{5r_2\varepsilon(r_2)}{|x|}, 
		\text{ for any }x\in E\cap A(r_1,r_2),
	\]
	we get so that 
	\[
		\dist(\Pi(x),X(t))\leq
		\frac{5r_2\varepsilon(r_2)}{|x|}+5\bar{\varepsilon}(r_2)+5\frac{r_2}{t}\bar{\varepsilon}(r_2)\leq
		10(r_2/r_1+1)\bar{\varepsilon}(r_2)<\frac{1}{2}.
	\]
	We now apply Lemma \ref{le:smallband} to get the result.
\end{proof}

\begin{lemma}
	Let $\varepsilon\in (0,1/2)$ be given. Let $A\subseteq \partial B(0,1)$ be
	an arc of a great circle such that $0<\HM^1(A)\leq \pi$ and  
	\[
		\dist(x,L_0)\leq \varepsilon, \forall x\in A.
	\]
	Then   
	\[
		\dist(x,L_0)\leq \frac{\pi^2}{2\HM^1(A)^2}\int_{A}\dist(x,L_0)d\HM^1(x),\ 
		\forall x\in A.
	\]
\end{lemma}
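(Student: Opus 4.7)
The plan is to reduce the inequality to a one-variable statement about the function $|\sin|$ and then prove it by case analysis using Jordan's inequality. First I would parametrize the great-circle arc $A$ by arc length: there exist orthonormal vectors $u,v\in\mathbb{R}^3$ such that $A=\{x(s)=\cos(s) u+\sin(s) v : s\in [0,T]\}$ with $T=\HM^1(A)$. Since $L_0=\partial\Omega_0=\{x_3=0\}$, the distance along $A$ is $\dist(x(s),L_0)=|u_3\cos s+v_3\sin s|=R|\sin(s-\beta)|$, where $R=\sqrt{u_3^2+v_3^2}\in [0,1]$ (extending $u,v$ to an orthonormal basis $u,v,w$ gives $u_3^2+v_3^2+w_3^2=1$) and $\beta$ is a phase. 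The common factor $R$ cancels from both sides of the claimed inequality, and a translation in $s$ removes $\beta$, so the claim reduces to
\[
\sup_{s\in [0,T]}|\sin s|\;\leq\;\frac{\pi^2}{2T^2}\int_0^T|\sin s|\,ds\qquad \text{for all }T\in (0,\pi].
\]

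I would prove this purely analytical inequality by case analysis, with the sole nontrivial input being Jordan's inequality $\sin x\geq 2x/\pi$ for $x\in [0,\pi/2]$. The principal case is when the supremum $M$ equals $1$ and is attained at an interior point of the interval; after shifting by a multiple of $\pi$ (using periodicity of $|\sin|$), the maximizer lies at $\pi/2$ and the interval becomes $[\pi/2-p,\pi/2+q]$ with $p+q=T\leq \pi$. When $p,q\leq \pi/2$, explicit integration gives $\int|\sin|=\sin p+\sin q$, and Jordan combined with $p+q\leq \pi$ yields $\sin p+\sin q\geq \frac{2(p+q)}{\pi}\geq \frac{2(p+q)^2}{\pi^2}$, which is exactly the required bound with $M=1$.

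The remaining cases (interval extending past an adjacent zero of $\sin$; monotone $|\sin|$ with the maximum at an endpoint; interval crossing a zero of $\sin$ without reaching $M=1$) each reduce, after explicit integration, to an elementary inequality of the form $F(a,b)\geq 0$ on the parameter triangle $\{a,b\geq 0,\ a+b\leq \pi\}$. In each such case $F$ turns out to be concave in one parameter with the other held fixed; one endpoint of the admissible range is nonnegative by Jordan's inequality (which in one subcase reduces to $\tan(b/2)\geq 4b^2/\pi^2$ on $[0,\pi/2]$, verified from the signs of the derivatives at $0$ and $\pi/2$), while the other endpoint corresponds to $T=\pi$ and gives $F=0$. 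The chord property of concave functions then yields $F\geq 0$ on the entire admissible region.

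The main obstacle is that the inequality is tight exactly at $T=\pi$ (where both sides equal $R$, via $\int_0^\pi |\sin|=2$), so each case must produce a bound that remains sharp up to this boundary. The sharpness of Jordan's inequality at $\pi/2$ is precisely what is needed to match this. The hypothesis $\varepsilon<1/2$ plays no explicit role in proving the stated inequality itself; it restricts the geometry of admissible arcs but is not invoked in the argument.
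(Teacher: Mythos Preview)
Your reduction and case analysis are correct (modulo a minor slip: after translating away $\beta$ you should have $|\sin s|$ on an \emph{arbitrary} interval of length $T$, not specifically $[0,T]$; your subsequent case analysis makes clear you understand this). However, the paper takes a shorter route that avoids the case analysis entirely.

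The key difference is the choice of basis. You take arbitrary orthonormal $u,v$ spanning the great-circle plane $P$, so $\dist(x(s),L_0)=R|\sin(s-\beta)|$ with an uncontrolled phase $\beta$; this forces you to prove the sharp inequality $\sup_I|\sin|\le \frac{\pi^2}{2T^2}\int_I|\sin|$ for arbitrary intervals $I$ of length $T$, hence the case analysis. The paper instead picks $v_0\in P\cap L_0\cap\partial B(0,1)$ (always possible since two planes through the origin in $\mathbb{R}^3$ meet in a line) and $v_1\perp v_0$ in $P$. Writing $v_1=w+w^{\perp}$ with $w\in L_0$, one gets $\dist(\gamma(t),L_0)=|w^{\perp}||\sin t|$, and the trivial bound $\dist\le|w^{\perp}|$ replaces your sup. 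It then suffices to bound the integral from below uniformly: $\int_{\theta_1}^{\theta_2}|\sin t|\,dt\ge 2\bigl(1-\cos\tfrac{T}{2}\bigr)\ge \tfrac{2T^2}{\pi^2}$, the first inequality being the minimum over all intervals of length $T$ (attained when the interval is centred at a zero of $\sin$), and the second following from $1-\cos y\ge 4y^2/\pi^2$ on $[0,\pi/2]$. Your approach proves a slightly stronger analytic inequality and confirms that the hypothesis $\varepsilon<1/2$ is not used; the paper's approach trades that sharpness for a two-line argument.
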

\begin{proof}
	We let $P$ be the plane such that $A\subseteq P$, let $v_0\in P\cap L_0 \cap
	\partial B(0,1)$ and $v_2\in P\cap \partial B(0,1)$ be two vectors such that
	$v_0$ is perpendicular to $v_1$. Then $A$ can be parametrized as
	$\gamma:[\theta_1,\theta_2]\to A$ given by 
	\[
		\gamma(t)=v_0\cos t+ v_1\sin t,
	\]
	where $\theta_2-\theta_1=\HM^1(A)$. We write $v_1=w+w^{\perp}$ with $w\in
	L_0$ and $w^{\perp}$ perpendicular to $L_0$. Since $\ap J_{1}\gamma(t)=1$ 
	for any $t\in [\theta_1,\theta_2]$, by Theorem 3.2.22 in \cite{Federer:1969}, 
	we have that 
	\[
		\begin{aligned}
			\int_{A}\dist(x,L_0)\HM^1(x)&=\int_{\theta_1}^{\theta_2}\dist(\gamma(t),L_0)dt=
			\int_{\theta_1}^{\theta_2}|w^{\perp}\sin t|dt\\
			&\geq 2|w^{\perp}|\left(1-\cos \frac{\theta_2-\theta_1}{2}\right)\geq
			\frac{2(\theta_2-\theta_1)^2}{\pi^2}|w^{\perp}|,
		\end{aligned}
	\]
	and that 
	\[
		\dist(x,L_0)\leq |w^{\perp}|\leq \frac{\pi^2}{2\HM^1(A)^2}\int_{A}
		\dist(x,L_0)d\HM^1(x).
	\]
\end{proof}

\begin{lemma}\label{le:CompCon}
	Let $r_1$ and $r_2$ be the same as in Lemma \ref{le:cinp}. If $\Xi(r_i)\leq
	\mu\tau_0$, $10(1+r_2/r_1)\bar{\varepsilon}(r_2)\leq 1$, then
	we have that 
	\[
		d_{0,1}(X(r_1),X(r_2))\leq \frac{30r_2}{r_1}\Theta(0,r_2)^{1/2}\cdot 
		F(r_2)^{1/2}+ 20\pi\mu^{-1/2}\cdot \left( \Xi(r_1)^{1/2}+\Xi(r_2)^{1/2}
		\right).
	\]
\end{lemma}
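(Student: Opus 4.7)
The strategy is to reduce the comparison of the cones $X(r_1)$ and $X(r_2)$ to that of the auxiliary sphere-level sets $\Gamma(r_1)$ and $\Gamma(r_2)$ via Lemma \ref{le:HDCC}, and then to transport points of $\Gamma(r_1)$ to $\Gamma(r_2)$ along curves in $E\cap A(r_1,r_2)$ supplied by Lemma \ref{le:cinp}. The transport distance is measured as an arc on a great circle and controlled by the conicality estimates of Lemma \ref{le:eap}.

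First, Lemma \ref{le:HDCC} gives $d_H(\Gamma(r_i),X(r_i)\cap\partial B(0,1))\leq 10\mu^{-1/2}\Xi(r_i)^{1/2}$ for $i=1,2$, which accounts for the $20\pi\mu^{-1/2}(\Xi(r_1)^{1/2}+\Xi(r_2)^{1/2})$ contribution in the conclusion (the factor $\pi$ absorbing the chord-to-arc conversion on the unit sphere). It then remains to bound $d_H(\Gamma(r_1),\Gamma(r_2))$. Given $z\in\Gamma(r_1)$, I consider the vertical half-plane $P_z$ through the $x_3$-axis containing $z$. Since the loci $\{\mathfrak{X}_{r,i}\}_{r\in[r_1,r_2]}$ form finitely many continuous curves, a small perturbation of $z$ arranges $P_z\cap\mathcal{X}_r=\emptyset$ for every $r\in[r_1,r_2]$ and $\HM^1(E\cap P_z\cap B_{\mathfrak{r}})<\infty$. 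Lemma \ref{le:cinp} then yields a compact path-connected $\mathcal{C}\subseteq E\cap P_z\cap A(r_1,r_2)$ meeting $\gamma(t)$ for every $t\in[r_1,r_2]$; hence $\Pi(\mathcal{C})$ is connected in the semicircle $P_z\cap\partial B(0,1)\cap\Omega_0$ and meets both $\Gamma(r_1)$ and $\Gamma(r_2)$. The arc between some $z'\in\Pi(\mathcal{C})\cap\Gamma(r_1)$ close to $z$ and some $w\in\Pi(\mathcal{C})\cap\Gamma(r_2)$ has length at most $\HM^1(\Pi(\mathcal{C}))\leq\HM^1(P_z\cap E_{r_1,r_2})$, where $E_{r_1,r_2}=\Pi(E\cap A(r_1,r_2))$.

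The analytical core is the bound
\[
\HM^2(E_{r_1,r_2})\leq\left(\int_{E\cap A(r_1,r_2)}|x|^{-2}d\HM^2\right)^{1/2}\left(\int_{E\cap A(r_1,r_2)}\frac{\sin^2\theta(x)}{|x|^2}d\HM^2\right)^{1/2}\leq \sqrt{2}\,\frac{r_2}{r_1}\Theta(0,r_2)^{1/2}F(r_2)^{1/2},
\]
obtained from \eqref{eq:eap2} via Cauchy-Schwarz, using $|x|\geq r_1$ and $\HM^2(E\cap B_{r_2})=r_2^2\Theta(0,r_2)$ for the first factor and $\sin^2\theta\leq 2(1-\cos\theta)$ together with \eqref{eq:eap1} for the second. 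Averaging $\HM^1(P_z\cap E_{r_1,r_2})$ over $z\in X(t)\cap\partial B(0,1)$ for some fixed $t\in[r_1,r_2]$ via the unlabeled integral lemma just preceding this statement (which bounds this integral by $2\HM^2(E_{r_1,r_2})$) shows that the mean transported arc length is at most a constant multiple of $(r_2/r_1)\Theta(0,r_2)^{1/2}F(r_2)^{1/2}$.

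Finally, I would upgrade this averaged control to a pointwise one: every point of $\Gamma(r_i)$ lies, along the Lipschitz-graph structure of $\Gamma(r_i)$ over the arcs of $X(r_i)\cap\partial B(0,1)$, within controlled intrinsic distance of a generic "good" $z$ where the transport applies; near $L_0$ the auxiliary arc-integral lemma immediately preceding Lemma \ref{le:CompCon} is used to pass from the integral bound on $\dist(\cdot,L_0)$ to a pointwise one. The main obstacle I anticipate is precisely this averaged-to-pointwise conversion, together with the bookkeeping needed to guarantee that the transported point $w\in\Gamma(r_2)$ lands on the arc matched to the one through $z$ rather than on an unrelated arc of $\Gamma(r_2)$, in particular near the apex and at the endpoints on $L_0$; the constants $30$ and $20\pi$ in the conclusion absorb these conversion losses.
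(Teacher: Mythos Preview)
Your strategy matches the paper's: use Lemma~\ref{le:HDCC} to pass between $X(r_i)$ and $\Gamma(r_i)$, use Lemma~\ref{le:cinp} to transport along a connected set in $E\cap P\cap A(r_1,r_2)$, bound the transported length by $\HM^1(P_z\cap E_{r_1,r_2})$, average this via the slicing lemma to get $\HM^2(E_{r_1,r_2})$, and then bound $\HM^2(E_{r_1,r_2})$ by Cauchy--Schwarz and Lemma~\ref{le:eap}. Two points of divergence are worth noting.

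First, your plane $P_z$ is the vertical half-plane through the $x_3$-axis, whereas the paper takes $P_z$ to be the plane through $0$ and $z$ that is \emph{perpendicular to} $\Tan(X(r_2)\cap\partial B_1,z)$. The slicing lemma you invoke (the unlabeled one just before Lemma~\ref{le:CompCon}) is stated for this latter family, and the paper's transport inequality
\[
\dist(z,X(r_1)\cap\partial B_1)\leq \HM^1(P_z\cap E_{r_1,r_2})+10\mu^{-1/2}\bigl(\Xi(r_1)^{1/2}+\Xi(r_2)^{1/2}\bigr)
\]
is obtained directly for $z\in X(r_2)\cap\partial B_1$, not for $z\in\Gamma(r_1)$. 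Working with $X(r_2)$ rather than $\Gamma(r_1)$ avoids your worry about arc-matching near the apex: the paper simply picks $b_i\in\mathcal{C}_{P_z,r_1,r_2}\cap\gamma(r_i)$ and chains $z\to\Pi(b_2)\to\Pi(b_1)\to X(r_1)$.

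Second, your reading of the arc-integral lemma (the one about $\dist(\cdot,L_0)$ on a great-circle arc) is too narrow. The paper applies it not ``near $L_0$'' but to the function $z\mapsto\dist(z,X(r_1))$ along each arc $A_x\subset X(r_2)\cap\partial B_1$; this is what converts the averaged bound $\int_{A_x}\HM^1(P_z\cap E_{r_1,r_2})\,d\HM^1(z)\leq 2\HM^2(E_{r_1,r_2})$ into the pointwise one, and it is where the factors $\pi^2/(2\HM^1(A_x)^2)$ (hence the numerical constants) come from. Your anticipated obstacle of averaged-to-pointwise is therefore resolved exactly by that lemma, applied to $X(r_1)$ rather than to $L_0$.
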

\begin{proof}
	For $z\in X(r_2)\cap \partial B_1$, if $z\notin \{ y_r \}\cup
	\mathcal{X}_r$, we will denote by $P_z$ the plane which is
	through $0$ and $z$ and perpendicular to $\Tan(X(r_2)\cap \partial B_1, z)$.
	By Lemma \ref{le:HDCC}, we have that
	\[
		|z-a|\leq 10\mu^{-1/2}\Xi(r_1)^{1/2},\forall a\in \Gamma(r_2)\cap P_z.
	\]
	Since $\mathcal{C}_{P_z,r_1,r_2}\cap \gamma(r_i)\neq \emptyset$, $i=1,2$,
	we take $b_i\in\mathcal{C}_{P_z,r_1,r_2}\cap \gamma(r_i)$, then 
	\[
		|\Pi(b_1)-\Pi(b_2)|\leq \HM^1(\Pi(\mathcal{C}_{P_z,r_1,r_2}))\leq 
		\HM^1(P_z\cap E_{r_1,r_2}), 
	\]
	thus 
	\[
		\begin{aligned}
			\dist(z,X(r_1)\cap \partial B_1)&\leq
			|z-\Pi(b_2)|+|\Pi(b_2)-\Pi(b_1)|+\dist(\Pi(b_1),X(r_1)\cap \partial B_1)\\ 
			&\leq \HM^1(P_z\cap E_{r_1,r_2})+10\mu^{-1/2}\left( \Xi(r_1)^{1/2}+
			\Xi(r_2)^{1/2} \right).
		\end{aligned}
	\]
	For any $x\in \mathcal{X}_r$, we
	let $A_x$ be the arc in $\partial B(0,1)$ which join $\Pi(x)$ and $\Pi(y_r)$,
	We see that $X(r_2)\cap \partial B(0,1)=\cup_{x\in \mathcal{X}_r}A_x$, and 
	$\HM^1(A_x)\geq (1/2-\bar{\varepsilon}(r_2))\pi\geq \pi/4$. Suppose $z\in
	A_x$, then 
	\[
		\begin{aligned}
			\dist(z,X(r_1))&\leq
			\frac{\pi^2}{2\HM^1(A_x)^2}\int_{A_x}\dist(z,X(r_1))d\HM^1(x)\\
			&\leq \frac{2\pi}{\HM^1(A_x)} \int_{A_x}\HM^1(P_z\cap E_{r_1,r_2})d\HM^1(x)+
			20\pi\mu^{-1/2}\left( \Xi(r_1)^{1/2}+ \Xi(r_2)^{1/2} \right)\\
			&\leq 16 \HM^2(E_{r_1,r_2})+20\pi\mu^{-1/2}\left( \Xi(r_1)^{1/2}+ 
			\Xi(r_2)^{1/2} \right)\\
			&\leq \frac{16r_2}{r_1}\left(2\Theta(0,r_2)
			\right)^{1/2}F(r_2)^{1/2}+
			20\pi\mu^{-1/2}\left( \Xi(r_1)^{1/2}+ \Xi(r_2)^{1/2} \right)
		\end{aligned}
	\]
\end{proof}
\begin{remark}
	For any cones $X_1$ and $X_2$, we see that 
	\[
		d_H(X_1\cap \partial B(0,1),X_2\cap \partial B(0,1))\leq 
		2d_{0,1}(X_1,X_2).
	\]
\end{remark}

Since $\Xi(r)=[rF_1(r)]'$ for any $r\in \mathscr{R}$, we get that  
\[
	\int_{r_1}^{r_2}\Xi(t)dt\leq r_2F_1(r_2)-r_1F_1(r_1),
\]
For any $\zeta>2$, if $r_1\leq r_2\leq r$, then by
Chebyshev's inequality, we get that,
\[ 
	\HM^1\left( \left\{ t\in [r_1,r_2]\midd \Xi(t)\leq \zeta F_1(r)^{2/3} \right\}
	\right)\geq r_2-r_1-\frac{1}{\zeta}rF_1(r)^{1/3},
\]
thus $\left\{ t\in [r_1,r_2]\midd \Xi(t)\leq \zeta F_1(r)^{2/3} \right\}\neq
\emptyset$ when $r_2-r_1>(1/\zeta) rF_1(r)^{1/3}$.

\begin{lemma}\label{le:1appEC}
	Let $R_0<(1-\tau)\mathfrak{r}$ be a positive number such that $F(R_0)\leq 
	\mu\tau_0/4$ and $\bar{\varepsilon}(R_0)\leq 10^{-4}$. For any 
	$r\in\mathscr{R}\cap (0,R_0)$, if $\Xi(r)\leq \mu\tau_0$, then there is a 
	constant $C=C(\mu,\Theta(0))$ such that
	\[
		\dist(x, E)\leq C r\left(F_1(r)^{1/3}+\Xi(r)^{1/2}\right), 
		\ x\in X(r)\cap B_r.
	\]
\end{lemma}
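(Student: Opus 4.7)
The plan is to approximate $x=sz_0\in X(r)\cap B_r$ (with $z_0\in X(r)\cap\partial B(0,1)$ and $s=|x|$) by a point of $E$ through a three-step transport: from $x$ to $tz_0\in X(r)\cap\partial B(0,t)$ at an intermediate scale $t$, then across to some $y\in X(t)$ via Lemma~\ref{le:CompCon}, and finally to a point $ta\in\gamma(t)\subseteq E$ via Lemma~\ref{le:HDCC}. When $s\leq rF_1(r)^{1/3}$ the claim is immediate since $0\in E$ gives $\dist(x,E)\leq s\leq rF_1(r)^{1/3}$; hence I focus on $s>rF_1(r)^{1/3}$.

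To select $t$, I apply Chebyshev's inequality to the bound $\int_{s-\delta}^{s}\Xi(u)\,du\leq sF_1(s)\leq rF_1(r)$ stated just before the lemma. Taking $\delta=rF_1(r)^{1/3}$ and $M=2F_1(r)^{2/3}$ gives $\HM^1\{u\in[s-\delta,s]:\Xi(u)>M\}\leq\delta/2$, so I can choose $t\in\mathscr{R}\cap(s-\delta,s)$ with $\Xi(t)\leq 2F_1(r)^{2/3}$ (and in particular $\leq\mu\tau_0$ once $F_1(R_0)$ is small enough). Provided also $r/t\leq 999$, so the constraint $10(1+r/t)\bar{\varepsilon}(r)\leq 1$ of Lemma~\ref{le:CompCon} is met, that lemma produces $y\in X(t)$ with $|y-tz_0|\leq t\,d_{0,1}(X(r),X(t))$, the right-hand side being bounded by
\[
30(r/t)\Theta(0)^{1/2}F(r)^{1/2}+20\pi\mu^{-1/2}\bigl(\Xi(r)^{1/2}+\Xi(t)^{1/2}\bigr).
\]
Writing $\hat y=y/|y|$, one has $|y-t\hat y|=\bigl||y|-t\bigr|\leq|y-tz_0|$; Lemma~\ref{le:HDCC} furnishes $a\in\Gamma(t)$ with $|\hat y-a|\leq 10\mu^{-1/2}\Xi(t)^{1/2}$, so that $ta\in\gamma(t)\subseteq E$. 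The triangle inequality then yields
\[
\dist(x,E)\leq|s-t|+2t\,d_{0,1}(X(r),X(t))+10t\mu^{-1/2}\Xi(t)^{1/2},
\]
and each of the three terms will be bounded by a $C(\mu,\Theta(0))$-multiple of $r(F_1(r)^{1/3}+\Xi(r)^{1/2})$, using $F(r)^{1/2}\leq F_1(r)^{1/2}\leq F_1(R_0)^{1/6}F_1(r)^{1/3}$ (since $F_1$ is bounded on $(0,R_0]$), together with $|s-t|\leq rF_1(r)^{1/3}$ and $t\Xi(t)^{1/2}\leq\sqrt{2}\,rF_1(r)^{1/3}$.

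The main obstacle is the constraint $r/t\leq 999$, which can fail in the narrow regime $rF_1(r)^{1/3}<s<r/999$ (only realized when $F_1(r)$ is extremely small). In that case I will replace the single-scale comparison by a dyadic chain: construct scales $r=\rho_0>\rho_1>\cdots>\rho_N$ with each ratio $\rho_{k-1}/\rho_k\leq 100$ and $|\rho_N-s|\leq rF_1(r)^{1/3}$, extracting each $\rho_k$ by Chebyshev in its own dyadic interval so that $\Xi(\rho_k)\leq CF_1(r)^{2/3}$ (which is feasible as long as $\rho_k$ stays above a constant multiple of $rF_1(r)^{1/3}$, and this holds along the chain). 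Iterating Lemma~\ref{le:CompCon} across the consecutive pairs $(\rho_{k-1},\rho_k)$ and summing telescopes $d_{0,1}(X(r),X(\rho_N))$ into $N$ per-step contributions of order $F(r)^{1/2}+F_1(r)^{1/3}$ plus a $\Xi(r)^{1/2}$ term from the first step; multiplying by the final scale $\rho_N\leq s$ and using the elementary bound $s\log(r/s)\leq r/e$ for $s\in(0,r]$ keeps the accumulated physical transport error within $CrF_1(r)^{1/3}$. A final application of Lemma~\ref{le:HDCC} at scale $\rho_N$ then completes the approximation exactly as in the single-scale case.
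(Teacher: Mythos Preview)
Your proposal is correct and uses the same toolkit as the paper: Lemma~\ref{le:CompCon} for cone comparison, Lemma~\ref{le:HDCC} to pass from the cone to $\gamma(t)\subseteq E$, Chebyshev on $\int\Xi\leq rF_1(r)$ to select good scales, and the elementary bound $s\log(r/s)\leq r/e$ to control accumulated chain length. The organization differs slightly. You attempt a single jump from $r$ to a scale $t$ near $|x|$ (viable when $r/t$ is bounded) and fall back to a dyadic chain with the fixed threshold $\Xi(\rho_k)\leq CF_1(r)^{2/3}$; this forces the side condition $\rho_k\gtrsim rF_1(r)^{1/3}$ for Chebyshev to succeed in each interval. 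The paper instead builds one universal dyadic chain $r_k=2^{-k}r$, $t_k\in[r_k,r_{k-1}]$ with the scale-adapted threshold $\Xi(t_k)\leq 2F_1(r_{k-1})$ (which always leaves room), obtains a bound involving $\Xi(|x|)^{1/2}$, and only at the end restricts to the set $I=\{t:\Xi(t)\leq F_1(r)^{2/3}\}$ followed by a radial correction of size $\leq 2rF_1(r)^{1/3}$. Both arrive at the same estimate; the paper's separation of ``chain'' and ``threshold optimization'' is a bit cleaner, while your single-step shortcut is a nice simplification in the common regime $|x|\gtrsim r$.
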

\begin{proof}
	For any $k\geq 0$, we take $r_k=2^{-k}r$. Then there exists $t_k\in
	[r_k,r_{k-1}]$ such that 
	\[
		\Xi(t_k)\leq \frac{\int_{r_k}^{r_{k-1}}\Xi(t)dt}{r_{k-1}-r_k}\leq
		\frac{r_{k-1}F_1(r_{k-1})}{r_{k-1}/2}=2F_1(r_{k-1}).
	\]
	We let $X_k=X(t_k)$, then for any $j>i\geq 1$, we have that 
	\begin{equation}\label{eq:sequ}
		\begin{aligned}
			d_{0,1}(X_i,X_j)&\leq \sum_{k=i}^{j-1}d_{0,1}(X_k,X_{k+1})\\
			&\leq 60 \left( \Theta(0)+\mu\tau_0/4 \right)^{1/2}
			\sum_{k=i}^{j-1}F_1(t_k)^{1/2}+ 20\pi\mu^{-1/2}\sum_{k=i}^{j-1}
			\left( \Xi(t_k)^{1/2}+\Xi(t_{k+1})^{1/2} \right) \\
			&\leq \left(  60 \left( \Theta(0)+\mu\tau_0/4
			\right)^{1/2}+40\pi\mu^{-1/2}
			\right)\sum_{k=i}^{j-1}2F_1(t_k)^{1/2}+F_1(t_{k-1})^{1/2}\\
			&\leq C_1(\mu,\Theta(0))(j-i)F_1(r_{i-1})^{1/2}=C_1(\mu,\Theta(0))
			F_1(r_{i-1})^{1/2}\log_2 (r_i/r_j),
		\end{aligned}
	\end{equation}
	where $C_1(\mu,\Theta(0))=3\left(  60 \left( \Theta(0)+\mu\tau_0/4
	\right)^{1/2}+40\pi\mu^{-1/2} \right)$.

	For any $x\in X(r)\cap B_r$ with $\Xi(|x|)\leq \mu\tau_0$, we assume that $t_{k+1}\leq
	|x|< t_k$, then 
	\[
		\begin{aligned}
			\dist(x,E)&\leq d_H(X(r)\cap B_{|x|},X(|x|)\cap B_{|x|})+d_H(X(|x|)\cap
			B_{|x|},\gamma(|x|))\\
			&\leq 2|x|d_{0,1}(X(r),X(|x|))+10\mu^{-1/2}|x|\Xi(|x|)^{1/2} \\
			&\leq 2|x|(d_{0,1}(X(|x|),X_k)+d_{0,1}(X_k,X_1)+d_{0,1}(X_1,X(r)))
			+10\mu^{-1/2}|x|\Xi(|x|)^{1/2}\\
			&\leq (40\pi+10)\mu^{-1/2}|x| \left( \Xi(|x|)^{1/2}+\Xi(r)^{1/2}\right)+
			C_2(\mu,\Theta(0))|x|F_1(r)^{1/2}\log_{2}(r/|x|)\\
			&\leq (40\pi+10)\mu^{-1/2} |x|\Xi(|x|)^{1/2}+C_3(\mu,\Theta(0))r \left( 
			\Xi(r)^{1/2}+ F_1(r)^{1/2} \right)
		\end{aligned}
	\]

	For any $0\leq a\leq b\leq r$, we put 
	\[
		I(a,b)=\left\{ t\in [a,b]\midd \Xi(t)\leq F_1(r)^{2/3}
		\right\},
	\]
	then $I(a,b)\neq\emptyset$ when $b-a> rF_1(r)^{1/3}$. If $|x|\in I(0,r)$,
	then
	\[
		\dist(x,E)\leq C_4(\mu,\Theta(0))r\left(F_1(r)^{1/3}
		+\Xi(r)^{1/2}\right).
	\]

	We let $\{ s_i \}_{i=0}^{m+1}\subseteq [0,r]$ be a sequence such that
	\[
		0=s_0<s_1<\cdots<s_m<s_{m+1}=r,\ s_i\in I(0,r),
	\]
	and 
	\[
		s_{i+1}-s_i\leq 2rF_1(r)^{1/3}.
	\]

	For any $x\in X(r)\cap B_r$, if $s_i\leq |x|<s_{i+1}$ for 
	some $0\leq i\leq m$, we have that 
	\[
		\begin{aligned}
			\dist(x,E)&\leq \left|x-\frac{s_i}{|x|}x\right| +\dist\left(
			\frac{s_i}{|x|}x,E\right)\\
			&\leq (s_{i+1}-s_i)+ C_4(\mu,\Theta(0))r\left(F_1(r)^{1/3}
			+\Xi(r)^{1/2}\right)\\
			&\leq (C_4(\mu,\Theta(0))+2) r \left(F_1(r)^{1/3} +\Xi(r)^{1/2}\right).
		\end{aligned}
	\]
\end{proof}

\begin{definition}
	Let $U\subseteq\mathbb{R}^{3}$ be an open set, $E\subseteq \mathbb{R}^{3}$ be a
	set of Hausdorff dimension $2$. $E$ is called Ahlfors-regular in $U$ if
	there is a $\delta>0$ and $\xi_0\geq 1$ such that, for any $x\in E\cap U$, if
	$0<r<\delta$ and $B(x,r)\subseteq U$, we have that 
	\[
		\xi_0^{-1} r^2\leq \HM^2(E\cap B(x,r))\leq \xi_0 r^2.
	\]
\end{definition}
\begin{lemma}\label{le:2appEC}
	Let $R_0$ be the same as in Lemma \ref{le:1appEC}. If $E$ is Ahlfors-regular,
	and $r\in\mathscr{R}\cap (0,R_0)$ satisfies $\Xi(r)\leq \mu\tau_0$, then 
	there is a constant $C=C(\mu,\xi_0,\Theta(0))$ such that 
	\[
		\dist(x,X(r))\leq Cr\left(F_1(r)^{1/4}+\Xi(r)^{1/2}\right),\ x\in E\cap
		B(0,9r/10).
	\]
\end{lemma}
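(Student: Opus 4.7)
The plan is to proceed by contradiction, combining the forward Hausdorff approximation (Lemma~\ref{le:1appEC}) with a lower bound from Ahlfors regularity. Fix $x\in E\cap B(0,9r/10)$ and set $D=\dist(x,X(r))$. We want $D\leq Cr(F_1(r)^{1/4}+\Xi(r)^{1/2})$; assume by contradiction that $D$ is much larger than each term on the right. First, applying Ahlfors regularity at the point $x$ (on a ball of radius $\rho:=\min\{D/4,\delta\}$, where the case $D/4>\delta$ is trivial since $r\leq R_0$ forces $D$ to be comparable to $r$) gives
\[
\HM^2(E\cap B(x,\rho))\geq \xi_0^{-1}\rho^2.
\]
By the triangle inequality, $B(x,D/4)$ sits at distance at least $D/2$ from $X(r)$.

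The next step is to transport this information back to the scale $r$ by comparing cones across radii. Using the iterative argument from \eqref{eq:sequ} in the proof of Lemma~\ref{le:1appEC}, together with Lemma~\ref{le:CompCon} and a Chebyshev-type selection of "good" dyadic radii $t_k\in[2^{-k}r,2^{-k+1}r]$ satisfying $\Xi(t_k)\lesssim F_1(r)$, one gets $d_{0,1}(X(t),X(r))\leq C(\mu,\Theta(0))(F_1(r)^{1/2}+\Xi(r)^{1/2})$ for every good $t$ in a thick annulus around $|x|$. Combined with Lemma~\ref{le:HDCC}, which gives $d_H(\Gamma(t),X(t)\cap\partial B(0,1))\leq 10\mu^{-1/2}\Xi(t)^{1/2}$, the curves $\gamma(t)\subseteq E\cap\partial B(0,t)$ remain within distance $\ll D$ of $X(r)$. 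In particular, for each good $t$ in the range $(|x|-D/4,|x|+D/4)$, the spherical cap $B(x,D/4)\cap\partial B(0,t)$ is disjoint from $\gamma(t)$.

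Now apply the coarea formula to $E\cap B(x,D/4)$ with the radial function $y\mapsto|y|$ (as in the proof of Lemma~\ref{le:DE}): the Ahlfors lower bound $c D^2$ on the left forces the existence of a good radius $t_\ast\in(|x|-D/4,|x|+D/4)$ on which
\[
\HM^1\bigl(E\cap\partial B(0,t_\ast)\cap B(x,D/4)\bigr)\geq c\,D.
\]
This arc lies outside $\gamma(t_\ast)$ by the previous paragraph, so it contributes entirely to the length excess
\[
j(t_\ast)=\tfrac{1}{t_\ast}\HM^1(E\cap\partial B_{t_\ast})-\HM^1(X(t_\ast)\cap\partial B(0,1))\gtrsim D/t_\ast.
\]

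Finally, one integrates this pointwise information. Since $\Xi(t)=[tF_1(t)]'$ on $\mathscr{R}$, the bound $\int_{a}^{b}\Xi(t)\,dt\leq bF_1(b)$ combined with Chebyshev on the annulus of width $D/2$ about $|x|$ yields, on averaged good radii, $j(t_\ast)^2\lesssim \Xi(t_\ast)/\mu\lesssim F_1(r)^{1/2}$ after a Cauchy--Schwarz step that exchanges $\int j$ with $(\int j^2)^{1/2}$. Combining with $j(t_\ast)\gtrsim D/t_\ast\gtrsim D/r$ gives $D^2\lesssim r^2 F_1(r)^{1/2}$, i.e.\ $D\leq Cr F_1(r)^{1/4}$, contradicting our assumption.

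The main obstacle is the last paragraph: specifically, the Cauchy--Schwarz step that converts the integrated excess $\int \Xi(t)\,dt\leq rF_1(r)$ into a pointwise bound of order $F_1(r)^{1/2}$ on a good radius, which is precisely the reason for the $1/4$ power in the conclusion rather than the $1/3$ power of Lemma~\ref{le:1appEC}. Careful bookkeeping of constants depending on $\mu$, $\xi_0$, $\Theta(0)$, and of the $\Xi^{1/2}$ corrections from Lemma~\ref{le:HDCC} and Lemma~\ref{le:CompCon}, must be done to ensure the circular chain of inequalities closes with the right exponents.
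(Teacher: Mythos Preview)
Your overall architecture is right --- show that points of $E$ far from $X(r)$ must lie in a ``bad'' part of $E$, then use Ahlfors regularity to bound the distance --- but the final quantitative step does not close with the exponent you claim.

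The problem is in your last paragraph. From \eqref{eq:HDCC5} the relation between $j$ and $\Xi$ is \emph{linear}, $j(t)\le \mu^{-1}\Xi(t)$; there is no inequality of the form $j(t_\ast)^2\lesssim \Xi(t_\ast)/\mu$, and the ``Cauchy--Schwarz step exchanging $\int j$ with $(\int j^2)^{1/2}$'' is not a usable bound here since you have no independent control on $\int j^2$. If you carry your single-radius argument through honestly --- pick $t_\ast$ with $\Xi(t_\ast)\le F_1(r)^{1/2}$ via Chebyshev, then combine $j(t_\ast)\gtrsim D/r$ with $j(t_\ast)\le \mu^{-1}\Xi(t_\ast)$ --- you obtain only $D\lesssim r F_1(r)^{1/2}$, not $rF_1(r)^{1/4}$. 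A second issue: the coarea step from $\HM^2(E\cap B(x,D/4))$ to slice lengths carries the Jacobian $\cos\theta(y)$ (cf.\ Lemma~\ref{le:DE}), which can be small; you never account for the $(1-\cos\theta)$ defect.

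The paper recovers the $1/4$ power by working in \emph{area} rather than at a single radius. It splits $E\cap B_r$ into a good set $E_3=\bigcup_{t\notin J}\gamma(t)$ (where $J=\{t:\Xi(t)>F_1(r)^{1/2}\}$) and a bad set $E_1\cup E_2$, and shows via an integrated coarea/Chebyshev computation (which also absorbs the $(1-\cos\theta)$ term through Lemma~\ref{le:eap}) that $\HM^2(E_1\cup E_2)\le C r^2 F_1(r)^{1/2}$. Since every point of $E_3$ is already within $Cr(F_1(r)^{1/4}+\Xi(r)^{1/2})$ of $X(r)$, any $x\in E\cap B(0,9r/10)$ farther than this plus $s$ has $E\cap B(x,s)\subset E_1\cup E_2$; Ahlfors regularity then gives $\xi_0^{-1}s^2\le Cr^2F_1(r)^{1/2}$, hence $s\le C r F_1(r)^{1/4}$. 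The extra square root comes precisely from comparing a two-dimensional quantity ($s^2$) to the area bound, which your one-dimensional slice argument cannot reproduce.
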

\begin{proof}
	Let $\{X_k\}_{k\geq 1}$ be the same as in \eqref{eq:sequ}. For any $t\in
	\mathscr{R}$ with $t_{k+1}\leq t< t_k$, $\Xi(t)\leq \mu\tau_0$ and $x\in \gamma(t)$, we have that 
	\[
		\begin{aligned}
			\dist(x,X(r))&\leq d_H(\gamma(t),X(|x|)\cap B_{|x|})+d_H(X(|x|)\cap
			B_{|x|},X(r))\\
			&\leq (40\pi+10)\mu^{-1/2} |x|\Xi(|x|)^{1/2}+C_3(\mu,\Theta(0))r 
			\left( \Xi(r)^{1/2}+ F_1(r)^{1/2} \right)		
		\end{aligned}
	\]
	We put 
	\[
		J(0,r)=\{ t\in [0,r]:\Xi(t)> F_1(r)^{1/2} \}.
	\]
	For any $x\in \gamma(t)$ with $t\in (0,r)\setminus J(0,r)$, we have that 
	\[
		\dist(x,X(r))\leq C_5(\mu,\Theta(0))r\left(
		\Xi(r)^{1/2} + F_1(r)^{1/4}\right).
	\]
	We put 
	\[
		E_1=\bigcup_{t\in J(0,r)}(E\cap \partial B_t),\ E_2= \bigcup_{t\in
		(0,r)\setminus J(0,r)}(E\cap B_t\setminus \gamma(t)),
	\]
	and
	\[
		E_3=E\cap B_r\setminus (E_1\cup E_2)=\bigcup_{t\in
		(0,r)\setminus J(0,r)}\gamma(t).
	\]
	Then 
	\begin{equation}\label{eq:sm}
		\begin{aligned}
			\HM^2(E_1\cup E_2)&=\int_{E\cap B_r}d \HM^2(x)-\int_{E_3}d\HM^2(x)\\
			&\leq \int_{E\cap B_r}d \HM^2(x)-\int_{E_3}\cos\theta(x)d\HM^2(x)\\
			&=\int_{E\cap B_r}(1-\cos\theta(x))d \HM^2(x)+\int_{
			E_1\cup E_2}\cos\theta(x)d\HM^2(x)\\
			&\leq r^2F(r)+\int_{0}^{r}\HM^1(E_1\cap \partial B_t)d t
			+\int_{0}^{r}\HM^1(E_2\cap \partial B_t) dt\\
			&\leq r^2F(r)+\int_{J(0,r)}(2\Theta(0)+tf'(t)+2f(t))td t
			+ \mu^{-1}\int_{0}^rt\Xi(t) d t\\
			&\leq (2+\mu^{-1})r^2F_1(r)+2\Theta(0)\int_{\{ t\in [0,r]:\Xi(t)> F_1(r)^{1/2} \}} t dt\\
			&\leq (2+\mu^{-1})r^2F_1(r)+\frac{2\Theta(0)}{ F_1(r)^{1/2}}\int_0^rt\Xi(t) d	t\\
			&\leq C_6(\mu,\Theta(0))r^2F_1(r)^{1/2},
		\end{aligned}
	\end{equation}
	where $C_6(\mu,\Theta(0))=(2+\mu^{-1})(\mu\tau_0/4)^{1/2}+2\Theta(0)$.

	We see that, for any $x\in  E_3$,
	\[
		\dist(x,X(r))\leq C_5(\mu,\Theta(0)) r\left(\Xi(r)^{1/2} + F_1(r)^{1/4} \right). 
	\]
	If $x\in E\cap B(0,9r/10)$ with
	\[
		\dist(x,X(r))>C_5(\mu,\Theta(0)) r\left(\Xi(r)^{1/2} + F_1(r)^{1/4} \right)+s
	\]
	for some $s\in (0,r/10)$, then $E\cap B(x,s)\subseteq E_1\cup E_2$, thus 
	\[
		\HM^2(E\cap B(x,s))\leq C_6(\mu,\Theta(0))r^2F_1(r)^{1/2}.
	\]
	But on the other hand, by Ahlfors-regular property of $E$, we have that 
	\[
		\HM^2(E\cap B(x,s))\geq \xi_0^{-1}s^2.
	\]
	We get so that 
	\[
		s\leq C_6(\mu,\Theta(0))^{1/2}\cdot\xi_0^{1/2}\cdot r F_1(r)^{1/4} .
	\]
	Therefore, for $x\in E\cap B(0,9r/10)$,
	\[
		\dist(x,X(r))\leq
		\left(C_6(\mu,\Theta(0))^{1/2}\cdot\xi_0^{1/2}+C_5(\mu,\Theta(0))\right)
		\left(\Xi(r)^{1/2} + F_1(r)^{1/4} \right) .
	\]

\end{proof}
For any $k\geq 0$, we take $R_k=2^{-k}R_0$ and $s_k\in
[R_{k+1},R_{k}]$ such that  
\[
	\Xi(s_k)\leq \frac{\int_{R_{k+1}}^{R_{k}}\Xi(t) d t}{R_{k}-R_{k+1}}\leq 2F_1(R_{k}).
\]
We put $X_k=X(s_k)$. Then for any $j\geq i\geq 2$, we have that 
\[
	\begin{aligned}
		d_{0,1}(X_i,X_j)&\leq \frac{C_1(\mu,\Theta(0))}{3}\sum_{k=i}^{j-1}
		\left(2F_1(s_k)^{1/2}+F_1(s_{k-1})^{1/2}\right)\\
		&\leq C_1(\mu,\Theta(0))\sum_{k=i-1}^{j-1} F_1(R_{k})^{1/2}\\
		&\leq  \frac{C_1(\mu,\Theta(0))}{\ln
		2}\sum_{k=i-1}^{j-1}\int_{R_k}^{R_{k-1}}\frac{F_1(t)^{1/2}}{t}dt\\
		&=\frac{C_1(\mu,\Theta(0))}{\ln
		2} \int_{R_{i-2}}^{R_{j-1}}\frac{F_1(t)^{1/2}}{t}dt.
	\end{aligned}
\]
If the gauge function $h$ satisfy that 
\begin{equation}\label{eq:smallF}
	\int_0^{R_0}\frac{F_1(t)^{1/2}}{t}dt < +\infty,
\end{equation}
then $X_k$ converges to a cone $X(0)$, and 
\[
	d_{0,1}(X(0),X_k)\leq \frac{C_1(\mu,\Theta(0))}{\ln
	2}\int_0^{R_{k-2}}\frac{F_1(t)^{1/2}}{t}dt.
\]
\begin{remark}
	If $h(r)\leq C(\ln (A/r))^{-b}$, $0<r\leq
	R_0$, for some $A>R_0$, $C>0$ and $b>3$, then \eqref{eq:smallF} holds.
\end{remark}
Indeed,
\[
	h_1(r)=\int_{0}^{r}\frac{h(2t)}{t}dt\leq \frac{C}{b-1}\left( \ln\left(
	\frac{A}{r} \right)\right)^{-b+1},
\]
and then Remark \ref{re:largegaugefun} implies that
\[
	F(r)\leq C_1\left( \ln\left( \frac{A}{r}\right)\right)^{-b}+
	\frac{C}{b-1}\left( \ln\left( \frac{A}{r} \right)\right)^{-b+1}\leq 
	C_2\left( \ln\left( \frac{A}{r} \right)\right)^{-b+1},
\]
thus \eqref{eq:smallF} holds.
\begin{lemma}
	If \eqref{eq:smallF} holds, then $X(0)$ is a minimal cone. 
\end{lemma}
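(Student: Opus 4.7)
The plan is to realize $X(0)$ as a blow-up limit of $E$ at the origin, and then invoke the general principle that Hausdorff limits of sliding almost minimal sets whose gauge functions tend to zero are sliding minimal. Since each $X_k=X(s_k)$ is a cone with vertex $0$ and $d_{0,1}(X_k,X(0))\to 0$ (which by homogeneity extends to convergence on every ball $B(0,R)$), the limit $X(0)$ is automatically a cone in $\Omega_0$ centred at $0$; only minimality needs to be established.

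First I would establish that $E_k:=\scale{1/s_k}(E)\to X(0)$ in local Hausdorff distance on each fixed ball. For one direction, Lemma \ref{le:1appEC} gives $\sup_{x\in X(s_k)\cap B(0,s_k)}\dist(x,E)\leq Cs_k\bigl(F_1(s_k)^{1/3}+\Xi(s_k)^{1/2}\bigr)$; after rescaling and using $d_{0,1}(X_k,X(0))\to 0$ we obtain $\sup_{x\in X(0)\cap B(0,1)}\dist(x,E_k)\to 0$, because $F_1(s_k)\to 0$ by definition of $\Theta(0)$ and $\Xi(s_k)\leq 2F_1(R_k)\to 0$ by construction of the sequence $s_k$. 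For the reverse direction, the monotonicity statement of Theorem \ref{thm:ANDD} combined with the upper bound of Theorem \ref{thm:EsADB} supplies Ahlfors regularity of $E$ near $0$, so Lemma \ref{le:2appEC} applies at scale $s_k$ and yields $\sup_{x\in E_k\cap B(0,9/10)}\dist(x,X_k)\to 0$. Both inclusions extend to every ball by the cone property of $X(0)$.

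Second, a direct rescaling shows that each $E_k$ is a sliding almost minimal set in $\Omega_0$ with sliding boundary $L_0$ and gauge $h_k(r):=h(s_kr)$: any sliding deformation $\{\varphi_t\}$ of $E_k$ in $B(x,r)$ lifts via $\scale{s_k}$ to a sliding deformation of $E$ in $B(s_kx,s_kr)$, and the almost minimality inequality for $E$ rescales exactly to the desired one for $E_k$ with gauge $h_k$. Since $h$ is a gauge function, $h_k(r)\to 0$ for every fixed $r>0$ as $k\to\infty$.

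Finally, I would apply the closure theorem for sliding almost minimal sets: if $E_k\in SAM(\Omega_0,L_0,h_k)$, $E_k\to F$ locally in Hausdorff distance and $h_k\to 0$, then $F\in SAM(\Omega_0,L_0,0)$, that is, $F$ is sliding minimal. This is the sliding-boundary analogue of the closure result in \cite{David:2008}, and a version tailored to the present setting is available in \cite{Fang:2015}. Applying it to $F=X(0)$ yields that $X(0)$ is a sliding minimal cone. The main obstacle in this plan is to verify the Hausdorff convergence $E_k\to X(0)$ rigorously: the reverse inclusion hinges on Ahlfors regularity of $E$ near $0$, which must be extracted cleanly from the density bounds already established.
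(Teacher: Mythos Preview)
Your plan is workable in spirit but takes a substantially longer route than the paper, and the specific way you propose to handle the ``reverse inclusion'' has a gap.

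The paper's proof is essentially two lines. It does not realize $X(0)$ as a blow-up of $E$ at all. Instead it uses Lemma~\ref{le:EM}: for each $r$ with $\varepsilon(r)<1/2$ there is a sliding minimal cone $Z(r)$ (of type $\mathbb{P}_+$ or $\mathbb{Y}_+$, according to case~\ref{AS3}) with $d_{0,1}(X(r),Z(r))\leq 4\varepsilon(r)$. Since $\varepsilon(r)\to 0$ by the standing hypothesis~\ref{AS3}, the triangle inequality gives $d_{0,1}(Z(s_k),X(0))\to 0$. The family of cones of type $\mathbb{P}_+$ (respectively $\mathbb{Y}_+$) is a finite-dimensional compact family, closed under $d_{0,1}$-convergence; hence $X(0)$ lies in the same family and is therefore sliding minimal. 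No closure theorem for almost minimal sets, no Ahlfors regularity, no convergence of $E_k$ is needed.

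Your approach, by contrast, passes through $E_k\to X(0)$ and then invokes a limiting theorem for sliding almost minimal sets. The forward inclusion via Lemma~\ref{le:1appEC} is fine, but the reverse inclusion you propose relies on Lemma~\ref{le:2appEC}, which \emph{assumes} Ahlfors regularity of $E$. That is not part of the standing hypotheses \ref{AS1}--\ref{AS3} of this section (only local almost minimality \emph{at the single point $0$} is assumed), and your claim that it ``follows from Theorems~\ref{thm:ANDD} and~\ref{thm:EsADB}'' is incorrect: those results control the density only for balls centered at $0$, whereas Lemma~\ref{le:2appEC} needs a lower density bound for balls $B(x,s)$ centered at arbitrary $x\in E$. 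So as written, your reverse inclusion is not justified under the stated hypotheses. (In the downstream applications $E$ is globally almost minimal and Ahlfors regular, so your argument would go through there, but not at the generality of the present lemma.) Note also that the reverse inclusion is actually free from $\varepsilon(r)\to 0$, since by definition $d_{0,r}(E,Z_r)\leq \varepsilon(r)$ already bounds $\dist(x,Z_{s_k})$ for $x\in E\cap B(0,s_k)$; but once you use that, you have essentially reproduced the paper's shortcut.
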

\begin{proof}
	By Lemma \ref{le:EM}, for any $r\in (0,\mathfrak{r})\cap \mathscr{R}$, there
	exist sliding minimal cone $Z(r)$ such that $d_{0,1}(X(r),Z(r))\leq 4
	\varepsilon(r)$. But $\varepsilon(r)\to 0$ as $r\to 0+$, we get that 
	\[
		d_{0,1}(Z(s_k),X(0))\to 0. 
	\]
	Since $Z(s_k)$ is sliding minimal for any $k$, we get that $X(0)$ is also
	sliding minimal.
\end{proof}
For any $r\in \mathscr{R}\cap (0,R_0)$ with $\Xi(r)\leq \mu\tau_0$, 
we assume $R_{k+1}\leq r<R_k$, by Lemma \ref{le:CompCon}, we have that 
\begin{equation}\label{eq:appCT}
	\begin{aligned}
		d_{0,1}(X(0),X(r))&\leq d_{0,1}(X(0),X_{k+3})+d_{0,1}(X_{k+3},X(r))\\
		&\leq \frac{C_1(\mu,\Theta(0))}{\ln 2}\int_0^{R_{k+1}}\frac{F_1(t)^{1/2}}{t}dt\\
		&\quad +\frac{30r}{s_{k+3}}\Theta(0,r)^{1/2}F_1(r)^{1/2}+20\pi\mu^{-1/2}\left( \Xi(s_{k+3})^{1/2}+\Xi(r)^{1/2} \right)\\
		&\leq 10C_1(\mu,\Theta(0))\left(\Xi(r)^{1/2}+F_1(r)^{1/2}+
		\int_0^{r}\frac{F_1(t)^{1/2}}{t}dt\right).
	\end{aligned}
\end{equation}
\begin{theorem}\label{thm:distTC}
	If \eqref{eq:smallF} holds, and $E$ is Ahlfors-regular, then $E$ has unique 
	blow-up limit $X(0)$ at $0$, and there is a constant $C=C_{10}(\mu,\Theta,
	\xi_0)$ such that 
	\begin{equation}\label{eq:distTC0}
		d_{0,9r/10}(E,X(0))\leq C \left(F_1(r)^{1/4}+
		\int_0^{r}\frac{F(t)^{1/2}}{t}dt \right),\ 0<r< \mathfrak{r}.
	\end{equation}
	In particular, 
	\begin{itemize}[nolistsep]
		\item if $h(r)\leq C_h(\ln(A/r))^{-b}$ for some $A,C_h>0$, $b>3$ and
			$0<r\leq R_0<A$, then 
			\[
				d_{0,r}(E,X(0))\leq C'(\ln(A_1/r))^{-(b-3)/4},\ 0<r\leq 9R_0/10, \
				A_1\leq 10A/9;
			\]
		\item if $h(r)\leq C_hr^{\alpha_1}$ for some $C_h,\alpha_1>0$, and 
			$0<r\leq r_0$, $0<r_0\leq \min\{ 1,R_0 \}$, then 
			\[
				d_{0,r}(E,X(0))\leq C (r/r_0)^{\beta},\ 0<r\leq 9r_0/10,\
				0<\beta<\alpha_1,
			\]
			where 
			\[
				C\leq C_{11}(\mu,\lambda_0,\alpha_1,\beta,C_h,\xi_0,\Theta(0))\left(
				F(r_0)^{1/4}+r_0^{\alpha_1/4}\right).
			\]
	\end{itemize}
\end{theorem}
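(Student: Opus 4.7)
The plan is to combine the two-sided approximation estimates for $E$ versus the cones $X(r)$ (Lemmas~\ref{le:1appEC} and~\ref{le:2appEC}) with the cone-comparison bound~\eqref{eq:appCT}, evaluated at a carefully chosen good radius $r'\approx r$ at which the error term $\Xi(r')$ is small. The key tool for selecting $r'$ is the identity $\Xi(t)=[tF_1(t)]'$ noted in the text, which gives $\int_r^{2r}\Xi(t)\,dt\le 2rF_1(2r)-rF_1(r)\le 2rF_1(2r)$; Chebyshev's inequality then produces $r'\in[r,2r]\cap\mathscr{R}$ with $\Xi(r')\le 3F_1(2r)$. For $r$ small enough (so that $3F_1(2r)\le\mu\tau_0$) this places $r'$ in the range of applicability of all three results.

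The passage from the normalized bound $d_{0,1}(X(r'),X(0))$ to a usable absolute Hausdorff bound exploits that $X(r')$ and $X(0)$ are cones centered at $0$: for cones one has $d_{0,R}=d_{0,1}$ for every $R>0$, so any $p\in X(r')\cap B(0,R)$ satisfies $\dist(p,X(0))\le R\,d_{0,1}(X(r'),X(0))$, and symmetrically. I would then perform the two one-sided estimates. For $x\in E\cap B(0,9r/10)\subseteq E\cap B(0,9r'/10)$, Lemma~\ref{le:2appEC} gives $\dist(x,X(r'))\le Cr'(F_1(r')^{1/4}+\Xi(r')^{1/2})$; composing with the cone bound and~\eqref{eq:appCT} yields
\[
\dist(x,X(0))\le Cr'\Bigl(F_1(r')^{1/4}+\Xi(r')^{1/2}+\int_0^{r'}\frac{F_1(t)^{1/2}}{t}\,dt\Bigr).
\]
Conversely, for $y\in X(0)\cap B(0,9r/10)$, the cone-comparison produces $y'\in X(r')$ with $|y-y'|\le |y|\,d_{0,1}(X(0),X(r'))$, and then Lemma~\ref{le:1appEC} at $r'$ controls $\dist(y',E)$ by $Cr'(F_1(r')^{1/3}+\Xi(r')^{1/2})$.

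Next, I would transfer the bound at $r'$ back to $r$: since $r'\le 2r$ and $F_1$ is nondecreasing, $F_1(r')\le F_1(2r)$; the increment $\int_r^{2r}F_1(t)^{1/2}/t\,dt\le (\ln 2)F_1(2r)^{1/2}$ is dominated by $F_1(2r)^{1/4}$ when $F_1(2r)\le 1$, and the higher-power contributions $F_1^{1/2},F_1^{1/3}$ together with $\Xi(r')^{1/2}\le 2F_1(2r)^{1/2}$ are absorbed into a leading $F_1(r)^{1/4}$ after adjusting constants (trading $F_1$ for $F$ in the integrand is permissible because $F_1=F+8h_1$ and the extra $h_1$ piece is swallowed by the $F_1^{1/4}$ term). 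This produces~\eqref{eq:distTC0}. Uniqueness of the blow-up limit then follows because, under~\eqref{eq:smallF}, the right-hand side of~\eqref{eq:distTC0} tends to $0$ as $r\to 0^+$, so every Hausdorff sub-limit of $\scale{1/r_k}(E)$ must coincide with $X(0)$.

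For the two explicit gauges I would argue as follows. In the logarithmic case $h(r)\le C_h(\ln(A/r))^{-b}$, Remark~\ref{re:largegaugefun} together with the direct estimate of $h_1$ given in the paragraph after~\eqref{eq:smallF} yields $F(r)\le C(\ln(A/r))^{-(b-1)}$; hence $F_1(r)^{1/4}\le C(\ln(A/r))^{-(b-1)/4}$, while the substitution $u=\ln(A/t)$ in the integral gives $\int_0^r F(t)^{1/2}/t\,dt\le C(\ln(A/r))^{-(b-3)/2}$, which converges precisely when $b>3$; both terms are bounded by $C'(\ln(A_1/r))^{-(b-3)/4}$ after absorbing the slack into $A_1\le 10A/9$. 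In the polynomial case $h(r)\le C_h r^{\alpha_1}$, Corollary~\ref{co:dendecay} supplies $F(r)\le Cr^\beta$ for any $\beta<\min(\alpha_1,2\lambda_0)$, whence $F_1(r)^{1/4}\le Cr^{\beta/4}$ and $\int_0^r F(t)^{1/2}/t\,dt\le Cr^{\beta/2}$, giving the claimed power-decay rate after relabeling $\beta$. The main technical obstacle will be the bookkeeping in the chain $r\to r'\to X(r')\to X(0)$: one must check that the Chebyshev-chosen $r'\in[r,2r]$ yields a bound on $B(0,9r/10)$ rather than on $B(0,9r'/10)$ without producing constants that blow up as $r\to 0$, and that the absorption of the lower-order $\Xi^{1/2},F_1^{1/2},F_1^{1/3}$ terms into the leading $F_1^{1/4}$ term produces the stated universal constant $C_{10}(\mu,\Theta,\xi_0)$.
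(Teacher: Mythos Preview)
Your proposal is correct and follows essentially the same strategy as the paper: combine the one-sided approximations of Lemmas~\ref{le:1appEC} and~\ref{le:2appEC} with the cone-comparison bound~\eqref{eq:appCT}, then eliminate the residual $\Xi(r)^{1/2}$ term by passing to good radii where $\Xi$ is controlled by a power of $F_1$.

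The only difference is a minor variant in how the good radii are chosen. You select a \emph{single} $r'\in[r,2r]$ via Chebyshev and apply everything at $r'$, which yields a bound in terms of $F_1(2r)$ rather than $F_1(r)$ (harmless after relabelling $r\mapsto r/2$, and irrelevant for the two explicit gauge cases). The paper instead reuses the sequence-of-good-radii device from the proof of Lemma~\ref{le:1appEC}: it builds $\{s_i\}\subseteq[0,r]$ with $s_i\in I(0,r)=\{t:\Xi(t)\le F_1(r)^{2/3}\}$ and gaps at most $2rF_1(r)^{1/3}$, and for any $x\in X(0)\cap B_r$ moves radially to the nearest $s_i$ before applying the combined estimate there. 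This gives the bound directly in terms of $F_1(r)$ and avoids the doubling of scale. Your version is slightly shorter; the paper's gives the stated constant shape exactly. Your treatment of the two specific gauges is the same as the paper's.
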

\begin{proof}
	From \eqref{eq:appCT} and Lemma \ref{le:1appEC}, we get that, for any 
	$x\in X(0)\cap B_r$ where $r\in \mathscr{R}\cap (0,R_0)$ such that $\Xi(r)\leq
	\mu\tau_0$,
	\[
		\dist(x,E)\leq C_7(\mu,\xi_0,\Theta(0))r\left( \Xi(r)^{1/2}+F_1(r)^{1/4}+
		\int_0^{r}\frac{F_1(t)^{1/2}}{t}dt \right).
	\]
	Similarly to the proof of Lemma \ref{le:1appEC}, we still consider 
	\[
		I(a,b)=\left\{ t\in [a,b]\midd \Xi(t)\leq F_1(r)^{2/3} \right\},\ 0\leq
		a\leq b\leq r,
	\]
	we have that $I(a,b)\neq \emptyset$ whenever $b-a>rF_1(r)^{1/3}$. We let
	$\{ s_i \}_{0}^{m+1}\subseteq [0,r]$ be a sequence such that 
	\[
		0=s_0<s_1<\cdots<s_m<s_{m+1}=r,\ s_i\in I(0,r),
	\]
	and 
	\[
		s_{i+1}-s_i\leq 2rF_1(r)^{1/3}.
	\]
	For any $r\in (0,R_0)$, we assume that $s_i\leq r<s_{i+1}$, $x\in X(0)\cap
	\partial B_r$. 
	\begin{equation}\label{eq:distTC1}
		\begin{aligned}
			\dist(x,E)&\leq \left| x-\frac{s_i}{|x|}x\right|+\dist\left(
			\frac{s_i}{|x|}x,E \right)\\
			&\leq C_8(\mu,\xi_0,\Theta(0)) r \left( F_1(r)^{1/4}+
			\int_0^{r}\frac{F_1(t)^{1/2}}{t}dt\right)
		\end{aligned}
	\end{equation}
	From 	\eqref{eq:appCT} and Lemma \ref{le:2appEC}, we have that, for any 
	$x\in X(0)\cap B(0,9r/10)$ where $r\in \mathscr{R}\cap (0,R_0)$ such that
	$\Xi(r)\leq \mu\tau_0$,
	\[
		\dist(x,X(0))\leq C_9(\mu,\xi_0,\Theta(0))\left( \Xi(r)^{1/2}+F_1(r)^{1/4} +
		\int_0^{r}\frac{F_1(t)^{1/2}}{t}dt\right).
	\]
	Similarly to the proof of Lemma \ref{le:2appEC}, we can get that
	\begin{equation}\label{eq:distTC2}
		\dist(x,X(0))\leq C_{10}(\mu,\xi_0,\Theta(0)) \left( F_1(r)^{1/4} +
		\int_0^{r}\frac{F_1(t)^{1/2}}{t}dt \right).
	\end{equation}

	We get, from \eqref{eq:distTC1} and \eqref{eq:distTC2}, that
	\eqref{eq:distTC0} holds.

	If $h(r)\leq C_h(\ln(A/r))^{-b}$ for some $A,C_h>0$ and $b>3$ and $0<r\leq
	R_0<A$, then 
	\[
		h_1(r)=\int_{0}^{r}\frac{h(2t)}{t}dt \leq \frac{C_h}{b-1}\left( \ln\left(
		\frac{A}{r} \right) \right)^{-b+1},
	\]
	and by Remark \ref{re:largegaugefun} we have that 
	\[
		F(r)\leq C''\left( \ln \frac{A}{r} \right)^{-b+1}
	\]
	where 
	\[
		C''\leq  C(R_0,\lambda,b)\left(\ln\frac{A}{r}\right)^{-1}+
		\frac{C_1}{b-1} \leq C(R_0,\lambda,b)\left(\ln\frac{A}{R_0}\right)^{-1}+
		\frac{C_1}{b-1}
	\]
	is bounded, thus
	\[
		\int_{0}^{r}\frac{F_1(t)^{1/2}}{t}dt \leq C'''
		\left( \ln \frac{A}{r} \right)^{(-b+3)/2}
	\] 
	Hence we get that 
	\[
		\begin{aligned}
			d_{0,9r/10}(E,X(0))&\leq C_{10}(\mu,\xi_0,\Theta(0)) \left(
			F_1(r)^{1/4}+\int_{0}^{r}\frac{F_1(t)^{1/2}}{t}dt \right)\\
			&\leq C' \left( \ln \frac{A}{r} \right)^{-(b-3)/4}.
		\end{aligned}
	\]

	If $h(r)\leq C_hr^{\alpha_1}$ for some $C_h,\alpha_1>0$ and $0<r\leq
	r_0$, then 
	\[
		h_1(r)=\int_{0}^{r}\frac{h(2t)}{t}dt\leq
		\frac{C_h}{\alpha_1}(2r)^{\alpha_1}.
	\]
	We see, from the proof of Corollary \ref{co:dendecay}, that 
	\[
		f(r)\leq \left( f(r_0)+C_2(\alpha_1,\beta,\lambda_0)C_hr_0^{\alpha_1}
		\right)(r/r_0)^{\beta},\ \forall 0<\beta<\alpha_1,
	\]
	thus 
	\[
		F_1(r)=f(r)+16h_1(r)\leq (f(r_0)+C_2'(\alpha_1,\beta,\lambda_0)
		C_hr_0^{\alpha_1})(r/r_0)^{\beta}.
	\]
	Then 
	\[
		\begin{aligned}
			d_{0,9r/10}(E,X(0))&\leq C_{10}(\mu,\xi_0,\Theta(0)) 
			\left(F_1(r)^{1/4}+\int_{0}^{r}\frac{F_1(t)^{1/2}}{t}dt\right)\\
			& \leq C (r/r_0)^{\beta/4},
		\end{aligned}
	\]
	where 
	\[
		C\leq C_{10}'(\mu,\xi_0,\Theta(0))
		(F(r_0)^{1/4}+C_{2}''(\alpha_1,\beta,\lambda_0,C_h)r_0^{1/4}).
	\]
\end{proof}

\section{Parameterization of well approximate sets}
Recall that a cone in $\mathbb{R}^3$ is called of type $\mathbb{P}$ if it is a
plane; a cone is called of type $\mathbb{Y}$ if it is the union of three half 
planes with common boundary line and that make $120^{\circ}$ angles along the
boundary line; a cone of type $\mathbb{T}$ if it is the cone over the union of
the edges of a regular tetrahedron.
\begin{theorem}\label{thm:RPWAPS}
	Let $E\subseteq \Omega_0$ be a set with $0\in E$. Suppose that there exist $C>0$,
	$r_0>0$, $\beta>0$ and $0<\eta\leq 1$ such that, for any $x\in E\cap B(0,r_0)$
	and $0<r\leq 2r_0$, we can find cone $Z_{x,r}$ through $x$ such that 
	\[
		d_{x,r}(E,Z_{x,r})\leq Cr^{\beta},
	\]
	where $Z_{x,r}$ is a minimal cone in $\mathbb{R}^3$ of type $\mathbb{P}$ or
	$\mathbb{Y}$ when $x\notin \partial \Omega_0$ and $0<r<\eta\dist(x,\partial
	\Omega_0)$, and otherwise, $Z_{x,r}$ is a sliding minimal cone of type
	$\mathbb{P}_+$ or $\mathbb{Y}_+$ in $\Omega_0$ with sliding boundary
	$\partial \Omega_0$ centered at some point in $\partial \Omega_0$. Then 
	there exist a radius $r_1\in (0,r_0/2)$, a sliding minimal cone $Z$ centered 
	at $0$ and a mapping $\Phi:\Omega_0\cap B(0,r_1)\to \Omega_0$, which is a 
	$C^{1,\beta}$-diffeomorphism between its domain and image, such that
	$\Phi(0)=0$, $\Phi(\partial\Omega_0\cap B(0,2r_1))\subseteq\partial\Omega_0$,
	$\|\Phi-\id\|_{\infty}\leq 10^{-2}r_1$ and 
	\[
		E\cap B(0,r_1)=\Phi(Z)\cap B(0,r_1).
	\]
\end{theorem}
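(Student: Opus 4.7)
The strategy is to construct $\Phi$ as the limit of a sequence of maps $\Phi_k$ built at dyadic scales $r_k = 2^{-k} r_1$, each carrying a reference sliding minimal cone $Z$ onto a set close to $E$ at scale $r_k$. This follows the scheme Guy David established in the interior case in \cite{David:2008}, with the essential addition that the sliding boundary $\partial\Omega_0$ must be preserved throughout the construction.

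The first step is to extract the reference cone. The hypothesis $d_{0,r}(E,Z_{0,r})\leq C r^{\beta}$ gives, for dyadic scales $r_k=2^{-k}r_1$, a comparison
\[
	d_{0,1}(Z_{0,r_k},Z_{0,r_{k+1}})\leq C_1 r_k^{\beta}
\]
(obtained by forcing both cones through $E$ at the appropriate scales), which is summable in $k$. Hence $Z_{0,r_k}$ converges in Hausdorff distance on compact sets to a sliding minimal cone $Z$ centered at $0$, and $d_{0,r}(E,Z)\leq C_2 r^{\beta}$ for $0<r\leq r_1$; this is the reference cone of the statement.

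Next I would decompose $Z\cap B(0,2r_1)$ into a flat part $R_{\mathrm{flat}}$ where $Z$ is a smooth $2$-manifold away from its singular set, a tube $R_{\mathrm{spine}}$ around the interior $\mathbb{Y}$-spine (if present), and a boundary strip $R_{\partial}$ around $\partial\Omega_0$. Over $R_{\mathrm{flat}}$, the pointwise cone approximation plus sliding almost-minimality forces $E$ to be a small Lipschitz graph over the tangent plane of $Z$; the decay $Cr^{\beta}$ upgrades this graph function to $C^{1,\beta}$ by a standard tilt-excess iteration that compares approximating planes at consecutive dyadic scales. Over $R_{\mathrm{spine}}$ and $R_{\partial}$, one first shows that the singular locus of $E$ is a $C^{1,\beta}$ curve (for $R_{\partial}$ this is $\partial\Omega_0$ itself, which is already $C^{1,\alpha}$), then describes the transverse structure of $E$ as a $C^{1,\beta}$ perturbation of the corresponding $\mathbb{P}_+$ or $\mathbb{Y}_+$ cone over each spine point, again via summation of the scalewise tilts controlled by $r^{\beta}$.

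The map $\Phi$ is assembled by patching the local parameterizations by a partition of unity subordinate to a suitable covering of $Z\cap B(0,2r_1)$. The normalization $\|\Phi-\id\|_{\infty}\leq 10^{-2}r_1$ follows by taking $r_1$ small so the telescoping sum of scalewise corrections is negligible, and preservation of $\partial\Omega_0$ is automatic because all cones $Z_{x,r}$ centered at boundary points contain $\partial\Omega_0$ by the classification recalled in the introduction. The main obstacle is the neighborhood of points where an interior $\mathbb{Y}$-spine meets $\partial\Omega_0$: there the local reference cone is of type $\mathbb{Y}_+$, the singular line of $Z$, the boundary surface, and the flat sheets all come together, and the coordinate changes between the different pieces of the partition must be arranged so that quantitative $C^{1,\beta}$ bounds survive the gluing. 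This forces a careful choice of the scale at which the tilt of each local cone is frozen, and a modification of the interior tilt-excess iteration that explicitly accounts for the two extra degrees of freedom in how the $\mathbb{Y}_+$-spine can sit above the fixed boundary line.
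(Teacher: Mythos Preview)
Your outline is a workable direct construction in the spirit of \cite{David:2008} and \cite{DDT:2008}, but the paper takes a much shorter route that sidesteps precisely the obstacle you flag at the end. Instead of building $\Phi$ scale by scale while tracking the boundary, the paper reflects: set $\sigma(x_1,x_2,x_3)=(x_1,x_2,-x_3)$ and $E_1=E\cup\sigma(E)$. The hypothesis on the approximating cones $Z_{x,r}$ is arranged exactly so that after reflection each $\mathbb{P}_+$ becomes a full plane $\mathbb{P}$ and each $\mathbb{Y}_+$ becomes a full $\mathbb{Y}$; thus $E_1$ satisfies the \emph{interior} cone-approximation hypothesis in all of $B(0,r_0)$, with symmetric cones $Z(\sigma(x),r)=\sigma(Z(x,r))$. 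One then invokes the already-proved interior parametrization (Theorem~4.1 of \cite{Fang:2015} for the bi-H\"older map, then the argument of Section~10 of \cite{DDT:2008} for the $C^{1,\beta}$ upgrade) to get a $\sigma$-equivariant $\Phi_1$ on the full ball, and restricts to $\Omega_0$.

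What each approach buys: the reflection trick eliminates the boundary as a separate stratum, so the delicate gluing you describe near the junction of a $\mathbb{Y}$-spine with $\partial\Omega_0$ never has to be carried out---that point simply becomes an ordinary interior $\mathbb{Y}$-point of $E_1$. Your direct scheme would also succeed and is closer in spirit to what one would do if the boundary were curved (where reflection is unavailable), but here the flatness of $\partial\Omega_0$ makes the paper's reduction both legitimate and far more economical.
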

\begin{proof}
	Let $\sigma:\mathbb{R}^3\to \mathbb{R}^3$ be given by
	$\sigma(x_1,x_2,x_3)=(x_1,x_2,-x_3)$. By setting $E_1=E\cup \sigma(E)$, we
	have that, for any $x\in E_1\cap B(0,r_0)$ and $0<r\leq 2r_0$, there exist
	minimal cone $Z(x,r)$ in $\mathbb{R}^3$ centered at $x$ of type
	$\mathbb{P}$ or $\mathbb{Y}$ such that $Z(\sigma(x),r)=\sigma(Z(x,r))$ and  
	\[
		d_{x,r}(E,Z(x,r))\leq Cr^{\beta}.
	\]
	By Theorem 4.1 in \cite{Fang:2015}, there exist $r_1\in (0,r_0)$, 
	$\tau\in (0,1)$, a cone $Z$ centered at $0$ of type $\mathbb{P}$ or
	$\mathbb{Y}$, and a mapping $\Phi_1:B(0,3r_1/2)\to B(0,2r_1)$ such that 
	\[
		\begin{gathered}
			\sigma(Z)=Z,\ \sigma\circ \Phi_1=\Phi_1\circ \sigma,\ \|\Phi_1-\id\|\leq
			r_0\tau,\\
			C_1|x-y|^{1+\tau}\leq |\Phi(x)-\Phi(y)|\leq C_1^{-1}|x-y|^{1/(1+\tau)},\\
			E_1\cap B(0,r_1)\subseteq \Phi_1(Z\cap B(0,3r_1/2))\subseteq E_1\cap
			B(0,2r_1).
		\end{gathered}
	\]

	Using the same argument as in Section 10 in \cite{DDT:2008}, we get 
	that $\Phi_1$ is of class $C^{1,\beta}$. 
\end{proof}
\section{Approximation of $E$ by cones away from the boundary}
In this section, we let $\Omega\subseteq \mathbb{R}^3$ be a closed set.
Let $E\in SAM(\Omega,\partial \Omega,h)$ be a sliding almost minimal
set, $x_0\in E\setminus \partial \Omega$. Then $E\cap B(x,r)$ is almost minimal 
with gauge function $h$ for any $0<r<\dist(x_0,\partial \Omega)$. We put
\[
	F(x,r)=\Theta(x,r)-\Theta(x)+8h_1(r).
\]
We see from Theorem \ref{thm:ANDD} that $F(x,r)\geq 0$ and $F(x,\cdot)$ is
nondecreasing for $0<r<\dist(x_0,L)$.

\begin{theorem} \label{thm:ine}
	If $\int_{0}^{R_0}r^{-1}F(x,r)^{1/3}dr<\infty$ for some $R_0>0$, then $E$ 
	has unique blow-up limit $T$ at $x$. Moreover there is a constant $C>0$ and
	a radius $\rho_0=\rho_0(x)>0$ such that  
	\begin{equation}\label{eq:ine0}
		d_{x,r}(E,T)\leq C \int_{0}^{200r}\frac{F(x,t)^{1/3}}{t}dt,\
		0<r\leq \rho_0.
	\end{equation}
	In particular, if the gauge function $h$ satisfies that 
	\[
		h(t)\leq C_ht^{\alpha_1} \text{ for some }\alpha_1>0 \text{ and }0<t\leq
		R_0,
	\]
	then there is a $\beta_0>0$ such that, for any $0<\beta<\beta_0$, 
	\[
		d_{x,r}(E,T)\leq C(\alpha_1,\beta)\left(F(x,\rho_0)+
		C_h\rho_0^{\alpha_1}\right)^{1/3}(r/\rho_0)^{\beta/3}. 
	\]
\end{theorem}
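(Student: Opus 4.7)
The plan is to mirror the architecture of Sections 3--4 in the interior setting, where Jean Taylor's classification restricts candidate blow-up cones to types $\mathbb{P}$, $\mathbb{Y}$ and $\mathbb{T}$. Choose $\rho_0>0$ so that $B(x,200\rho_0)\subseteq \Omega\setminus \partial\Omega$; inside this ball no sliding boundary constraint is active, so all the constructions of competitors and retractions may be performed in the ambient $\mathbb{R}^3$ without tracking the condition $\varphi_t(L)\subseteq L$.

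First I would prove the interior analog of Theorem \ref{thm:EsADB}: for every good radius $r$ in a set of full measure in $(0,\rho_0)$, there is a minimal cone $X(x,r)$ through $x$, of type $\mathbb{P}$, $\mathbb{Y}$ or $\mathbb{T}$, with
\[
d_{x,r}(E,X(x,r))\leq C\, F(x,r)^{1/3}.
\]
The ingredients duplicate those of Sections 3--4 (the differential inequality from Lemmas \ref{le:DE}--\ref{le:DI}, approximation of $E\cap \partial B(x,r)$ by rectifiable curves on the sphere, Lipschitz retraction onto a union of sectors, and an area-reducing push analogous to Lemmas \ref{le:CLLHP}--\ref{le:CLLHY}); the absence of boundary makes the retraction (no splitting into $\Omega_i$) and the perturbation (no condition $\varphi(L_0)\subseteq L_0$) considerably simpler, so the whole chain goes through with smaller constants. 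Alternatively one may invoke the corresponding interior statements of \cite{David:2008}.

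Second I would extract a Cauchy sequence of cones by a dyadic telescoping. Taking $r_k=2^{-k}r$ and selecting by Chebyshev radii $s_k\in[r_{k+1},r_k]$ where the local derivative quantity analogous to $\Xi$ is comparable to $F_1(x,r_k)$, the interior analog of Lemma \ref{le:CompCon} gives $d_{0,1}(X(x,s_k),X(x,s_{k+1}))\leq C\, F(x,r_k)^{1/3}$, and comparing the resulting dyadic sum with an integral,
\[
d_{0,1}(X(x,s_i),X(x,s_j))\leq \frac{C}{\ln 2}\int_{s_j}^{s_{i-1}}\frac{F(x,t)^{1/3}}{t}\,dt.
\]
Under the integrability hypothesis this forces $\{X(x,s_k)\}$ to be Cauchy; the limit $T$ is itself minimal (as a Hausdorff limit of minimal cones) and is the unique blow-up limit of $E$ at $x$, since $r^{-1}(E-x)$ stays uniformly close to $X(x,r)$ and hence to $T$.

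Third, combining the single-scale estimate with the telescoping bound gives
\[
d_{x,r}(E,T)\leq d_{x,r}(E,X(x,r))+d_{0,1}(X(x,r),T)\leq C\int_{0}^{200r}\frac{F(x,t)^{1/3}}{t}\,dt,
\]
which is \eqref{eq:ine0}. For the polynomial gauge $h(t)\leq C_h t^{\alpha_1}$, the interior analog of Corollary \ref{co:dendecay} yields $F(x,t)\leq C(F(x,\rho_0)+C_h\rho_0^{\alpha_1})(t/\rho_0)^{\beta_0}$ for any $\beta_0\in(0,\min\{\alpha_1,2\lambda_0\})$; substituting and evaluating the integral produces the stated $(r/\rho_0)^{\beta/3}$ decay for any $\beta<\beta_0$. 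The main technical obstacle is the single-scale Reifenberg-type estimate $d_{x,r}(E,X(x,r))\lesssim F(x,r)^{1/3}$, but in the absence of a boundary the adaptation of Sections 3--4 is strictly simpler than the boundary version just completed and introduces no genuinely new idea.
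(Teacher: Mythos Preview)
Your plan is correct and matches the paper's strategy: obtain a single-scale Reifenberg estimate $d_{x,r}(E,Z_r)\lesssim F(x,110r)^{1/3}$, telescope dyadically to force the cones to converge, and then feed in the decay of $F$ coming from a differential inequality when $h$ is polynomial. The paper executes this more directly than you outline: it simply cites Theorem~11.4 of \cite{David:2008} for the single-scale bound (valid at \emph{every} $r$, not just on a full-measure set) and Theorem~4.5 of \cite{David:2008} for the differential inequality $rf'(r)\geq \frac{4\alpha_0}{1-2\alpha_0}f(r)-\frac{8}{1-2\alpha_0}h(r)$, rather than re-running the Section~3--4 machinery. Because the cited estimate holds at all scales, the telescoping is the naive one, $d_{x,1}(Z_k,Z_{k+1})\leq d_{x,\rho_{k+1}}(Z_k,E)+d_{x,\rho_{k+1}}(E,Z_{k+1})$, with no need for Chebyshev selection of good radii or an interior analogue of Lemma~\ref{le:CompCon}. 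Since you already flag the option of quoting \cite{David:2008}, the difference is purely one of economy.
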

\begin{proof}
	Let $\varrho$ be the radius defines as in \eqref{eq:imcone}.
	We take $\rho_0=10^{-3}\min\{R_0,\dist(x_0,\partial \Omega),\varrho\}$. 
	By Theorem 11.4 in \cite{David:2008}, there is a constant $C>0$ and cone 
	$Z_r$ for each $0<r<\rho_0$ such that 
	\[
		d_{x,r}(E,Z_r)+\alpha_+(Z_r)\leq CF(x,110r)^{1/3}.
	\]
	We put $\rho_k=2^{-k}\rho_0$, and $Z_k=Z_{\rho_k}$. Then 
	\[
		\begin{aligned}
			d_{x,1}(Z_k,Z_{k+1})&=d_{x,\rho_{k+1}}(Z_{k},Z_{k+1})\leq
			d_{x,\rho_{k+1}}(Z_k,E)+d_{x,\rho_{k+1}}(E,Z_{k+1})\\
			&\leq CF(x,110\rho_{k+1})^{1/3}+2CF(x,110\rho_{k})^{1/3}.
		\end{aligned}
	\]
	For any $1\leq i<j$, we have that 
	\[
		\begin{aligned}
			d_{x,1}(Z_i,Z_j)&\leq 2C \sum_{k=i}^{j-1}F(x,110\rho_{k})^{1/3}+C
			\sum_{k=i+1}^{j}F(x,110\rho_{k})^{1/3} \leq 3C
			\sum_{k=i}^{j}F(x,110\rho_{k})^{1/3}\\
			&\leq \frac{3C}{\ln	2}
			\int_{\rho_j}^{\rho_{i-1}}\frac{F(x,110t)^{1/3}}{t}dt.
		\end{aligned}
	\]
	Let $Z_0$ be the limit of $\{Z_k\}_{k=1}^{\infty}$. Then we have that 
	\[
		d_{x,1}(Z_0,Z_i)\leq \frac{3C}{\ln	2}
		\int_{0}^{\rho_{i-1}}\frac{F(x,110t)^{1/3}}{t}dt.
	\]
	For any $0<r<\rho_0$, we assume that $\rho_{k+1}\leq r< \rho_k$, then 
	\[
		\begin{aligned}
			d_{x,1}(Z_r,Z_0)&\leq
			d_{x,\rho_{k+1}}(Z_r,Z_{k+1})+d_{x,1}(Z_{k+1},Z_0)\\
			&\leq d_{x,1}(Z_{k+1},Z_0)+d_{x,\rho_{k+1}}(Z_r,E)+
			d_{x,\rho_{k+1}}(E,Z_{k+1})\\
			&\leq d_{x,1}(Z_{k+1},Z_0)+\frac{r}{\rho_{k+1}}d_{x,r}(Z_r,E)+
			d_{x,\rho_{k+1}}(E,Z_{k+1})\\
			&\leq 3CF(x,110r)^{1/3}+\frac{3C}{\ln	2}
			\int_{0}^{\rho_k}\frac{F(x,110t)^{1/3}}{t}dt.
		\end{aligned}
	\]
	Hence 
	\begin{equation}\label{eq:ine10}
		d_{x,r}(E,Z_0)\leq d_{x,r}(E,Z_r)+ d_{x,r}(Z_r,Z_0)\leq \frac{10C}{\ln	2}
		\int_{0}^{200r}\frac{F(x,t)^{1/3}}{t}dt
	\end{equation}
	and $T=\trans{x}(Z_0)$ is the only blow up limit of $E$ at $x$, which is 
	a minimal cone.

	By Theorem 4.5 in \cite{David:2008}, we have that 
	\[
		\Theta_E(x,r)\leq \left(\frac{1}{2}-\alpha_0\right)\frac{\HM^1(E\cap
		B(x,r))}{r} +2 \alpha_0\Theta_E(x)+4h(r),
	\]
	where we take $\alpha_0$ the constant $\alpha$ in Theorem 4.5 
	in \cite{David:2008}. For our convenient, we denote
	$u(r)=\HM^2(E\cap B(x,r))$ and $f(r)=\Theta_E(x,r)- \Theta_E(x)$, then we
	have $\HM^1(E\cap \partial B(x,r))\leq u'(r)$ and 
	\[
		\begin{aligned}
			f(r)+\Theta_E(x)&\leq \left(\frac{1}{2}-\alpha_0\right)\frac{u'(r)}{r}+
			2 \alpha_0 \Theta_E(x)+4h(r)\\
			&= \left(\frac{1}{2}-\alpha_0\right)(2f(r)+rf'(r)+2 \Theta_E(x))+
			2 \alpha_0 \Theta_E(x)+4h(r),
		\end{aligned}
	\]
	thus
	\[
		rf'(r)\geq \frac{4 \alpha_0}{1-2 \alpha_0}f(r)-\frac{8}{1-2 \alpha_0}h(r),
	\]
	and 
	\[
		\left( r^{-\frac{4 \alpha_0}{1-2 \alpha_0}}f(r)\right)'\geq -\frac{8}{1-2
		\alpha_0}r^{-\frac{1+2 \alpha_0}{1-2 \alpha_0}}h(r).
	\]
	We take $\beta_0=\min\{4 \alpha_0/(1-2 \alpha_0),\alpha_1\}$. Then for any
	$0<\beta<\beta_0$, we have that 
	\[
		\begin{aligned}
			f(r)&\leq (r/\rho_0)^{\frac{4 \alpha_0}{1-2 \alpha_0}} f(\rho_0)+\frac{8}{1-2
			\alpha_0}r^{\frac{4 \alpha_0}{1-2 \alpha_0}}\int_{r}^{\rho_0}t^{-\frac{1+2
			\alpha_0}{1-2 \alpha_0}}h(t) dt\\
			&\leq (r/\rho_0)^{\frac{4 \alpha_0}{1-2 \alpha_0}} f(\rho_0) +
			C_1'(\alpha_1,\beta,\alpha_0) \rho_0^{\alpha_1} \cdot (r/\rho_0)^{\beta}.
		\end{aligned}
	\]
	We get so that 
	\[
		F(x,r)\leq C(\alpha_1,\beta,\alpha_0)(F(x,\rho_0)+C_h\rho_0^{\alpha_1})
		(r/\rho_0)^{\beta},
	\]
	combine this with \eqref{eq:ine10}, we get the conclusion.
\end{proof}

\section{Parameterization of sliding almost minimal sets}
Let $n$, $d\leq n$ and $k$ be nonnegative integers, $\alpha\in (0,1)$. 
By a $d$-dimensional submanifold of class $C^{k,\alpha}$ of $\mathbb{R}^{n}$ 
we mean a subset $M$ of $\mathbb{R}^{n}$ satisfying that for each $x\in M$
there exist s neighborhood $U$ of $x$ in $\mathbb{R}^{n}$, a mapping 
$\Phi:U\to \mathbb{R}^{n}$ which is a diffeomorphism of class $C^{k,\alpha}$ 
between its domain and image, and a $d$ dimensional vector subspace $Z$ of
$\mathbb{R}^{n}$ such that 
\[
	\Phi(M\cap U)= Z\cap \Phi(U).
\]

In this section, we assume that $\Omega\subseteq \mathbb{R}^{3}$ is a closed
set whose boundary $\partial \Omega$ is a 2-dimensional submanifold of
class $C^{1,\alpha}$ for some $\alpha\in (0,1)$, and suppose that $\Omega$ has tangent
cone a half space at any point in $\partial \Omega$. 
Let $E\subseteq\Omega$ be a closed set such that $E\in SAM(\Omega,\partial
\Omega,h)$ and $\partial \Omega\subseteq E$, $x_0\in \partial\Omega$. We always 
assume that the gauge function $h$ satisfies that 
\begin{equation}\label{eq:gaugesmall1}
	\int_{0}^{R_0}\frac{1}{r}\left(\int_{0}^{r}\frac{h(2t)}{t}dt\right)^{1/2}dr
	<+\infty
\end{equation}
and
\begin{equation}\label{eq:gaugesmall2}
	\int_{0}^{R_0}r^{-1+\frac{\lambda}{1-\lambda}} \left(\int_{r}^{R_0}
	t^{-1-\frac{2\lambda}{1-\lambda}}h(2t)dt\right)^{1/2}dr<+\infty,
\end{equation}
for some $R_0>0$. It is easy to see that if $h(t)\leq Ct^{\alpha_1}$ for some
$\alpha_1>0$, $C>0$ and $0<t\leq R_0$, then \eqref{eq:gaugesmall1} and
\eqref{eq:gaugesmall2} hold.
%Then by \eqref{eq:denapp10}, we have that \eqref{eq:smallF} holds. 
For our convenient, we put $\lambda_0=\lambda/(1-\lambda)$,  
\begin{equation}\label{eq:sgauge1}
	h_2(\rho)=\int_{0}^{\rho}\frac{1}{r}\left( \int_{0}^{r}\frac{h(2t)}{t}
	dt\right)^{1/2}dr
\end{equation}
and 
\begin{equation}\label{eq:sgauge2}
	h_3(\rho)=\int_{0}^{\rho}r^{-1+\lambda_0}\left(
	\int_{r}^{R_0}t^{-1-2\lambda_0}h(2t) dt\right)^{1/2}dr.
\end{equation}

We see, from Proposition 4.1 in \cite{David:2014}, that $E$ is Ahlfors-regular
in $B(x_0,R_0)$, i.e. there exist $\delta_1>0$ and $\xi_1\geq 1$ such that for
any $x\in E\cap B(x_0,R_0)$, if $0<r<\delta_1$ and $ B(x,r)\subseteq B(x_0,R_0)$,
we have that 
\[
	\xi_1^{-1}r^2\leq \HM^2(E\cap B(x,r))\leq \xi_1 r^{2} .
\]
We see from Theorem 3.10 in \cite{Fang:2015} that there only there kinds of
possibility for the blow-up limits of $E$ at $x_0$, they are the plane 
$\Tan(\partial\Omega,x_0)$, cones of type $\mathbb{P}_+$ union
$\Tan(\partial\Omega,x_0)$, and cones of type $\mathbb{Y}_+$ union
$\Tan(\partial\Omega,x_0)$. By Proposition 29.53 in \cite{David:2014}, we get 
so that 
\[
	\Theta_E(x_0)=\pi,\ \frac{3\pi}{2}, \text{ or } \frac{7\pi}{4}.
\]
If $\Theta_E(x_0)=\pi$, then there is a neighborhood $U_0$ of $x_0$ in
$\mathbb{R}^{3}$ such that $E\cap U_0=\partial \Omega\cap U_0$. In the next
content of this section, we put ourself in the case 
$\Theta_E(x_0)=3\pi/2$ or $7\pi/4$. 

\begin{lemma}\label{le:diffeodo}
	There exist $r_0=r_0(x_0)>0$ and a mapping 
	$\Psi=\Psi_{x_0}:B(0,r_0)\to\mathbb{R}^{3}$,
	which is a diffeomorphism of class $C^{1,\alpha}$ from $B(0,r_0)$ to
	$\Psi(B(0,r_0))$, such that  
	\[
		\Psi(0)=x_0, \Psi(\Omega_0\cap B_{r_0})\subseteq \Omega\cap B(x_0,R_0), 
		\Psi(L_0\cap B_{r_0})\subseteq \partial \Omega\cap B(x_0,R_0),
	\]
	and that $D\Psi(0)$ is a rotation satisfying that 
	\[
		D\Psi(0)(\Omega_0)=\Tan(\Omega,x_0)\text{ and
		}D\Psi(0)(L_0)=\Tan(\partial\Omega,x_0).
	\]
\end{lemma}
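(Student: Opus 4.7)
The plan is to build $\Psi$ explicitly from the graph representation of $\partial\Omega$ near $x_0$, and then to verify the diffeomorphism property via the inverse function theorem. Since $\partial\Omega$ is a $C^{1,\alpha}$ submanifold, I first write it locally as a graph over its tangent plane. Let $T=\Tan(\partial\Omega,x_0)$ and let $\nu$ be the unit vector in $T^{\perp}$ pointing into $\Tan(\Omega,x_0)$ (well defined because $\Tan(\Omega,x_0)$ is a half space with boundary $T$). Then there is $\rho>0$ and a $C^{1,\alpha}$ function
\[
	f:T\cap B(0,\rho)\to\mathbb{R},\qquad f(0)=0,\ Df(0)=0,
\]
such that $\partial\Omega$ agrees in a neighborhood of $x_0$ with the image of $y\mapsto x_0+y+f(y)\nu$ for $y\in T\cap B(0,\rho)$.

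Next I pick a rotation $R:\mathbb{R}^3\to\mathbb{R}^3$ such that $R(L_0)=T$, $R(\Omega_0)=\Tan(\Omega,x_0)$, and in particular $R(e_3)=\nu$; writing $\pi_T$ for the orthogonal projection onto $T$, I set
\[
	\Psi(y)=x_0+R(y)+f(\pi_T R(y))\,\nu,\qquad y\in B(0,r_0),
\]
for $r_0>0$ small enough that $\pi_T R(y)\in B(0,\rho)$. A direct computation gives $\Psi(0)=x_0$ and $D\Psi(0)v=R(v)+Df(0)(\pi_T R(v))\,\nu=R(v)$, so $D\Psi(0)=R$ is a rotation with $D\Psi(0)(\Omega_0)=\Tan(\Omega,x_0)$ and $D\Psi(0)(L_0)=\Tan(\partial\Omega,x_0)$. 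Since $\Psi$ is of class $C^{1,\alpha}$ and $D\Psi(0)$ is invertible, the $C^{1,\alpha}$ version of the inverse function theorem produces $r_0>0$ such that $\Psi$ is a $C^{1,\alpha}$ diffeomorphism from $B(0,r_0)$ onto its image.

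For $y\in L_0\cap B(0,r_0)$ we have $R(y)\in T$, hence $\pi_T R(y)=R(y)$, and therefore $\Psi(y)=x_0+R(y)+f(R(y))\nu$ lies on the graph of $f$, i.e.\ in $\partial\Omega$. For $y\in\Omega_0\cap B(0,r_0)$, writing $y=y'+y_3 e_3$ with $y'\in L_0$ and $y_3\geq 0$, we get
\[
	\Psi(y)=x_0+R(y')+\bigl(y_3+f(R(y'))\bigr)\nu,
\]
which sits on the same side of the graph as $\Tan(\Omega,x_0)$; shrinking $r_0$ and $\rho$ if necessary, this puts $\Psi(y)$ in $\Omega\cap B(x_0,R_0)$.

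The main (and only genuine) obstacle is the last inclusion $\Psi(\Omega_0\cap B_{r_0})\subseteq\Omega$: one must know that in a neighborhood of $x_0$ the graph of $f$ truly separates $\Omega$ from its complement, with $\Omega$ lying on the $\nu$-side. This is where the assumption that $\Tan(\Omega,x_0)$ is a half space is used in an essential way, together with the fact that $\partial\Omega$ is an embedded $C^{1,\alpha}$ hypersurface locally bounding the closed set $\Omega$; a standard invariance-of-domain / local separation argument, applied to the $C^{1,\alpha}$ flattening chart of $\partial\Omega$, gives the conclusion after possibly reducing $r_0$. Once this is verified, all the required properties of $\Psi$ follow.
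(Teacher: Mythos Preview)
Your proof is correct and takes a genuinely different route from the paper. The paper does not build $\Psi$ from scratch: it starts from the abstract flattening chart $\Phi:U\to V$ that comes with the definition of a $C^{1,\alpha}$ submanifold (so that $\Phi(U\cap\partial\Omega)=Z\cap V$ for some plane $Z$), then post-composes with the linear map $A=D\Phi(x_0)^{-1}$ and a rotation $\Phi_1$ sending $\Tan(\Omega,x_0)$ to $\Omega_0$, and finally sets $\Psi=(\Phi_1\circ A\circ\Phi)^{-1}$. This is slicker in that the diffeomorphism property is inherited for free and no inverse function theorem is invoked; on the other hand your explicit graph construction $\Psi(y)=x_0+R(y)+f(\pi_T R(y))\nu$ gives a concrete formula, makes $D\Psi(0)=R$ transparent, and shows directly why $L_0$ and $\Omega_0$ land where they should. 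The one substantive point both arguments need is exactly the one you flag: that near $x_0$ the set $\Omega$ lies on the $\nu$-side of the graph of $f$. The paper asserts the analogous statement $\Phi(U\cap\Omega)=V\cap D\Phi(x_0)\Tan(\Omega,x_0)$ without further comment, so your proof is at least as careful on this point.
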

\begin{proof}
	By definition, there are an open set $U,V \subseteq \mathbb{R}^{3}$ and a
	diffeomorphism $\Phi:U\to V$ of class $C^{1,\alpha}$ such that $x_0\in U$,
	$0=\Phi(x_0)\in V$ and 
	\[
		\Phi(U\cap \partial \Omega)=Z\cap V,
	\]
	where $Z$ is a plane through $0$. Indeed, we have that
	\[
		Z=D\Phi(x_0)\Tan(\partial\Omega,x_0)
	\]
	and 
	\[
		\Phi(U\cap \Omega)=V\cap D\Phi(x_0)\Tan(\Omega,x_0).
	\]

	We will denote by $A$ the linear mapping given by
	$A(v)=D\Phi(x_0)^{-1}v$, and assume that $A(V)=B(0,r)$ is a ball. Let 
	$\Phi_1$ be a rotation such that $\Phi_1(\Tan(\partial\Omega,x_0))=L_0$ and
	$\Phi_1(\Tan(\Omega,x_0))=\Omega_0$. Then we get that $\Phi_1\circ A\circ 
	\Phi$ is also $C^{1,\alpha}$ mapping which is a diffeomorphism between $U$ 
	and $B(0,r)$, 
	\[
		\begin{gathered}
			D(\Phi_1\circ A\circ \Phi)(x_0)\Tan(
			\Omega,x_0)=\Phi_1(\Tan(\Omega,x_0))=\Omega_0,\\
			D(\Phi_1\circ A\circ \Phi)(x_0)\Tan(\partial
			\Omega,x_0)=\Phi_1(\Tan(\partial \Omega,x_0))=L_0,
		\end{gathered}
	\]
	and 
	\[
		\begin{gathered}
			\Phi_1\circ A\circ \Phi(U\cap \partial \Omega)=\Phi_1\circ A(Z\cap V)=
			L_0\cap B(0,r),\\
			\Phi_1\circ A\circ \Phi(U\cap \partial \Omega)=\Phi_1\circ 
			A(V\cap D\Phi(x_0)\Tan(\Omega,x_0))=\Omega_0\cap B(0,r).
		\end{gathered}
	\]
	We now take $r_0=r$ and $\Psi=(\Phi_1\circ A\circ \Phi)^{-1}\vert_{B(0,r)}$
	to get the result.

\end{proof}

Let $U\subseteq \mathbb{R}^{n}$ be an open set. For any mapping $\Psi:U\to
\mathbb{R}^{n}$ of class $C^{1,\alpha}$, we will denote by $C_{\Psi}$ the
constant  $C_{\Psi}=\sup\left\{\|D\Psi(x)-D\psi(y)\|/|x-y|^{\alpha}:
x,y\in U,x\neq y\right\}$.
%\begin{equation}\label{eq:diffeapp}
%	C_{\Psi}=\sup\left\{\frac{\|D\Psi(x)-D\psi(y)\|}{|x-y|^{\alpha}}:
%	x,y\in U,x\neq y\right\}.
%\end{equation}
Then we have that 
\[
	\Psi(x)-\Psi(y)=\left\langle x-y,\int_{0}^{1}D\Psi(y+t(x-y))dt\right\rangle,
\]
and thus
\[
	|\Psi(x)-\Psi(y)-D\Psi(y)(x-y)|\leq 
	|x-y|\int_{0}^{1}C_{\Psi}(t|x-y|)^{\alpha}dt \leq 
	\frac{C_\Psi}{\alpha+1}|x-y|^{1+\alpha}.
\]

For any $0<\rho\leq r_0$, we set $U_{\rho}=\Psi(B_{\rho})$, 
$M_{\rho}=\Psi^{-1}(E\cap U_{\rho})$ and
\begin{equation}\label{eq:defL}
	\Lambda(\rho)=\max\left\{ \Lip\left(\Psi_{B_{\rho}}\right),
	\Lip\left(\Psi^{-1}_{U_{\rho}}\right)\right\}.
\end{equation}
Then 
\[
	\|D\Psi(0)\|-\|D\Psi(x)-D\Psi(0)\|\leq \|D\Psi(x)\|\leq
	\|D\Psi(0)\|+\|D\Psi(x)-D\Psi(0)\|,
\]
thus $1-C_{\Psi}\rho^{\alpha}\leq \|D\Psi(x)\|\leq 1+C_{\Psi}\rho^{\alpha}$
for $x\in B_{\rho}$, and we have that 
\begin{equation}\label{eq:diffeapp}
	\Lambda(\rho)\leq 1/(1-C_{\Psi}\rho^{\alpha}) \text{ whenever }
	C_{\Psi}\rho^{\alpha}<1.
\end{equation}
\begin{lemma}\label{le:localmini}
	For any $1<\rho\leq \min\{r_0, C_{\Psi}^{-1/\alpha}\}$, $M_{\rho}$ is local
	almost minimal in $B_{\rho}$ at $0$ with gauge function $H$ satisfying that  
	\[
		H(2r)\leq 4 \Lambda(r)^2h(2\Lambda(r)r)+ 
		4\xi_1 C_{\Psi} \Lambda(\rho)r^{\alpha} \text{ for }0<r<(1-C_{\Psi}\rho^{\alpha})\delta_1.
	\]
\end{lemma}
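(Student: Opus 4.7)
The plan is to push sliding deformations of $M_\rho$ through $\Psi$ to deformations of $E$, apply the almost minimality of $E$, and transfer the resulting inequality back using a Jacobian perturbation estimate. Given $r\in(0,(1-C_\Psi\rho^\alpha)\delta_1)$ and a sliding deformation $\{\varphi_t\}_{0\le t\le 1}$ of $M_\rho$ in $B(0,r)$ with sliding boundary $L_0\cap B_\rho$, I would set $\tilde\varphi_t=\Psi\circ\varphi_t\circ\Psi^{-1}$ on $E\cap U_\rho$ and extend by the identity. The key preliminary is that for $\rho$ sufficiently small, $\Psi(L_0\cap B_\rho)=\partial\Omega\cap U_\rho$ and $\Psi(\Omega_0\cap B_\rho)=\Omega\cap U_\rho$: this follows from invariance of domain applied to the 2-dimensional $C^{1,\alpha}$-submanifold $\Psi(L_0)\subseteq\partial\Omega$, which is tangent to $\partial\Omega$ at $x_0$. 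Combined with the inclusion $\Psi(B(0,r))\subseteq B(x_0,\Lambda(\rho)r)$ from the Lipschitz bound, this makes $\{\tilde\varphi_t\}$ a valid sliding deformation of $E$ in $B(x_0,\Lambda(\rho)r)$ with sliding boundary $\partial\Omega$.

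Applying the almost minimality of $E$ at $x_0$ and cancelling the pieces outside $\Psi(B(0,r))$ (where $\tilde\varphi_1$ is the identity) reduces to
\[
	\HM^2(E\cap\Psi(B(0,r)))\le\HM^2(\Psi(\varphi_1(M_\rho\cap B(0,r))))+\Lambda(\rho)^2 h(\Lambda(\rho)r)r^2.
\]
To translate this back to $M_\rho$, I would use that $D\Psi(0)$ is a rotation and $\Psi\in C^{1,\alpha}$: for $x\in B(0,r)$, the bound $\|D\Psi(x)-D\Psi(0)\|\le C_\Psi r^\alpha$ together with the exterior-product expansion of $(D\Psi(x)v_1)\wedge(D\Psi(x)v_2)$ around $D\Psi(0)$ yields $|\ap J_2(\Psi|_S)(x)-1|\le 3C_\Psi r^\alpha$ uniformly over tangent planes of any rectifiable $S\subseteq B(0,r)$. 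The area formula then gives $(1-3C_\Psi r^\alpha)\HM^2(S)\le\HM^2(\Psi(S))\le(1+3C_\Psi r^\alpha)\HM^2(S)$, which, inserted into the previous display and rearranged (dividing by $1-3C_\Psi r^\alpha\ge 1/2$ for $\rho$ small), produces
\[
	\HM^2(M_\rho\cap B(0,r))\le\HM^2(\varphi_1(M_\rho)\cap B(0,r))+C\,C_\Psi r^\alpha\,\HM^2(\varphi_1(M_\rho)\cap B(0,r))+2\Lambda(\rho)^2 h(\Lambda(\rho)r)r^2.
\]

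To close the estimate I would assume without loss of generality that $\HM^2(\varphi_1(M_\rho)\cap B(0,r))\le\HM^2(M_\rho\cap B(0,r))$ (otherwise the target inequality is trivial since $H\ge 0$), and then use Ahlfors regularity of $E$ in $B(x_0,R_0)$ combined with the Jacobian estimate to bound $\HM^2(M_\rho\cap B(0,r))\le 2\xi_1\Lambda(\rho)^2 r^2$. The distortion term is then absorbed into a gauge of the form $H(r)\le C'\Lambda(\rho)^2 h(\Lambda(\rho)r)+C''\xi_1 C_\Psi\Lambda(\rho)^2 r^\alpha$, matching the stated bound up to universal constants (with the factor $2$ inside the $h$ and the $r^\alpha$ absorbed into the substitution of $r$ for $2r$). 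The main, and essentially only, delicate step is the Jacobian perturbation estimate: for $C^2$-diffeomorphisms it would be immediate from a determinant formula, but in the $C^{1,\alpha}$ setting one has to work directly with the exterior-product expansion, using crucially that $D\Psi(0)$ is a rotation rather than merely an invertible linear map.
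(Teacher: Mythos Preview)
Your approach is correct and essentially identical to the paper's: push the deformation through $\Psi$, apply the sliding almost minimality of $E$ in a ball containing $\Psi(B(0,r))$, and transfer the inequality back via the Jacobian estimate $|\ap J_2(\Psi|_S)-1|\lesssim C_\Psi r^\alpha$, controlling the residual term by Ahlfors regularity. The paper obtains the Ahlfors bound for $M_\rho$ slightly differently---it first cites the stability of generalized quasiminimality under bi-Lipschitz maps (Proposition~2.8 in \cite{David:2014}) to conclude $M_\rho\in GSAQ(B_\rho,\Lambda(\rho)^4,2\rho,\Lambda(\rho)^4 h(2\rho\Lambda(\rho)))$, and then invokes the Ahlfors regularity of quasiminimal sets (Proposition~4.1 in \cite{David:2014}) to get $\HM^2(M_\rho\cap B(x,r))\le \xi_1\Lambda(\rho)\,r^2$ directly---so it avoids your WLOG reduction; but the two routes amount to the same computation up to constants.
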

\begin{proof}
	For any open set $U\subseteq \mathbb{R}^3$, $M\geq 1$, $\delta>0$ and
	$\epsilon>0$, we let $GSAQ(U,M,\delta,\epsilon)$ be the collection of
	generalized sliding Almgren quasiminimal sets which is defined in Definition
	2.3 in \cite{David:2014}.
	We see that 
	\[
		\diam(U_{\rho})\leq 2\rho \Lip\left( \Psi\vert_{B_{\rho}} \right)\leq
		2\rho \Lambda(\rho)
	\]
	and 
	\[
		E\cap U_{\rho}\in GSAQ(U_{\rho},1,\diam(U_{\rho}),h(2\diam(U_{\rho}))),
	\]
	By Proposition 2.8 in \cite{David:2014}, we have that 
	\[
		M_{\rho}\in GSAQ\left(B_{\rho},\Lambda(\rho)^4,2\rho,\Lambda(\rho)^4
		h\left(2\rho\Lambda(\rho)\right)\right)
	\]
	By Proposition 4.1 in \cite{David:2014}, we get that $M_\rho$ is Ahlfors-regular
	in $B_{\rho}$. Indeed, we can get a little more, that is, for any $x\in
	M_{\rho}$ with $0<r\Lambda(\rho)<\delta_1$ and $B(x,r)\subseteq 
	B(0,\rho)$, we have that 
	\begin{equation}\label{eq:localmini9}
		\left( \xi_1 \Lambda(\rho) \right)^{-1}r^2\leq \HM^2(M_{\rho}\cap
		B(x,r))\leq \left( \xi_1 \Lambda(\rho) \right)r^2.
	\end{equation}

	Let $\{ \varphi_t \}_{0\leq t\leq 1}$ be any sliding deformation of $M_{\rho}$ 
	in $B_{r}$. Then  
	\[
		\left\{ \Psi\circ\varphi_t\circ\Psi^{-1} \right\}_{0\leq t\leq 1}
	\]
	is a sliding deformation of $E$ in $U_{r}$. Hence we get that 
	\begin{equation}\label{eq:localmini10}
		\HM^2(E\cap U_{r})\leq \HM^2(\Psi\circ\varphi_1\circ\Psi^{-1}(E\cap
		U_{r}))+ h(2\diam (U_{r}))^2\diam(U_{r})^2
	\end{equation}

	For any $2$-rectifiable set $A\subseteq B_{\rho}$, by Theorem 3.2.22 in
	\cite{Federer:1969}, we have that 
	\[
		\ap J_2(\Psi\vert_{A})(x)=
		\left\|\wedge_2\left( D\Psi(x)\vert_{\Tan(A,x)} \right)\right\|
	\]
	and
	\[
		\HM^2(\Psi(A\cap B_{r}))=\int_{A\cap B_{r}}\ap
		J_2(\Psi\vert_{A})(x) d\HM^2(x)
	\]
	By \eqref{eq:diffeapp}, we get that 
	\begin{equation}\label{eq:localmini11}
		\int_{A\cap B_{r}}(1-C_{\Psi}|x|^{\alpha})^2 d\HM^2\leq 
		\HM^2(\Psi(A\cap B_{r}))\leq
		\int_{A\cap B_{r}}(1+C_{\Psi}|x|^{\alpha})^2 d\HM^2.
	\end{equation}
	Thus, by taking $A=M_{\rho}$, we have that $M_r=M_{\rho}\cap B_r$,
	$\Psi(M_r)=E\cap U_r$ and 
	\[
		\HM^2(\Psi(M_{r}))\geq (1-C_{\Psi}\rho^{\alpha})^2
		\HM^2(M_{r}) ;
	\]
	by taking $A=\varphi_1(M_{\rho})$, we have that  
	\[
		\HM^2(\Psi(\varphi_1(M_{\rho})\cap B_r))\leq
		(1+C_{\Psi}r^{\alpha})^2\HM^2(\varphi_1(M_{\rho})\cap B_r) .
	\]
	Combine these two equations
	with \eqref{eq:localmini10} and \eqref{eq:localmini9}, we get that 
	\[
		\begin{aligned}
			\HM^2(\varphi_1(M_{\rho})\cap B_r)&\geq
			(1+C_{\Psi}r^{\alpha})^{-2}\HM^2(\Psi(\varphi_1(M_{\rho})\cap B_r)) \\
			&\geq (1+C_{\Psi}r^{\alpha})^{-2} \left(\HM^2(E\cap U_{r}) - h(4r
			\Lambda(r))(2 r\Lambda(r))^2\right) \\
			&\geq
			\left(\frac{1-C_{\Psi}\rho^{\alpha}}{1+C_{\Psi}r^{\alpha}}\right)^2\HM^2(M_r)
			-\left(\frac{2 r\Lambda(r)}{1+C_{\Psi}r^{\alpha}}\right)^2h(4 r\Lambda(r))\\
			&\geq \HM^2(M_r)-H(2r)r^2.
		\end{aligned}
	\]
\end{proof}
\begin{lemma}\label{le:diffeohd}
	Let $E_1\subseteq \Omega_0$ be a $2$-rectifiable set, $x\in E_1$,
	$X$ a cone centered at $0$, $\Phi:\mathbb{R}^{3}\to\mathbb{R}^{3}$ a
	diffeomorphism of class $C^{1,\alpha}$. Then there exist $C>0$
	such that, for any $r>0$ and $\rho>0$ with $B(\Phi(x),\rho)\subseteq
	\Phi(B(x,r))$,
	\[
		d_{\Phi(x),\rho}\left(\Phi(E_1),\Phi(x)+D\Phi(x)X\right)\leq 
		\left(Cr^{\alpha}+\|D\Phi(x)\| d_{x,r}(E_1,x+X)\right)\frac{r}{\rho}.
	\]
\end{lemma}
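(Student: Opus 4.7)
The plan is to exploit the first-order Taylor approximation of $\Phi$ at $x$: since $\Phi$ is of class $C^{1,\alpha}$, one has
\begin{equation*}
|\Phi(y)-\Phi(x)-D\Phi(x)(y-x)|\leq \tfrac{C_\Phi}{1+\alpha}|y-x|^{1+\alpha}
\end{equation*}
on the relevant domain, where $C_\Phi$ denotes the H\"older seminorm of $D\Phi$. Since the affine map $y\mapsto \Phi(x)+D\Phi(x)(y-x)$ carries $x+X$ onto $\Phi(x)+D\Phi(x)X$, each of the two suprema in $d_{\Phi(x),\rho}$ will be estimated by transferring the information $d_{x,r}(E_1,x+X)$ across $\Phi$ and absorbing the Taylor error.

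For the first supremum, I would take any $z=\Phi(y)\in\Phi(E_1)\cap B(\Phi(x),\rho)$; the inclusion $B(\Phi(x),\rho)\subseteq \Phi(B(x,r))$ forces $y\in E_1\cap B(x,r)$. Choosing $y'\in x+X$ with $|y-y'|\leq r\,d_{x,r}(E_1,x+X)$ and noting that $\Phi(x)+D\Phi(x)(y'-x)\in \Phi(x)+D\Phi(x)X$, a triangle inequality combined with the Taylor bound gives
\begin{equation*}
\dist(z,\Phi(x)+D\Phi(x)X)\leq \tfrac{C_\Phi}{1+\alpha}\,r^{1+\alpha}+\|D\Phi(x)\|\,r\,d_{x,r}(E_1,x+X).
\end{equation*}
Division by $\rho$ produces the desired $(Cr^\alpha+\|D\Phi(x)\|\,d_{x,r}(E_1,x+X))\,r/\rho$.

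For the reverse supremum, I would take $w=\Phi(x)+D\Phi(x)v$ with $v\in X$ and $w\in B(\Phi(x),\rho)$, and first control $|v|$ by applying the $C^{1,\alpha}$ Taylor expansion of $\Phi^{-1}$ at $\Phi(x)$ to $u:=\Phi^{-1}(w)\in B(x,r)$: this gives $|u-(x+v)|\leq C_1\rho^{1+\alpha}$, hence $|v|\leq r+C_1\rho^{1+\alpha}$. To guarantee access to the $d_{x,r}$ hypothesis I would truncate along the cone by setting $v':=\min(1,r/|v|)\,v\in X$, so that $|v'|\leq r$ and $|v-v'|\leq C_1\rho^{1+\alpha}$. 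The hypothesis then produces $y\in E_1$ with $|y-(x+v')|\leq r\,d_{x,r}(E_1,x+X)$ and consequently $|y-x|\leq 2r$; a second application of the Taylor estimate at $x$ yields
\begin{equation*}
|\Phi(y)-w|\leq \tfrac{C_\Phi}{1+\alpha}(2r)^{1+\alpha}+\|D\Phi(x)\|\bigl(r\,d_{x,r}(E_1,x+X)+C_1\rho^{1+\alpha}\bigr).
\end{equation*}
Since $B(\Phi(x),\rho)\subseteq \Phi(B(x,r))$ forces $\rho\leq C_2 r$ (from $\Lip(\Phi|_{B(x,r)})$), the term $\rho^{1+\alpha}/\rho=\rho^\alpha$ is dominated by $C_3 r^\alpha\leq C_3 r^\alpha\cdot r/\rho$, so dividing through by $\rho$ yields the required bound.

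The one genuine subtlety is the reverse direction: one must bring the coefficient vector $v\in X$ inside $B(0,r)$ to apply the hypothesis on $d_{x,r}$, and this is secured by the cone truncation $v'$ together with the second-order Hölder control on $\Phi^{-1}$. All remaining constants---the H\"older seminorms $C_\Phi$ and $C_{\Phi^{-1}}$ near $\Phi(x)$, as well as bounds on $\|D\Phi\|$ and $\|D\Phi^{-1}\|$ on a neighborhood---are absorbed into the single constant $C=C(\Phi,x)$ appearing in the statement.
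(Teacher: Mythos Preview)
Your argument follows the same Taylor-expansion strategy as the paper; in fact you are more careful than the paper in the reverse direction, explicitly truncating $v$ to $v'\in X\cap \overline{B(0,r)}$ via the $C^{1,\alpha}$ expansion of $\Phi^{-1}$, whereas the paper simply ranges over $z\in X\cap B_r$ and tacitly assumes this already covers $(\Phi(x)+D\Phi(x)X)\cap B(\Phi(x),\rho)$.

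One small slip in your last step: the chain ``$\rho^\alpha\le C_3 r^\alpha\le C_3 r^\alpha\cdot r/\rho$'' requires $\rho\le r$, which is not guaranteed by the hypothesis (only $\rho\le \Lip(\Phi)\,r$ follows from $B(\Phi(x),\rho)\subseteq\Phi(B(x,r))$). The fix is immediate: write
\[
\rho^{\alpha}=\Bigl(\tfrac{\rho}{r}\Bigr)^{1+\alpha}\,r^{\alpha}\cdot\tfrac{r}{\rho}\le \Lip(\Phi)^{\,1+\alpha}\,r^{\alpha}\cdot\tfrac{r}{\rho},
\]
and absorb the extra factor into $C$.
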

\begin{proof}
	Since $\Phi$ is of class $C^{1,\alpha}$, we have that
	\[
		|\Phi(y)-\Phi(x)-D\Phi(x)(y-x)|\leq \frac{C_{\Phi}}{\alpha+1}|x-y|^{1+\alpha},
	\]
	by putting $C_1=C_{\Phi}/(\alpha+1)$, we get that 
	\[
		\dist(\Phi(y),\Phi(x)+D\Phi(x)X)\leq C_1 |y-x|^{1+\alpha} \text{ for }
		y\in x+X.
	\]
	For any $z\in E_1\cap B_r$ and $y\in x+X$, we have that 
	\[
		\begin{aligned}
			|\Phi(z)-\Phi(y)|&\leq
			|\Phi(z)-\Phi(y)-D\Phi(x)(z-y)|+\|D\Phi(x)\|\cdot|z-y|\\
			&\leq \|D\Phi(x)\|\cdot|z-y| + C_1|z-x|^{1+\alpha}+C_{1}|y-x|^{1+\alpha},
		\end{aligned}
	\]
	thus
	\[
		\dist(\Phi(z),\Phi(x+X))\leq
		\|D\Phi(x)\|rd_{x,r}(E_1,x+X)+2C_1r^{1+\alpha},
	\]
	hence 
	\begin{equation}\label{eq:diffeohd10}
		\dist(\Phi(z),\Phi(x)+D\Phi(x)X)\leq 
		\|D\Phi(x)\|rd_{x,r}(E_1,x+X)+3C_1r^{1+\alpha}.
	\end{equation}

	For any $z\in X\cap B_r$, $\Phi(x)+D\Phi(x)z\in \Phi(x)+D\Phi(x)X$, and 
	\begin{equation}\label{eq:diffeohd11}
		\begin{aligned}
			\dist(\Phi(x)+D\Phi(x)z, \Phi(E_1)) &=\inf
			\{ |\Phi(y)-\Phi(x)-D\Phi(x)z|:y\in E_1 \}\\
			&\leq \inf\{C_1r^{1+\alpha}+\|D\Phi(x)\|\cdot|y-x-z|:y\in E_1\}\\
			&\leq \|D\Phi(x)\|rd_{x,r}(x+X,E_1)+C_1r^{1+\alpha}.
		\end{aligned}
	\end{equation}

	We get from \eqref{eq:diffeohd10} and \eqref{eq:diffeohd11} that 
	\[
		d_{\Phi(x),\rho}(\Phi(E_1),\Phi(x)+D\Phi(x)X)\leq \frac{r}{\rho}\left(
		3C_1r^{\alpha}+\|D\Phi(x)\|\cdot d_{x,r}(E_1,x+X) \right)
	\]

\end{proof}
\begin{theorem}\label{thm:ut}
	Let $\Omega$, $E\subseteq \Omega$, $x_0\in \partial\Omega$ and $h$ be the same as in
	the beginning of this section. Then there is a unique blow-up limit $X$ of $E$
	at $x_0$; moreover, if the gauge function $h$ satisfy that 
	\begin{equation}\label{eq:ut0}
		h(t)\leq C_h t^{\alpha_1} \text{ for some }C_h>0, \alpha_1>0
		\text{ and }0<t<t_0,
	\end{equation}
	then there exists $\rho_0>0$ such that, for any
	$0<\beta<\min\{ \alpha,\alpha_1,2\lambda_0 \}$,
	\[
		d_{x_0,\rho}(E,x_0+X)\leq C(\rho/\rho_0)^{\beta/4},\ 0<\rho\leq 9\rho_0/20,
	\]
	where $C$ is a constant satisfying that 
	\[
		C\leq C_{20}(\mu,\lambda_0,\alpha,\alpha_1,\beta,\xi_1)
		(F_E(x_0,2\rho_0)+ C_{\Psi}\rho_0^{\alpha}+C_h\rho_0^{\alpha_1})^{1/4},
	\]
	and $F_E(x_0,r)=r^{-2}\HM^2(E\cap B(x_0,r))-\Theta_E(x_0)+16h_1(r)$.
\end{theorem}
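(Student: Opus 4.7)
The plan is to pull back to the half-space model via the diffeomorphism of Lemma \ref{le:diffeodo}, apply the decay estimate of Theorem \ref{thm:distTC}, and transfer the conclusion back through Lemma \ref{le:diffeohd}. Concretely, choose $\rho_0 \leq \min\{r_0(x_0), C_{\Psi}^{-1/\alpha}\}$ small enough that $\Lambda(\rho_0) \leq 2$ and that $\rho_0 \leq R_0$; set $\Psi = \Psi_{x_0}$ and $M = \Psi^{-1}(E \cap U_{\rho_0}) \subseteq \Omega_0$. The target sliding minimal cone will be $X = D\Psi(0) X(0)$, where $X(0)$ is the unique blow-up limit of $M$ at $0$ produced by Theorem \ref{thm:distTC}; since $D\Psi(0)$ is a rotation mapping $\Omega_0$ onto $\Tan(\Omega, x_0)$, $X$ is a sliding minimal cone in $\Tan(\Omega, x_0)$.

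The first step is to verify that $M$ satisfies the hypotheses of Theorem \ref{thm:distTC}. By Lemma \ref{le:localmini}, $M$ is locally $(\Omega_0, L_0, H)$ sliding almost minimal at $0$ and Ahlfors-regular on $B(0, \rho_0)$, and under the assumption $h(t) \leq C_h t^{\alpha_1}$ the gauge function $H$ satisfies $H(2r) \leq C(r^{\alpha_1} + r^{\alpha})$, which in particular yields condition \eqref{eq:smallF}. Assumption \ref{AS1} is immediate from Ahlfors-regularity, and \ref{AS2} is the lemma itself. To obtain condition \ref{AS3}, the local $C^{0}$-equivalence of $M$ to a sliding minimal cone of type $\mathbb{P}_+$ or $\mathbb{Y}_+$, we invoke the biHölder regularity result of \cite{Fang:2015} applied at $0$ together with the standing assumption $\Theta_E(x_0) \in \{3\pi/2, 7\pi/4\}$, which via Proposition 29.53 of \cite{David:2014} determines whether the cone is of type $\mathbb{P}_+$ or $\mathbb{Y}_+$.

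Theorem \ref{thm:distTC} then yields the unique blow-up limit $X(0)$ of $M$ at $0$ and the decay estimate
\[
d_{0, 9r/10}(M, X(0)) \leq C_1 (r/\rho_0)^{\beta/4}, \quad 0 < r \leq \rho_0,
\]
valid for $0 < \beta < \min\{\alpha, \alpha_1, 2\lambda_0\}$, with $C_1$ bounded by a universal factor times $F_M(0, \rho_0)^{1/4} + \rho_0^{\alpha_1/4} + \rho_0^{\alpha/4}$. Applying Lemma \ref{le:diffeohd} with $\Phi = \Psi$, $x = 0$, and radii $r$ and $\rho$ chosen so that $B(x_0, \rho) \subseteq \Psi(B(0, 9r/10))$ (a fixed comparable ratio works since $\Lambda(\rho_0) \leq 2$), and using $\|D\Psi(0)\| = 1$, we transform this into
\[
d_{x_0, \rho}(E, x_0 + X) \leq C(\rho/\rho_0)^{\beta/4}
\]
because the curvature contribution $C_{\Psi} r^\alpha$ produced by Lemma \ref{le:diffeohd} is absorbed into the same power $(\rho/\rho_0)^{\beta/4}$ on account of $\beta < \alpha$. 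The existence and uniqueness of the blow-up limit of $E$ at $x_0$ then follow by sending $\rho \to 0$.

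The main obstacle I expect is the bookkeeping required to relate the half-space density gap $F_M(0, \rho_0)$ to the original gap $F_E(x_0, 2\rho_0)$ appearing in the final constant: the comparison of $\HM^2$-measures through $\Psi$ given by \eqref{eq:localmini11} yields $F_M(0, r) \leq F_E(x_0, \Lambda(\rho_0) r) + O(r^\alpha)$ with constants depending on $C_\Psi$, and the density $\Theta_M(0) = \Theta_E(x_0)$ is preserved because $D\Psi(0)$ is a rotation. A secondary technical point is ensuring that the density-decay constant $\lambda_0$ entering Corollary \ref{co:dendecay}, and hence the range of admissible $\beta$, survives the enlargement of the gauge function by the additive $r^\alpha$ term coming from Lemma \ref{le:localmini}; this is where the condition $\beta < \min\{\alpha, \alpha_1, 2\lambda_0\}$ gets its three constraints. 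Everything else is a routine translation of the half-space estimates.
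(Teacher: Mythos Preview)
Your proposal is correct and follows essentially the same route as the paper: pull back through $\Psi$ via Lemma~\ref{le:diffeodo}, use Lemma~\ref{le:localmini} to obtain local almost minimality of $M$ with the enlarged gauge $H$, invoke the bi-H\"older result of \cite{Fang:2015} for assumption~\ref{AS3}, apply Theorem~\ref{thm:distTC} to $M$, and push forward with Lemma~\ref{le:diffeohd}; the paper also carries out exactly the bookkeeping you anticipate, comparing $f_{M_r}$ with $f_E$ through the Jacobian bounds \eqref{eq:localmini11} (see \eqref{eq:cME10}--\eqref{eq:cME20}). One small point you glossed over: the uniqueness claim in the theorem is asserted under the standing hypotheses \eqref{eq:gaugesmall1}--\eqref{eq:gaugesmall2} alone, so condition~\eqref{eq:smallF} must be verified without the power gauge---the paper does this by bounding $\int_0^t F_1(\rho)^{1/2}\rho^{-1}\,d\rho$ in terms of $h_2$ and $h_3$, which is straightforward once you unpack \eqref{eq:denapp10}.
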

\begin{proof}
	Let $r\in (0,r_0)$ be such that $C_{\Psi}r^{\alpha}\leq 1/2$ and
	$2r\leq R_0$. Then $\Lambda(r)\leq 2$. By Lemma \ref{le:localmini}, we have
	that $M_r$ is local almost minimal at 0 with gauge function $H$ satisfying 
	that 
	\begin{equation}\label{eq:newgauge2}
		H(t)\leq 16 h(2t)+C_rt^{\alpha} ,\ 0<t<r,
	\end{equation}
	where $C_r\in (0,	2^{3-\alpha}\xi_1C_{\Psi})$ is a constant.

	We put $f_{M_r}(\rho)=\Theta_{M_r}(0,\rho)-\Theta_{M_r}(0)$. Then we get, 
	from \eqref{eq:denapp10}  and \eqref{eq:denapp50}, that 
	\[
		\begin{aligned}
			f_{M_r}(\rho)&\leq \left( r^{-2\lambda_0}f_{M_r}(r) \right)
			\rho^{2\lambda_0}+8(1+\lambda_0)\rho^{2\lambda_0}\int_{\rho}^{r}
			t^{-1-2\lambda_0}H(2t)dt\\
			&\leq \left(r^{-2\lambda_0}f_{M_r}(r)\right) \rho^{2\lambda_0}+
			2^{7+2\lambda_0}(1+\lambda_0) \rho^{2\lambda_0}
			\int_{2\rho}^{2 r} \frac{h(2t)}{t^{1+2\lambda_0}}dt \\
			&\quad + 2^{\alpha+3}(1+\lambda_0)C_r\cdot
			C_1(\alpha,\beta,\lambda_0)r^{\alpha}\cdot (\rho/r)^{\beta},
		\end{aligned}
	\]
	where $C_1(\alpha,\beta,\lambda_0)$ is the constant in \eqref{eq:denapp50}.

	We get from \eqref{eq:newgauge2} that 
	\[
		H_1(\rho)=\int_{0}^{\rho}\frac{H(2s)}{s}ds\leq 
		16h_1(2\rho)+\frac{C_r}{\alpha}(2\rho)^{\alpha},
	\]
	by setting $F_1(\rho)=f_{M_r}(\rho)+16H_1(\rho)$, we have that
	\[
		\begin{aligned}
			F_1(\rho)&\leq C_{12}(\lambda_0,\alpha,\beta,r)(\rho/r)^{\beta} +
			2^8h_1(2\rho)+2^{4+\alpha}C_r\alpha^{-1}\rho^{\alpha}\\
			&\quad + 2^{7+2\lambda_0}(1+\lambda_0)
			\rho^{2\lambda_0}
			\int_{2\rho}^{2 r} \frac{h(2t)}{t^{1+2\lambda_0}}dt,
		\end{aligned}
	\]
	where 
	\[
		C_{12}(\lambda_0,\alpha,\beta,r)\leq  f_{M_r}(r)+
		2^{\alpha+3}(1+\lambda_0)C_rC_1(\alpha,\beta,\lambda_0)
		r^{\alpha}.
	\]
	Hence 
	\[
		\begin{aligned}
			\int_{0}^{t}\frac{F_1(\rho)^{1/2}}{\rho}d\rho &\leq 
			C_{12}(\lambda_0,\alpha,\beta,r)^{1/2}(2/\beta) (t/r)^{\beta} +
			16 h_2(2t)
			+C_{13}(\alpha,r)t^{\alpha/2}\\
			& + 2^{4+\lambda_0}(1+\lambda_0)^{1/2}\int_{0}^{t} 
			\rho^{-1+\lambda_0}\left(\int_{2\rho}^{2r}\frac{h(2s)}
			{s^{1+2\lambda_0}}ds \right)^{1/2}d\rho ,
		\end{aligned}
	\]
	where $C_{13}(\alpha,r)\leq 2^{3+\alpha/2}\alpha^{-3/2}C_r^{1/2}$,
	thus 
	\[
		\int_{0}^{t}\frac{F_1(\rho)^{1/2}}{\rho}d\rho <+\infty,\text{ for
		}0<t\leq r.
	\]

	We now apply Theorem \ref{thm:distTC}, there is a unique tangent cone $T$ of
	$M_r$ at $0$, thus there is a unique tangent cone $X$ of $E$ at $x_0$.

	For any $R\in (0,R_0)$, we put 
	\[
		f_E(x_0,R)=R^{-2}\HM^2(E\cap B(x_0,R))-\Theta_E(x_0)
	\]
	and 
	\[
		F_E(x_0,R)=f_E(x_0,R)+16h_1(R).
	\]
	We see, from \eqref{eq:localmini10}  and $B(x_0,\rho/\Lambda(\rho))\subseteq
	U_{\rho}\subseteq B(x_0,\rho\Lambda(\rho))$, that  
	\[
		(1-C_{\Psi}\rho^{\alpha})^2(f_{M_r}(\rho)+\Theta_E(x_0))\leq 
		\rho^{-2}\HM^2(E\cap U_{\rho})\leq
		(1+C_{\Psi}\rho^{\alpha})^2(f_{M_r}(\rho)+\Theta_E(x_0)),
	\]
	so that 
	\begin{equation}\label{eq:cME10}
		f_{M_r}(\rho)\leq (1-C_{\Psi}\rho^{\alpha})^{-4}f_E(x_0,\rho\Lambda(\rho))+
		4\Theta_E(x_0)C_{\Psi}\rho^{\alpha},
	\end{equation}
	and 
	\begin{equation}\label{eq:cME20}
		f_{M_r}(\rho)\geq (1-C_{\Psi}^2\rho^{2\alpha})^2f_E(x_0,\rho/\Lambda(\rho))+
		2\Theta_E(x_0)C_{\Psi}^2\rho^{2\alpha}.
	\end{equation}
	Thus we get that 
	\[
		C_{12}(\lambda_0,\alpha,\beta,r)\leq  16 f_E(x_0,2r)+ (9\xi_1\cdot 2^{\alpha+3}
		(1+\lambda_0) C_1(\alpha,\beta,\lambda_0)+4\Theta_E(0))C_{\Psi}r^{\alpha}.
	\]

	If $h$ satisfy \eqref{eq:ut0}, we take $0<\rho_0\leq \min\{ r,t_0\}$, then 
	\[
		h_1(\rho)\leq \frac{C_h}{\alpha_1}(2\rho)^{\alpha_1},\ H_1(\rho)\leq
		\frac{2^{4+2\alpha_1}C_h}{\alpha_1}\rho^{\alpha_1}+
		\frac{2^{\alpha}C_r}{\alpha}\rho^{\alpha},\ 0<\rho\leq \rho_0,
	\]
	and 
	\begin{equation}\label{eq:ut10}
		F_1(\rho)\leq C_{13}(\lambda_0,\alpha,\beta,\rho_0,C_h)(\rho/\rho_0)^{\beta}+
		2^{8+\alpha_1}\alpha_1^{-1}C_h\rho^{\alpha_1}+
		C_{14}(\alpha,\xi_1,C_{\Psi})\rho^{\alpha},
	\end{equation}
	where $C_{13}(\lambda_0,\alpha_1,\beta,\rho_0,C_h)$ and $C_{14}(\alpha,\xi_1,C_{\Psi})$ 
	are constant satisfying that 
	\[
		C_{13}(\lambda_0,\alpha_1,\beta,\rho_0,C_h)\leq
		C_{12}(\lambda_0,\alpha,\rho_0)+ 2^{7+4\alpha_1}(1+\lambda_0)
		C_1(\alpha_1,\beta,\lambda_0)C_h\rho_0^{\alpha_1}
	\]
	and 
	\[
		C_{14}(\alpha,\xi_1,C_{\Psi})\leq 2^{8+\alpha}\alpha^{-1}\xi_1C_{\Psi}.
	\]
	We get so that \eqref{eq:ut10} can be rewrite as 
	\[
		F_{1}(\rho)\leq C_{15}(\lambda_0,\alpha,\alpha_1,\beta,\xi_1)
		(F_E(x_0,2\rho_0)+C_{\Psi}\rho_0^{\alpha}+C_h\rho_0^{\alpha_1})
		(\rho/\rho_0)^{\beta/4}.
	\]
	By Theorem \ref{thm:distTC}, we have that 
	\[
		\begin{aligned}
			d_{0,9\rho/10}(M_r,T)&\leq C_{16}(\mu,\xi_0) \left(F_{1}(\rho)^{1/4}+
			\int_{0}^{\rho}\frac{F_{1}(t)^{1/2}}{t}dt\right)\\
			&\leq C_{17}(\mu,\lambda_0,\alpha,\alpha_1,\beta,\xi_1)
			G_E(x_0,\rho_0) (\rho/\rho_0)^{\beta/4},
		\end{aligned}
	\]
	where
	\[
		G_E(x_0,\rho_0)=
		(F_E(x_0,2\rho_0)+C_{\Psi}\rho_0^{\alpha}+C_h\rho_0^{\alpha_1})^{1/4}.
	\]
	Apply Lemma \ref{le:diffeohd}, and by setting $X=D\Psi(0)T$, we get that, 
	for any $\rho\in (0,9\rho_0/10)$, 
	\[
		\begin{aligned}
			d_{x_0,\rho/2}(E,x_0+X)&\leq
			d_{x_0,\rho/\Lambda(\rho)}(E,x_0+D\Psi(0)T)\\
			&\leq 6C_{\Psi}\rho^{\alpha}+2d_{x,\rho}(M_r,T)\\
			&\leq 6C_{\Psi}\rho^{\alpha}+
			C_{18}(\mu,\lambda_0,\alpha,\alpha_1,\beta,\xi_1)G_E(x_0,\rho_0)
			(\rho/\rho_0)^{\beta/4}\\
			&\leq C_{19}(\mu,\lambda_0,\alpha,\alpha_1,\beta,\xi_1)
			G_E(x_0,\rho_0) (\rho/\rho_0)^{\beta/4}.
		\end{aligned}
	\]
	The radius $\rho_0$ is chosen to be such that
	\begin{equation}\label{eq:ut100}
		0<\rho_0\leq \min\left\{ 1,t_0,r_0(x_0),R_0/2,(2C_{\Psi})^{-1/\alpha} \right\}
	\end{equation}
	and $R_0>0$ is chosen to be such that 
	\[
		F_{M_r}(R_0)\leq \mu\tau_0/4,\ \bar{\varepsilon}(R_0)\leq 10^{-4},\
		R_0<(1-\tau)\mathfrak{r}.
	\]
\end{proof}

\begin{lemma} \label{le:smalldendecay}
	For any $\tau>0$ small enough, there exists $\varepsilon_2=\varepsilon_2(\tau)>0$
	such that the following hold: $E$ is an sliding almost minimal set in
	$\Omega$ with sliding boundary $\partial\Omega$ and gauge function $h$,
	$x_0\in E\cap \partial\Omega$, $\Psi$ is a mapping as in Lemma
	\ref{le:diffeodo} and $C_{\Psi}$ is the constant as in \eqref{eq:diffeapp},
	if $r_1>0$ satisfy that $C_{\Psi}r_1^{\alpha}\leq \varepsilon_2$, 
	$h(2r_1)\leq \varepsilon_2$ and $F_E(x_0,r_1)\leq \varepsilon_2$, then for any 
	$r\in (0,9r_1/10)$, we can find sliding minimal cone $Z_{x_0,r}$ in 
	$\Tan(\Omega,x_0)$ with sliding boundary $\Tan(\partial\Omega,x_0)$ 
	such that 
	\[
		\begin{gathered}
			\dist(x,Z_{x_0,r})\leq \tau r,\ x\in E\cap B(x_0,(1-\tau)r)\\
			\dist(x,E)\leq \tau r,\ x\in Z_{x_0,r}\cap B(x_0,(1-\tau)r),
		\end{gathered}
	\]
	and for any ball $B(x,t)\subseteq B(x_0,(1-\tau)r)$,
	\[
		|\HM^2(Z_{x_0,r}\cap B(x,t))-\HM^2(E\cap B(x,t))|\leq \tau r^2.
	\]
	Moreover, if $E\supseteq \partial\Omega$, then $Z_{x_0,r}\supseteq 
	\Tan(\partial\Omega,x_0)$. 
\end{lemma}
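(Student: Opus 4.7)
The plan is a contradiction-compactness argument, transferring the problem to the model half-space $\Omega_0$ via the diffeomorphism $\Psi_{x_0}$ of Lemma \ref{le:diffeodo} and exploiting the monotonicity formula (Theorem \ref{thm:ANDD}) to force a limiting minimal cone. Suppose the conclusion fails for some $\tau>0$. Then we can extract sequences $E_n\in SAM(\Omega^{(n)},\partial\Omega^{(n)},h_n)$, $x_{0,n}\in E_n\cap\partial\Omega^{(n)}$, diffeomorphisms $\Psi_n$, radii $r_{1,n}>0$ and scales $r_n\in(0,9r_{1,n}/10)$ such that $C_{\Psi_n}r_{1,n}^{\alpha}\to 0$, $h_n(2r_{1,n})\to 0$, $F_{E_n}(x_{0,n},r_{1,n})\to 0$, but for no choice of sliding minimal cone $Z\subseteq\Tan(\Omega^{(n)},x_{0,n})$ are all three conditions simultaneously satisfied.

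First I would pull back to $\Omega_0$ and rescale: set $M_n=\scale{1/r_n}\bigl(\Psi_n^{-1}(E_n\cap U_{r_{1,n}})\bigr)\subseteq\Omega_0$. By Lemma \ref{le:localmini}, each $r_nM_n$ is locally sliding almost minimal at $0$ in $B(0,r_{1,n})$ with gauge $H_n$ satisfying $H_n(2t)\leq 4\Lambda_n(t)^2h_n(2\Lambda_n(t)t)+4\xi_1 C_{\Psi_n}\Lambda_n(r_{1,n})t^{\alpha}$; after rescaling, $M_n$ is sliding almost minimal in $B(0,r_{1,n}/r_n)\supseteq B(0,10/9)$ with gauge $\widetilde H_n(\cdot)=H_n(r_n\cdot)$, and $\widetilde H_n\to 0$ uniformly on compact sets by our three smallness hypotheses. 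The sets $M_n$ are uniformly Ahlfors-regular in $B(0,10/9)$ (Proposition 4.1 in \cite{David:2014} combined with \eqref{eq:diffeapp}), and $L_0\subseteq M_n$ since $\partial\Omega^{(n)}\subseteq E_n$ and $\Psi_n$ maps $L_0$ into $\partial\Omega^{(n)}$.

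By Blaschke selection (applied to local Hausdorff distance on $\overline{B(0,1)}$), a subsequence of $M_n$ converges in Hausdorff distance to a closed set $M_\infty\subseteq\Omega_0$ with $L_0\cap\overline{B(0,1)}\subseteq M_\infty$. The standard closedness property of sliding almost minimal sets under Hausdorff convergence with vanishing gauge (the analogue at the boundary of Theorem 10.8 in \cite{David:2008}, using Ahlfors-regularity to upgrade Hausdorff convergence to measure convergence) shows that $M_\infty$ is a sliding minimal set in $B(0,1)$ with sliding boundary $L_0$. Now I would apply Theorem \ref{thm:ANDD} to $M_\infty$: since $\widetilde H_n\to 0$, the functional $\Theta_{M_n}(0,\cdot)+8\widetilde{H}_{1,n}(\cdot)$ is nondecreasing, and by the hypothesis $F_{E_n}(x_{0,n},r_{1,n})\to 0$ combined with the comparisons \eqref{eq:cME10}--\eqref{eq:cME20} (transferred to the rescaled setting), the densities $\Theta_{M_n}(0,t)$ converge for every $t\in(0,1]$ to a single value $\Theta_{E_n}(x_{0,n})\to\theta_\infty\in\{3\pi/2,7\pi/4\}$. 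Thus $\Theta_{M_\infty}(0,t)$ is constant in $t$, which by equality in the monotonicity formula (the standard argument using the derivative of $r\mapsto r^{-2}\HM^2(M_\infty\cap B(0,r))$ and the coarea identity \eqref{eq:DE1}--\eqref{eq:DI1} at gauge $0$) forces $M_\infty$ to be a cone centered at $0$. By Theorem 3.10 in \cite{Fang:2015}, $M_\infty$ is one of the admissible sliding minimal cones containing $L_0$, in particular $M_\infty\supseteq L_0$.

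Finally I would transfer the approximation back to $E_n$: set $Z_n=D\Psi_n(0)(M_\infty)$, which is a sliding minimal cone in $\Tan(\Omega^{(n)},x_{0,n})$ containing $\Tan(\partial\Omega^{(n)},x_{0,n})$. Since $\|\Psi_n-\id-x_{0,n}\|_{\infty}=O(C_{\Psi_n}r_{1,n}^{1+\alpha})$ on $B(0,r_{1,n})$ and $D\Psi_n(0)$ is an isometry, Lemma \ref{le:diffeohd} combined with the Hausdorff convergence $M_n\to M_\infty$ gives the two distance inequalities $\dist(x,Z_n)\leq\tau r_n$ and $\dist(x,E_n)\leq\tau r_n$ on $B(x_{0,n},(1-\tau)r_n)$ for all $n$ large. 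The measure comparison follows from the local Hausdorff convergence $M_n\to M_\infty$ upgraded to measure convergence via Ahlfors regularity and the almost-monotonicity of the density (so that for any ball $B(y,s)\subseteq B(0,1-\tau)$, $\HM^2(M_n\cap B(y,s))\to\HM^2(M_\infty\cap B(y,s))$), together with the Jacobian bound $|\ap J_2(\Psi_n)-1|\leq C C_{\Psi_n}r_{1,n}^{\alpha}$ used in \eqref{eq:localmini11} to convert to the measure of $E_n$. Both yield the desired $\tau r_n^2$ approximation for large $n$, contradicting the failing assumption.

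The main obstacle is step three: going from smallness of $F_{E_n}(x_{0,n},r_{1,n})$ to true density constancy of the limit. One must carefully combine the monotonicity inequality $\Theta_{M_n}(0,t)+8\widetilde{H}_{1,n}(t)$ is nondecreasing with the comparison \eqref{eq:cME20} relating $f_{M_n}$ to $f_{E_n}$ (which introduces the error term $C_{\Psi_n}r_{1,n}^{\alpha}\to 0$) and the fact that $\Theta_{E_n}(x_{0,n})$ takes only the discrete values $\pi$, $3\pi/2$, $7\pi/4$, so that the limit density is achieved at every scale $t\in(0,1]$. Once this cone conclusion is established, the rest is routine.
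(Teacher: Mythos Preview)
Your argument is correct in outline, and it is in fact considerably more detailed than the paper's own proof, which consists of a single sentence: ``It is a consequence of Proposition 30.19 in \cite{David:2014}.'' That proposition is precisely a limiting-and-compactness statement of the type you have sketched: a sequence of sliding almost minimal sets with vanishing gauge and vanishing density excess converges (after straightening the boundary and rescaling) to a sliding minimal cone, with convergence both in Hausdorff distance and in measure on interior balls. So rather than taking a different route, you have essentially unpacked the content of the cited result.

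Two small remarks on your sketch. First, the passage from Hausdorff convergence of $M_n$ to the measure estimate $|\HM^2(M_n\cap B(y,s))-\HM^2(M_\infty\cap B(y,s))|\to 0$ is not a consequence of Ahlfors regularity alone; it requires the lower semicontinuity of $\HM^2$ along limits of (quasi)minimal sets, which is the non-trivial part of the limiting theorems in \cite{David:2014} (e.g., Theorem 10.97 and its relatives there). You gesture at this, but it is worth naming explicitly since it is where the real work lies. Second, the lemma as stated does not assume $E\supseteq\partial\Omega$; you build that into your argument from the start. This is harmless in the paper's setting (the standing hypothesis of Section~7 includes $\partial\Omega\subseteq E$), but in the general case one simply drops the conclusion $L_0\subseteq M_\infty$ and the classification of the limit cone still goes through via the list of sliding minimal cones in $\Omega_0$.
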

\begin{proof}
	It is a consequence of Proposition 30.19 in \cite{David:2014}.
\end{proof}
\begin{corollary}\label{cor:appbytangentcone}
	Let $\Omega$, $E\subseteq \Omega$, $x_0\in \partial\Omega$, $h$ and $F_E$ be 
	the same as in Theorem \ref{thm:ut}. Suppose that the gauge function $h$
	satisfying
	\begin{equation}\label{eq:ut0}
		h(t)\leq C_h t^{\alpha_1} \text{ for some }C_h>0, \alpha_1>0
		\text{ and }0<t<t_0.
	\end{equation}
	Then there exists $\delta>0$ and constant $C=C_{20}(\mu,\lambda_0,\alpha,\alpha_1,
	\beta,\xi_1)>0$ for $0<\beta<\min\{ \alpha,\alpha_1,2\lambda_0 \}$ such 
	that, whenever $0<\rho_0\leq \min\{ 1,t_0,r_0(x_0),\mathfrak{r}\}$ satisfying 
	\[
		F_E(x_0,2\rho_0)+ C_{\Psi}\rho_0^{\alpha}+C_h\rho_0^{\alpha_1}\leq \delta,
	\]
	we have that, for $0<\rho\leq 9\rho_0/20$,
	\[
		d_{x_0,\rho}(E,x_0+\Tan(E,x_0))\leq C(F_E(x_0,2\rho_0)+ C_{\Psi}
		\rho_0^{\alpha}+C_h\rho_0^{\alpha_1})^{1/4}(\rho/\rho_0)^{\beta/4}.
	\]
\end{corollary}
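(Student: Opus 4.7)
The plan is to derive this corollary almost directly from Theorem~\ref{thm:ut}, with Lemma~\ref{le:smalldendecay} supplying the verification of one of the smallness hypotheses. Theorem~\ref{thm:ut} already asserts existence of a unique blow-up limit $X$ at $x_0$ together with a bound of exactly the form
\[
  d_{x_0,\rho}(E,x_0+X)\leq C\,(\rho/\rho_0)^{\beta/4},\qquad
  C\leq C_{20}\bigl(F_E(x_0,2\rho_0)+C_{\Psi}\rho_0^{\alpha}+C_h\rho_0^{\alpha_1}\bigr)^{1/4},
\]
so the only things to do are: (i) to observe that a unique blow-up limit coincides with the tangent cone, as stated in the introductory discussion of $\Tan(S,x)$, hence $X=\Tan(E,x_0)$; and (ii) to package the various technical smallness assumptions scattered through the proof of Theorem~\ref{thm:ut} (conditions \eqref{eq:ut100} and the requirements $F_{M_r}(R_0)\leq\mu\tau_0/4$, $\bar\varepsilon(R_0)\leq 10^{-4}$, $R_0<(1-\tau)\mathfrak{r}$) into the single scalar inequality $F_E(x_0,2\rho_0)+C_{\Psi}\rho_0^{\alpha}+C_h\rho_0^{\alpha_1}\leq\delta$.

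For step~(ii) I would first choose $\delta$ small enough so that $C_{\Psi}\rho_0^{\alpha}\leq 1/2$, which by \eqref{eq:diffeapp} gives $\Lambda(\rho_0)\leq 2$ and lets Lemma~\ref{le:localmini} apply to transplant the problem from $E$ to $M_{\rho_0}=\Psi^{-1}(E\cap U_{\rho_0})$, with a modified gauge $H(t)\leq 16h(2t)+C_r t^{\alpha}$. The comparison \eqref{eq:cME10} together with the bound $h_1(\rho_0)\leq (C_h/\alpha_1)(2\rho_0)^{\alpha_1}$ then yields
\[
  F_{M_r}(\rho_0)\;\lesssim\;F_E(x_0,2\rho_0)+C_{\Psi}\rho_0^{\alpha}+C_h\rho_0^{\alpha_1},
\]
so imposing $\delta$ small enough makes $F_{M_r}(R_0)\leq \mu\tau_0/4$ automatic. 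For the condition $\bar\varepsilon(R_0)\leq 10^{-4}$, I would appeal to Lemma~\ref{le:smalldendecay}: its three hypotheses $C_{\Psi}r_1^{\alpha}\leq\varepsilon_2(\tau)$, $h(2r_1)\leq\varepsilon_2(\tau)$ and $F_E(x_0,r_1)\leq\varepsilon_2(\tau)$ are each bounded by the left-hand side of our single smallness assumption, so a choice of $\delta\leq \varepsilon_2(10^{-4})$ forces $E$ to be $10^{-4}r$-close to a sliding minimal cone on every scale $r\leq 9\rho_0/10$, which (after pulling back through $\Psi$) is exactly the statement $\bar\varepsilon(R_0)\leq 10^{-4}$ needed at the end of the proof of Theorem~\ref{thm:ut}.

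With all hypotheses verified, Theorem~\ref{thm:ut} delivers the bound verbatim, and replacing $X$ by $\Tan(E,x_0)$ completes the argument. The main obstacle, such as it is, is purely bookkeeping: tracking how the single scalar $F_E(x_0,2\rho_0)+C_{\Psi}\rho_0^{\alpha}+C_h\rho_0^{\alpha_1}$ simultaneously dominates the density drop $f_E(x_0,2\rho_0)$, the integrated gauge $h_1(\rho_0)$, the bilipschitz defect $C_{\Psi}\rho_0^{\alpha}$ of $\Psi$, and the approximation error $\bar\varepsilon(r)$ on a whole range of scales, so that the constants $\delta$ and $C$ can be chosen depending only on $\mu,\lambda_0,\alpha,\alpha_1,\beta,\xi_1$ and not on $x_0$. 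No new estimate is required beyond those already developed in Sections~2--5; the work is confined to making the implicit smallness quantifier in Theorem~\ref{thm:ut} explicit.
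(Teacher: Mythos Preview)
Your proposal is correct and follows essentially the same approach as the paper: invoke Theorem~\ref{thm:ut} for the main estimate, identify the unique blow-up limit with $\Tan(E,x_0)$, and use Lemma~\ref{le:smalldendecay} to show that the single scalar smallness condition $F_E(x_0,2\rho_0)+C_{\Psi}\rho_0^{\alpha}+C_h\rho_0^{\alpha_1}\leq\delta$ forces all the technical hypotheses \eqref{eq:ut100} and the conditions on $R_0$. Your write-up is in fact more explicit than the paper's, which compresses the bookkeeping into two sentences; in particular, your use of \eqref{eq:cME10} to pass from $F_E$ to $F_{M_r}$ makes precise a step the paper leaves implicit.
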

\begin{proof}
	By Theorem \ref{thm:ut}, there exist $\rho_0>0$ such that 	
	\[
		d_{x_0,\rho}(E,x_0+\Tan(E,x_0))\leq C(\rho/\rho_0)^{\beta/4},
		\ 0<\rho\leq 9\rho_0/20,
	\]
	where $\rho_0>0$ is chosen to be such that 
	\begin{equation}\label{eq:utcor}
		0<\rho_0\leq \min\left\{ 1,t_0,r_0(x_0),R_0/2,(2C_{\Psi})^{-1/\alpha} \right\}
	\end{equation}
	and $R_0>0$ is chosen to be such that 
	\[
		F_{M_r}(R_0)\leq \mu\tau_0/4,\ \bar{\varepsilon}(R_0)\leq 10^{-4},\
		R_0<(1-\tau)\mathfrak{r}.
	\]
	By Lemma \ref{le:smalldendecay}, there exists $\delta>0$ such that if 
	$F_E(x_0,2\rho_0)+C_{\Psi}\rho_0^{\alpha}+C_h\rho_0^{\alpha_1}\leq \delta$,
	then \eqref{eq:utcor} holds, and we get the result.
\end{proof}

\begin{lemma}\label{le:np1}
	Let $\Omega,E$ and $h$ be the same as in Theorem \ref{thm:ut}. We have that
	\[
		\overline{E\setminus \partial \Omega}\in SAM(\Omega, \partial \Omega,h).
	\]
\end{lemma}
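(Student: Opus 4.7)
The plan is to reduce the almost-minimality of $E_1 := \overline{E\setminus \partial\Omega}$ to that of $E$. The starting point is the observation that $U := E \setminus E_1$ is the relatively open subset of $\partial\Omega$ consisting of those points $x$ such that $E$ coincides with $\partial\Omega$ in some neighborhood of $x$. This gives the disjoint decomposition $E = E_1 \cup U$ with $E_1 \cap U = \emptyset$, and the relative boundary $\overline{U}\setminus U$ is contained in $E_1 \cap \partial\Omega$.

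Given $x_0 \in E_1$, $r > 0$, and a sliding deformation $\{\varphi_t\}_{0\leq t\leq 1}$ of $E_1$ in $B(x_0, r)$, the first step is to promote it to a sliding deformation $\{\tilde\varphi_t\}$ of $E$ in the same ball. Using the $C^{1,\alpha}$-regularity of $\partial\Omega$, one has a Lipschitz retraction $\pi$ of a tubular neighborhood of $\partial\Omega$ onto $\partial\Omega$. Combining a Kirszbraun extension of the Lipschitz map $\varphi_1\colon E_1 \to \Omega$ with $\pi$ and a cut-off to the identity outside $B(x_0, r)$, I would build a Lipschitz mapping $\Phi\colon \Omega \to \Omega$ which extends $\varphi_1|_{E_1}$, sends $\partial\Omega$ into itself, equals $\id$ off $B(x_0, r)$, and preserves $\overline{B(x_0, r)}$. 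Joining $\id$ to $\Phi$ by a straight-line path corrected by $\pi$ produces the required family $\{\tilde\varphi_t\}$, restricted to $E$.

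With this family in hand, applying the sliding almost-minimality of $E$ yields
\[
  \HM^2(E \cap B(x_0,r)) \leq \HM^2(\tilde\varphi_1(E) \cap B(x_0,r)) + h(r) r^2.
\]
Using the $\HM^2$-additivity on the disjoint union $E = E_1 \cup U$ together with the inclusion $\tilde\varphi_1(E) \cap B(x_0,r) \subseteq (\varphi_1(E_1) \cap B(x_0,r)) \cup (\Phi(U) \cap B(x_0,r))$, cancelling the $U$-contribution on both sides produces the desired
\[
  \HM^2(E_1 \cap B(x_0, r)) \leq \HM^2(\varphi_1(E_1) \cap B(x_0, r)) + h(r) r^2.
\]

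The main obstacle is controlling $\HM^2(\Phi(U) \cap B(x_0, r))$ by $\HM^2(U \cap B(x_0, r))$, since at points of $\overline U \setminus U \subseteq E_1 \cap \partial\Omega$ the map $\varphi_t$ need not fix its argument, which obstructs a naive definition $\tilde\varphi_t|_U = \id$ for continuity reasons. The cleanest remedy is an approximation: first modify $\{\varphi_t\}$ so that it equals the identity on an $\epsilon$-neighborhood in $E_1$ of $\overline U\setminus U$; for such a modification the extension $\tilde\varphi_t|_U = \id$ is continuous, so $\Phi(U) = U$ and the $U$-contributions cancel exactly. A standard estimate based on the Lipschitz character of $\varphi_1$ and the $1$-dimensional nature of $\overline U\setminus U$ in the $2$-manifold $\partial\Omega$ shows that the perturbation costs arbitrarily little $\HM^2$-mass as $\epsilon \to 0$; passing to the limit yields the inequality and hence $E_1 \in SAM(\Omega, \partial\Omega, h)$.
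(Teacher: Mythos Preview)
Your extension of the deformation from $E_1$ to $E$ is handled more carefully than in the paper, which silently applies $\varphi_1$ to all of $E$; the concern about the $U$-contribution is legitimate. However, your final step rests on the ``$1$-dimensional nature of $\overline{U}\setminus U$,'' and this is a genuine gap: the topological boundary of an open subset of a $2$-manifold can a priori have positive $\HM^2$ measure, so without further input there is no reason the $\epsilon$-perturbation costs vanish.

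The paper supplies the missing fact by a short density argument: for every $x\in E_1\cap\partial\Omega$ one has $\Theta_E(x)\ge 3\pi/2$ (since $x$ is a limit of points of $E\setminus\partial\Omega$, any blow-up must contain a half-plane in addition to $\Tan(\partial\Omega,x)$), whereas rectifiability gives $\Theta_E(x)=\pi$ for $\HM^2$-a.e.\ $x\in E$; hence $\HM^2(E_1\cap\partial\Omega)=0$, and in particular $\HM^2(\overline U\setminus U)=0$. With this in hand your $\epsilon$-modification works, but the paper's route is shorter and avoids it entirely: once you have \emph{any} extension $\tilde\varphi_t$ to a sliding deformation of $E$, the sliding condition forces $\tilde\varphi_1(\partial\Omega)\subseteq\partial\Omega$, so $\tilde\varphi_1(E)\setminus\partial\Omega=\varphi_1(E_1)\setminus\partial\Omega$; since also $E\cap\partial\Omega\cap B=\partial\Omega\cap B\supseteq \tilde\varphi_1(E)\cap\partial\Omega\cap B$, one simply subtracts $\HM^2(\partial\Omega\cap B)$ from both sides of the SAM inequality for $E$ to obtain
\[
\HM^2(E_1\cap B)=\HM^2((E\setminus\partial\Omega)\cap B)\le \HM^2((\varphi_1(E_1)\setminus\partial\Omega)\cap B)+h(r)r^2\le \HM^2(\varphi_1(E_1)\cap B)+h(r)r^2,
\]
with no need to track $\Phi(U)$ at all.
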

\begin{proof}
	We will put $E_1=\overline{E\setminus \partial \Omega}$ for convenient.
	We first show that $\HM^2(E_1\cap \partial \Omega)=0$. Indeed, for any
	$x\in E_1\cap \partial \Omega$, $\Theta_{E}(x)\geq 3\pi/2$. It follows
	from the fact that for $\HM^2$-a.e. $x\in E$, $\Theta_E(x)=\pi$ that 
	$\HM^2(E_1\cap \partial \Omega)=0$.

	Let $\{\varphi_t\}_{0\leq t\leq 1}$ be any sliding deformation in some ball
	$B=B(y,r)$. Since $E\supseteq \partial \Omega$ and $E\in SAM(\Omega,
	\partial \Omega,h)$, we have that  
	\[
		\begin{aligned}
			\HM^2(E_1)&=\HM^2(E\setminus \partial \Omega)\leq
			\HM^2(\varphi_1(E)\setminus \partial \Omega)+4h(2r)r^2\\
			&= \HM^2(\varphi_1(E_1)\setminus \partial \Omega)+4h(2r)r^2\\
			&\leq \HM^2(\varphi_1(E_1))+4h(2r)r^2.
		\end{aligned}
	\]
	Thus $E_1\in SAM(\Omega, \partial \Omega,h)$.
\end{proof}
\begin{lemma}\label{le:np100}
	Let $\Omega,E$, $x_0$ and $h$ be the same as in Theorem \ref{thm:ut}. For
	ant $\varepsilon>0$ small enough, there exists a $\rho_0>0$ such that for
	any $0<\rho<\rho_0$ and  $x\in E\cap B(x_0,\rho)$, there exists $x_1\in
	B(x_0,5\rho)\cap \partial\Omega$ with $x_1\in \overline{E\setminus\Omega}$ 
	such that  
	\[
		|x-x_1|\leq (1+\varepsilon)\dist(x,\partial\Omega).
	\]
\end{lemma}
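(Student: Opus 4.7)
The plan is to take $x_1$ to be the nearest boundary point of $x$; with that choice the inequality in the conclusion is automatic (even with constant $1$ in place of $1+\varepsilon$), so the entire content of the lemma reduces to showing that this nearest boundary point lies in $\overline{E\setminus\partial\Omega}$ (the expression $\overline{E\setminus\Omega}$ in the statement must be a typo for $\overline{E\setminus\partial\Omega}$, since $E\subseteq\Omega$). Assume therefore $x\in E\setminus\partial\Omega$, and let $x^{\ast}\in\partial\Omega$ realize $d:=|x-x^{\ast}|=\dist(x,\partial\Omega)$. Since $x_{0}\in\partial\Omega$ and $|x-x_{0}|<\rho$, we have $d<\rho$ and $|x^{\ast}-x_{0}|<2\rho<5\rho$, so it suffices to show $x^{\ast}\in\overline{E\setminus\partial\Omega}$.

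The first step is to rule out $\Theta_{E}(x^{\ast})=\pi$ when $\rho_{0}$ is small. By Proposition~29.53 in \cite{David:2014}, $\Theta_{E}(x^{\ast})\in\{\pi,3\pi/2,7\pi/4\}$. If $\Theta_{E}(x^{\ast})=\pi$, then the remark recorded just before Lemma~\ref{le:diffeodo} supplies $r^{\sharp}>0$ with $E\cap B(x^{\ast},r^{\sharp})=\partial\Omega\cap B(x^{\ast},r^{\sharp})$. The essential ingredient is a uniform positive lower bound $r^{\sharp}\geq r^{\sharp}_{0}>0$ as $x^{\ast}$ ranges over boundary points in $B(x_{0},2\rho_{0})$ with density $\pi$: this would follow from tracking the dependence of the constants in Corollary~\ref{cor:appbytangentcone} on the base point---namely on the $C^{1,\alpha}$-modulus of $\partial\Omega$, the local Ahlfors-regularity constant $\xi_{1}$, $F_{E}(\cdot,2\rho_{0})$ and $C_{h}$---each of which is uniformly controlled in a neighborhood of $x_{0}$, combined with upper semicontinuity of $\Theta_{E}$ on $\partial\Omega$. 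Once $r^{\sharp}_{0}$ is secured, choose $\rho_{0}<r^{\sharp}_{0}$; then $d<\rho_{0}<r^{\sharp}$, so $x\in E\cap B(x^{\ast},r^{\sharp})=\partial\Omega\cap B(x^{\ast},r^{\sharp})$, contradicting $x\notin\partial\Omega$.

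Once $\Theta_{E}(x^{\ast})\in\{3\pi/2,7\pi/4\}$, the tangent cone at $x^{\ast}$ has the form $L^{\ast}\cup Z^{\ast}$ with $L^{\ast}=\Tan(\partial\Omega,x^{\ast})$ and $Z^{\ast}$ of type $\mathbb{P}_{+}$ or $\mathbb{Y}_{+}$, and the origin sits on the spine of $Z^{\ast}$. Pick $\zeta_{n}\in Z^{\ast}\setminus L^{\ast}$ with $|\zeta_{n}|\to 0$ and $\dist(\zeta_{n},L^{\ast})\geq |\zeta_{n}|/2$. Applying Corollary~\ref{cor:appbytangentcone} at $x^{\ast}$ at scale $2|\zeta_{n}|$ with tolerance $\eta$ (small), we find $w_{n}\in E$ with $|w_{n}-(x^{\ast}+\zeta_{n})|\leq 2\eta|\zeta_{n}|$. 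The $C^{1,\alpha}$-closeness of $\partial\Omega$ to $x^{\ast}+L^{\ast}$ then gives $\dist(w_{n},\partial\Omega)\geq |\zeta_{n}|/2-2\eta|\zeta_{n}|-C|\zeta_{n}|^{1+\alpha}>0$ for $\eta$ small and $n$ large, so $w_{n}\in E\setminus\partial\Omega$. Since $w_{n}\to x^{\ast}$, one obtains $x^{\ast}\in\overline{E\setminus\partial\Omega}$, and setting $x_{1}=x^{\ast}$ finishes the proof with $|x-x_{1}|=d\leq(1+\varepsilon)\dist(x,\partial\Omega)$.

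The main obstacle is precisely the uniform lower bound on $r^{\sharp}$ used in the second paragraph: the quantity depends implicitly on $x^{\ast}$ through several ingredients in Corollary~\ref{cor:appbytangentcone}, and the cleanest justification is a compactness/blow-up argument---assume for contradiction a sequence of basepoints $x^{\ast}_{n}\to y_{\infty}$ with $r^{\sharp}(x^{\ast}_{n})\to 0$, extract a subsequential limit of the rescaled sets $\scale{1/r^{\sharp}(x^{\ast}_{n})}\circ\trans{-x^{\ast}_{n}}(E)$, and derive a contradiction with the tangent cone classification and upper semicontinuity of $\Theta_{E}$. Apart from this uniformity issue the argument is short and relies only on results already established in the paper.
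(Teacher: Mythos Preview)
Your approach has a genuine gap: the nearest boundary point $x^{\ast}$ to $x\in E\setminus\partial\Omega$ need \emph{not} lie in $\overline{E\setminus\partial\Omega}$, and the uniform lower bound on $r^{\sharp}$ you need is in fact false. To see this, consider the flat model $\Omega=\Omega_{0}$ with $\Theta_{E}(x_{0})=3\pi/2$, so that $E_{1}=\overline{E\setminus\partial\Omega}$ is (after the Hölder parametrisation) a small $C^{1}$ graph over the half-plane $\{x_{2}=0,\ x_{3}\geq 0\}$, say $E_{1}=\{(x_{1},f(x_{1},x_{3}),x_{3}):x_{3}\geq 0\}$. For $x=(a,f(a,t),t)\in E_{1}$ with $t>0$ the nearest boundary point is $x^{\ast}=(a,f(a,t),0)$, whereas the curve $E_{1}\cap L_{0}$ passes through $(a,f(a,0),0)$. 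Unless $f(a,t)=f(a,0)$, which is generically false, we have $x^{\ast}\notin E_{1}\cap L_{0}$, hence $\Theta_{E}(x^{\ast})=\pi$. Moreover $r^{\sharp}(x^{\ast})\leq\dist(x^{\ast},E_{1})\leq|f(a,t)-f(a,0)|$, which is of order $o(t)=o(d)$, so $r^{\sharp}(x^{\ast})<d$ and your contradiction never triggers. The compactness argument you sketch cannot salvage this: a sequence of such $x^{\ast}_{n}$ converging to a point of $E_{1}\cap\partial\Omega$ has $r^{\sharp}(x^{\ast}_{n})\to 0$ without producing any contradiction with the cone classification.

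The paper proceeds differently and this is why the factor $1+\varepsilon$ (rather than $1$) is present. It first replaces $E$ by $E_{1}=\overline{E\setminus\partial\Omega}$, which by Lemma~\ref{le:np1} is itself sliding almost minimal, and then works in the $C^{1,\alpha}$ chart $\Psi^{-1}$ of Lemma~\ref{le:diffeodo}. There, $A_{1}=\Psi^{-1}(E_{1})$ is uniformly $2\varepsilon$-close at every small scale to a cone of type $\mathbb{P}_{+}$ or $\mathbb{Y}_{+}$ centred at $0$; invoking the geometric argument of Lemma~5.4 in \cite{Fang:2015}, one finds for each $x\in A_{1}\cap B(0,\rho)$ a point $a\in A_{1}\cap L_{0}\cap B(0,3\rho)$ with $|P_{L_{0}}(x)-a|\leq 8\varepsilon|x-a|$, whence $|x-a|\leq(1-8\varepsilon)^{-1}\dist(x,L_{0})$. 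Pushing forward by $\Psi$ and absorbing the bi-Lipschitz constant $\Lambda(\rho)\to 1$ gives the conclusion. In short: one does not take the exact nearest boundary point, but rather locates a point of $E_{1}\cap\partial\Omega$ close to the foot of the perpendicular, at the cost of the $(1+\varepsilon)$ factor.
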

\begin{proof}
	If $\Theta_E(x_0)=\pi$, then there is an open ball $B=B(x_0,r)$ such 
	that $E\cap B=\partial \Omega\cap B$, and we have nothing to prove.

	We assume that $\Theta_E(x_0)=3\pi/2$ or $7\pi/4$. We put
	$E_1=\overline{E\setminus \partial \Omega}$. Then $x_0\in E_1$ and
	$\Theta_E(x_0)=\pi/2$ or $3\pi/4$, and by Lemma \ref{le:np1}, we have that
	$E_1\in SAM(\Omega,\partial \Omega,h)$. By Lemma \ref{le:smalldendecay}, 
	for any $\varepsilon\in (0,10^{-3})$, there exists $\rho_0\in (0,r_0)$ such 
	that, for any $0<\rho<\rho_0$, we can find sliding minimal cone $Z_{\rho}$ 
	centered at $x_0$ of type $\mathbb{P}_+$ or $\mathbb{Y}_+$ satisfying that 
	\[
		d_{x_0,\rho}(E_1,Z_{\rho})\leq \varepsilon.
	\]
	Let $\Psi:B(0,r_0)\to \mathbb{R}^3$ be the mapping defined in Lemma
	\ref{le:diffeodo}, and let $\Lambda$ be the same as in \eqref{eq:defL}.
	We put $U_{\rho}=\Psi(B_{\rho})$, $A_1=\Psi^{-1}(E_1\cap
	U_{\rho_0})$. By Lemma \ref{le:diffeohd}, for any $0<r\leq 
	\rho/\Lambda(\rho)$, there exist sliding minimal cone $X_{r}$ in
	$\Omega_0$ such that  
	\[
		d_{0,r}(A_1,X_{r})\leq (C\rho^{\alpha}+\varepsilon)\frac{\rho}{r}.
	\]
	Thus there exists $\rho_1>0$ such that for any $0<r\leq \rho_1$, we can find
	sliding minimal cone $X_r$ of type $\mathbb{P}_+$ or $\mathbb{Y}_+$ such
	that 
	\[
		d_{0,r}(A_1,X_r)\leq 2 \varepsilon.
	\]

	Using the same argument as in the proof Lemma 5.4 in \cite{Fang:2015}, we
	get that there exists $\rho_2>0$ such that for any $x\in A_1\cap
	B(0,\rho)$ with $0<\rho\leq \rho_2$, we can find $a\in A_1\cap L_0\cap 
	B(0,3\rho)$ such that 
	\[
		|P_{L_0}(x)-a|\leq 8 \varepsilon |x-a|,
	\]
	where we denote by $P_{L_0}$ the orthogonal projection from 
	$\mathbb{R}^3$ to $L_0$. Thus 
	\[
		|x-a|\leq |x-P_{L_0}(x)|+|P_{L_0}(x)-a|\leq \dist(x,L_0)+8
		\varepsilon|x-a|,
	\]
	and we get that 
	\[
		\dist(x,A_1\cap L_0\cap B(0,3\rho))\leq \frac{1}{1-8 \varepsilon}
		\dist(x,L_0\cap B(0,3\rho)).
	\]

	We take $\rho_3=\dist(x_0, \mathbb{R}^3\setminus U_{\rho_2})/10$. Then, for
	any $0<\rho\leq \rho_3$ and $z\in E_1\cap B(x_0,\rho)$,
	\[
		\begin{aligned}
			\dist(z,E_1\cap \partial \Omega\cap B(x_0,5\rho))&\leq
			\Lip\left(\Psi\vert_{B(0,3\rho_2)}\right)\dist(\Psi^{-1}(z),A_1\cap L_0
			\cap B(0,3\rho))\\
			&\leq (1-8 \varepsilon)^{-1}\Lambda (3\rho) \dist(\Psi^{-1}(z),A_1\cap
			L_0\cap B(0,3\rho)) \\
			&\leq (1-8 \varepsilon)^{-1}\Lambda (3\rho)^2\dist(z,\partial
			\Omega \cap B(x_0,5\rho)).
		\end{aligned}
	\]
	We assume $\rho_2$ to be small enough such that $(1-8
	\varepsilon)^{-1}\Lambda (3\rho_2)^2<1+10 \varepsilon$, then 
	\[
		\dist(z,E_1\cap\partial \Omega\cap B(x_0,5\rho) )\leq (1+10 \varepsilon)
		\dist(z,\partial \Omega \cap B(x_0,5\rho)).
	\]

\end{proof}
\begin{lemma}\label{le:appmcp}
	Let $\Omega,E$, $x_0$ and $h$ be the same as in Theorem \ref{thm:ut}.
	Suppose that $\Theta_E(x_0)=3\pi/2$. Then, by putting $E_1=
	\overline{E\setminus \partial \Omega}$, there exist a radius $r>0$, a number
	$\beta>0$ and a constant $C>0$ such that, for any $x\in B(x_0,r)\cap E_1$
	and $0<\rho<2r$, we can find cone $Z_{x,\rho}$ such that 
	\begin{equation}\label{eq:mainappp}
		d_{x,\rho}(E_1,Z_{x,\rho})\leq C\rho^{\beta},
	\end{equation}
	where $Z_{x,\rho}=y+\Tan(E_1,y)$, $y\in E_1\cap B(x,C\rho)$, and $y\in
	E_1\cap \partial \Omega\cap B(x,C\rho)$ 
	in case $\rho\geq \dist(x,\partial \Omega)/10$.
\end{lemma}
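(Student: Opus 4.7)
The plan is to split into an interior case and a boundary case according to whether the scale $\rho$ is small compared to $\dist(x,\partial\Omega)$, and to choose $r>0$ small enough that the estimates of the earlier theorems apply with uniform constants throughout $B(x_0,r)$. First I fix $\rho_0>0$ (by applying Corollary~\ref{cor:appbytangentcone} at $x_0$, where $\Theta_E(x_0)=3\pi/2$) so small that the quantity $F_E(x',2\rho_0)+C_\Psi \rho_0^\alpha+C_h\rho_0^{\alpha_1}$ stays below the threshold $\delta$ from Corollary~\ref{cor:appbytangentcone} for every $x'\in E\cap\partial\Omega\cap B(x_0,\rho_0)$; this uses the monotone density estimate of Theorem~\ref{thm:ANDD} (via Corollary~\ref{co:dendecay}) together with upper semicontinuity of $\Theta_E$ and the continuity of $\Psi_{x'}$. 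As a byproduct, for all $x'\in E_1\cap\partial\Omega\cap B(x_0,r)$ with $r\ll\rho_0$, we must have $\Theta_E(x')=3\pi/2$: the alternative $\Theta_E(x')=\pi$ would force $E=\partial\Omega$ near $x'$ by the classification of Theorem~3.10 of \cite{Fang:2015}, contradicting $x'\in\overline{E\setminus\partial\Omega}$, and $\Theta_E(x')=7\pi/4$ is excluded by the density semicontinuity provided $r$ is small.

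\emph{Interior regime.} When $\rho<\dist(x,\partial\Omega)/10$, the ball $B(x,2\rho)$ lies inside $\Omega\setminus\partial\Omega$, so $E_1\cap B(x,2\rho)=E\cap B(x,2\rho)$ is almost minimal in the classical Almgren sense with the same gauge $h$. I apply Theorem~\ref{thm:ine} at $x$: since $h(t)\le C_h t^{\alpha_1}$, a unique blow-up limit $\Tan(E_1,x)$ exists and
\[
	d_{x,\rho}\bigl(E_1,\,x+\Tan(E_1,x)\bigr)\le C\bigl(F_{E_1}(x,\rho_0)+C_h\rho_0^{\alpha_1}\bigr)^{1/3}(\rho/\rho_0)^{\beta/3}.
\]
The prefactor is bounded uniformly in $x\in B(x_0,r)\cap E_1$ by the Ahlfors regularity of $E$ (from Proposition~4.1 of \cite{David:2014}) and the uniform density decay of Corollary~\ref{co:dendecay}, yielding $C\rho^\beta$ with $y:=x$ and $Z_{x,\rho}=x+\Tan(E_1,x)$.

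\emph{Boundary regime.} When $\rho\ge\dist(x,\partial\Omega)/10$, I use Lemma~\ref{le:np100} (with a fixed $\varepsilon\in(0,1/10)$) to produce $y\in E_1\cap\partial\Omega$ with $|x-y|\le(1+\varepsilon)\dist(x,\partial\Omega)\le 11\rho$. By the first paragraph $\Theta_E(y)=3\pi/2$, so Corollary~\ref{cor:appbytangentcone} at $y$ gives, for $\rho'=22\rho$,
\[
	d_{y,\rho'}\bigl(E,\,y+\Tan(E,y)\bigr)\le C(\rho'/\rho_0)^{\beta/4}\le C'\rho^{\beta/4},
\]
where $\Tan(E,y)=\Tan(\partial\Omega,y)\cup Z_y$ for some cone $Z_y$ of type $\mathbb{P}_+$ in $\Tan(\Omega,y)$. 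To convert this into an estimate for $E_1$, I combine three ingredients: (i) $\partial\Omega$ is $C^{1,\alpha}$, so $\partial\Omega\cap B(y,\rho')$ lies within Hausdorff distance $O(\rho'^{1+\alpha})$ of $y+\Tan(\partial\Omega,y)$; (ii) a quantitative version of Lemma~\ref{le:np100} applied at $y$ shows that points of $E_1\cap\partial\Omega$ near $y$ lie within polynomial distance of the common line $Z_y\cap\Tan(\partial\Omega,y)\subseteq y+Z_y$; (iii) any $z\in E\cap B(y,\rho')$ which lies close to $y+\Tan(\partial\Omega,y)$ but away from $y+Z_y$ must also lie close to $\partial\Omega$ (by (i)), so either $z\in\partial\Omega$ (then $z\notin E_1\setminus(y+Z_y)$ up to a controlled error) or $z\in E\setminus\partial\Omega$ at distance $\gg\rho'^{1+\alpha}$ from $\partial\Omega$, ruling out the shadowing. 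One then verifies by the blow-up argument of Theorem~\ref{thm:ut} applied to $E_1$ that the tangent cone $\Tan(E_1,y)$ exists and equals $Z_y$, so that $Z_{x,\rho}:=y+\Tan(E_1,y)$ is the desired cone; rescaling from $B(y,\rho')$ to $B(x,\rho)$ costs a bounded factor since $|x-y|/\rho\le 11$, giving $d_{x,\rho}(E_1,Z_{x,\rho})\le C\rho^{\beta/4}$.

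The main obstacle is the transfer in the boundary regime: one must rule out that points of $E_1$ cling to the $\Tan(\partial\Omega,y)$ piece of $\Tan(E,y)$ rather than to $Z_y$. The resolution rests on the $C^{1,\alpha}$ graph structure of $\partial\Omega$ together with the characterization of $E_1$ as the closure of $E\setminus\partial\Omega$: a point of $E_1$ genuinely close to $\Tan(\partial\Omega,y)$ and far from $Z_y$ would force a nearby interior point of $E$ also close to $\partial\Omega$, contradicting the structure of $y+\Tan(E,y)$ near the boundary. The final exponent $\beta$ is the minimum of the exponents supplied by Theorem~\ref{thm:ine}, Corollary~\ref{cor:appbytangentcone} and the $C^{1,\alpha}$ modulus of $\partial\Omega$, all divided by a small combinatorial factor (at most $4$).
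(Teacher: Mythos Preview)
There is a genuine gap in your interior regime. Theorem~\ref{thm:ine} only furnishes the estimate
\[
	d_{x,\rho}\bigl(E_1,\,x+\Tan(E_1,x)\bigr)\le C\bigl(F_{E_1}(x,\rho_0)+C_h\rho_0^{\alpha_1}\bigr)^{1/3}(\rho/\rho_0)^{\beta/3}
\]
for $0<\rho\le\rho_0$, where $\rho_0=\rho_0(x)$ is explicitly $10^{-3}\min\{R_0,\dist(x,\partial\Omega),\varrho\}$ and therefore tends to $0$ as $x\to\partial\Omega$. You correctly observe that the prefactor is bounded, but that leaves only $C(\rho/\rho_0(x))^{\beta/3}$, which is \emph{not} bounded by $C\rho^{\beta}$ with a constant independent of $x$. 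At your threshold $\rho=\dist(x,\partial\Omega)/10$ this ratio is of order one, so the interior estimate degenerates to a mere $O(1)$ bound there, and no uniform power of $\rho$ survives the patching of the two regimes.

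The paper handles exactly this difficulty by an interpolation trick that you are missing. After establishing the interior decay $d_{x,r}(E_1,x+\Tan(E_1,x))\le C(r/t_0)^{\beta}$ with $t_0=10^{-3}\dist(x,\partial\Omega)$ (via Lemma~\ref{le:smalldendecay} at a nearby boundary point $x_1$ to control $F_{E_1}(x,t_0)$, then Theorem~\ref{thm:ine}), the paper chooses an auxiliary exponent $0<a<\beta/(1+\beta)$ and sets the threshold at $r\sim t_0^{1/(1-a)}$, \emph{not} at $r\sim t_0$. For $r\le C_3 t_0^{1/(1-a)}$ one has $t_0\ge (r/C_3)^{1-a}$, hence $(r/t_0)^{\beta}\le C r^{a\beta}$; for $r>C_3 t_0^{1/(1-a)}$ one has $|x-x_1|\lesssim t_0\lesssim r^{1-a}$, so the boundary decay at $x_1$ (which is at the \emph{fixed} scale $\rho_1/2$) yields $d_{x,r}(E_1,x_1+\Tan(E_1,x_1))\lesssim (1+r^{-a})(r^{1-a})^{\beta}\lesssim r^{\beta-a\beta-a}$. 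Both exponents are positive by the choice of $a$, and taking their minimum gives the uniform $\rho^{\beta_1}$ bound. A secondary simplification: the paper applies Corollary~\ref{cor:appbytangentcone} directly to $E_1$ (which lies in $SAM(\Omega,\partial\Omega,h)$ by Lemma~\ref{le:np1}, with $\Theta_{E_1}(x_0)=\pi/2$), so the tangent cone at boundary points of $E_1$ is already of type $\mathbb{P}_+$ and no ``transfer from $E$ to $E_1$'' is needed.
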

\begin{proof}
	We see that $E=E_1\cup \partial \Omega$, and
	$F_E(x_0,\rho)=F_{E_1}(x,\rho)+F_{\partial \Omega}(x_0,r)$.
	By Corollary \ref{cor:appbytangentcone}, there exist $\delta>0$ and $C>0$
	such that whenever  $0<\rho_0\leq \min\{ 1,t_0,r_0(x_0) \}$ satisfying 
	\[
		F_{E_1}(x_0,2\rho_0)+ C_{\Psi_{x_0}}\rho_0^{\alpha}+C_h\rho_0^{\alpha_1}\leq \delta.
	\]
	we have that, for $0<\rho\leq 9\rho_0/20$,
	\[
		d_{x_0,\rho}(E_{1},x_0+\Tan(E_{1},x_0))\leq C\delta^{1/4}(\rho/\rho_0)^{\beta},
	\]
	where $0<\beta<\min\{ \alpha,\alpha_1,2\lambda_0 ,\beta_0\}/4$. We take $\rho_1\in
	(0,\rho_0)$ such that 
	\[
		F_{E_1}(x_0,2\rho)+ C_{\Psi_{x_0}}\rho^{\alpha}+C_h\rho^{\alpha_1}\leq
		\min\{\delta/2,\varepsilon_2(\tau)\}, \forall 0<\rho\leq \rho_1.
	\]

	If $x\in \partial\Omega\cap B(x_0,\rho_1/10)$, we take $t=\rho_1/2$,
	then apply Lemma \ref{le:smalldendecay} with $r=|x-x_0|+t$ to get that 
	\[
		\HM^2(E_{1}\cap B(x,t))\leq \HM^2(Z_{x,r}\cap B(x,t))+\tau r^2,
	\]
	thus
	\[
		\Theta_{E_1}(x,t)\leq \frac{1}{t^2}\HM^2(Z_{x,r}\cap B(x,t))+4\tau\leq
		\frac{\pi}{2}+C_{\Psi_{x_0}}r^{\alpha}+4\tau,
	\]
	and 
	\[ 
		F_{E_1}(x,t)\leq C_{\Psi_{x_0}}r^{\alpha}+4\tau +16 h_1(t). 
	\]
	We get that $F_{E_1}(x,2\rho)+
	C_{\Psi_{x}}\rho^{\alpha}+C_h\rho^{\alpha_1}\leq \delta$ for $0<\rho\leq
	t/2$. Thus 
	\begin{equation}\label{eq:bappbtp10}
		d_{x,r}(E_{1},x+\Tan(E_{1},x))\leq C\delta^{1/4} (r/t)^{\beta},\ 0<r<9t/20. 
	\end{equation}

	By Lemma \ref{le:np100}, we assume that for 
	any $x\in E_{1}\cap B(x_0,\rho_1/10)$, there exists $x_1\in E_1\cap 
	B(x_0,\rho_1/2)\cap \partial\Omega$ such that  
	\[
		|x-x_1|\leq 2\dist(x,\partial\Omega).
	\]

	If $x\in E_{1}\cap B(x_0,\rho_1/10)\setminus \partial\Omega$, we take
	$t=t(x)=10^{-3}\dist(x,\partial\Omega)$, then apply Lemma 
	\ref{le:smalldendecay} with $r=|x-x_1|+t$ to get that 
	\[
		\HM^2(E_{1}\cap B(x,t))\leq \HM^2(Z_{x_1,r}\cap B(x,t))+\tau r^2,
	\]
	thus
	\[
		\Theta_{E_1}(x,t)\leq \frac{1}{t^2}\HM^2(Z_{x_1,r}\cap B(x,t))+
		(1+2\cdot 10^3)^2\tau\leq \pi/2+(1+2\cdot 10^3)^2\tau,
	\]
	and 
	\[
		F(x,t)\leq (1+2\cdot 10^3)^2\tau +8h_1(t).
	\]
	By Theorem \ref{thm:ine}, there is a constent $C_1>0$ such that 
	\begin{equation}\label{eq:bappbtp11}
		d_{x,r}(E_{1},x+\Tan(E_{1},x))\leq C_1 (r/t)^{\beta},\ 0<r<t.
	\end{equation}
	Hence we get that 
	\begin{equation}\label{eq:bappbtp12}
		d_{x,r}(E_{1},x+\Tan(E_{1},x))\leq C_2 (r/t_0)^{\beta},\forall x\in E_{1}\cap
		B(x_0,\rho_1/10), 0<r<t_0,
	\end{equation}
	where 
	\[
		t_0=\begin{cases}
			\rho_1/10,& x\in \partial \Omega,\\
			10^{-3}\dist(x,\partial \Omega),&x\notin \partial \Omega.
		\end{cases}
	\]

	We take $0<a<\beta/(1+\beta)$. For any $x\in
	B(x_0,\rho_1/10)\setminus \partial\Omega$, if $r\leq C_3 t_0^{1/(1-a)}$, then 
	we get from \eqref{eq:bappbtp12} that 
	\[
		d_{x,r}(E_{1},x+\Tan(E_{1},x))\leq C_2C_3^{\beta(a-1)}r^{a\beta};
	\]
	if $C_3 t_0^{1/(1-a)}<r<\rho_1/5$, then by \eqref{eq:bappbtp12}, we have that
	\[
		\begin{aligned}
			d_{x,r}(E_{1},x_1+\Tan(E_{1},x_1))&\leq \frac{|x-x_1|+r}{r}
			d_{x_1,|x-x_1|+r}(E_{1},x_1+\Tan(E_{1},x_1))\\
			&\leq C_4\left( 1+\frac{2\cdot 10^3t_0}{r} \right)\left(
			\frac{r+2\cdot 10^3t_0}{\rho_1/2} 
			\right)^{\beta}\\
			&\leq C_5(1+C_6r^{-a})^{\beta+1}r^{\beta}\leq C_7 r^{\beta-a\beta-a}.
		\end{aligned}
	\]
	We get so that, for any $0<\beta_1<\min\{a\beta, \beta-a\beta-a \}$ there is
	a constant $C_8$ such that for any $x\in E_{1}\cap B(x_0,\rho_1/10)$ and
	$0<\rho<\rho_1/5$, we can find cone $Z_{x,\rho}$ such that 
	\[
		d_{x,\rho}(E_{1},Z_{x,\rho})\leq C_8\rho^{\beta_1},
	\]
	where $Z_{x,\rho}=y+\Tan(E_{1},y)$, $y\in E_{1}\cap B(x,C_8\rho)$, and $y\in
	E_{1}\cap \partial \Omega\cap B(x,C_8\rho)$ 
	in case $\rho\geq C_3t_0^{1/(1-a)}$.
\end{proof}
\begin{lemma}\label{le:appmcy}
	Let $\Omega,E$, $x_0$ and $h$ be the same as in Theorem \ref{thm:ut}.
	Suppose that $\Theta_E(x_0)=7\pi/4$. Then, by putting $E_1=
	\overline{E\setminus \partial \Omega}$, there exist a radius $r>0$, a number
	$\beta>0$ and a constant $C>0$ such that, for any $x\in B(x_0,r)\cap E_1$ and
	$0<\rho<2r$, we can find a cone $Z_{x,\rho}$ such that 
	\begin{equation}\label{eq:mainappy}
		d_{x,\rho}(E_1,Z_{x,\rho})\leq C\rho^{\beta},
	\end{equation}
	where $Z_{x,\rho} = y+\Tan(E_1,y) $, $y\in E_1\cap B(x_0,C\rho)$, and
	$y\in E_1\cap\partial
	\Omega\cap B(x_0,C\rho)$ in case $\rho\geq \dist(x,\partial \Omega)/10$.
\end{lemma}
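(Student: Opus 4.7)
The plan is to mimic the proof of Lemma \ref{le:appmcp} verbatim, adjusting only the values of the densities. Since $\Theta_E(x_0)=7\pi/4$, by Lemma \ref{le:np1} the set $E_1=\overline{E\setminus\partial\Omega}$ is sliding almost minimal with the same gauge, and its density at $x_0$ equals $3\pi/4$ (corresponding to the $\mathbb{Y}_+$ tangent cone). This is the only substantive change from the $\mathbb{P}_+$ case; the rest of the argument is structurally identical.

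First, I would apply Corollary \ref{cor:appbytangentcone} to $E_1$ at $x_0$ to obtain $\delta>0$, $\beta>0$ and $\rho_0>0$ such that $F_{E_1}(x_0,2\rho_0)+C_{\Psi_{x_0}}\rho_0^{\alpha}+C_h\rho_0^{\alpha_1}\leq \min\{\delta/2,\varepsilon_2(\tau)\}$ and
\[
d_{x_0,\rho}(E_1,x_0+\Tan(E_1,x_0))\leq C\delta^{1/4}(\rho/\rho_0)^{\beta}
\]
for $0<\rho\leq 9\rho_0/20$. Choose $\rho_1\in(0,\rho_0)$ small enough that this smallness persists on $B(x_0,\rho_1)$. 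For any boundary point $x\in\partial\Omega\cap B(x_0,\rho_1/10)$, set $t=\rho_1/2$ and $r=|x-x_0|+t$; Lemma \ref{le:smalldendecay} yields $\Theta_{E_1}(x,t)\leq 3\pi/4+C_{\Psi_{x_0}}r^{\alpha}+4\tau$, hence $F_{E_1}(x,t)\leq 4\tau+C_{\Psi_{x_0}}r^{\alpha}+16h_1(t)$, and a second application of Corollary \ref{cor:appbytangentcone} gives $d_{x,r}(E_1,x+\Tan(E_1,x))\leq C\delta^{1/4}(r/t)^{\beta}$ for $0<r<9t/20$.

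Second, for an interior point $x\in E_1\cap B(x_0,\rho_1/10)\setminus\partial\Omega$, invoke Lemma \ref{le:np100} to produce $x_1\in E_1\cap\partial\Omega\cap B(x_0,\rho_1/2)$ with $|x-x_1|\leq 2\dist(x,\partial\Omega)$. Setting $t=10^{-3}\dist(x,\partial\Omega)$ and $r=|x-x_1|+t$, Lemma \ref{le:smalldendecay} (applied at $x_1$ where the tangent cone is of type $\partial\Omega_0\cup\mathbb{Y}_+$) bounds $\Theta_{E_1}(x,t)\leq 3\pi/4+(1+2\cdot 10^3)^2\tau$. Since $x$ is away from $\partial\Omega$, $E_1$ is Almgren almost minimal near $x$ and its density is $\pi/2$ or $3\pi/4$ (only $\mathbb{P}$ and $\mathbb{Y}$ cones have density at most $3\pi/4$, the $\mathbb{T}$ density being strictly larger); Theorem \ref{thm:ine} then gives $d_{x,r}(E_1,x+\Tan(E_1,x))\leq C_1(r/t)^{\beta}$ for $0<r<t$.

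Third, interpolate these two regimes. Fix $a\in(0,\beta/(1+\beta))$ and set $t_0=\rho_1/10$ if $x\in\partial\Omega$ and $t_0=10^{-3}\dist(x,\partial\Omega)$ otherwise. For $r\leq C_3 t_0^{1/(1-a)}$ the direct estimate yields a bound of order $r^{a\beta}$; for $C_3 t_0^{1/(1-a)}<r<\rho_1/5$ one passes through the nearest boundary point $x_1$, using
\[
d_{x,r}(E_1,x_1+\Tan(E_1,x_1))\leq \frac{|x-x_1|+r}{r}\,d_{x_1,|x-x_1|+r}(E_1,x_1+\Tan(E_1,x_1))
\]
together with the boundary estimate of the first paragraph, yielding a bound of order $r^{\beta-a\beta-a}$. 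Taking any $\beta_1\in(0,\min\{a\beta,\beta-a\beta-a\})$ and $Z_{x,\rho}=y+\Tan(E_1,y)$ with $y=x$ in the small-scale regime and $y=x_1\in E_1\cap\partial\Omega$ in the large-scale regime (where $\rho\geq \dist(x,\partial\Omega)/10$, hence $\rho\gtrsim t_0^{1/(1-a)}$) gives \eqref{eq:mainappy}.

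The only genuinely new point compared with Lemma \ref{le:appmcp} is verifying that $\Theta_{E_1}(x)\in\{\pi/2,3\pi/4\}$ uniformly at interior points near $x_0$, so that Theorem \ref{thm:ine} is applicable and the tangent cone $\Tan(E_1,y)$ is of type $\mathbb{P}$ or $\mathbb{Y}$; this follows because the density function is upper semicontinuous by Theorem \ref{thm:ANDD} and bounded above by $3\pi/4+o(1)$ in $B(x_0,\rho_1)\setminus\partial\Omega$ via Lemma \ref{le:smalldendecay}, which rules out $\mathbb{T}$-type points in this neighborhood. Apart from this verification the entire argument is bookkeeping, and the main obstacle is merely tracking constants so that the interpolation exponents combine into a single $\beta_1>0$ valid uniformly for all $x\in E_1\cap B(x_0,r)$.
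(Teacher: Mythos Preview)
Your proposal has a genuine structural gap: the $\mathbb{Y}_+$ case is not a verbatim copy of the $\mathbb{P}_+$ argument, because the interior of $E_1$ near $x_0$ now carries two strata, namely $\mathbb{P}$-points (density $\pi$) and a curve $E_Y$ of $\mathbb{Y}$-points (density $3\pi/2$) emanating from $x_0$. Your single interior scale $t=10^{-3}\dist(x,\partial\Omega)$ fails for a $\mathbb{P}$-type point $x$ that lies close to this spine: at that scale the ball $B(x,t)$ sees the nearby $\mathbb{Y}$-point, so $\Theta_{E_1}(x,t)\approx 3\pi/2$ while $\Theta_{E_1}(x)=\pi$, giving $F_{E_1}(x,t)\approx\pi/2$, and Theorem~\ref{thm:ine} does not apply. (Your stated interior densities $\pi/2,3\pi/4$ are half-space values and are off by a factor of two here.) The paper remedies this by setting $t_0=10^{-1}\min\{10^{-2}\dist(x,\partial\Omega),\dist(x,E_Y)\}$ for interior $\mathbb{P}$-points, proving that $E_Y$ is a $C^1$ curve transversal to $\partial\Omega$ (so $\dist(y,\partial\Omega)\gtrsim|y-x_0|$ on $E_Y$), and running a \emph{three}-level interpolation with two exponents $0<\beta_1<\beta_2<\beta/(1+\beta)$: first to the nearest $\mathbb{Y}$-point, then to the nearest boundary point, then to $x_0$.

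A related error occurs at boundary points $x\neq x_0$: there the tangent cone of $E_1$ is of type $\mathbb{P}_+$, so $\Theta_{E_1}(x)=\pi/2$, not $3\pi/4$. With your choice $t=\rho_1/2$ the ball $B(x,t)$ contains $x_0$ and the $\mathbb{Y}_+$ comparison cone gives $\Theta_{E_1}(x,t)\approx 3\pi/4$, hence $F_{E_1}(x,t)\approx\pi/4$ and Corollary~\ref{cor:appbytangentcone} is inapplicable. The paper instead takes $t=|x-x_0|/2$, so that $B(x,t)$ avoids the singular vertex and the comparison cone looks planar there, yielding $\Theta_{E_1}(x,t)\leq\pi/2+C_{\Psi_{x_0}}r^{\alpha}+9\tau$ and a small density drop. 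This in turn forces the boundary estimate to already involve one interpolation through $x_0$, which then feeds into the three-level scheme above.
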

\begin{proof}
	By Corollary \ref{cor:appbytangentcone}, there exist $\delta>0$ and $C>0$
	such that whenever  $0<\rho_0\leq \min\{ 1,t_0,r_0(x_0) \}$ satisfying 
	\[
		F_{E_1}(x_0,2\rho_0)+ C_{\Psi_{x_0}}\rho_0^{\alpha}+C_h\rho_0^{\alpha_1}\leq \delta,
	\]
	we have that, for $0<\rho\leq 9\rho_0/20$,
	\[
		d_{x_0,\rho}(E_1,x_0+\Tan(E_1,x_0))\leq C\delta^{1/4}(\rho/\rho_0)^{\beta},
	\]
	where $0<\beta<\min\{ \alpha,\alpha_1,2\lambda_0 \}/4$. We take $\rho_1\in
	(0,\rho_0)$ such that 
	\[
		F_{E_1}(x_0,2\rho)+ C_{\Psi_{x_0}}\rho^{\alpha}+C_h\rho^{\alpha_1}\leq
		\min\{\delta/2,\varepsilon_2(\tau)\}, \forall 0<\rho\leq \rho_1.
	\]
	If $x\in \partial\Omega\cap B(x_0,\rho_1/10)$, we take $t=|x-x_0|/2$,
	then apply Lemma \ref{le:smalldendecay} with $r=|x-x_0|+t$ to get that 
	\[
		\HM^2(E_1\cap B(x,t))\leq \HM^2(Z_{x,r}\cap B(x,t))+\tau r^2,
	\]
	thus 
	\[
		\Theta_{E_1}(x,t)\leq \frac{1}{t^2}\HM^2(Z_{x,r}\cap B(x,t))+9\tau\leq
		\frac{\pi}{2}+C_{\Psi_{x_0}}r^{\alpha}+9\tau,
	\]
	and 
	\[ 
		F_{E_1}(x,t)\leq C_{\Psi_{x_0}}r^{\alpha}+9\tau +16 h_1(t). 
	\]
	We get that $F_{E_1}(x,2\rho)+
	C_{\Psi_{x}}\rho^{\alpha}+C_h\rho^{\alpha_1}\leq \delta$ for $0<\rho\leq
	t/2$. Thus 
	\begin{equation}\label{eq:bappbty10}
		d_{x,r}(E_1,x+\Tan(E_1,x))\leq C\delta^{1/4} (r/t)^{\beta},\ 0<r<9t/20. 
	\end{equation}

	By Lemma \ref{le:np100}, we assume that for 
	any $x\in E_1\cap B(x_0,\rho_1/10)$, there exists $x_1\in E_1\cap 
	B(x_0,\rho_1/5)\cap \partial\Omega$ such that  
	\[
		|x-x_1|\leq 2\dist(x,\partial\Omega).
	\]

	If $x\in E_1\cap B(x_0,\rho_1/10)\setminus \partial\Omega$, then
	$\Theta_{E_1}(x)=\pi$ or $3\pi/2$. We put $t(x)=\dist(x,\partial\Omega)$.
	If $\Theta_{E_1}(x)=3\pi/2$, we take $t=10^{-3}t(x)$, then apply Lemma 
	\ref{le:smalldendecay} with $r=|x-x_1|+t$ to get that 
	\[
		\HM^2(E_1\cap B(x,t))\leq \HM^2(Z_{x_1,r}\cap B(x,t))+\tau r^2,
	\]
	thus 
	\begin{equation}\label{eq:bappbty30}
		\Theta_{E_1}(x,t)\leq \frac{1}{t^2}\HM^2(Z_{x_1,r}\cap
		B(x,t))+(1+2\cdot 10^3)^2\tau\leq
		\frac{3\pi}{2}+(1+2\cdot 10^3)^2\tau.
	\end{equation}
	and
	\[
		F_{E_1}(x,t)\leq (1+2\cdot 10^3)^2\tau +8h_1(t).
	\]
	By Theorem \ref{thm:ine}, we have that
	\begin{equation}\label{eq:bappbty40}
		d_{x,\rho}(E_1,x+\Tan(E_1,x))\leq C_1 (\rho/t)^{\beta},\ 0<\rho<t.
	\end{equation}

	We put $E_Y=\{x_0\}\cup \{ x\in E\setminus \partial \Omega:\Theta_{E_1}(x)=\pi \}$. 
	If $\Theta_{E_1}(x)=\pi$ and $\dist(x,E_Y)\leq 10^{-2}\dist(x,\partial \Omega)$,
	we take $x_2\in E_Y$ such that $|x-x_2|\leq 2\dist(x,E_Y)$ and
	$t=10^{-1}\dist(x,E_Y)$, then apply Lemma
	7.24 in \cite{David:2009} with $r=|x-x_2|+t$ to get that 
	\[
		\HM^2(E_1\cap B(x,t))\leq \HM^2(Z_{x_2,r}\cap B(x,t))+\tau r^2,
	\]
	thus 
	\begin{equation}\label{eq:bappbty45}
		\Theta_{E_1}(x,t)\leq \frac{1}{t^2}\HM^2(Z_{x_2,r}\cap B(x,t))+400\tau\leq
		\pi+400\tau,
	\end{equation}
	and
	\[
		F_{E_1}(x,t)\leq 4\tau +8h_1(t).
	\]
	By Theorem \ref{thm:ine}, we have that
	\begin{equation}\label{eq:bappbty50}
		d_{x,\rho}(E_1,x+\Tan(E_1,x))\leq C_2 (\rho/t)^{\beta},\ 0<\rho<t.
	\end{equation}

	If $\Theta_{E_1}(x)=\pi$ and $\dist(x,E_Y)> 10^{-2}\dist(x,\partial \Omega)$,
	we take $t=10^{-3}\dist(x,\partial \Omega)$, then apply Lemma
	\ref{le:smalldendecay} with $r=|x-x_1|+t$ to get that 
	\[
		\HM^2(E_1\cap B(x,t))\leq \HM^2(Z_{x_1,r}\cap B(x,t))+\tau r^2,
	\]
	thus 
	\begin{equation}\label{eq:bappbty55}
		\Theta_{E_1}(x,t)\leq \frac{1}{t^2}\HM^2(Z_{x_1,r}\cap
		B(x,t))+(1+2\cdot 10^3)^2\tau\leq\pi+(1+2\cdot 10^3)^2\tau.
	\end{equation}
	and
	\[
		F_{E_1}(x,t)\leq (1+2\cdot 10^3)^2\tau +8h_1(t).
	\]
	By Theorem \ref{thm:ine}, we have that
	\begin{equation}\label{eq:bappbty60}
		d_{x,\rho}(E_1,x+\Tan(E_1,x))\leq C_3 (\rho/t)^{\beta},\ 0<\rho<t.
	\end{equation}

	We get, from \eqref{eq:bappbty10}, \eqref{eq:bappbty40},
	\eqref{eq:bappbty50} and \eqref{eq:bappbty60}, so that 
	\begin{equation}\label{eq:bappbty71}
		d_{x,\rho}(E_1,x+\Tan(E_1,x))\leq C_4 (\rho/t_0)^{\beta},\ x\in E_1\cap 
		B(x_0,\rho_1/10),\ 0<\rho<t_0,
	\end{equation}
	where 
	\[
		t_0=\begin{cases}
			\rho_1/2,& x=x_0,\\
			|x-x_0|/10,& x\in \partial \Omega\setminus \{x_0\},\\
			10^{-3}\dist(x,\partial\Omega),&x\notin \partial \Omega, \Theta_{E_1}(x)=3\pi/2\\
			10^{-1}\min\{10^{-2}\dist(x,\partial\Omega),\dist(x,E_Y)\},&x\notin \partial
			\Omega, \Theta_{E_1}(x)=\pi.\\
		\end{cases}
	\]

	Claim: $E_Y\cap B(x_0,\rho_1/2)$ is a $C^{1}$ curve which is perpendicular
	to $\Tan(\Omega,x_0)$.
	Indeed, by biH\"older regaurity at the boundary, we see that $E_Y\cap
	B(x_0,\rho_1/2)$ is a curve, and by J. Taylor's regularity, we get that 
	$E_Y\cap B(x_0,\rho_1/2)$ is of class $C^{1}$. 

	By the claim, we can assume that, there is a constant $\eta_3>0$ such that  
	\begin{equation}\label{eq:bappbty77}
		\dist(x,\partial\Omega)\geq \eta_3|x-x_0|, \ \forall x\in E_Y\cap
		B(x_0,\rho_1/10).
	\end{equation}

	We fix $0<\beta_1< \beta_2<\beta/(1+\beta)$ such that $\beta_1\leq
	\beta_2\beta/(1+\beta)$.

	By \eqref{eq:bappbty71}, we have that, for any $x\in \partial \Omega \cap B(x_0,\rho_1/10)\setminus \{x_0\}$, and any $0<\rho<|x-x_0|/10$, 
	\[
		d_{x,\rho}(E_1,x+\Tan(E_1,x))\leq C_4 (\rho/t_0)^{\beta}.
	\]
	If $0<\rho\leq C_5|x-x_0|^{1/(1-\beta_1)}$, then 
	\[
		d_{x,\rho}(E_1,x+\Tan(E_1,x))\leq C_4
		(10\rho/|x-x_0|)^{\beta}=C_6\rho^{\beta_1\beta};
	\]
	if $C_5|x-x_0|^{1/(1-\beta_1)}<\rho\leq \rho_1/5$, then
	\[
		\begin{aligned}
			d_{x,\rho}(E_1,x_0+\Tan(E_1,x_0))&\leq
			\frac{|x-x_0|+\rho}{\rho}d_{x_0,|x-x_0|+\rho}(E_1,x_0+\Tan(E_1,x_0))\\
			&\leq (1+C_5^{-1+\beta_1}\rho^{-\beta_1})C_4
			\left(\frac{C_5^{-1+\beta_1}\rho^{1-\beta_1}+\rho}{\rho_1/2}\right)^{\beta}\\
			&\leq C_7\rho^{\beta-\beta_1-\beta\beta_1}.
		\end{aligned}
	\]
	Thus we get that,  for any
	$0<\beta_3\leq\min\{\beta\beta_1,\beta-\beta_1-\beta\beta_1)\}$, there is 
	a constant $C_8$ such that for any $x\in \partial \Omega \cap B(x_0,\rho_1/10)$ and
	$ 0<\rho\leq \rho_1/5$ we can find cone $Z_{x,\rho}$ satisfying that
	\begin{equation}\label{eq:bappbty80}
		d_{x,\rho}(E_1,Z_{x,\rho})\leq C_8\rho^{\beta_3}.
	\end{equation}

	If $x\in E_1\cap B(x_0,\rho_1/10)\setminus \partial \Omega$ and
	$\Theta_{E_1}(x)=3\pi/2$, then  for $0<\rho\leq C_5|x-x_0|^{1/(1-\beta_1)}$,
	we get, from \eqref{eq:bappbty71}, that 
	\[
		d_{x,\rho}(E_1,x+\Tan(E_1,x))\leq C_4
		(10^3\rho/\dist(x,\partial\Omega))^{\beta}=C_9\rho^{\beta_1\beta};
	\]
	and for $C_5|x-x_0|^{1/(1-\beta_1)}<\rho\leq \rho_1/5$, we have that
	\[
		\begin{aligned}
			d_{x,\rho}(E_1,x_0+\Tan(E_1,x_0))&\leq
			\frac{|x-x_0|+\rho}{\rho}d_{x_0,|x-x_0|+\rho}(E_1,x_0+\Tan(E_1,x_0))\\
			&\leq (1+C_5^{-1+\beta_1}\rho^{-\beta_1})C_4
			\left(\frac{C_5^{-1+\beta_1}\rho^{1-\beta_1}+\rho}{\rho_1/2}\right)^{\beta}\\
			&\leq C_{10}\rho^{\beta-\beta_1-\beta\beta_1}.
		\end{aligned}
	\]
	Thus we get that,  for any
	$0<\beta_4\leq\min\{\beta\beta_1,\beta-\beta_1-\beta\beta_1)\}$, there is 
	a constant $C_{11}$ such that for any $x\in E_1 \cap B(x_0,\rho_1/10)\setminus
	\partial \Omega$ with $\Theta_{E_1}(x)=3\pi/2$, and
	$ 0<\rho\leq \rho_1/5$ we can find cone $Z_{x,\rho}$ satisfying that
	\begin{equation}\label{eq:bappbty90}
		d_{x,\rho}(E_1,Z_{x,\rho})\leq C_{11}\rho^{\beta_4}.
	\end{equation}

	If $x\in E_1\cap B(x_0,\rho_1/10)\setminus \Omega$,
	$\Theta_{E_1}(x)=\pi$ and $\dist(x,\partial \Omega)<100\dist(x,E_Y)$, then for any
	$0<\rho<C_9\dist(x,\partial \Omega)^{1/(1-\beta_1)}$, we get, from
	\eqref{eq:bappbty71}, that 
	\begin{equation}\label{eq:bappbty94}
		d_{x,\rho}(E_1,x+\Tan(E_1,x))\leq
		C_4(10^3\rho/\dist(x,\partial\Omega))^{\beta}=C_{12}\rho^{\beta_1\beta};
	\end{equation}
	and for $C_9\dist(x,\partial \Omega)^{1/(1-\beta_1)}\leq \rho\leq \rho_1/5$, in
	case $\rho\leq C_{13}|x-x_0|^{1/(1-\beta_2)}$, we get, from 
	\eqref{eq:bappbty71}, that
	\begin{equation}\label{eq:bappbty95}
		\begin{aligned}
			d_{x,\rho}(E_1,x_1+\Tan(E_1,x_1))&\leq
			\frac{|x-x_1|+\rho}{\rho}d_{x_1,|x-x_1|+\rho}(E_1,x_1+\Tan(E_1,x_1))\\
			&\leq (1+2C_9^{-1+\beta_1}\rho^{-\beta_1})C_4
			\left(\frac{2C_9^{-1+\beta_1}\rho^{1-\beta_1}+\rho}
			{|x_0-x_1|/10}\right)^{\beta}\\
			&\leq C_{14}\rho^{\beta\beta_2-\beta_1-\beta\beta_1};
		\end{aligned}
	\end{equation}
	in case $\rho> C_{13}|x-x_0|^{1/(1-\beta_2)}$, we have that 
	\begin{equation}\label{eq:bappbty96}
		\begin{aligned}
			d_{x,\rho}(E_1,x_0+\Tan(E_1,x_0))&\leq
			\frac{|x-x_0|+\rho}{\rho}d_{x_0,|x-x_0|+\rho}(E_1,x_0+\Tan(E_1,x_0))\\
			&\leq (1+C_{13}^{-1+\beta_2}\rho^{-\beta_2})C_4
			\left(\frac{C_{13}^{-1+\beta_2}\rho^{1-\beta_2}+\rho}{\rho_1/2}\right)^{\beta}\\
			&\leq C_{15}\rho^{\beta-\beta_2-\beta\beta_2}.
		\end{aligned}
	\end{equation}
	If $x\in E_1\cap B(x_0,\rho_1/10)\setminus \partial \Omega$,
	$\Theta_{E_1}(x)=\pi$ and $\dist(x,\partial \Omega)\geq 100\dist(x,E_Y)$, then for any
	$0<\rho<C_{16}\dist(x,E_Y)^{1/(1-\beta_1)}$, we get, from
	\eqref{eq:bappbty71}, that 
	\begin{equation}\label{eq:bappbty97}
		d_{x,\rho}(E_1,x+\Tan(E_1,x))\leq
		C_4(10\rho/\dist(x,E_Y))^{\beta}=C_{17}\rho^{\beta_1\beta},
	\end{equation}
	for $C_{16}\dist(x,E_Y)^{1/(1-\beta_1)}\leq \rho\leq \rho_1/5$, we can find
	$y\in E_Y$ such that $|x-y|\leq 2\dist(x,E_Y)$, in case
	$\rho\leq C_{18}\dist(y,\partial \Omega)^{1/(1-\beta_2)}$, we get, from 
	\eqref{eq:bappbty71}, that
	\begin{equation}\label{eq:bappbty98}
		\begin{aligned}
			d_{x,\rho}(E_1,y+\Tan(E_1,y))&\leq
			\frac{|x-y|+\rho}{\rho}d_{y,|x-y|+\rho}(E_1,y+\Tan(E_1,y))\\
			&\leq (1+2C_{16}^{-1+\beta_1}\rho^{-\beta_1})C_4
			\left(\frac{2C_{16}^{-1+\beta_1}\rho^{1-\beta_1}+\rho}{10^{-3}
			\dist(y,\partial \Omega)}\right)^{\beta}\\
			&\leq C_{19}\rho^{\beta\beta_2-\beta_1-\beta\beta_1};
		\end{aligned}
	\end{equation}
	and in case $\rho> C_{18}\dist(y,\partial \Omega)^{1/(1-\beta_2)}$,
	we have that 
	\[
		|x-x_0|\geq \dist(x,\partial \Omega)\geq 100\dist(x,E_Y)\geq 50 |x-y|,
	\]
	and by \eqref{eq:bappbty77}, 
	\[
		\dist(y,\partial \Omega)\geq \eta_3|y-x_0|\geq \eta_3(|x-x_0|-|x-y|)\geq
		\eta_3\cdot \frac{49}{50}|x-x_0|,
	\]
	thus by \eqref{eq:bappbty71},
	\begin{equation}\label{eq:bappbty99}
		\begin{aligned}
			d_{x,\rho}(E_1,x_0+\Tan(E_1,x_0))&\leq
			\frac{|x-x_0|+\rho}{\rho}d_{x_0,|x-x_0|+\rho}(E_1,x_0+\Tan(E_1,x_0))\\
			&\leq (1+C_{20}^{-1+\beta_2}\rho^{-\beta_2})C_4
			\left(\frac{C_{20}^{-1+\beta_2}\rho^{1-\beta_2}+\rho}{\rho_1/2}\right)^{\beta}\\
			&\leq C_{21}\rho^{\beta-\beta_2-\beta\beta_2}.
		\end{aligned}
	\end{equation}
	We get, from \eqref{eq:bappbty94}, \eqref{eq:bappbty95}, \eqref{eq:bappbty96},
	\eqref{eq:bappbty97},\eqref{eq:bappbty98} and \eqref{eq:bappbty99}, that for any
	$0<\beta_5\leq \min\{\beta\beta_1,\beta\beta_2-\beta_1-\beta\beta_1,
	\beta-\beta_2-\beta\beta_2\}$, there is a constant $C_{22}$ such that for any $x\in E_1\cap B(x_0,\rho_1/10)\setminus \partial \Omega$ with
	$\Theta_{E_1}(x)=\pi$, and $ 0<\rho\leq \rho_1/5$ we can find cone $Z_{x,\rho}$ 
	such that
	\begin{equation}\label{eq:bappbty100}
		d_{x,\rho}(E_1,Z_{x,\rho})\leq C_{22}\rho^{\beta_5}.
	\end{equation}

	Hence we get, from \eqref{eq:bappbty80}, \eqref{eq:bappbty90} and 
	\eqref{eq:bappbty100}, that for any $0<\beta_6\leq 
	\min\{\beta\beta_1,\beta\beta_2-\beta_1-\beta\beta_1,
	\beta-\beta_2-\beta\beta_2\}$, there is a constant $C_{23}>0$ and $C_{24}>0$
	such that for any $x\in E_1\cap B(x_0,\rho_1/10)$ and $ 0<\rho\leq \rho_1/5$ 
	we can find cone $Z_{x,\rho}$  such that
	\begin{equation}\label{eq:bappbty111}
		d_{x,\rho}(E_1,Z_{x,\rho})\leq C_{23}\rho^{\beta_6},
	\end{equation}
	where $Z_{x,\rho}=z+\Tan(E_1,z)$ for some $z\in E_1\cap B(x,C_{24}\rho)$, and
	$z\in E_1\cap \partial \Omega \cap B(x,C_{24}\rho)$ in case $\rho\geq \max\{ 
	C_5|x-x_0|^{1/(1-\beta_1)},C_9\dist(x,\partial
	\Omega)^{1/(1-\beta_1)},C_{18}\dist(y,\partial \Omega)^{1/(1-\beta_2)}\dist(x,\partial \Omega)\}$.
\end{proof}
\begin{corollary}\label{cor:appmc}
	Let $\Omega$, $E$ and $h$ be the same as in Theorem \ref{thm:ut}. Let
	$E_1=\overline{E\setminus \partial \Omega}$ and $x_0\in E_1\cap \partial
	\Omega$. Then there exist a radius $r>0$, a number $\beta>0$ and a 
	constant $C>0$ such that, for any $x\in E_1\cap B(x_0,r)$ and $0<\rho<2r$, 
	we can find cone $Z_{x,\rho}$ such that 
	\begin{equation}\label{eq:mainapp}
		d_{x,\rho}(E_1,Z_{x,\rho})\leq C\rho^{\beta},
	\end{equation}
	where $Z_{x,\rho} = y+\Tan(E_1,y) $, $y\in E_1\cap B(x,C\rho)$, and
	$y\in E_1\cap\partial
	\Omega\cap B(x,C\rho)$ in case $\rho\geq \dist(x,\partial \Omega)/10$.. 
\end{corollary}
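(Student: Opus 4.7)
The plan is to case split on the density $\Theta_E(x_0)$ at the boundary point. As noted just before Lemma \ref{le:diffeodo}, the classification of possible blow-up limits at $x_0\in \partial\Omega$ (from Theorem 3.10 of \cite{Fang:2015}) combined with Proposition 29.53 in \cite{David:2014} forces $\Theta_E(x_0)\in\{\pi,\,3\pi/2,\,7\pi/4\}$. The corollary is then proved by reducing to one of three sub-cases.

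The case $\Theta_E(x_0)=\pi$ is trivial: as recorded in the discussion preceding Lemma \ref{le:diffeodo}, there is a neighborhood $U_0$ of $x_0$ in which $E\cap U_0=\partial\Omega\cap U_0$, so $E_1\cap U_0=\emptyset$ (after possibly shrinking, using that $E_1=\overline{E\setminus\partial\Omega}$), and the conclusion \eqref{eq:mainapp} is then vacuously satisfied on any ball $B(x_0,r)$ with $B(x_0,r)\subseteq U_0$, with arbitrary choices of $\beta$, $C$, and $Z_{x,\rho}$.

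The cases $\Theta_E(x_0)=3\pi/2$ and $\Theta_E(x_0)=7\pi/4$ are precisely the hypotheses of Lemma \ref{le:appmcp} and Lemma \ref{le:appmcy}, respectively. I would simply invoke each lemma to obtain a radius $r>0$, an exponent $\beta>0$, and a constant $C>0$ for which \eqref{eq:mainapp} holds for all $x\in E_1\cap B(x_0,r)$ and $0<\rho<2r$. Since both lemmas already deliver the cone $Z_{x,\rho}$ in the required form $y+\Tan(E_1,y)$ with $y\in E_1\cap B(x,C\rho)$ and $y\in E_1\cap \partial\Omega\cap B(x,C\rho)$ when $\rho\geq \dist(x,\partial\Omega)/10$, no further work is needed beyond collecting constants. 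Taking the smaller of the two radii, the smaller of the two $\beta$'s, and the larger $C$ among the two cases gives uniform parameters that serve every possibility, and the corollary follows.

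There is no genuine obstacle at this step: all the analytic difficulty has already been absorbed into Lemmas \ref{le:appmcp} and \ref{le:appmcy}, where the interplay of the density decay from Corollary \ref{co:dendecay}, the cone approximation from Theorem \ref{thm:ut}, and the interior regularity via Theorem \ref{thm:ine} had to be balanced across points near the boundary (for which Lemma \ref{le:np100} provides nearby boundary companions). The corollary itself is purely a case synthesis.
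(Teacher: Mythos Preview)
Your proposal is correct and matches the paper's own proof, which simply reads ``It follows from Lemma~\ref{le:appmcp} and Lemma~\ref{le:appmcy}.'' One small remark: the hypothesis $x_0\in E_1\cap\partial\Omega$ already excludes the case $\Theta_E(x_0)=\pi$ (since then $E\setminus\partial\Omega$ is empty near $x_0$, so $x_0\notin E_1$), so your treatment of that case is superfluous rather than vacuous---but this is harmless.
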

\begin{proof}
	It is follow from Lemma \ref{le:appmcp} and Lemma \ref{le:appmcy}.
\end{proof}
\begin{lemma}\label{le:WAPPM}
	Let $\Omega,E$, $x_0$ and $h$ be the same as in Corollary \ref{cor:appmc}.
	Let $\Psi:B(0,r_0)\to \mathbb{R}^3$ be the mapping defined in Lemma
	\ref{le:diffeodo}. Let $R>0$ be such that $\Psi(B(0,R))\subseteq B(x_0,r)$,
	where $B(x_0,r)$ is the ball considered as in Corollary \ref{cor:appmc}.  
	By putting $U=\Psi(B(0,R))$, 
	$M_1=\Psi^{-1}(E_1\cap U)$, we have that there exist $\rho_3>0$, $\beta>0$, and
	constant $C>0$ such that for any $z\in M_1\cap B(0,\rho_3)$ and $0<t<2\rho_3$,
	we can find cone $Z(z,t)$ through $z$ such that 
	\[
		d_{z,t}(M_1,Z(z,t))\leq Ct^{\beta},
	\]
	where $Z(z,t)$ is a minimal cone of type $\mathbb{P}$ or $\mathbb{Y}$ in
	case $z\in M_1\setminus L_0$ and $0<t<\dist(z,L_0)$; and in case $t\geq
	\dist(z,L_0)$ or $z\in L_0$, $Z(z,t)$ is a sliding minimal cone in 
	$\Omega_0$ with sliding boundary $L_0$, if $Z(z,t)\setminus L_0\neq 
	\emptyset$, we can be written as $Z(z,t)=L_0\cup Z$, $Z$ is a slding minimal
	cone of type $\mathbb{P}_+$ or $\mathbb{Y}_+$.
\end{lemma}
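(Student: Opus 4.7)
The aim is to transport the uniform approximation furnished by Corollary~\ref{cor:appmc} for $E_1$ in the curved setting to the flattened set $M_1 = \Psi^{-1}(E_1\cap U) \subseteq \Omega_0$ via $\Phi := \Psi^{-1}$, using Lemma~\ref{le:diffeohd} as the transport tool and Lemma~\ref{le:localmini} to recognize the transported cones as genuine sliding minimal cones in $\Omega_0$.

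I would first choose $\rho_3 \in (0, R/4)$ small enough that $C_{\Psi}(4\rho_3)^{\alpha} \le 1/2$ (so by \eqref{eq:diffeapp} $\Lambda := \Lambda(4\rho_3) \le 2$) and $\Psi(B(0,4\rho_3)) \subseteq B(x_0, r)$, where $r$ is the radius furnished by Corollary~\ref{cor:appmc}. For $z \in M_1\cap B(0,\rho_3)$ and $0 < t < 2\rho_3$, set $x = \Psi(z) \in E_1$ and apply Corollary~\ref{cor:appmc} at $x$ at scale $2\Lambda t$: this produces $y \in E_1 \cap B(x, C_0\Lambda t)$ with
\[
d_{x, 2\Lambda t}\bigl(E_1,\, y+\Tan(E_1,y)\bigr) \le C(2\Lambda t)^{\beta},
\]
and $y \in \partial\Omega$ as soon as $2\Lambda t \ge \dist(x,\partial\Omega)/10$.

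I then apply Lemma~\ref{le:diffeohd} to $\Phi$ at the base point $y$, with cone $X = \Tan(E_1,y)$, source radius $4\Lambda t$ and target radius $t$; the inclusion $B(\Phi(y),t) \subseteq \Phi(B(y,4\Lambda t))$ holds since $|\Phi(y) - z| \le \Lambda|x-y| \le C_0\Lambda^2 t$ and $\rho_3$ is small. The lemma gives
\[
d_{\Phi(y),\,t}\bigl(M_1,\; \Phi(y) + D\Phi(y)\Tan(E_1,y)\bigr) \le C_1 t^{\min(\alpha,\beta)}.
\]
By the $C^1$ chain rule, $D\Phi(y)\Tan(E_1,y) = \Tan(M_1,\Phi(y))$. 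Because $M_1$ is locally sliding almost minimal in $B(0,R)$ by Lemma~\ref{le:localmini}, this tangent cone is a sliding minimal cone in $\Omega_0$ with boundary $L_0$, and the classification of such cones forces it to be of the type prescribed by the lemma: a plane or a $\mathbb{Y}$-cone when $\Phi(y)\notin L_0$, and either $L_0$ or $L_0\cup Z_{+}$ with $Z_{+}$ of type $\mathbb{P}_+$ or $\mathbb{Y}_+$ when $\Phi(y)\in L_0$.

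Finally I let $Z(z,t)$ be the parallel translate of $\Phi(y) + \Tan(M_1,\Phi(y))$ that passes through $z$ (in the sliding cases, translating only in directions tangent to $L_0$ so that $L_0 \subseteq Z(z,t)$ is preserved). Using the $C^{1,\alpha}$ expansion $\Phi(x)-\Phi(y) = D\Phi(y)(x-y) + O(|x-y|^{1+\alpha})$ together with the bound $\dist\bigl(x,\, y+\Tan(E_1,y)\bigr) \le C(2\Lambda t)^{1+\beta}$ (which follows from the first display above and $x\in E_1$), the component of $z-\Phi(y)$ perpendicular to $\Tan(M_1,\Phi(y))$ is bounded by $C_2 t^{1+\min(\alpha,\beta)}$, so this translation introduces at most $C_3 t^{\min(\alpha,\beta)}$ in normalized Hausdorff distance at scale $t$. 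Renaming $\min(\alpha,\beta)$ as the new $\beta$ and shrinking $\rho_3$ if necessary completes the proof. The main obstacle is this last shift step for the $\mathbb{Y}$ and $\mathbb{Y}_+$ cases, where translating the cone moves its spine and one must verify both that the spine lands close enough to $z$ and that the translated cone remains of the prescribed sliding-minimal type; both points follow by combining the $C^{1,\alpha}$ linear approximation of $\Phi$ with the quantitative proximity of $x$ to the singular locus of $y+\Tan(E_1,y)$ provided by Corollary~\ref{cor:appmc}.
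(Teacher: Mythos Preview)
Your overall route is the same as the paper's: pull the approximation of Corollary~\ref{cor:appmc} back through $\Phi=\Psi^{-1}$ using the $C^{1,\alpha}$ expansion (which is exactly the content of Lemma~\ref{le:diffeohd}), identify the resulting cone as $\Phi(y)+D\Phi(y)\Tan(E_1,y)=\Phi(y)+\Tan(M_1,\Phi(y))$, and then translate so that the cone passes through $z$. On the translation step you are in fact more careful than the paper: you translate by the nearest--point vector (of size $\le C t^{1+\min(\alpha,\beta)}$, since $z\in M_1$), whereas the paper translates by the full vector $z-w$ of size $\sim t$, which taken literally would only give a bounded, not $O(t^{\beta})$, contribution to $d_{z,t}$.

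There is, however, a genuine gap in your classification of the cone type, and the paper's proof shares it. You invoke Lemma~\ref{le:localmini} to conclude that $\Tan(M_1,\Phi(y))$ is a sliding minimal cone in $\Omega_0$, but that lemma only asserts local almost minimality of $M_\rho$ \emph{at the origin}; its proof uses $\|D\Psi(x)-D\Psi(0)\|\le C_\Psi|x|^{\alpha}$ together with the fact that $D\Psi(0)$ is a rotation, so that the $2$--Jacobian of $\Psi$ is $1+O(|x|^{\alpha})$ \emph{uniformly over all tangent $2$--planes}. At a point $w=\Phi(y)\neq 0$ the linear map $D\Psi(w)$ is only within $C_\Psi|w|^{\alpha}$ of a rotation, so $J_2(D\Psi(w)|_P)$ genuinely depends on the $2$--plane $P$; the argument of Lemma~\ref{le:localmini} then yields only quasiminimality of $M_1$ at $w$ with constant $1+O(|w|^{\alpha})$, not almost minimality with a gauge tending to $0$. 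Correspondingly, $\Tan(M_1,w)=D\Phi(y)\Tan(E_1,y)$ is the image of an exact minimal cone under a linear map that is not an isometry: for a $\mathbb Y$--cone the $120^\circ$ angles, and for a $\mathbb P_+$ or $\mathbb Y_+$ cone the perpendicularity to $L_0$, are distorted by an amount of order $|w|^{\alpha}\le\rho_3^{\alpha}$, which does not decay with $t$. Replacing $D\Phi(y)\Tan(E_1,y)$ by the nearest true minimal cone therefore introduces an error $O(\rho_3^{\alpha})$ into $d_{z,t}$, which is incompatible with the claimed bound $Ct^{\beta}$. The paper's proof does not address this point either; it simply asserts that $Z'(z,t)=\Phi(y)+\Tan(M_1,\Phi(y))$ is a sliding minimal cone without justification.
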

\begin{proof}
	For any $x\in B(x_0,r)\cap E_1$ and $0<\rho<2r$, we let $Z_{x,\rho}$ be the
	same cone considered as in Corollary \ref{cor:appmc}.
	We put $\Phi=\Psi^{-1}\vert_{B(x_0,r)}$ and $X=\Tan(E_1,y)$ for convenient.  

	For any $x\in E_1\cap B(x_0,r)$, and any $z\in E_1\cap B(x,\rho)$, we have that  
	\[
		\dist(\Phi(z),\Phi(y+X))\leq
		\Lip(\Phi)\dist(z,y+X)\leq C \Lip(\Phi)\rho^{1+\beta}.
	\]
	Since 
	\[
		|\Phi(z_1)-\Phi(z_2)-D\Phi(z_2)(z_1-z_2)|\leq
		C_1|z_1-z_2|^{1+\alpha},
	\]
	we have that, for any $z_1\in y+X$, 
	\[
		\dist(\Phi(z_1),\Phi(y)+D\Phi(y)X)\leq C_1 |z_1-y|^{1+\alpha}.
	\]
	Hence 
	\begin{equation}\label{eq:pfmth20}
		\dist(\Phi(z),\Phi(y)+D\Phi(y)X)\leq C\Lip(\Phi)\rho^{1+\beta} +
		C_1(\rho+C\rho+C\rho^{1+\beta})^{1+\alpha}\leq C_2 \rho^{1+\beta}.
	\end{equation}

	For any $v\in X$, we see that $\Phi(y)+D\Phi(y)v\in \Phi(y)+D\Phi(y)X$, and
	we have that 
	\[
		\begin{aligned}
			\dist(\Phi(y)+D\Phi(y)v, M_1)&\leq \dist(\Phi(y)+D\Phi(y)v, \Phi(E_1\cap
			B(x,\rho)))\\
			&=\inf\{|\Phi(z)-\Phi(y)-D\Phi(y)v|:z\in
			E_1\cap B(x,\rho)\}\\
			&\leq \inf\{C_1|z-y|^{1+\alpha}+\Lip(\Phi)|z-y-v|:z\in E_1\cap
			B(x,\rho)\}\\
			&\leq C_1(\rho+C\rho)^{1+\alpha}+\Lip(\Phi)\dist(y+v,E_1).
		\end{aligned}
	\]
	Thus there exist $C_3>0$ such that, for any $v\in X$ with $|y+v-x|\leq \rho$,
	\begin{equation}\label{eq:pfmth30}
		\dist(\Phi(y)+D\Phi(y)v, M_1)\leq C_3\rho^{1+\beta}.
	\end{equation}
	We take $0<C_5<C_4<1$ small enough, for example $C_4<(10\Lip(\Phi))^{-1}$,
	then for any $C_5\rho \leq t\leq C_4\rho\leq \rho/\Lip(\Phi)-C_1(C\rho)^{1+\alpha}$, we have 
	that $M_1\cap B(\Phi(x),t)\subseteq \Phi(E_1\cap B(x,\rho))$ and 
	\[
		[\Phi(y)+D\Phi(y)X]\cap B(\Phi(x),t)\subseteq
		\{\Phi(y)+D\Phi(y)v:v\in X,y+v\in B(x,\rho)\}.
	\]
	We get, from \eqref{eq:pfmth20} and \eqref{eq:pfmth30}, so that 
	\begin{equation}\label{eq:pfmth40}
		d_{\Phi(x),t}(M_1,\Phi(y)+D\Phi(y)X)\leq C_6\rho^{\beta}\leq
		C_7t^{\beta},
	\end{equation}
	and 
	\begin{equation}\label{eq:pfmth41}
		|\Phi(x)-\Phi(y)|\leq \Lip(\Phi)|x-y|\leq (\Lip(\Phi)CC_5^{-1})t.
	\end{equation}
	Hence 
	\begin{equation}\label{eq:pfmth45}
		d_{\Phi(x),t}(M_1,\Phi(y)+D\Phi(y)X)\leq
		C_7t^{\beta},\ \text{ for any }0<t<C_4\rho_1,
	\end{equation}
	where $\rho_1\in (0,2r)$ satisfy that  $C_1C^{1+\alpha}\rho_1\leq
	\Lip(\Phi)^{-1}-C_4$. 

	We take $\rho_2>0$ such that, for any $x\in E_1\cap 
	\Phi(B(x_0,\rho_2))$ and $0<\rho<2\rho_2$, $Z_{x,\rho}$ can be expressed as 
	$Z_{x,\rho}=y+\Tan(E_1,y)$ with $y\in E_1\cap U$.  Since $D\Phi(y)X=D\Phi(y)
	\Tan(E_1,y)=\Tan(M_1,\Phi(y))$ in case $y\in E_1\cap U$, by putting 
	$\rho_3=\min\{\rho_2,C_4\rho_1/2,R\}$, we have that, for any $z\in M_1\cap
	B(0,\rho_3)$ and $0<t<2\rho_3$, there exist cone $Z'(z,t) $ in $\Omega_0$ 
	with sliding boundary $L_0=\partial \Omega_0$, such that
	\[
		d_{x,t}(M_1,Z'(z,t))\leq C_7t^{\beta}.	
	\]
	For such cone $Z'(z,t)$, we have that $Z'(z,t)=w+\Tan(M_1,w)$, $w\in M_1$,
	$|w-z|\leq C_8 t$, and $w\in L_0\cap B(z,C_8t)$ in case $t\geq
	\dist(z,L_0)/2$. $Z'(z,t)$ may not pass through $z$, but the cone
	$Z(z,t)=Z'(z,t)-w+z$  pass through $z$, and 
	\[
		d_{x,t}(M_1,Z(z,t))\leq C_7t^{\beta} + C_8t\leq C_9 t^{\beta}.	
	\]
\end{proof}

\begin{proof}[Proof of Theorem \ref{mainthm}]
	Let $M_1$ be the same as in Lemma \ref{le:WAPPM}, and let
	$M=\Psi^{-1}(E\cap U)$. Then by Lemma \ref{le:WAPPM}, we have that 
	for any $x\in M_1\cap B(0,\rho_3)$ and $0<r<2\rho_3$, there exist cone 
	$Z(x,r)$ such that 
	\[
		d_{x,r}(M_1,Z(x,r))\leq Cr^{\beta},
	\]
	where $Z(x,r)$ is a minimal cone in $\mathbb{R}^3$ of type $\mathbb{P}$ or
	$\mathbb{Y}$ in case $x\not\in L_0$ and $t\leq \dist(x,L_0)$; and $Z(x,r)$ 
	is a sliding minimal cone in $\Omega_0$ with sliding boundary $L_0$ of type
	$\mathbb{P}_+$ or $\mathbb{Y}_+$ in other case. We apply Theorem
	\ref{thm:RPWAPS} to get that there exist  $\rho_4>0$, a sliding minimal cone
	$Z'$ centered at 0, and a mapping $\Phi_1:\Omega_0\cap B(0,\rho_4)\to
	\Omega_0$, which is a $C^{1,\beta}$-differential, such that $\Phi_1(0)=0$,
	$\Phi_1(\partial \Omega_0\cap B(0,\rho_4))\subseteq L_0$, $\|\Phi-\id\|\leq
	10^{-1}\rho_4$ and 
	\[
		M_1\cap B(0,\rho_4)=\Phi(Z')\cap B(0,\rho_4).
	\]
	We take $Z=Z'\cup L_0$, then we get that 
	\[
		M\cap B(0,\rho_4)=\Phi(Z)\cap B(0,\rho_4).
	\]
\end{proof}
\section{Existence of the Plateau problem with sliding boundary conditions}
The Plateau Problem with sliding boundary conditions arise in
\cite{David:2014p}, due to Guy David. That is, given an initial set $E_0$, and
boundary $\Gamma$, to find the minimizers among all competitors. The author of
the paper \cite{David:2014p} also gives some hint to the existence in Section 6,
and later on in \cite{David:2014}, he pave the way. We will give an existence
result in case the boundary is nice enough.

Let $\Omega\subseteq\mathbb{R}^{3}$ be a closed domain such that the boundary
$\partial\Omega$ is a $2$-dimensional manifold of class $C^{1,\alpha}$ for
some $\alpha>0$. Let $E_0\subseteq \Omega$ be a closed set with $E_0\supseteq
\partial\Omega$. We denote by $\mathscr{C}(E_0)$ the collection of all
competitors of $E_0$. 
\begin{theorem} \label{thm:espp}
	If there is a bounded minimizing sequence of competitors.
	Then there exists $E\in \mathscr{C}(E_0)$ such that 
	\[
		\HM^{2}(E\setminus \partial\Omega)=\inf\{\HM^2(S\setminus
		\partial\Omega): S\in \mathscr{C}(E_0)\}
	\]
\end{theorem}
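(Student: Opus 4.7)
The plan is to follow the standard compactness-plus-regularity strategy of \cite{David:2014}. Let $\{E_n\}\subseteq \mathscr{C}(E_0)$ be a bounded minimizing sequence, that is
\[
\lim_{n\to\infty}\HM^2(E_n\setminus\partial\Omega)=m:=\inf\{\HM^2(S\setminus\partial\Omega):S\in\mathscr{C}(E_0)\}.
\]
Since the sequence lies in a fixed compact subset of $\Omega$ and each $E_n$ contains $\partial\Omega$, by Blaschke's selection theorem we may extract a subsequence (still denoted $\{E_n\}$) converging in local Hausdorff distance to a closed set $E_\infty\subseteq\Omega$ with $E_\infty\supseteq\partial\Omega$. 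First I would invoke the general limiting machinery from \cite{David:2014} (the analogue of the lower semicontinuity/quasi-minimality result for sliding almost minimal sets): the class $SAM(\Omega,\partial\Omega,0)$ of sliding minimal sets is closed under Hausdorff limits of minimizing sequences, and $\HM^2(\cdot\setminus\partial\Omega)$ is lower semicontinuous along such limits. This gives both that $E_\infty\in SAM(\Omega,\partial\Omega,0)$ (i.e.\ it is a genuine sliding minimal set with the zero gauge) and that $\HM^2(E_\infty\setminus\partial\Omega)\leq m$.

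Next I would upgrade $E_\infty$ to an actual competitor. The obstacle is that $E_\infty$ need not be the image $\varphi_1(E_0)$ of a sliding deformation of the initial set $E_0$, only a limit of such images. This is precisely where Theorem \ref{mainthm} enters: since $\partial\Omega$ is $C^{1,\alpha}$ and the zero gauge trivially satisfies $h(t)\leq Ct^{\alpha_1}$, Theorem \ref{mainthm} gives $C^{1,\beta}$ structure of $E_\infty$ at every boundary point, while Jean Taylor's theorem of \cite{Taylor:1976,David:2008} gives the corresponding interior regularity on $E_\infty\setminus\partial\Omega$. Combining these two local models, $E_\infty$ is a finite union of $C^{1,\beta}$ faces meeting along $C^{1,\beta}$ curves with the standard $\mathbb{Y}$-, $\mathbb{T}$-, $\mathbb{P}_+$-, or $\mathbb{Y}_+$-type transversality conditions, so a standard patching argument (exactly as in the interior case) shows that $E_\infty$ is a Lipschitz neighborhood retract in $\Omega$, by a retraction $\pi:\mathcal{N}\to E_\infty$ defined on an open neighborhood $\mathcal{N}$ of $E_\infty$ with $\pi(\partial\Omega)\subseteq\partial\Omega$.

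Using this retraction, I can now realize $E_\infty$ as a competitor. Take $n$ large enough that $E_n\subseteq\mathcal{N}$; by definition of competitor, $E_n=\varphi_1^{(n)}(E_0)$ for some sliding deformation $\{\varphi_t^{(n)}\}$ of $E_0$ in some ball. Consider the family
\[
\widetilde\varphi_t(x)=\begin{cases}\varphi_{2t}^{(n)}(x),&0\leq t\leq 1/2,\\ H(2t-1,\varphi_1^{(n)}(x)),&1/2<t\leq 1,\end{cases}
\]
where $H:[0,1]\times\mathcal{N}\to\mathcal{N}$ is a homotopy from the identity to $\pi$ that preserves $\partial\Omega$ (such $H$ exists because $\pi$ is a Lipschitz retraction onto a neighborhood retract and can be built from straight-line homotopies inside the neighborhood, reprojected by $\pi$). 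Then $\widetilde\varphi_t$ is a sliding deformation of $E_0$ and $\widetilde\varphi_1(E_0)=\pi(E_n)\subseteq E_\infty$. A standard density/extension argument (noting that $E_\infty$ is $\HM^2$-rectifiable and $\pi\circ\varphi_1^{(n)}$ is surjective from $E_0$ onto $\pi(E_n)$, which Hausdorff-converges to $E_\infty$) then produces $E:=E_\infty\in\mathscr{C}(E_0)$. The main obstacle in this step is to rigorously pass to the limit in the deformations; this is handled exactly as in Section 6 of \cite{David:2014p} and Section~3 of \cite{David:2014}, using the Lipschitz neighborhood retract property of $E_\infty$ to absorb the small Hausdorff error between $E_n$ and $E_\infty$.

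Finally, combining the lower semicontinuity $\HM^2(E\setminus\partial\Omega)\leq m$ with $E\in\mathscr{C}(E_0)$, which forces $\HM^2(E\setminus\partial\Omega)\geq m$ by definition of infimum, we conclude $\HM^2(E\setminus\partial\Omega)=m$, as required. I expect the decisive step to be the Lipschitz-retract construction of the minimizing competitor from the abstract Hausdorff limit, since this is where the main theorem (Theorem \ref{mainthm}) is genuinely used; the compactness and lower semicontinuity parts are by now standard thanks to \cite{David:2014}.
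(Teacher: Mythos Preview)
Your overall strategy is the same as the paper's, but there is a genuine gap at the very first step. You take a raw minimizing sequence $\{E_n\}\subseteq\mathscr{C}(E_0)$, pass to a Hausdorff limit $E_\infty$, and then assert that the limiting machinery of \cite{David:2014} yields both $E_\infty\in SAM(\Omega,\partial\Omega,0)$ and lower semicontinuity of $\HM^2(\cdot\setminus\partial\Omega)$. Those results (e.g.\ Corollary~21.15 in \cite{David:2014}) apply to Hausdorff limits of \emph{uniformly quasiminimal} (or almost minimal) sets, not to limits of arbitrary competitors. A generic minimizing sequence can be badly behaved---for instance, one may attach to each $E_n$ thin hairs of vanishing $\HM^2$-measure so that the Hausdorff limit is not even rectifiable---so neither the sliding minimality of $E_\infty$ nor the lower semicontinuity of $\HM^2$ follows from what you invoke.

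The paper closes exactly this gap by inserting Feuvrier's polyhedral approximation step (Lemme~5.2.6 in \cite{Feuvrier:2008}): each $S_i$ is replaced by a competitor $E_i$ with $\HM^2(E_i)\leq\HM^2(S_i)+2^{-i}$ which is genuinely quasiminimal on an exhausting sequence of open sets $U_i\nearrow B(0,R+2)\setminus\partial\Omega$. Only after this replacement can one legitimately apply Corollary~21.15 of \cite{David:2014} to conclude that the Hausdorff limit $E$ is sliding minimal, and only then do Theorem~\ref{mainthm} and Taylor's theorem give the Lipschitz neighborhood retract property. From that point on your argument and the paper's coincide: the retraction pulls a late $E_n$ onto (a subset of) $E$, exhibiting a competitor inside $E$. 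So the fix is simply to insert the Feuvrier step before taking the limit; without it, the appeal to \cite{David:2014} is unjustified.
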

\begin{proof}
	We put 
	\[
		m_0=\inf\{\HM^2(S\setminus \partial\Omega): S\in \mathscr{C}(E_0)\}.
	\]
	If $m_0=+\infty$, we have nothing to do.
	We now assume that $0\leq m_0<+\infty$.

	Let $\{S_i\}\subseteq \mathscr{C}_0$ be a sequence of competitors bounded by
	$B(0,R)$ such that 
	\[
		\lim_{i\to\infty}\HM^2(S_i\setminus \partial\Omega) = m_0.
	\]
	Apply Lemme 5.2.6 in \cite{Feuvrier:2008}, we can fined a sequence of open
	sets $\{ U_i \}$ and a sequence of competitors $\{ E_i \} \subseteq
	\mathscr{C}(E_0)$ of $E_0$ bounded by $B(0,R+1)$ such that
	\begin{itemize}[itemsep=0pt]
		\item $U_i\subseteq U_{i+1}$, $\cup_{i\geq 1} U_i=B(0,R+2)\setminus \partial
			\Omega$;
		\item $E_i\cap U_i\in QM(U_i,M,\diam(U_i))$ for constant $M>0$;
		\item $\HM^2(E_i)\leq \HM^2(S_i)+2^{-i}$.
	\end{itemize}
	We assume that $E_i$ converge locally to $E$ in $B(0,R+2)$, pass to 
	subsequence if necessary, then by Corollary 21.15 in \cite{David:2014}, we 
	get that $E$ is sliding minimal.

	We get, from Theorem \ref{mainthm} and Theorem 1.15 in \cite{David:2008}, 
	that $E$ is a Lipschitz neighborhood retract. But we see that $E_i$
	converges to $E$, we get so that $E$ contains a competitor.
\end{proof}
\begin{bibdiv}
\begin{biblist}

\bib{Allard:1972}{article}{
      author={Allard, W.},
       title={On the first variation of a varifold},
        date={1972},
     journal={Ann. of Math.},
      volume={95},
      number={3},
}

\bib{DDT:2008}{article}{
      author={David, G.},
      author={Pauw, T.~De},
      author={Toro, T.},
       title={A generalization of reifenberg's theorem in {$\mathbb{R}^3$}},
        date={2008},
     journal={Geometric and Functional Analysis},
      volume={18},
       pages={1168\ndash 1235},
}

\bib{David:2009}{article}{
      author={David, Guy},
       title={H\"older regularity of two-dimensional almost-minimal sets in
  {$\Bbb R^n$}},
        date={2009},
        ISSN={0240-2963},
     journal={Ann. Fac. Sci. Toulouse Math. (6)},
      volume={18},
      number={1},
       pages={65\ndash 246},
         url={http://afst.cedram.org/item?id=AFST_2009_6_18_1_65_0},
      review={\MR{2518104}},
}

\bib{David:2008}{article}{
      author={David, Guy},
       title={{$C^{1+\alpha}$}-regularity for two-dimensional almost-minimal
  sets in {$\Bbb R^n$}},
        date={2010},
        ISSN={1050-6926},
     journal={J. Geom. Anal.},
      volume={20},
      number={4},
       pages={837\ndash 954},
         url={http://dx.doi.org/10.1007/s12220-010-9138-z},
      review={\MR{2683770}},
}

\bib{David:2014}{article}{
      author={David, Guy},
       title={Local regularity properties of almost and quasiminimal sets with
  a sliding boundary condition},
        date={2014},
      eprint={http://arxiv.org/abs/1401.1179},
      status={Preprint},
}

\bib{David:2014p}{incollection}{
      author={David, Guy},
       title={Should we solve {P}lateau's problem again?},
        date={2014},
   booktitle={Advances in analysis: the legacy of {E}lias {M}. {S}tein},
      series={Princeton Math. Ser.},
      volume={50},
   publisher={Princeton Univ. Press, Princeton, NJ},
       pages={108\ndash 145},
      review={\MR{3329849}},
}

\bib{Falconer:1986}{book}{
      author={Falconer, K.~J.},
       title={The geometry of fractal sets},
   publisher={Cambridge university press},
        date={1986},
      volume={85},
}

\bib{Fang:2015}{article}{
      author={Fang, Yangqin},
       title={H\"older regularity at the boundary of two-dimensional sliding
  almost minimal sets},
        date={2016},
     journal={Adv. Calc. Var. (accepted)},
}

\bib{Federer:1969}{book}{
      author={Federer, H.},
       title={Geometric measure theory},
   publisher={Springer-Verlag, New York},
        date={1969},
}

\bib{Feuvrier:2008}{thesis}{
      author={Feuvrier, V.},
       title={Un r\'esultat d'existence pour les ensembles minimaux par
  optimisation sur des grilles poly\'edrales},
        type={Ph.D. Thesis},
        date={2008},
}

\bib{Kuratowski:1992}{book}{
      author={Kuratowski, C.},
       title={Topologie},
   publisher={\'Editions Jacques Gabay},
     address={Sceaux},
        date={1992},
      volume={2},
}

\bib{Taylor:1976}{article}{
      author={Taylor, J.~E.},
       title={{The structure of singularities in soap-bubble-like and
  soap-film-like minimal surfaces}},
        date={1976},
     journal={Ann. of Math},
      volume={103},
       pages={489\ndash 539},
}

\end{biblist}
\end{bibdiv}
\bigskip
\footnotesize

Yangqin FANG, \textsc{Max-Planck-Institut f\"ur Gravitationsphysik,
Am M\"uhlenberg 1, 14476 Potsdam, Germany}\par\nopagebreak
\textit{E-mail address}: \texttt{yangqin.fang@aei.mpg.de}

\end{document}